\documentclass[12pt,a4paper]{article}
\usepackage[T1]{fontenc}
\usepackage[utf8]{inputenc}
\DeclareUnicodeCharacter{2192}{\ensuremath{\to}}
\usepackage{natbib}
\setcitestyle{authoryear,open={(},close={)},citesep={,}}
\usepackage{amsmath,amssymb,amsthm,mathtools, bm,mathrsfs,dsfont}
\mathtoolsset{showonlyrefs=false,showmanualtags=true}
\usepackage{thmtools}
\usepackage[dvipsnames]{xcolor}
\usepackage{multirow}

\usepackage{graphicx, caption,subcaption, array,tabularx, booktabs, multirow}
\usepackage[shortlabels]{enumitem}
\setlist{topsep=0.35em,itemsep=0.15em,parsep=0pt,partopsep=0pt}
\setlist[enumerate,1]{leftmargin=*,label=\textup{(\roman*)},ref=\textup{(\roman*)}}
\allowdisplaybreaks
\usepackage{microtype}
\usepackage{titling,abstract}
\usepackage[a4paper,left=2.5cm,right=2.5cm,top=2.5cm,bottom=2.5cm]{geometry}
\usepackage[colorlinks=true,linkcolor=blue,citecolor=blue,urlcolor=blue]{hyperref}
\usepackage[capitalize,noabbrev]{cleveref}

\pretitle{\begin{center}\begin{minipage}{0.94\textwidth}\centering\LARGE\bfseries}
\posttitle{\par\end{minipage}\end{center}\vspace{0.2em}}
\preauthor{\begin{center}\large\scshape}
\postauthor{\par\end{center}\vspace{-0.6em}}
\predate{\begin{center}\normalsize\itshape}
\postdate{\par\end{center}\vspace{-0.45em}}

\newcommand{\paperkeywords}[1]{\begin{center}\begin{minipage}{0.86\textwidth}\small\noindent\textbf{Keywords:} #1\end{minipage}\end{center}}

\declaretheoremstyle[spaceabove=6pt,spacebelow=6pt,headfont=\normalfont\bfseries,notefont=\normalfont,bodyfont=\itshape,headpunct={.},postheadspace=0.5em,notebraces={(}{)}]{plainthm}
\declaretheoremstyle[spaceabove=6pt,spacebelow=6pt,headfont=\normalfont\bfseries,notefont=\normalfont,bodyfont=\normalfont,headpunct={.},postheadspace=0.5em,notebraces={(}{)}]{definitionthm}
\declaretheoremstyle[spaceabove=6pt,spacebelow=6pt,headfont=\normalfont\bfseries,notefont=\normalfont,bodyfont=\normalfont,headpunct={.},postheadspace=0.5em,notebraces={(}{)}]{remarkthm}
\declaretheorem[style=plainthm,name=Theorem,numberwithin=section]{theorem}
\declaretheorem[style=plainthm,name=Lemma,sibling=theorem]{lemma}
\declaretheorem[style=plainthm,name=Corollary,sibling=theorem]{corollary}
\declaretheorem[style=plainthm,name=Proposition,sibling=theorem]{proposition}

\newcounter{rlfrcondition}
\makeatletter
\newenvironment{rlfrcondition}[1]{%
  \par\smallskip
  \refstepcounter{rlfrcondition}%
  \def\@currentlabel{#1}%
  \noindent\textbf{Condition \textup{#1}.}\ignorespaces
}{\par\smallskip}
\makeatother
\declaretheorem[style=definitionthm,name=Example,numberwithin=section]{example}
\declaretheorem[style=remarkthm,name=Remark,numberwithin=section]{remark}

\crefname{theorem}{Theorem}{Theorems}
\Crefname{theorem}{Theorem}{Theorems}
\crefname{lemma}{Lemma}{Lemmas}
\Crefname{lemma}{Lemma}{Lemmas}
\crefname{corollary}{Corollary}{Corollaries}
\Crefname{corollary}{Corollary}{Corollaries}
\crefname{proposition}{Proposition}{Propositions}
\Crefname{proposition}{Proposition}{Propositions}
\crefname{rlfrcondition}{Assumption}{Assumptions}
\Crefname{rlfrcondition}{Assumption}{Assumptions}
\crefname{remark}{Remark}{Remarks}
\Crefname{remark}{Remark}{Remarks}
\crefname{example}{Example}{Examples}
\Crefname{example}{Example}{Examples}

\newcommand{\R}{\mathbb R}
\newcommand{\E}{\mathbb E}
\newcommand{\Pbb}{\mathbb P}
\newcommand{\argmin}{\operatorname*{arg\,min}}
\newcommand{\PaperTitle}{Robust Local Fr\'echet Regression: Nonasymptotic Theory and Minimax Optimality}%Robust Local Fr\'echet Regression with High-Confidence Guarantees}
\newcommand{\PaperAuthors}{Chang Jun Im\textsuperscript{1}\quad Daisuke Kurisu\textsuperscript{2}}
\newcommand{\PaperAffiliations}{%
\textsuperscript{1}The Institute for Data Innovation in Science, Seoul National University, South Korea\\
\textsuperscript{2}Faculty of Economics, The University of Tokyo, Japan}

\newif\ifcompilewithsupplement
\compilewithsupplementtrue

\begin{document}
\title{\PaperTitle}
\author{\PaperAuthors}
\date{\PaperAffiliations}
\maketitle

\begin{abstract}
We develop robust local Fr\'echet regression for heavy-tailed responses in a general metric space and propose a distance-based median-of-means estimator. We derive nonasymptotic pointwise high-probability bounds separately under local second-order smoothness and local $1$-Wasserstein H\"older continuity, and show that the corresponding optimized rates match the classical pointwise rates for nonparametric regression. We also establish matching pointwise minimax lower bounds over embedded Gaussian regression subclasses. Remarkably, our results do not require an entropy condition on the response space. Simulation studies with positive-definite matrix-valued responses, together with a real-data application involving Wasserstein-valued responses, illustrate the robustness--efficiency tradeoff and practical performance of the proposed estimator.
\end{abstract}

\paperkeywords{Fr\'echet regression, heavy-tailed data, median-of-means, weighted empirical Fr\'echet mean, high-probability bounds, minimax lower bounds.}

\section{Introduction}\label{sec:introduction}                  

Responses in many applications are not naturally represented as vectors. Diffusion tensors, covariance matrices, shapes, rotations, and probability distributions are typical examples. In many such applications, the predictors are Euclidean variables such as age, time, dose, or clinical measurements. \citet{Frechet1948} introduced the Fr\'echet mean as a generalization of the usual mean to metric spaces, providing a general framework for defining averages of non-Euclidean data. Building on this idea, \citet{PetersenMuller2019} proposed Fr\'echet regression for metric-space-valued responses with Euclidean predictors, with the conditional Fr\'echet mean as the regression target. Their local estimator is obtained by minimizing a kernel-weighted sum of squared distances. This localization, while essential for nonparametric regression, creates a particular vulnerability to heavy-tailed responses: a small number of observations with unusually large distances can have a disproportionate effect on the local criterion, even when the response has a finite second moment.

Regression with non-Euclidean responses has been studied in a variety of geometric settings. \citet{Fletcher2013} and \citet{HinkleEtAl2012} considered geodesic and polynomial regression for Riemannian-manifold-valued responses. \citet{YuanZhuLinMarron2012} developed intrinsic local polynomial regression for positive-definite matrix responses, while \citet{LinEtAl2015} studied extrinsic local regression for manifold-valued responses. For general metric-space-valued responses, \citet{PetersenMuller2019} developed local Fr\'echet regression and established its asymptotic theory. \citet{Schoetz2022} subsequently obtained nonasymptotic expectation bounds under fixed design, with conditions involving the entropy of the response space. More recently, \citet{Schoetz2026WeightedPower} established nonasymptotic expectation bounds for weighted power Fr\'echet means with applications to local constant Fr\'echet regression under fixed design without imposing response-space entropy conditions. In the Euclidean setting, classical pointwise minimax theory for nonparametric regression was established by \citet{Stone1980}.

Robust estimation under weak moment assumptions has a long history. The median-of-means principle dates back to \citet{NemirovskiYudin1983} and has since been developed in a variety of settings. \citet{Minsker2015} used geometric medians to aggregate independent estimators and obtain exponential deviation bounds in Banach spaces. \citet{DevroyeEtAl2016} and \citet{LugosiMendelson2019} developed sub-Gaussian mean estimation for real- and multivariate-valued observations under only finite second moments. \citet{HsuSabato2016} extended the median-of-means principle to general metric spaces and proposed a distance-based aggregation rule for estimation and loss minimization. For Fr\'echet means, \citet{BrunelSerres2024,BrunelSerres2025}  derived nonasymptotic bounds for empirical barycenters, while \citet{BartlEtAl2025} developed robust procedures under heavy tails and contamination. For Fr\'echet regression, \citet{HongXuWood2026} established uniform convergence results for generalized conditional Fr\'echet means and proposed weighted aggregation procedures, including a median-of-means version of Fr\'echet regression that aggregates blockwise estimators through a Fr\'echet median.

However, the results of \citet{HongXuWood2026} do not directly yield an explicit pointwise nonasymptotic theory for the random-design local kernel setting considered here. In this setting, the kernel weights are random and self-normalized, and the conditional response distribution varies with the predictor. Thus, the stochastic error must be controlled under unequal random weights and heterogeneous conditional response distributions, with explicit dependence on the bandwidth, local predictor density, and confidence level. Moreover, \citet{Schoetz2026WeightedPower} did not develop a robust estimator for classical Fr\'echet regression.

In this paper, we develop robust local Fr\'echet regression (RLFR) under a local second-moment condition. We divide the observations into blocks, compute a kernel-weighted empirical Fr\'echet mean within each block, and aggregate the resulting block estimators using the distance-based rule of \citet{HsuSabato2016}. The procedure therefore inherits the robustness of median-of-means methods while avoiding any additional optimization over the response space. A key technical ingredient is a nonasymptotic bound for weighted empirical Fr\'echet means based on independent observations with unequal weights and potentially different distributions. Conditional on the predictors, this result can be applied directly to the blockwise local estimators. 

Under suitable geometric conditions and a local second-order smoothness condition, we show that RLFR attains the classical pointwise convergence rate for local regression at any fixed confidence level. We further establish a pointwise minimax lower bound on an embedded Gaussian regression subclass, which matches the dependence of our upper bound on the sample size and confidence level over their common range. Such a minimax lower bound for local Fr\'echet regression under the setting considered here seems not available in this literature. Importantly, our upper bound requires only a local second-moment condition on the response and does not impose response-space entropy conditions that are commonly used in theoretical analyses of local Fr\'echet regression, such as \citet{PetersenMuller2019} and \citet{ChenMuller2022}. Thus, the robustness of RLFR does not come at the cost of stronger complexity assumptions on the response space. We also consider a complementary setting in which the conditional response distributions satisfy local H\"older continuity with respect to the $1$-Wasserstein distance. In this setting, we derive analogous nonasymptotic upper bounds and matching pointwise minimax lower bounds on an embedded Gaussian regression subclass. These results provide a unified characterization of the statistical difficulty of robust local Fr\'echet regression under these two smoothness conditions.

We assess the finite-sample performance of RLFR through simulation studies with positive-definite matrix-valued responses. We further illustrate its practical application through an analysis of wrist-actigraphy data from the Multi-Ethnic Study of Atherosclerosis (MESA), using body mass index as the scalar predictor and distributions of waking activity counts as Wasserstein-valued responses.

The rest of the paper is organized as follows. Section~\ref{sec:setting} introduces the problem and the RLFR estimator. Section~\ref{sec:main-results} presents the nonasymptotic bounds and the pointwise lower bounds. Section~\ref{sec:simulation} reports the simulation study, Section~\ref{subsec:realdata_mesa} presents a real-data application, and Section~\ref{sec:discussion} concludes the paper. The Supplementary Material contains all technical proofs and additional details.

\section{Problem setting and the estimator}\label{sec:setting}

\subsection{Problem setting}

Let $(\Omega,\mathcal F,\mathbb P)$ be a probability space. The predictor $X$ takes values in an open set $\mathcal X\subset\R^d$, and the response $Y$ takes values in a metric space $(\mathcal Y,d_{\mathcal Y})$. Both spaces are equipped with their Borel $\sigma$-fields. Let $P_X$ and $P_Y$ denote the marginal distributions of $X$ and $Y$, respectively, and fix a regular conditional distribution $P_{Y\mid X=x}$. We assume that the random minimizers used below admit measurable versions. Then, for $\omega\in\mathcal Y$, define
\begin{align*}
    M_\oplus(x, \omega):=\E\left[d_{\mathcal Y}^2(Y,\omega)\mid X=x\right], \quad x\in\mathcal{X}.
\end{align*}
Following \citet{PetersenMuller2019}, whenever the minimizer of the above exists and is unique, define the conditional Fr\'echet mean by
\begin{align*}
    m_{\oplus}(x):=\argmin_{\omega\in\mathcal Y}M_\oplus(x, \omega),\quad x\in\mathcal X.
\end{align*}
For $\mathcal Y=\R$ with the Euclidean metric, this reduces to the usual conditional mean whenever the conditional second moment is finite.

In this paper, we estimate $m_\oplus(x_0)$ at a fixed interior point $x_0\in\mathcal X$ from the i.i.d. sample $\mathcal D_n:=\left((X_1,Y_1),\ldots,(X_n,Y_n)\right)$. For the notations throughout the paper, $\|\cdot\|$ denotes the Euclidean norm, and $B(x,r)$ and $\overline B(x,r)$ denote the open and closed Euclidean balls centered at $x\in\R^d$ with radius $r\in(0,\infty)$. Also, $B_{\mathcal Y}(y,r)$ and $\overline B_{\mathcal Y}(y,r)$ denote the open and closed balls in $\mathcal Y$ centered at $y \in\mathcal{Y}$ with radius $r\in(0,\infty)$.

\subsection{Robust local Fr\'echet regression (RLFR) estimator}\label{subsec:smooth-estimator}

Let $K:\R^d\to[0,\infty)$ be a bounded measurable kernel. For a bandwidth $h\in(0,\infty)$, define
\begin{align*}
    K_h(z,x) := \frac{1}{h^d}K\left(\frac{z-x}{h}\right), \quad z,x\in\mathcal X.
\end{align*}
Fix $B\in\{1,\ldots,\lfloor n/2\rfloor\}$ and partition $\{1,\ldots,n\}$ deterministically into disjoint blocks $I_1,\ldots,I_B$ whose sizes differ by at most one. For $b=1, \ldots, B$, define
\begin{align} \label{eq:means-selector}
    \widehat m_{h, b}(x_0) := \begin{cases} \argmin_{\omega\in\mathcal Y}\sum_{i\in I_b}K_h(X_i,x_0)d_{\mathcal Y}^2(Y_i,\omega), & \text{if} ~ \sum_{i\in I_b}K_h(X_i,x_0)>0, \\
    \argmin_{\omega\in\mathcal Y}\sum_{i\in I_b}d_{\mathcal Y}^2(Y_i,\omega), & \text{if} ~ \sum_{i\in I_b}K_h(X_i,x_0)=0. \end{cases}
\end{align}
We then aggregate the block estimates by the distance-based median-of-means rule of \citet{HsuSabato2016}. For $b=1,\ldots,B$, let
\begin{align} \label{eq:median-of-means-selector}
    r_{h, b}(x_0) := \inf\left\{ r\ge0: \sum_{l=1}^B \mathds{1}_{\left\{d_{\mathcal Y}\left(\widehat m_{h,b}(x_0),\widehat m_{h,l}(x_0)\right)\le r\right\}}>\frac B2 \right\},
\end{align}
where $\mathds{1}$ denotes the indicator function. Note that $r_{h,b}(x_0)$ is the smallest radius such that the ball centered at $\widehat m_{h,b}(x_0)$ contains a strict majority of the block estimates. Choose $\widehat b_h(x_0)\in\argmin_{1\le b\le B}r_{h,b}(x_0)$, where ties are resolved according to a fixed deterministic rule, such as choosing the smallest minimizing index. Then, define
\begin{align*}
    \widehat m^{\mathrm{RLFR}}_{h,B}(x_0)
    :=
    \widehat m_{h,\widehat b_h(x_0)}(x_0).
\end{align*}
We call $\widehat m^{\mathrm{RLFR}}_{h,B}(x_0)$ the robust local Fr\'echet regression (RLFR) estimator. 

\section{Theoretical results}\label{sec:main-results}

\subsection{Weighted empirical Fr\'echet means}\label{subsec:general-framework}

In this subsection, we establish a nonasymptotic bound for weighted empirical Fr\'echet means. We first introduce a condition on the kernel.

\begin{rlfrcondition}{K1}\label{ass:K1}
The kernel $K:\R^d\to[0,\infty)$ is a bounded probability density such that $\operatorname{supp}(K)\subset\overline B(0,1)$.
\end{rlfrcondition}

Conditions of this type are standard in kernel density estimation and local regression; see, for example, \citet{WandJones1995} and \citet{FanGijbels1996}. They are satisfied by many commonly used compactly supported kernels, including uniform, Epanechnikov, biweight, and triweight kernels.

We also impose a condition on $f_X:\R^d\to[0,\infty)$, which denotes the density of $X$.

\begin{rlfrcondition}{D1}\label{ass:D1}
The density $f_X$ is continuous at $x_0$ and satisfies $f_X(x_0)>0$.
\end{rlfrcondition}

Condition~\ref{ass:D1} is standard in kernel smoothing and nonparametric regression. By Condition~\ref{ass:D1} and the openness of $\mathcal{X}$, we can fix $h_0\in(0,\infty)$ sufficiently small such that
\begin{align} 
\begin{split}
    \overline B(x_0,h_0)&\subset\mathcal X,\\
    \frac{f_X(x_0)}{2}\le f_X(x)&\le2f_X(x_0), \quad
    x\in\overline B(x_0,h_0).
\end{split} \label{eq:local-density-bound}
\end{align}
By Lemma~\ref{lem:kernel-target-positive} (in the Supplementary Material), Condition~\ref{ass:K1} and Condition~\ref{ass:D1} ensure $\E[K_h(X,x_0)]>0$ for $h\in(0,h_0]$. Therefore, for $\omega\in\mathcal Y$ and $h\in(0,h_0]$, we can define the smoothed version of $M_\oplus(x_0, \omega)$ by
\begin{align} \label{def:bigmhtilde}
    \widetilde M_h(x_0,\omega):=\frac{\E\left[K_h(X,x_0)d_{\mathcal Y}^2(Y,\omega)\right]}{\E[K_h(X,x_0)]}.
\end{align}
Whenever the minimizer exists and is unique, define
\begin{align*}
    \widetilde m_h(x_0):=\argmin_{\omega\in\mathcal Y}\widetilde M_h(x_0,\omega).
\end{align*}
Next, we define three weighted Fr\'echet means that will be used in the theoretical analysis. Let $\mathcal L_2(\mathcal Y)$ denote the class of $\mathcal Y$-valued random variables with finite second moments, that is, 
\begin{align*}
    \mathcal{L}_2(\mathcal{Y}):=\left\{Y:\mathbb{P}(Y\in\mathcal{Y})=1, \quad \mathbb{E}\bigl[d_{\mathcal Y}^2(Y,y_0)\bigr]<\infty\quad\text{for some}\quad y_0\in\mathcal{Y}\right\}.
\end{align*}
The definition is independent of the choice of $y_0\in\mathcal{Y}$ by the triangle inequality. For $N\in\mathbb N$, define
\begin{align*}
    \mathcal S_N:=\left\{(\lambda_1,\ldots,\lambda_N)\in[0,1]^N:\sum_{i=1}^N\lambda_i=1\right\}.
\end{align*}
For $\bm\lambda\in\mathcal S_N$ and $\omega\in\mathcal Y$, define
\begin{align*}
    \overline M_{\oplus,\bm\lambda}(\bm x,\omega)&:=\sum_{i=1}^N\lambda_iM_\oplus(x_i,\omega), \quad \bm x=(x_1,\ldots,x_N)\in\mathcal X^N, \\
    \overline B_{\oplus,\bm\lambda}(\bm y,\omega)&:=\sum_{i=1}^N\lambda_i d_{\mathcal Y}^2(y_i,\omega), \quad \bm y=(y_1,\ldots,y_N)\in\mathcal Y^N, \\
    \overline Q_{\oplus,\bm\lambda}(\widetilde{\bm Y},\omega)&:=\sum_{i=1}^N\lambda_i\E\left[d_{\mathcal Y}^2(\widetilde Y_i,\omega)\right], \quad \widetilde{\bm Y}=(\widetilde Y_1,\ldots,\widetilde Y_N)\in\mathcal L_2(\mathcal Y)^N.
\end{align*}
Whenever each minimizer exists and is unique, write
\begin{align*}
    \overline m_{\oplus,\bm\lambda}(\bm x)&:=\argmin_{\omega\in\mathcal Y}\overline M_{\oplus,\bm\lambda}(\bm x,\omega), \quad \bm x=(x_1,\ldots,x_N)\in\mathcal X^N, \\
    \overline b_{\oplus,\bm\lambda}(\bm y)&:=\argmin_{\omega\in\mathcal Y}\overline B_{\oplus,\bm\lambda}(\bm y,\omega), \quad \bm y=(y_1,\ldots,y_N)\in\mathcal Y^N, \\
    \overline q_{\oplus,\bm\lambda}(\widetilde{\bm Y})&:=\argmin_{\omega\in\mathcal Y}\overline Q_{\oplus,\bm\lambda}(\widetilde{\bm Y},\omega), \quad \widetilde{\bm Y}=(\widetilde Y_1,\ldots,\widetilde Y_N)\in\mathcal L_2(\mathcal Y)^N.
\end{align*}
Then, we impose conditions on $\mathcal Y$ using these minimizers.

\begin{rlfrcondition}{M1}\label{ass:M1}
The following minimizers exist and are unique.
\begin{enumerate}[(i)]
\item $\widetilde m_h(x_0)$ for every $h\in(0,h_0]$.

\item $\overline m_{\oplus,\bm\lambda}(\bm x)$ for every
$N\in\mathbb N$, $\bm\lambda\in\mathcal S_N$, and $\bm x\in\overline B(x_0,h_0)^N$.

\item $\overline b_{\oplus,\bm\lambda}(\bm y)$ for every
$N\in\mathbb N$, $\bm\lambda\in\mathcal S_N$, and $\bm y\in\mathcal Y^N$.
\item $\overline q_{\oplus,\bm\lambda}(\widetilde{\bm Y})$ for every $N\in\mathbb N$, $\bm\lambda\in\mathcal S_N$, and $\widetilde{\bm Y}\in\mathcal L_2(\mathcal Y)^N$.
\end{enumerate}
\end{rlfrcondition}

\begin{rlfrcondition}{M2}\label{ass:M2}
There exists a constant $\kappa_{\mathcal Y}\in(0,\infty)$ such that the following inequalities hold.
\begin{enumerate}[(i)]
\item For every $h\in(0,h_0]$ and $\omega\in\mathcal Y$,
\begin{align*}
    \widetilde M_h(x_0,\omega) -\widetilde M_h\left(x_0,\widetilde m_h(x_0)\right) \ge \kappa_{\mathcal Y} d_{\mathcal Y}^2\left(\omega,\widetilde m_h(x_0)\right).
\end{align*}

\item For every $N\in\mathbb N$, $\bm\lambda\in\mathcal S_N$,
$\bm x\in\overline B(x_0,h_0)^N$, and $\omega\in\mathcal Y$,
\begin{align*}
    \overline M_{\oplus,\bm\lambda}(\bm x,\omega) -\overline M_{\oplus,\bm\lambda} \left(\bm x,\overline m_{\oplus,\bm\lambda}(\bm x)\right) \ge \kappa_{\mathcal Y} d_{\mathcal Y}^2 \left(\omega,\overline m_{\oplus,\bm\lambda}(\bm x)\right).
\end{align*}

\item For every $N\in\mathbb N$, $\bm\lambda\in\mathcal S_N$,
$\bm y\in\mathcal Y^N$, and $\omega\in\mathcal Y$,
\begin{align*}
    \overline B_{\oplus,\bm\lambda}(\bm y,\omega) -\overline B_{\oplus,\bm\lambda} \left(\bm y,\overline b_{\oplus,\bm\lambda}(\bm y)\right) \ge \kappa_{\mathcal Y} d_{\mathcal Y}^2 \left(\omega,\overline b_{\oplus,\bm\lambda}(\bm y)\right).
\end{align*}
\item For every $N\in\mathbb N$, $\bm\lambda\in\mathcal S_N$, $\widetilde{\bm Y}\in\mathcal L_2(\mathcal Y)^N$, and $\omega\in\mathcal Y$,
\begin{align*}
    \overline Q_{\oplus,\bm\lambda}(\widetilde{\bm Y},\omega)-\overline Q_{\oplus,\bm\lambda}\left(\widetilde{\bm Y},\overline q_{\oplus,\bm\lambda}(\widetilde{\bm Y})\right)\ge\kappa_{\mathcal Y}d_{\mathcal Y}^2\left(\omega,\overline q_{\oplus,\bm\lambda}(\widetilde{\bm Y})\right).
\end{align*}
\end{enumerate}
\end{rlfrcondition}

Condition~\ref{ass:M1} guarantees existence and uniqueness of the Fr\'echet means, while Condition~\ref{ass:M2} assumes a quadratic curvature condition. Conditions of this type have been used in previous work on Fr\'echet regression; see, for example, \citet{PetersenMuller2019}. 

\begin{rlfrcondition}{M3}\label{ass:M3}
There exists a constant $\sigma\in(0,\infty)$ such that
\begin{align*}
    \sup_{x\in\overline B(x_0,h_0)} M_\oplus\left(x,m_\oplus(x_0)\right) \le \sigma^2.
\end{align*}
\end{rlfrcondition}

Since $h_0$ can be chosen sufficiently small, we may assume without loss of generality that $h_0\leq\sigma$ throughout the paper. Condition~\ref{ass:M3} is a local second-moment condition around the target $m_\oplus(x_0)$. Unlike the response-space entropy conditions used in previous Fr\'echet regression theory, Condition~\ref{ass:M3} imposes no covering-number condition on $\mathcal Y$; see, for example, \citet{PetersenMuller2019}. In particular, Condition~\ref{ass:M3} can hold for responses in the 2-Wasserstein space $\mathcal W_2(\R)$, for which the corresponding entropy condition does not hold; see Section~\ref{subsec:hadamard-examples} of the Supplementary Material for details. When $\mathcal Y = \mathbb{R}$, Condition~\ref{ass:M3} is equivalent to
\begin{align*}
    \sup_{x\in\overline B(x_0,h_0)} \left( \operatorname{Var}\left[Y\mid X=x\right]+\left(m_\oplus(x)-m_\oplus(x_0)\right)^2 \right) \le \sigma^2.
\end{align*}
If Conditions~\ref{ass:M1}--\ref{ass:M2} hold, $m_\oplus(x_0)$ in Condition~\ref{ass:M3} may be replaced by any fixed point in $\mathcal Y$, with only a change in the constant; see Remark~\ref{rem:moment-center} in the Supplementary Material for a discussion on this condition.

\begin{rlfrcondition}{M4}\label{ass:M4}
There exists a constant $C_{\mathcal{Y}}\in(0,\infty)$ such that, for all $y,y',\omega,\omega'\in\mathcal Y$,
\begin{align*}
    \left| d_{\mathcal Y}^2(y,\omega)-d_{\mathcal Y}^2(y,\omega')-d_{\mathcal Y}^2(y',\omega)+d_{\mathcal Y}^2(y',\omega') \right| \le C_{\mathcal{Y}} d_{\mathcal Y}(y,y')d_{\mathcal Y}(\omega,\omega').
\end{align*}
\end{rlfrcondition}

Condition~\ref{ass:M4} is the quadruple inequality for the squared distance. This condition has been used in previous Fr\'echet mean and regression theory; see, for example, \citet{Schoetz2022}.

Now, we introduce several metric spaces satisfying the above conditions. Note that Condition~\ref{ass:M3} is automatically satisfied for bounded response spaces, while it still requires a distributional condition for unbounded response spaces; see Section~\ref{sec:examples-scope} of the Supplementary Material.

\begin{example}\leavevmode\label{ex:hadamard-main}
    Separable Hadamard spaces are complete separable CAT(0) spaces, that is, geodesic metric spaces of nonpositive curvature. They satisfy Conditions~\ref{ass:M1}, \ref{ass:M2}, and \ref{ass:M4} with $\kappa_{\mathcal Y}=1$ and $C_{\mathcal Y}=2$. Examples include separable Hilbert spaces, complete separable metric trees, BHV spaces of phylogenetic trees, symmetric positive-definite matrices under the affine-invariant metric, the $2$-Wasserstein space $\mathcal W_2(\R)$, the space of compact convex sets, and real hyperbolic spaces. For more details, see Section~\ref{subsec:hadamard-examples} of the Supplementary Material.
\end{example}

\begin{example}\leavevmode\label{ex:non-hadamard-main}
    The above conditions are not restricted to spaces of nonpositive curvature. For $q\ge2$, a closed spherical cap of radius $\rho\in(0,\pi/4)$ in the $q$-dimensional unit sphere $\mathbb S^q$ satisfies Conditions~\ref{ass:M1}, \ref{ass:M2}, \ref{ass:M3}, and \ref{ass:M4} with $\kappa_{\mathcal Y}=2\rho\cot(2\rho)$, $\sigma=2\rho$, and $C_{\mathcal Y}=4\rho/\sin(2\rho)$. Similarly, for $q\in\mathbb N$, a closed ball of radius $\rho\in(0,\pi/4)$ in the space of probability densities on $\mathbb R^q$, equipped with the normalized Fisher--Rao metric, satisfies the same conditions with the same constants. Finally, for $q\ge2$ and $m,M\in(0,\infty)$ with $m<M$, the space of $q\times q$ symmetric positive-definite matrices whose eigenvalues lie in $[m,M]$, equipped with the Bures--Wasserstein metric, satisfies Conditions~\ref{ass:M1}, \ref{ass:M2}, \ref{ass:M3}, and \ref{ass:M4} with $\kappa_{\mathcal Y}=(m/M)^3$, $\sigma=\sqrt q\,(\sqrt M-\sqrt m)$, and $C_{\mathcal Y}=2(M/m)^2$. For more details, see Section~\ref{subsec:non-hadamard-examples} of the Supplementary Material.
\end{example}

We first establish a nonasymptotic bound for weighted empirical Fr\'echet means. 

\begin{theorem}\label{thm:r2-weighted-barycenter}
Suppose Conditions~\ref{ass:M1}(iii)--(iv), \ref{ass:M2}(iii)--(iv), and \ref{ass:M4} hold. Let $N\in\mathbb N$, and let $\widetilde{\bm Y}=(\widetilde Y_1,\ldots,\widetilde Y_N)\in\mathcal L_2(\mathcal Y)^N$ have independent components, and let $\bm\lambda\in\mathcal S_N$ be deterministic. Let $\widetilde{\bm Y}'=(\widetilde Y_1',\ldots,\widetilde Y_N')$ be an independent copy of $\widetilde{\bm Y}$. Then
\begin{align*}
    \E\left[d_{\mathcal Y}^2\left(\overline b_{\oplus,\bm\lambda}(\widetilde{\bm Y}),\overline q_{\oplus,\bm\lambda}(\widetilde{\bm Y})\right)\right]
    \le\frac{C_{\mathcal Y}^2}{8\kappa_{\mathcal Y}^2}\sum_{i=1}^N\lambda_i^2\E\left[d_{\mathcal Y}^2(\widetilde Y_i,\widetilde Y_i')\right].
\end{align*}
\end{theorem}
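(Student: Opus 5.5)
The plan is a stability (leave-one-out replacement) argument. I combine the two curvature conditions to bound the distance by an empirical-process term. Then I symmetrize that term with the independent copy $\widetilde{\bm Y}'$ and control it via the quadruple inequality and a replace-one stability bound.

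Write $\widehat b:=\overline b_{\oplus,\bm\lambda}(\widetilde{\bm Y})$ and $q:=\overline q_{\oplus,\bm\lambda}(\widetilde{\bm Y})$. Note that $q$ is deterministic. For $y,\omega\in\mathcal Y$ set $g(y,\omega):=d_{\mathcal Y}^2(y,q)-d_{\mathcal Y}^2(y,\omega)$.

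\emph{Step 1 (two-sided curvature).} Apply Condition~\ref{ass:M2}(iii) at $\omega=q$:
\begin{align*}
\overline B_{\oplus,\bm\lambda}(\widetilde{\bm Y},q)-\overline B_{\oplus,\bm\lambda}(\widetilde{\bm Y},\widehat b)\ge\kappa_{\mathcal Y}d_{\mathcal Y}^2(q,\widehat b).
\end{align*}
Apply Condition~\ref{ass:M2}(iv) pointwise in $\omega$ and then plug in $\omega=\widehat b$. This is legitimate because the inequality holds for every deterministic $\omega$, and $\overline Q_{\oplus,\bm\lambda}(\widetilde{\bm Y},\omega)=\sum_i\lambda_i\E[d^2_{\mathcal Y}(\widetilde Y_i',\omega)]$ is a deterministic function of $\omega$. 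This gives $\overline Q_{\oplus,\bm\lambda}(\widetilde{\bm Y},\widehat b)-\overline Q_{\oplus,\bm\lambda}(\widetilde{\bm Y},q)\ge\kappa_{\mathcal Y}d^2_{\mathcal Y}(\widehat b,q)$. Adding the two inequalities and taking expectations yields
\begin{align*}
2\kappa_{\mathcal Y}\E\bigl[d_{\mathcal Y}^2(\widehat b,q)\bigr]\le\sum_{i=1}^N\lambda_i\,\E\bigl[g(\widetilde Y_i,\widehat b)-g(\widetilde Y_i',\widehat b)\bigr],
\end{align*}
where $\widetilde{\bm Y}'$ is independent of $\widehat b$. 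Integrability follows from $\widetilde Y_i\in\mathcal L_2(\mathcal Y)$ and the triangle inequality; finiteness of $\E d^2(\widehat b,q)$ is checked along the way, e.g.\ by comparing $\widehat b$ to a fixed point via Condition~\ref{ass:M2}(iii).

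\emph{Step 2 (symmetrization by swapping).} Let $\widehat b^{(i)}$ be the empirical barycenter computed from $\widetilde{\bm Y}$ with $\widetilde Y_i$ replaced by $\widetilde Y_i'$. By independence, exchanging $\widetilde Y_i$ and $\widetilde Y_i'$ preserves the joint law and maps $(\widehat b,\widetilde Y_i')$ to $(\widehat b^{(i)},\widetilde Y_i)$. Hence $\E g(\widetilde Y_i',\widehat b)=\E g(\widetilde Y_i,\widehat b^{(i)})$, and likewise $\E g(\widetilde Y_i,\widehat b)=\E g(\widetilde Y_i',\widehat b^{(i)})$. Averaging the two resulting expressions, the $q$-terms cancel and the $i$-th summand equals
\begin{align*}
\tfrac12\E\bigl[d^2_{\mathcal Y}(\widetilde Y_i,\widehat b^{(i)})-d^2_{\mathcal Y}(\widetilde Y_i,\widehat b)-d^2_{\mathcal Y}(\widetilde Y_i',\widehat b^{(i)})+d^2_{\mathcal Y}(\widetilde Y_i',\widehat b)\bigr]\le\tfrac{C_{\mathcal Y}}2\E\bigl[d_{\mathcal Y}(\widetilde Y_i,\widetilde Y_i')\,d_{\mathcal Y}(\widehat b,\widehat b^{(i)})\bigr].
\end{align*}
The inequality is Condition~\ref{ass:M4}.

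\emph{Step 3 (replace-one stability).} Apply Condition~\ref{ass:M2}(iii) twice: once to the original empirical functional at $\widehat b^{(i)}$, and once to the modified one at $\widehat b$. Adding the two inequalities, all terms except the $i$-th cancel, giving
\begin{align*}
2\kappa_{\mathcal Y}d^2_{\mathcal Y}(\widehat b,\widehat b^{(i)})\le\lambda_i\bigl[d^2(\widetilde Y_i,\widehat b^{(i)})-d^2(\widetilde Y_i,\widehat b)-d^2(\widetilde Y_i',\widehat b^{(i)})+d^2(\widetilde Y_i',\widehat b)\bigr]\le\lambda_iC_{\mathcal Y}d(\widetilde Y_i,\widetilde Y_i')d(\widehat b,\widehat b^{(i)}).
\end{align*}
Therefore $d_{\mathcal Y}(\widehat b,\widehat b^{(i)})\le C_{\mathcal Y}\lambda_i d_{\mathcal Y}(\widetilde Y_i,\widetilde Y_i')/(2\kappa_{\mathcal Y})$.

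Plugging Step 3 into Steps 1--2 gives
\begin{align*}
2\kappa_{\mathcal Y}\E d^2_{\mathcal Y}(\widehat b,q)\le\sum_i\frac{C_{\mathcal Y}^2}{4\kappa_{\mathcal Y}}\lambda_i^2\,\E d^2_{\mathcal Y}(\widetilde Y_i,\widetilde Y_i'),
\end{align*}
which is exactly the claimed constant $C_{\mathcal Y}^2/(8\kappa_{\mathcal Y}^2)$.

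The main obstacle is Step 2. The swap identities must be justified carefully: measurability of $\widehat b^{(i)}$, which uses the assumed measurable minimizer versions, and integrability of every term, so that expectations may be split. The remaining steps are direct algebra.
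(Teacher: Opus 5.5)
Your proposal is correct and is essentially the paper's proof. The paper also adds Condition~\ref{ass:M2}(iii) at $q$ and Condition~\ref{ass:M2}(iv) at $\widehat b$, reducing the problem to the gap $\E\bigl[\overline Q_{\oplus,\bm\lambda}(\widetilde{\bm Y},\widehat b)-\overline B_{\oplus,\bm\lambda}(\widetilde{\bm Y},\widehat b)\bigr]$. It bounds this gap by the same swap symmetrization with Condition~\ref{ass:M4} (Lemma~\ref{lem:r2-gap}) and the same replace-one stability bound (Lemma~\ref{lem:r2-replace}), and it handles integrability exactly as you suggest, via $\kappa_{\mathcal Y}d_{\mathcal Y}^2(\widehat b,q)\le\overline B_{\oplus,\bm\lambda}(\widetilde{\bm Y},q)$ (Lemma~\ref{lem:r2-integrability}).
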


Theorem~\ref{thm:r2-weighted-barycenter} shows that the mean squared error is controlled by the squared weights and the second moments of the responses. In particular, under a common second-moment bound, the right-hand side is of order $\sum_{i=1}^N\lambda_i^2$, which reduces to $N^{-1}$ for equal weights. Note that the result allows independent observations with different distributions and unequal weights, and requires no response-space entropy condition.

Independently, \citet{Schoetz2026WeightedPower} established a similar result for CAT spaces, assuming that the response distributions are supported on a sufficiently small convex set in the positively curved case. In contrast, Theorem~\ref{thm:r2-weighted-barycenter} does not require a CAT curvature bound or a restriction on the support of the response distributions. In particular, it applies to a class of symmetric positive-definite matrices equipped with the Bures--Wasserstein metric; see Example~\ref{ex:bures-wasserstein-spd} of the Supplementary Material.

To apply Theorem~\ref{thm:r2-weighted-barycenter} to local regression, let $\widetilde Y_i$ have the conditional distribution of $Y$ given $X=x_i$ for $i=1,\ldots,N$, and let
\begin{align} \label{def:lambdai}
    \lambda_i=\frac{K_h(x_i,x_0)}{\sum_{j=1}^N K_h(x_j,x_0)}, \quad i=1,\ldots,N,
\end{align}
whenever the denominator is positive. Then $\overline q_{\oplus,\bm\lambda}(\widetilde{\bm Y})=\overline m_{\oplus,\bm\lambda}(\bm x)$. For random design, conditioning on the predictors makes $\lambda_1,\ldots,\lambda_N$ deterministic and $\widetilde Y_1,\ldots,\widetilde Y_N$ independent, so Theorem~\ref{thm:r2-weighted-barycenter} applies conditionally on the predictors. The corresponding conditional bound is given in Corollary~\ref{cor:r2-conditional} of the Supplementary Material.

For the analyses under Conditions~\ref{ass:S1} and \ref{ass:S2} below, for $\omega\in\mathcal Y$, define
\begin{align} \label{def:criterion-contrast}
    S_\omega(x):=
    \begin{cases}
        M_\oplus(x,\omega)-M_\oplus(x,m_\oplus(x_0)),
        & \text{if} \quad x\in\overline B(x_0,h_0),\\
        0,
        & \text{if} \quad  x\in\mathcal X\setminus\overline B(x_0,h_0).
    \end{cases}
\end{align}

\subsection{Nonasymptotic theory under second-order smoothness}\label{subsec:second-order-theory}

In this subsection, we derive a pointwise nonasymptotic error bound for the RLFR estimator under local second-order smoothness and establish a pointwise minimax lower bound. We then optimize the upper bound over the bandwidth and show that its resulting rate matches the minimax lower bound on an embedded Gaussian regression subclass. We first impose additional conditions on the kernel and the density of $X$.

\begin{rlfrcondition}{K2}\label{ass:K2}
The kernel $K:\R^d\to[0,\infty)$ is symmetric about zero, that is, $K(u)=K(-u)$ for all $u\in\R^d$.
\end{rlfrcondition}

\begin{rlfrcondition}{D2}\label{ass:D2}
The density $f_X$ is continuously differentiable on an open neighborhood of $\overline B(x_0,h_0)$.
\end{rlfrcondition}

Conditions~\ref{ass:K2} and \ref{ass:D2} are standard in kernel smoothing and local regression; see, for example, \citet{WandJones1995} and \citet{FanGijbels1996}. Condition~\ref{ass:K2} is satisfied by the symmetric kernels listed above.

Now, we introduce a local second-order smoothness condition.

\begin{rlfrcondition}{S1}\label{ass:S1}
Let $\kappa_{\mathcal Y}$ and $\sigma$ be the constants in Conditions~\ref{ass:M2} and \ref{ass:M3}, respectively. For every $\omega\in\overline B_{\mathcal Y}\left(m_\oplus(x_0),\sigma/\sqrt{\kappa_{\mathcal Y}}\right)$, the map $S_{\omega}$ defined by \eqref{def:criterion-contrast} is twice continuously differentiable on $B(x_0,h_0)$. Moreover, there exists $L_S\in(0,\infty)$ such that
\begin{align*}
    \|\nabla S_{\omega}(x_0)\|\le L_Sd_{\mathcal Y}\left(\omega,m_\oplus(x_0)\right), \quad
    \sup_{x\in B(x_0,h_0)}\|\nabla^2S_{\omega}(x)\|_{\mathrm{op}}\le L_Sd_{\mathcal Y}\left(\omega,m_\oplus(x_0)\right)
\end{align*}
uniformly over $\omega\in\overline B_{\mathcal Y}\left(m_\oplus(x_0),\sigma/\sqrt{\kappa_{\mathcal Y}}\right)$, where $\|\cdot\|_{\mathrm{op}}$ denotes the matrix operator norm.
\end{rlfrcondition}

Condition~\ref{ass:S1} controls the local second-order smoothness of $S_{\omega}$ with respect to the predictor for $\omega\in\overline B_{\mathcal Y}\left(m_\oplus(x_0),\sigma/\sqrt{\kappa_{\mathcal Y}}\right)$. Conditions~\ref{ass:M2}--\ref{ass:M3} ensure that $\widetilde m_h(x_0)$ and $\overline m_{\oplus,\bm\lambda}(\bm x)$ lie in this ball, so it is sufficient to impose the smoothness condition only there. In a Hilbert space, Condition~\ref{ass:S1} follows from bounded first and second derivatives of the conditional mean. A sufficient condition based on smooth conditional densities is given in Proposition~\ref{prop:pm-sufficient} of the Supplementary Material.

We now establish a pointwise nonasymptotic upper bound for the RLFR estimator under Condition~\ref{ass:S1}. For $\delta\in(0,1/2]$, define
\begin{align}\label{eq:Bdelta-binomial}
    B_\delta
    :=
    \min\left\{
    B\in\mathbb N:
    \Pbb\left(
    \operatorname{Bin}\left(B,\frac{11}{16}\right)
    \le
    \left\lfloor\frac B2\right\rfloor
    \right)
    \le\delta
    \right\},
\end{align}
where $\operatorname{Bin}\left(B,p\right)$ denotes the binomial random variable with $B\in\mathbb{N}$ trials and a common success probability $p \in (0,1)$. Then $B_\delta\asymp\log(1/\delta)$, with $B_{0.10}=11$, $B_{0.05}=19$, and $B_{0.01}=37$.

\begin{theorem}\label{thm:second-order-kernel-regression}
Suppose Conditions~\ref{ass:K1}--\ref{ass:K2}, \ref{ass:D1}--\ref{ass:D2}, \ref{ass:M1}--\ref{ass:M4}, and \ref{ass:S1} hold. There exist constants $C_{S,U},C_{S,U}'\in(0,\infty)$, depending only on the fixed constants in these conditions, such that the following holds. For $\delta\in(0,1/2]$ and $h\in(0,h_0)$, if $B_\delta\le\lfloor n/2\rfloor$ and $nh^df_X(x_0)\ge C_{S,U}'\log(1/\delta)$, then \begin{align}\label{eq:second-order-main-full}
    \Pbb\left(d_{\mathcal Y}\left(\widehat m^{\mathrm{RLFR}}_{h,B_\delta}(x_0),m_\oplus(x_0)\right)>C_{S,U}\left(h^2+\sigma\sqrt{\frac{\log(1/\delta)}{nh^df_X(x_0)}}\right)\right)\le\delta.
\end{align}
\end{theorem}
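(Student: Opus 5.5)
The plan is the standard median-of-means reduction. We show that each block estimator $\widehat m_{h,b}(x_0)$ lands within radius $r:=C(h^2+\sigma\sqrt{B_\delta/(nh^df_X(x_0))})$ of $m_\oplus(x_0)$ with probability at least $11/16$. Since the blocks are independent, the number of such "good" blocks stochastically dominates $\operatorname{Bin}(B_\delta,11/16)$. By \eqref{eq:Bdelta-binomial}, with probability at least $1-\delta$ more than $B_\delta/2$ blocks are good.

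On that event, the Hsu--Sabato rule does the rest. Any good block $b$ has all other good blocks within distance $2r$, so $r_{h,b}(x_0)\le 2r$, and hence $r_{h,\widehat b_h}(x_0)\le 2r$. The ball of radius $2r$ around the selected estimator contains a strict majority of the blocks, so it intersects the good majority. The triangle inequality then gives $d_{\mathcal Y}(\widehat m^{\mathrm{RLFR}},m_\oplus(x_0))\le 3r$. Because $B_\delta\asymp\log(1/\delta)$ and each block has size $\asymp n/B_\delta$, this yields \eqref{eq:second-order-main-full}.

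The single-block claim uses the decomposition
\[
d_{\mathcal Y}(\widehat m_{h,b},m_\oplus(x_0))\le d_{\mathcal Y}(\widehat m_{h,b},\overline m_{\oplus,\bm\lambda}(\bm X_b))+d_{\mathcal Y}(\overline m_{\oplus,\bm\lambda}(\bm X_b),m_\oplus(x_0)),
\]
with weights \eqref{def:lambdai} built from the block predictors $\bm X_b$. Write $N=|I_b|\asymp n/B_\delta$ and let $A$ be the event that the block sums behave well:
\[
\sum_{i\in I_b}K_h(X_i,x_0)\ge \tfrac12 N\E K_h(X,x_0)\gtrsim N f_X(x_0),\qquad \sum_i K_h(X_i,x_0)^2\lesssim N h^{-d}f_X(x_0).
\]
By Bernstein's inequality, since $K$ is bounded and \eqref{eq:local-density-bound} holds, $A$ has probability at least $1-1/16$ once $Nh^df_X(x_0)$ exceeds a constant; this is where $nh^df_X(x_0)\ge C'_{S,U}\log(1/\delta)$ enters. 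On $A$ we get $\sum_i\lambda_i^2\lesssim 1/(Nh^df_X(x_0))$, and all $X_i$ with $\lambda_i>0$ lie in $\overline B(x_0,h)$.

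\textbf{Stochastic term.} Conditionally on $\bm X_b$, we apply Theorem~\ref{thm:r2-weighted-barycenter}, as in Corollary~\ref{cor:r2-conditional}. Condition~\ref{ass:M3} and the triangle inequality give $\E[d^2(\widetilde Y_i,\widetilde Y_i')]\le 4\sigma^2$, so the conditional mean squared error is $\lesssim\sigma^2/(Nh^df_X(x_0))$. Markov's inequality then puts this term below $C\sigma/\sqrt{Nh^df_X(x_0)}$ with conditional probability at least $1-1/16$ on $A$. Together with $\Pbb(A^c)\le 1/16$, the total failure probability is at most $1/8<5/16$.

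\textbf{Bias term.} This is the main obstacle: we need an $h^2$ bound for $d(\overline m_{\oplus,\bm\lambda}(\bm X_b),m_\oplus(x_0))$ with random, uneven weights. A naive Taylor expansion only gives $O(h)$, because the random weights do not cancel the linear term. I would therefore split the bias through the smoothed target,
\[
d(\overline m_{\oplus,\bm\lambda},m_\oplus(x_0))\le d(\overline m_{\oplus,\bm\lambda},\widetilde m_h(x_0))+d(\widetilde m_h(x_0),m_\oplus(x_0)).
\]

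For the deterministic part, write $\omega=\widetilde m_h$ and use Condition~\ref{ass:M2}(i), noting that $m_\oplus(x_0)$ minimizes $M_\oplus(x_0,\cdot)$. Then
\[
\kappa_{\mathcal Y}d^2(\omega,m_\oplus)\le \widetilde M_h(x_0,\omega)-\widetilde M_h(x_0,m_\oplus)-S_\omega(x_0)=\frac{\E[K_h(X,x_0)(S_\omega(X)-S_\omega(x_0))]}{\E K_h(X,x_0)}.
\]
A Taylor expansion of $S_\omega f_X$ is available by Conditions~\ref{ass:S1} and \ref{ass:D2}. Symmetry of $K$ (Condition~\ref{ass:K2}) kills the first-order term $\nabla S_\omega(x_0)^\top\int uK(u)f_X(x_0+hu)\,du$ up to $O(h^2)\|\nabla S_\omega(x_0)\|$, using $\nabla f_X$. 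This gives $\lesssim L_S h^2 d(\omega,m_\oplus)$, and dividing by $d(\omega,m_\oplus)$ yields $d(\widetilde m_h,m_\oplus)\lesssim h^2$. The membership $\omega\in\overline B_{\mathcal Y}(m_\oplus,\sigma/\sqrt{\kappa_{\mathcal Y}})$, which \ref{ass:S1} requires, follows from Conditions~\ref{ass:M2}--\ref{ass:M3}.

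For the random part, Condition~\ref{ass:M2}(ii) applied at $\omega=\widetilde m_h$, and \ref{ass:M2}(i) applied at $\omega=\overline m_{\oplus,\bm\lambda}$, are summed. The difference of the weighted and smoothed criteria evaluated at the two points reduces to
\[
2\kappa_{\mathcal Y}d^2\le \Bigl|\sum_i\lambda_i g(X_i)-\E_h g\Bigr|,\qquad g:=S_{\omega_1}-S_{\omega_2},
\]
where $\omega_1=\overline m_{\oplus,\bm\lambda}$, $\omega_2=\widetilde m_h$, and $\E_h$ denotes the kernel-weighted expectation. By the quadruple inequality \ref{ass:M4}, $|g(x)-g(x_0)|\le C_{\mathcal Y}d(\omega_1,\omega_2)\,W(x)$, where $W(x)$ is a transport-type distance between $P_{Y|X=x}$ and $P_{Y|X=x_0}$. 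I expect to have to substitute, via Condition~\ref{ass:S1}, the Taylor bound $|g(x)-g(x_0)-\nabla g(x_0)^\top(x-x_0)|\lesssim d\,h^2$, with $\|\nabla g(x_0)\|\lesssim d(\omega_1,\omega_2)$. This is the delicate point, since \ref{ass:S1} is stated relative to $m_\oplus(x_0)$; I would try applying it to $S_{\omega_1}$ and $S_{\omega_2}$ separately and accept constants.

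The remaining linear term is $\nabla g(x_0)^\top\bigl(\sum_i\lambda_i(X_i-x_0)-\E_h(X-x_0)\bigr)$. Its random fluctuation is $O_P\bigl(h/\sqrt{Nh^df_X(x_0)}\bigr)$ by Chebyshev's inequality on event $A$, which is at most $\sigma/\sqrt{Nh^d f_X}$ because $h\le h_0\le\sigma$. Dividing by $d$ gives $d(\overline m_{\oplus,\bm\lambda},\widetilde m_h)\lesssim h^2+\sigma/\sqrt{Nh^df_X(x_0)}$ with probability at least $1-1/16$. The total per-block failure probability is then at most $5/16$, which completes the single-block claim.
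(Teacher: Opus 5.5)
Your overall architecture matches the paper's proof. Both use a per-block success probability of at least $11/16$, independence across blocks with the Hsu--Sabato $3r$ argument, a Bernstein bound on the block kernel mass, and a conditional application of Theorem~\ref{thm:r2-weighted-barycenter} for the stochastic term. Your bound $\E[d_{\mathcal Y}^2(\widetilde Y_i,\widetilde Y_i')]\le4\sigma^2$, centered at $m_\oplus(x_0)$, is slightly cleaner than the paper's route through $\overline m_{\oplus,\bm\lambda_b}(\bm X_b)$.

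The real difference is the bias, and your detour rests on a false premise: the direct Taylor expansion does not only give $O(h)$. The paper (Lemma~\ref{lem:conditional-bias-second-order}) takes $\omega=\overline m_{\oplus,\bm\lambda_b}(\bm X_b)$ and combines $\kappa_{\mathcal Y}d_{\mathcal Y}^2(\omega,m_\oplus(x_0))\le S_\omega(x_0)$ with $\sum_i\lambda_{bi}S_\omega(X_i)\le0$. Expanding $S_\omega$ at $x_0$ then gives
\[
d_{\mathcal Y}\bigl(\overline m_{\oplus,\bm\lambda_b}(\bm X_b),m_\oplus(x_0)\bigr)\le\frac{L_S}{\kappa_{\mathcal Y}}\Bigl(\Bigl\|\sum_{i\in I_b}\lambda_{bi}(X_i-x_0)\Bigr\|+\frac{h^2}{2}\Bigr).
\]
Lemma~\ref{lem:self-normalized-first-moment} then shows $\|\sum_i\lambda_{bi}(X_i-x_0)\|\lesssim h^2+h/\sqrt{N_bh^df_X(x_0)}$, using symmetry of $K$ plus Chebyshev. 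That is exactly the estimate you already use for your ``remaining linear term.'' Since Condition~\ref{ass:S1} bounds $\nabla S_\omega(x_0)$ and $\nabla^2S_\omega$ by multiples of $d_{\mathcal Y}(\omega,m_\oplus(x_0))$, the right-hand side is proportional to the quantity being bounded, and one division finishes.

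Routing through $\widetilde m_h(x_0)$ is what creates the delicate point you flagged, and your ``dividing by $d$'' step fails as written. Write $d_{12}=d_{\mathcal Y}(\omega_1,\omega_2)$, $d_j=d_{\mathcal Y}(\omega_j,m_\oplus(x_0))$ and $\epsilon=\|\Delta\|+h^2$. Applying Condition~\ref{ass:S1} separately to $S_{\omega_1}$ and $S_{\omega_2}$ gives $2\kappa_{\mathcal Y}d_{12}^2\le L_S(d_1+d_2)\epsilon$, which is not proportional to $d_{12}$.

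It is repairable:
\begin{enumerate}
\item Insert $d_1\le d_{12}+d_2$ to get $2\kappa_{\mathcal Y}d_{12}^2\le L_S(d_{12}+2d_2)\epsilon$.
\item Solve this quadratic inequality to get $d_{12}\lesssim\epsilon+\sqrt{d_2\epsilon}$.
\item Use $d_2\lesssim h^2$ from your deterministic part and $\sqrt{d_2\epsilon}\le(d_2+\epsilon)/2$.
\end{enumerate}
So your route can be completed. However, it needs an extra bias bound for $\widetilde m_h(x_0)$ plus this quadratic step, and gains nothing. The paper proves that bound as Lemma~\ref{lem:bias-second-order} but does not invoke it for this theorem.

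Two smaller points:
\begin{enumerate}
\item The transport-type bound via Condition~\ref{ass:M4} is a dead end here, since Condition~\ref{ass:S1} gives no $W_1$ control.
\item The sign in your deterministic display is reversed. Condition~\ref{ass:M2}(i) and $S_\omega(x_0)\ge0$ give $\kappa_{\mathcal Y}d_{\mathcal Y}^2(\omega,m_\oplus(x_0))\le\E_h[S_\omega(x_0)-S_\omega(X)]$. This is harmless once you take absolute values in the Taylor step.
\end{enumerate}
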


Note that the first term in \eqref{eq:second-order-main-full} is the bias, while the second term corresponds to the stochastic error. 

\begin{remark}
Although Theorem~\ref{thm:second-order-kernel-regression} is nonasymptotic, it also yields the classical pointwise rate for nonparametric regression. For fixed $\delta\in(0,1/2]$, balancing the two terms in \eqref{eq:second-order-main-full} gives
\begin{align*}
    h_{n,\delta}^{\mathrm{opt}}:=\left(\frac{\sigma^2\log(1/\delta)}{nf_X(x_0)}\right)^{1/(d+4)}.
\end{align*}
Then the conditions of Theorem~\ref{thm:second-order-kernel-regression} hold for all sufficiently large $n\in\mathbb{N}$, and
\begin{align*}
    \Pbb\left(d_{\mathcal Y}\left(\widehat m^{\mathrm{RLFR}}_{h_{n,\delta}^{\mathrm{opt}},B_\delta}(x_0),m_\oplus(x_0)\right)>2C_{S,U}\left(\frac{\sigma^2\log(1/\delta)}{nf_X(x_0)}\right)^{2/(d+4)}\right)\le\delta.
\end{align*}
Thus, for fixed confidence, the RLFR estimator attains the pointwise rate $n^{-2/(d+4)}$. For the details, see Corollary~\ref{cor:second-order-optimized-rate} in the Supplementary Material.
\end{remark}

We next establish a pointwise minimax lower bound on an embedded Gaussian regression subclass and compare it with the upper bound. Suppose that $\mathcal Y$ admits an isometric embedding of $\R$, that is, there exists $\iota:\R\to\mathcal Y$ such that
\begin{align*}
    d_{\mathcal Y}\left(\iota(s),\iota(t)\right)=|s-t|, \quad s,t\in\R.
\end{align*}
Several spaces listed in Example~\ref{ex:hadamard-main} admit such an embedding; see Section~\ref{subsec:hadamard-examples} of the Supplementary Material for details. Fix $\tau, L\in(0,\infty)$, and define
\begin{align*}
    \mathcal G_L:=\left\{g\in C^2(\R^d):\operatorname{supp}(g)\subset\overline B(x_0,h_0), \quad \|\nabla g(x_0)\|\le L, \quad \sup_{x\in B(x_0,h_0)}\|\nabla^2g(x)\|_{\mathrm{op}}\le L\right\}.
\end{align*}
For $g\in\mathcal G_L$, consider the embedded Gaussian regression model
\begin{align} \label{eq:embeddedgaussian}
    X\sim P_X, \quad Y=\iota\left(g(X)+\varepsilon\right), \quad \varepsilon\sim N\left(0,\tau^2\right), \quad \varepsilon\perp X.
\end{align}
Let $P_{g,\tau}$ denote the joint distribution of $(X,Y)$ under this model, so that $P_{g,\tau}^{\otimes n}$ is the distribution of $\mathcal D_n$.

\begin{theorem}\label{thm:minimax-lower-second-order}
Suppose Conditions~\ref{ass:D1}--\ref{ass:D2} and \ref{ass:M1}(iv) hold. There exist constants $C_{S,L},C_{S,L}'\in(0,\infty)$, depending only on the fixed constants in these conditions, such that the following holds. For $\delta\in(0,1/16]$ and $\tau, L\in(0,\infty)$, if $L\left(2h_0+h_0^2\right)\le2\tau$ and $nf_X(x_0)L^2\ge C_{S,L}'\tau^2\log(1/\delta)$, then, for every measurable map $\mathcal{T}_n:(\mathcal X\times\mathcal Y)^n\to\mathcal Y$,
\begin{align*}
    \sup_{g\in\mathcal G_L}
    P_{g,\tau}^{\otimes n}\left(
    d_{\mathcal Y}\left(\mathcal{T}_n(\mathcal D_n),\iota(g(x_0))\right)
    \ge
    C_{S,L}L^{d/(d+4)}
    \left(\frac{\tau^2\log(1/\delta)}{nf_X(x_0)}\right)^{2/(d+4)}
    \right)
    \ge\delta.
\end{align*}
\end{theorem}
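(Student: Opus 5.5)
The plan is a two-point (Le Cam) argument in its high-probability form, applied to the embedded Gaussian family. First, under model \eqref{eq:embeddedgaussian} the conditional distribution of $Y$ given $X=x$ is the pushforward of $N(g(x),\tau^2)$ under $\iota$. I will show that $m_\oplus(x_0)=\iota(g(x_0))$. For $\omega=\iota(t)$, isometry gives $M_\oplus(x_0,\iota(t))=\tau^2+(t-g(x_0))^2$, which is minimized at $t=g(x_0)$. For general $\omega$, I will use Condition~\ref{ass:M1}(iv) with $N=1$: the Fr\'echet mean of $\iota(g(x_0)+\varepsilon)$ exists and is unique. It should coincide with $\iota(g(x_0))$ by a symmetry/reflection argument along the geodesic line $\iota(\R)$. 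Alternatively, I can simply interpret the loss with target $\iota(g(x_0))$ as written in the statement, so no identification is needed. The problem then reduces to estimating the real number $g(x_0)$, since any estimator $\mathcal T_n$ separating $\iota(g_0(x_0))$ from $\iota(g_1(x_0))$ by less than half their distance would solve a two-point test.

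Second, the construction. Take $g_0\equiv0$ and $g_1(x)=L\,r^2\phi((x-x_0)/r)$, where $\phi\in C^2$ is supported in $\overline B(0,1)$ with $\phi(0)=1$, $\nabla\phi(0)=0$ and $\|\nabla^2\phi\|_{\mathrm{op}}\le 1$. Rescaling $\phi$ by a constant if necessary, $g_1\in\mathcal G_L$ as long as $r\le h_0$; the side condition $L(2h_0+h_0^2)\le 2\tau$ should guarantee this scaling is compatible with the range of $r$ we need. The separation is $d_{\mathcal Y}(\iota(g_0(x_0)),\iota(g_1(x_0)))=Lr^2$. The Kullback--Leibler divergence is
\begin{align*}
\mathrm{KL}\left(P_{g_1,\tau}^{\otimes n}\,\|\,P_{g_0,\tau}^{\otimes n}\right)=\frac{n}{2\tau^2}\int g_1^2f_X\le \frac{n}{2\tau^2}L^2r^4\cdot 2f_X(x_0)\,v_d r^d,
\end{align*}
using \eqref{eq:local-density-bound}; the pushforward by the isometry $\iota$ preserves KL because it is injective and measurable.

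Third, the high-probability reduction. I will use the Bretagnolle--Huber inequality: for any test, $\max_j P_j(\text{error})\ge \tfrac14\exp(-\mathrm{KL})$. Choosing $r$ so that $\mathrm{KL}=\log(1/(4\delta))$, which is at least a constant multiple of $\log(1/\delta)$ for $\delta\le1/16$, gives $r^{d+4}\asymp \tau^2\log(1/\delta)/(nf_X(x_0)L^2)$. The resulting separation is
\begin{align*}
Lr^2\asymp L^{d/(d+4)}\left(\frac{\tau^2\log(1/\delta)}{nf_X(x_0)}\right)^{2/(d+4)}.
\end{align*}
The assumption $nf_X(x_0)L^2\ge C_{S,L}'\tau^2\log(1/\delta)$ ensures $r\le h_0$. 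Defining the test $\psi=\mathds 1\{d_{\mathcal Y}(\mathcal T_n,\iota(g_1(x_0)))<d_{\mathcal Y}(\mathcal T_n,\iota(g_0(x_0)))\}$, the triangle inequality shows that an error under $P_j$ forces $d_{\mathcal Y}(\mathcal T_n,\iota(g_j(x_0)))\ge Lr^2/2$. This yields the claimed bound with $\sup_g P\ge\delta$.

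The main obstacle is the first step, justifying $m_\oplus(x_0)=\iota(g(x_0))$ in a general $\mathcal Y$ using only M1(iv). If the symmetry argument is unavailable, I will bypass it, since the statement measures error against $\iota(g(x_0))$ directly. The remaining concern is bookkeeping of constants, so that $g_1\in\mathcal G_L$ and its support lies in $\overline B(x_0,h_0)$ under the stated side conditions.
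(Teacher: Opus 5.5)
Your proposal is correct and follows the paper's proof essentially step for step. Both use the same two-point construction with a rescaled $C^2$ bump, the Gaussian KL bound with $\mathrm{KL}\le\log(1/(4\delta))$, and the Bretagnolle--Huber two-point lemma with the nearest-target test. The paper's normalization $H_\varphi$ is exactly your ``rescaling''; it makes the separation $Lr^2/H_\varphi$ rather than $Lr^2$, since $\phi(0)=1$ and $\|\nabla^2\phi\|_{\mathrm{op}}\le1$ cannot hold simultaneously. As in the paper, the lower bound itself needs neither the identification $m_\oplus(x_0)=\iota(g(x_0))$ nor the side condition $L(2h_0+h_0^2)\le2\tau$; that condition only serves the matching upper bound.
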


Under the conditions of Proposition~\ref{prop:gaussian-subclass-upper} in the Supplementary Material, the conditions of Theorem~\ref{thm:second-order-kernel-regression} hold uniformly over $\mathcal G_L$. Hence, for fixed $L\in(0,\infty)$, the lower bound in Theorem~\ref{thm:minimax-lower-second-order} matches the optimized upper bound in Theorem~\ref{thm:second-order-kernel-regression} in terms of $n$ and $\delta$, so that the RLFR estimator attains the pointwise minimax rate on this subclass. 

\subsection{Nonasymptotic theory under $1$-Wasserstein H\"older continuity}\label{subsec:w1-main-summary}

In this subsection, we derive a pointwise nonasymptotic error bound for the RLFR estimator under local $1$-Wasserstein H\"older continuity. We then establish a pointwise minimax lower bound on an embedded Gaussian regression subclass and show that it matches the optimized rate of the RLFR estimator. Let $\mathcal P_1(\mathcal Y)$ denote the class of probability measures on $\mathcal Y$ with finite first moment. For $P_1,P_2\in\mathcal P_1(\mathcal Y)$, define
\begin{align*}
    W_1(P_1,P_2):=\inf_{\gamma\in\Pi(P_1,P_2)}\int_{\mathcal Y\times\mathcal Y}d_{\mathcal Y}(y_1,y_2)\,d\gamma(y_1,y_2),
\end{align*}
where $\Pi(P_1,P_2)$ denotes the set of couplings of $P_1$ and $P_2$; see, for example, Definition 6.1 of \cite{Villani2009}. We now introduce a local $1$-Wasserstein H\"older continuity condition.

\begin{rlfrcondition}{S2}\label{ass:S2}
There exist $\beta\in(0,1]$ and $L_W\in(0,\infty)$ such that
\begin{align*}
    W_1\left(P_{Y\mid X=x},P_{Y\mid X=x_0}\right) \le L_W\|x-x_0\|^\beta, \quad x\in\overline B(x_0,h_0).
\end{align*}
\end{rlfrcondition}

Condition~\ref{ass:S2} assumes H\"older continuity of the conditional distributions with respect to the $1$-Wasserstein distance. A similar condition is used in distributional regression by \citet{DombryModestePic2025}. Unlike Condition~\ref{ass:S1}, Condition~\ref{ass:S2} does not require differentiability with respect to the predictor and therefore allows certain non-smooth regression models. For example, in a scalar location model $Y=g(X)+\varepsilon$ with $\varepsilon\perp X$, Condition~\ref{ass:S2} follows from H\"older continuity of $g$, even when $g$ is not differentiable at $x_0$.

We now establish a pointwise nonasymptotic upper bound for the RLFR estimator under Condition~\ref{ass:S2}.

\begin{theorem}\label{thm:w1-kernel-regression}
Suppose Conditions~\ref{ass:K1}, \ref{ass:D1}, \ref{ass:M1}--\ref{ass:M4}, and \ref{ass:S2} hold. There exist constants $C_{W,U},C_{W,U}'\in(0,\infty)$, depending only on the constants defined in Conditions~\ref{ass:K1}, \ref{ass:D1}, and \ref{ass:M1}--\ref{ass:M4}, such that the following holds. For $\delta\in(0,1/2]$ and $h\in(0,h_0)$, if $B_\delta\le\lfloor n/2\rfloor$ and $nh^df_X(x_0)\ge C_{W,U}'\log(1/\delta)$, then
\begin{align}\label{eq:w1-main-summary}
    \Pbb\left(d_{\mathcal Y}\left(\widehat m^{\mathrm{RLFR}}_{h,B_\delta}(x_0),m_\oplus(x_0)\right)>C_{W,U}\left(L_Wh^\beta+\sigma\sqrt{\frac{\log(1/\delta)}{nh^df_X(x_0)}}\right)\right)\le\delta.
\end{align}
\end{theorem}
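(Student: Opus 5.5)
The proof has three steps. First I reduce the median-of-means aggregation to a single-block good event. Then I bound the stochastic error of each block via Theorem~\ref{thm:r2-weighted-barycenter}, conditionally on the predictors. Finally I bound the bias $d_{\mathcal Y}(\overline m_{\oplus,\bm\lambda}(\bm X_b),m_\oplus(x_0))$ using Condition~\ref{ass:S2} together with the curvature and quadruple inequalities.

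\textbf{Aggregation.} Write $\rho:=C\bigl(L_Wh^\beta+\sigma\sqrt{\log(1/\delta)/(nh^df_X(x_0))}\bigr)$. Call block $b$ good if $d_{\mathcal Y}(\widehat m_{h,b}(x_0),m_\oplus(x_0))\le\rho$. By the standard argument of \citet{HsuSabato2016}, if more than $B/2$ blocks are good, then every good block has $r_{h,b}\le2\rho$. Hence $r_{h,\widehat b}\le2\rho$. The majority ball around $\widehat m_{h,\widehat b}$ then intersects the good majority, so
\[
d_{\mathcal Y}\bigl(\widehat m^{\mathrm{RLFR}},m_\oplus(x_0)\bigr)\le3\rho.
\]
The blocks are independent, so it suffices to show that each block is good with probability at least $11/16$. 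The definition of $B_\delta$ then gives failure probability at most $\delta$.

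\textbf{Single block.} Fix block $b$, of size $N\asymp n/B_\delta\asymp n/\log(1/\delta)$. Let $\bm X_b$ denote its predictors and $\bm\lambda$ the weights \eqref{def:lambdai}. Define the event
\[
E=\Bigl\{\textstyle\sum_{i\in I_b}K_h(X_i,x_0)\ge \tfrac12 N\,\E K_h(X,x_0)\Bigr\}.
\]
Since $\E K_h\asymp f_X(x_0)$ and $K_h\le\|K\|_\infty h^{-d}$, Bernstein's inequality gives
\[
\Pbb(E^c)\le\exp\bigl(-cNh^df_X(x_0)\bigr)\le1/16,
\]
using the assumption $nh^df_X(x_0)\ge C'\log(1/\delta)$. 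On $E$ we have
\[
\textstyle\sum_i\lambda_i^2\le \|K\|_\infty h^{-d}\big/\sum_i K_h(X_i,x_0)\lesssim 1/(Nh^df_X(x_0)).
\]
All $X_i$ with $\lambda_i>0$ lie in $\overline B(x_0,h)$, and by the triangle inequality
\[
\E\bigl[d^2(\widetilde Y_i,\widetilde Y_i')\bigr]\le 4M_\oplus(X_i,m_\oplus(x_0))\le4\sigma^2
\]
by Condition~\ref{ass:M3}. Applying Corollary~\ref{cor:r2-conditional} (Theorem~\ref{thm:r2-weighted-barycenter} conditionally on $\bm X_b$, with $\overline q_{\oplus,\bm\lambda}=\overline m_{\oplus,\bm\lambda}(\bm X_b)$) and then conditional Chebyshev, with probability at least $1-1/16$ given $E$,
\[
d_{\mathcal Y}\bigl(\widehat m_{h,b},\overline m_{\oplus,\bm\lambda}(\bm X_b)\bigr)\lesssim \sigma\,(\kappa_{\mathcal Y})^{-1}C_{\mathcal Y}\bigl(Nh^df_X(x_0)\bigr)^{-1/2}.
\]
Since $N\asymp n/\log(1/\delta)$, this is the stochastic term. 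Combining the two failure events leaves success probability at least $1-2/16\ge 11/16$.

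\textbf{Bias (main step).} I need a deterministic bound
\[
d_{\mathcal Y}\bigl(\overline m_{\oplus,\bm\lambda}(\bm x),m_\oplus(x_0)\bigr)\lesssim L_W h^\beta
\]
for every $\bm x\in\overline B(x_0,h)^N$. Put $\bar\omega=\overline m_{\oplus,\bm\lambda}(\bm x)$ and $m_0=m_\oplus(x_0)$. Condition~\ref{ass:M2}(ii) gives
\[
\kappa\, d^2(m_0,\bar\omega)\le \overline M(\bm x,m_0)-\overline M(\bm x,\bar\omega).
\]
Condition~\ref{ass:M2}(iv), applied with $N=1$ to $Y\mid X=x_0$ (whose minimizer is $m_0$), gives
\[
M_\oplus(x_0,\bar\omega)-M_\oplus(x_0,m_0)\ge\kappa\, d^2(\bar\omega,m_0).
\]
Adding the two and writing the sum as $\sum_i\lambda_i\E[\Delta]$ yields
\[
2\kappa\, d^2\le\sum_i\lambda_i\Bigl(\E_{x_i}\bigl[d^2(Y,m_0)-d^2(Y,\bar\omega)\bigr]-\E_{x_0}\bigl[d^2(Y,m_0)-d^2(Y,\bar\omega)\bigr]\Bigr).
\]
Take an optimal $W_1$ coupling $(Y,Y')$ of $P_{Y\mid X=x_i}$ and $P_{Y\mid X=x_0}$. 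By Condition~\ref{ass:M4}, each summand is at most $C_{\mathcal Y}\,d(m_0,\bar\omega)\,\E d(Y,Y')$. Condition~\ref{ass:S2} bounds this by $C_{\mathcal Y}\,d(m_0,\bar\omega)\,L_W h^\beta$. Dividing by $d(m_0,\bar\omega)$ gives
\[
d(m_0,\bar\omega)\le C_{\mathcal Y}L_Wh^\beta/(2\kappa).
\]
This step is where care is needed: the expectations must be finite, which holds via Condition~\ref{ass:M3}, and the optimal coupling must exist, which can be avoided by taking an $\epsilon$-optimal coupling. Combining the three steps via the triangle inequality completes the proof.
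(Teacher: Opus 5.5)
Your proposal is correct and follows the paper's proof essentially step for step. It uses the same three ingredients: Hsu--Sabato aggregation with per-block success probability $11/16$; a Bernstein kernel-mass event combined with Corollary~\ref{cor:r2-conditional} and a conditional Markov bound for the stochastic term; and a coupling plus quadruple-inequality bias bound under Condition~\ref{ass:S2} (Lemma~\ref{lem:conditional-bias}). The only differences are in constants: in the bias step you use curvature at both $\bar\omega$ and $m_\oplus(x_0)$, whereas the paper uses curvature at $m_\oplus(x_0)$ and only minimality at $\bar\omega$, which gives you $C_{\mathcal Y}/(2\kappa_{\mathcal Y})$ in place of $C_{\mathcal Y}/\kappa_{\mathcal Y}$.
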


The first term in \eqref{eq:w1-main-summary} is the bias, while the second term is the stochastic error. Unlike Theorem~\ref{thm:second-order-kernel-regression}, Theorem~\ref{thm:w1-kernel-regression} does not require Conditions~\ref{ass:K2} and \ref{ass:D2}.

\begin{remark}
Although Theorem~\ref{thm:w1-kernel-regression} is nonasymptotic, it also yields the classical pointwise H\"older rate for nonparametric regression. For fixed $\delta\in(0,1/2]$, balancing the two terms in \eqref{eq:w1-main-summary} gives
\begin{align*}
    h_{n,\delta}^{\mathrm{opt}}:=\left(\frac{\sigma^2\log(1/\delta)}{L_W^2nf_X(x_0)}\right)^{1/(2\beta+d)}.
\end{align*}
Then the conditions of Theorem~\ref{thm:w1-kernel-regression} hold for all sufficiently large $n\in\mathbb N$, and
\begin{align*}
    \Pbb\left(d_{\mathcal Y}\left(\widehat m^{\mathrm{RLFR}}_{h_{n,\delta}^{\mathrm{opt}},B_\delta}(x_0),m_\oplus(x_0)\right)>2C_{W,U}L_W^{d/(2\beta+d)}\left(\frac{\sigma^2\log(1/\delta)}{nf_X(x_0)}\right)^{\beta/(2\beta+d)}\right)\le\delta.
\end{align*}
Thus, for fixed confidence, the RLFR estimator attains the pointwise H\"older rate $n^{-\beta/(2\beta+d)}$. For the details, see Corollary~\ref{cor:w1-optimized-rate} in the Supplementary Material.
\end{remark}

We next establish a pointwise minimax lower bound on an embedded Gaussian regression subclass and compare it with the upper bound. Suppose that $\mathcal Y$ admits an isometric embedding of $\R$. Fix $\tau, L\in(0,\infty)$, $\beta\in(0,1]$, and define
\begin{align*}
    \mathcal G_{\beta,L}:=\left\{g\in C(\R^d):\operatorname{supp}(g)\subset\overline B(x_0,h_0), \quad \sup_{u,v\in\overline B(x_0,h_0), u\ne v}\frac{|g(u)-g(v)|}{\|u-v\|^\beta}\le L\right\}.
\end{align*}
For $g\in\mathcal G_{\beta,L}$, let $P_{g,\tau}$ denote the joint distribution under the embedded Gaussian regression model defined in \eqref{eq:embeddedgaussian}, so that $P_{g,\tau}^{\otimes n}$ is the distribution of $\mathcal D_n$.

\begin{theorem}\label{thm:minimax-lower-w1}
Suppose Condition~\ref{ass:D1} and Condition~\ref{ass:M1}(iv) hold. There exist constants $C_{W,L},C_{W,L}'\in(0,\infty)$, depending only on the fixed constants in these conditions, such that the following holds. For $\delta\in(0,1/16]$, $\tau, L\in(0,\infty)$, and $\beta\in(0,1]$, if $L h_0^\beta\le\tau$ and $nf_X(x_0)L^2\ge C_{W,L}'\tau^2\log(1/\delta)$, then, for every measurable map $\mathcal T_n:(\mathcal X\times\mathcal Y)^n\to\mathcal Y$,
\begin{align}\label{eq:minimax-lower-w1}
    \sup_{g\in\mathcal G_{\beta,L}}
    P_{g,\tau}^{\otimes n}\left(
    d_{\mathcal Y}\left(
    \mathcal T_n(\mathcal D_n),
    \iota(g(x_0))
    \right)
    \ge
    C_{W,L}
    L^{d/(2\beta+d)}
    \left(
    \frac{\tau^2\log(1/\delta)}
    {nf_X(x_0)}
    \right)^{\beta/(2\beta+d)}
    \right)
    \ge\delta.
\end{align}
\end{theorem}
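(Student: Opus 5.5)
The proof will be a two-point (Le Cam) argument inside the embedded Gaussian model \eqref{eq:embeddedgaussian}. Since $\iota$ is an isometry, any estimator $\mathcal T_n$ yields a real-valued test. Fix $h\in(0,h_0]$, to be chosen later, and a nonnegative bump $\phi\in C(\R^d)$ with $\operatorname{supp}(\phi)\subset\overline B(0,1)$ and $\phi(0)=1$, normalized so that $|\phi(u)-\phi(v)|\le\|u-v\|$ for all $u,v$. Take $\phi(u)=(1-\|u\|)_+$. Set
\begin{align*}
g_0\equiv0,\qquad g_1(x):=Lh^\beta\,\phi\left(\frac{x-x_0}{h}\right).
\end{align*}
We first check that $g_1\in\mathcal G_{\beta,L}$. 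Its support lies in $\overline B(x_0,h)\subset\overline B(x_0,h_0)$. For the Hölder bound, if $\|u-v\|\le h$, then $|g_1(u)-g_1(v)|\le Lh^{\beta-1}\|u-v\|\le L\|u-v\|^\beta$. If $\|u-v\|>h$, then $|g_1(u)-g_1(v)|\le Lh^\beta\le L\|u-v\|^\beta$. The two targets are separated by $d_{\mathcal Y}(\iota(g_1(x_0)),\iota(g_0(x_0)))=Lh^\beta$. Condition~\ref{ass:M1}(iv) is only needed so that the model and target are well posed, with $m_\oplus(x_0)=\iota(g(x_0))$. The hypothesis $Lh_0^\beta\le\tau$ guarantees $Lh^\beta\le\tau$, which keeps the perturbation small relative to the noise.

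Next we bound the divergence between the two sample distributions. Conditionally on $X$, the responses are $\iota$ applied to $N(g_j(X),\tau^2)$, and $\iota$ is injective. Hence
\begin{align*}
\mathrm{KL}\left(P_{g_1,\tau}^{\otimes n}\,\middle\|\,P_{g_0,\tau}^{\otimes n}\right)=\frac{n}{2\tau^2}\E\left[g_1(X)^2\right]\le\frac{nL^2h^{2\beta}}{2\tau^2}\,2f_X(x_0)\,v_dh^d,
\end{align*}
where $v_d$ is the volume of the unit ball. The last step uses \eqref{eq:local-density-bound} from Condition~\ref{ass:D1}.

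The confidence level enters through the high-probability version of Le Cam's lemma. For any event $A$, the Bretagnolle--Huber inequality gives $P_0(A)+P_1(A^c)\ge\frac12\exp(-\mathrm{KL})$. Take $A=\{d_{\mathcal Y}(\mathcal T_n,\iota(0))\ge Lh^\beta/2\}$. By the triangle inequality, $A^c$ is contained in the event that $\mathcal T_n$ misses $\iota(g_1(x_0))$ by at least $Lh^\beta/2$. Therefore
\begin{align*}
\max_{j=0,1}P_{g_j,\tau}^{\otimes n}\left(d_{\mathcal Y}\left(\mathcal T_n,\iota(g_j(x_0))\right)\ge Lh^\beta/2\right)\ge\tfrac14\exp(-\mathrm{KL}).
\end{align*}
This is at least $\delta$ once $\mathrm{KL}\le\log(1/(4\delta))$. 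For $\delta\le1/16$ we have $\log(1/(4\delta))\ge\tfrac12\log(1/\delta)$, so it suffices to have $\mathrm{KL}\le\tfrac12\log(1/\delta)$.

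We therefore choose
\begin{align*}
h=c\left(\frac{\tau^2\log(1/\delta)}{L^2nf_X(x_0)}\right)^{1/(2\beta+d)}
\end{align*}
with a small constant $c$ depending on $d$, which makes $\mathrm{KL}\le\tfrac12\log(1/\delta)$. The separation is then
\begin{align*}
\frac{Lh^\beta}{2}=\frac{c^\beta}{2}\,L^{d/(2\beta+d)}\left(\frac{\tau^2\log(1/\delta)}{nf_X(x_0)}\right)^{\beta/(2\beta+d)}.
\end{align*}
This is the claimed rate with $C_{W,L}=\min_{\beta\in(0,1]}c^\beta/2=c/2$ when $c\le1$, uniformly in $\beta$.

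The main obstacle is admissibility: we need $h\le h_0$. This is where the assumption $nf_X(x_0)L^2\ge C_{W,L}'\tau^2\log(1/\delta)$ enters, since it gives $h\le c\,(C_{W,L}')^{-1/(2\beta+d)}$. This bound must be at most $h_0$ uniformly over $\beta\in(0,1]$. Because $2\beta+d\le d+2$, it suffices to take $C_{W,L}'\ge\max\{1,(c/h_0)^{d+2}\}$. With this choice, $C_{W,L}'$ depends only on $h_0$ and $d$, that is, only on the constants in Condition~\ref{ass:D1}.
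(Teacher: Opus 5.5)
Your proposal is correct and follows the paper's proof essentially step by step. Both use a two-point Le Cam argument in the embedded Gaussian model, comparing $g\equiv0$ with a bump of height $Lh^\beta$ at scale $h$, and both rely on the Gaussian product KL formula, the Bretagnolle--Huber inequality, and $\log(1/(4\delta))\ge\tfrac12\log(1/\delta)$ for $\delta\le 1/16$.

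The differences are only cosmetic. The paper uses the bump $(1-\|u\|)_+^\beta$, which is directly $\beta$-H\"older, and calibrates $h$ through the exact integral $J_\beta=\int\varphi_\beta^2$ so that the KL equals $\log(1/(4\delta))$; this requires the two-sided bound $J_1\le J_\beta\le V_d$. Your Lipschitz bump with the case split on $\|u-v\|\le h$, together with the cruder sup-times-volume KL bound, works equally well and makes the uniformity in $\beta$ of both constants slightly more transparent.
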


Under the conditions of Proposition~\ref{prop:gaussian-holder-subclass-upper} in the Supplementary Material, the conditions of Theorem~\ref{thm:w1-kernel-regression} hold uniformly over $\mathcal G_{\beta,L}$. Hence, for fixed $\beta\in(0,1]$ and $L\in(0,\infty)$, the lower bound in Theorem~\ref{thm:minimax-lower-w1} matches the optimized upper bound in Theorem~\ref{thm:w1-kernel-regression} in terms of $n$ and $\delta$, so that the RLFR estimator attains the pointwise minimax rate on this subclass.

\section{Simulation study}
\label{sec:simulation}

We examine the performance of the proposed method for responses in the space $\mathcal S_{++}^2$ of $2\times2$ symmetric positive-definite matrices, equipped with the affine-invariant metric; see Example~\ref{ex:spd-airm} in the Supplementary Material. We generate the predictors from the density
\begin{align*}
    f_X(x)\propto
    \left\{\exp\left(-\frac{(x-0.6)^2}{2(0.25)^2}\right)
    +\exp\left(-\frac{(x+0.6)^2}{2(0.25)^2}\right)\right\},\quad x\in[-1, 1].
\end{align*}
Let
\begin{align*}
    E_1=\frac{1}{\sqrt{2}}\begin{pmatrix}1&0\\0&-1\end{pmatrix},
    \quad
    E_2=\frac{1}{\sqrt{2}}\begin{pmatrix}0&1\\1&0\end{pmatrix},\quad
    S=\exp\left(0.4\begin{pmatrix}
    \cos(0.3)&\sin(0.3)\\
    \sin(0.3)&-\cos(0.3)
    \end{pmatrix}\right).
\end{align*}
For $i=1,\ldots,n$, conditional on $X_i=x$, we independently generate
\begin{align*}
    Z_{i0},Z_{i1},Z_{i2}\sim N(0,1),
    \quad T_i\sim t_3,\quad
    J_i\sim\mathrm{Bernoulli}\{0.01(1-x^2)\},\quad x\in[-1, 1],
\end{align*}
where $t_3$ denotes the Student $t$ distribution with three degrees of freedom. We then set
\begin{align*}
    \epsilon_i&=0.01Z_{i0}+\sqrt{30}\,J_iT_i,\\
    Y_i&=\exp\left(x^3+\frac{\epsilon_i}{\sqrt{2}}\right)S^{1/2}\exp\left(0.004(Z_{i1}E_1+Z_{i2}E_2)\right)S^{1/2},\quad x\in[-1, 1].
\end{align*}
The conditional Fr\'echet mean is
\begin{align*}
    m_{\oplus}(x)=e^{x^3}S,\quad x\in[-1, 1].
\end{align*}
Note that the distance $d(Y_i,m_{\oplus}(x))$ has a finite second moment but an infinite third moment for $x\in(-1, 1)$. We consider $n\in\{200,400,800\}$ and use $R=5000$ Monte Carlo replications.

We compare RLFR with the local constant (LC) and local linear (LL) Fr\'echet regression estimators of \citet{PetersenMuller2019}, and with the Fr\'echet-median aggregation estimator of \citet{HongXuWood2026}. For RLFR and Hong's estimator, we take $B=11,19,37$ and use the same random partitions into blocks of sizes differing by at most one. 

We use the Epanechnikov kernel and select bandwidths by 5-fold cross-validation over the common candidate grid
\begin{align*}
    \mathcal H_n:=\{0.1,0.2,\ldots,2.0\}\,n^{-1/5}.
\end{align*}
For RLFR and Hong's estimator, bandwidth selection is carried out separately for each block number $B$. Since LC and LL do not use block aggregation, they do not depend on $B$. We evaluate performance using the integrated squared error
\begin{align*}
    \mathrm{ISE}^{[r]}
    :=\frac12\int_{-1}^{1}
    d^2\{\hat m_{\oplus}^{[r]}(x),m_{\oplus}(x)\}\,\mathrm dx,
    \qquad
    \mathrm{MISE}:=\frac1R\sum_{r=1}^{R}\mathrm{ISE}^{[r]},
\end{align*}
where $\hat m_{\oplus}^{[r]}$ denotes an estimator computed from the $r$th replication.

Table~\ref{tab:sim_mise} reports the MISEs. RLFR has a smaller MISE than Hong's estimator in eight of the nine settings and than LC in all settings. Compared with LL, RLFR has a smaller MISE except when $B=37$ and $n\in\{400,800\}$. The large MISE of LL at $n=200$ is dominated by one replication with nearly coincident local predictors and large signed extrapolation weights.

\begin{table}[!ht]
\centering
\caption{MISEs for the positive-definite matrix responses based on $5000$ Monte Carlo replications. Bold values indicate the smallest MISE in each row, and underlined values indicate the second smallest.}
\label{tab:sim_mise}
\begin{tabular}{cccccc}
\toprule
$n$ & $B$ & LC & LL & RLFR & Hong \\
\midrule
200 & 11 & 0.08801 & 0.66304 & \textbf{0.01300} & \underline{0.01416} \\
200 & 19 & 0.08801 & 0.66304 & \textbf{0.02292} & \underline{0.02730} \\
200 & 37 & 0.08801 & 0.66304 & \textbf{0.05097} & \underline{0.06216} \\
\midrule
400 & 11 & 0.01983 & 0.00778 & \textbf{0.00426} & \underline{0.00452} \\
400 & 19 & 0.01983 & \underline{0.00778} & \textbf{0.00759} & 0.00898 \\
400 & 37 & 0.01983 & \textbf{0.00778} & \underline{0.01795} & 0.02350 \\
\midrule
800 & 11 & 0.01003 & 0.00489 & \underline{0.00131} & \textbf{0.00130} \\
800 & 19 & 0.01003 & 0.00489 & \textbf{0.00202} & \underline{0.00235} \\
800 & 37 & 0.01003 & \textbf{0.00489} & \underline{0.00513} & 0.00707 \\
\bottomrule
\end{tabular}
\end{table}

Table~\ref{tab:sim_tail} reports the upper quantiles of ISE. Following the block-count calibration in \eqref{eq:Bdelta-binomial}, we pair $B=11,19,37$ with the empirical $90$th, $95$th, and $99$th percentiles, respectively. For these pairings, RLFR reduces the upper quantiles relative to LC by approximately $55\%$--$90\%$. Its upper quantiles are smaller than those of LL in seven settings and Hong's estimator in six settings. The differences from Hong's estimator are within approximately $7.2\%$ in all settings. Thus, RLFR provides substantial improvements over LC in upper-tail error and broadly comparable performance to Hong's estimator.

\begin{table}[!ht]
\centering
\caption{Upper quantiles of ISE for the positive-definite matrix responses. Bold values indicate the smallest value in each row, and underlined values indicate the second smallest.}
\label{tab:sim_tail}
\begin{tabular}{clcccc}
\toprule
$n$ & Tail level & LC & LL & RLFR & Hong \\
\midrule
200 & $q_{0.90}$ ($B=11$) & 0.06008 & \textbf{0.02361} & \underline{0.02382} & 0.02419 \\
200 & $q_{0.95}$ ($B=19$) & 0.10259 & \textbf{0.03953} & \underline{0.04596} & 0.04652 \\
200 & $q_{0.99}$ ($B=37$) & 0.48310 & 0.13250 & \underline{0.10476} & \textbf{0.09958} \\
\midrule
400 & $q_{0.90}$ ($B=11$) & 0.03569 & 0.01621 & \textbf{0.00755} & \underline{0.00759} \\
400 & $q_{0.95}$ ($B=19$) & 0.05806 & 0.02557 & \textbf{0.01670} & \underline{0.01673} \\
400 & $q_{0.99}$ ($B=37$) & 0.15410 & 0.08387 & \textbf{0.04503} & \underline{0.04519} \\
\midrule
800 & $q_{0.90}$ ($B=11$) & 0.02125 & 0.01014 & \underline{0.00220} & \textbf{0.00205} \\
800 & $q_{0.95}$ ($B=19$) & 0.03100 & 0.01496 & \underline{0.00495} & \textbf{0.00472} \\
800 & $q_{0.99}$ ($B=37$) & 0.06205 & 0.03323 & \textbf{0.01357} & \underline{0.01383} \\
\bottomrule
\end{tabular}
\end{table}

Table~\ref{tab:sim_time} reports the mean computation times, including bandwidth selection and final estimation. For each $n\in\{200, 400, 800\}$, we use three independently generated datasets and run each method twice on each dataset, reversing the execution order in the second run. The computation times of Hong's estimator are approximately $1.46$--$6.79$ times those of RLFR. RLFR is also faster than LL, while LC is the fastest method in all settings.

\begin{table}[!ht]
\centering
\caption{Mean computation times in seconds, including 5-fold cross-validation and final estimation. Each entry averages two runs on each of three independently generated datasets. Bold values indicate the smallest value in each row, and underlined values indicate the second smallest.}
\label{tab:sim_time}
\begin{tabular}{cccccc}
\toprule
$n$ & $B$ & LC & LL & RLFR & Hong \\
\midrule
200 & 11 & \textbf{0.279} & 1.001 & \underline{0.346} & 0.806 \\
200 & 19 & \textbf{0.279} & 1.001 & \underline{0.428} & 1.821 \\
200 & 37 & \textbf{0.279} & 1.001 & \underline{0.788} & 5.351 \\
\midrule
400 & 11 & \textbf{1.000} & 3.467 & \underline{1.146} & 2.172 \\
400 & 19 & \textbf{1.000} & 3.467 & \underline{1.357} & 4.188 \\
400 & 37 & \textbf{1.000} & 3.467 & \underline{2.164} & 11.108 \\
\midrule
800 & 11 & \textbf{3.683} & 12.374 & \underline{3.983} & 5.809 \\
800 & 19 & \textbf{3.683} & 12.374 & \underline{4.336} & 9.721 \\
800 & 37 & \textbf{3.683} & 12.374 & \underline{6.086} & 23.789 \\
\bottomrule
\end{tabular}
\end{table}

\section{Real-data analysis}\label{subsec:realdata_mesa}

We analyze wrist-actigraphy data from the Multi-Ethnic Study of Atherosclerosis (MESA) Sleep Ancillary Study; see \citet{ChenEtAl2015} and \citet{ZhangEtAl2018}. The actigraphy recordings were collected over approximately seven days and are available at 30-second resolution. We take body mass index (BMI) as the scalar predictor and construct a distribution-valued response from each participant's waking activity counts. For each participant, we form the empirical distribution of $\log(1+\text{activity count})$ over the qualifying epochs. After preprocessing, the analysis sample consists of $1972$ participants. We regard these empirical distributions as elements of the one-dimensional Wasserstein space $\mathcal W_2$.

Figure~\ref{fig:mesa_eda} displays the empirical quantile functions for
$40$ randomly selected participants together with the sample Wasserstein
Fr\'echet mean. For interpretability, the curves are back-transformed to the original activity-count scale. The observed distributions vary across participants, with larger differences in the upper quantiles. This pattern motivates the use of robust local Fr\'echet regression.

\begin{figure}[!ht]
\centering
\includegraphics[width=\textwidth]{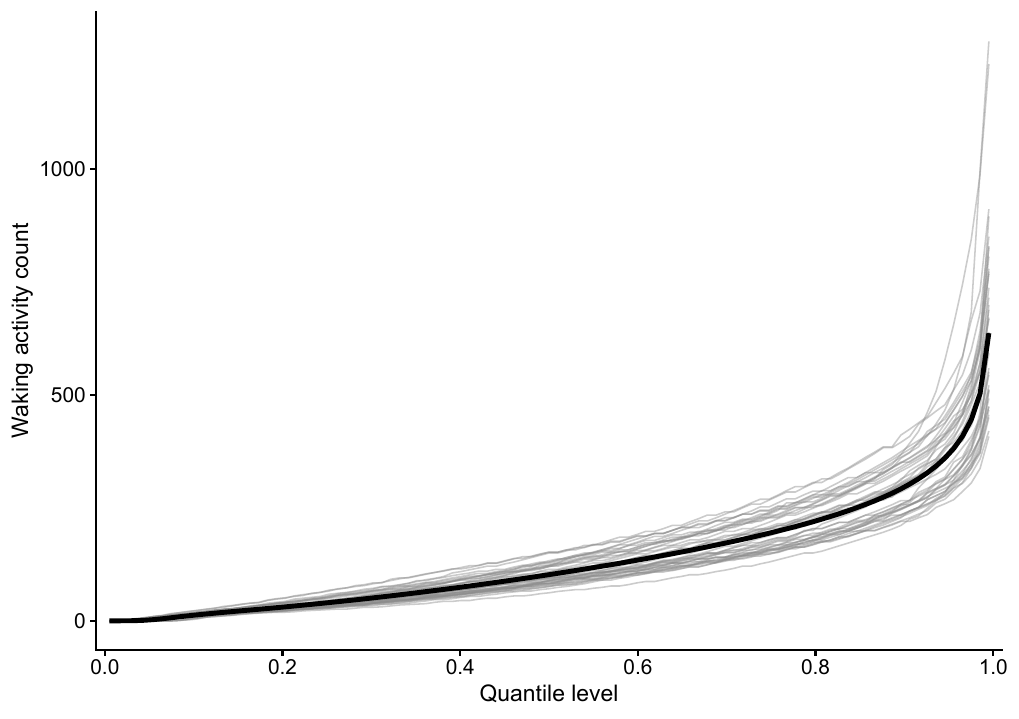}
\caption{
Empirical quantile functions of waking activity counts for $40$ randomly selected MESA participants. The black curve shows the sample Wasserstein Fr\'echet mean computed from all $1972$ participants after back-transformation to the original activity-count scale.
}
\label{fig:mesa_eda}
\end{figure}

We compare the proposed RLFR estimator with the local constant (LC) and local linear (LL) Fr\'echet regression estimators of \citet{PetersenMuller2019}, and with the Fr\'echet-median aggregation estimator of \citet{HongXuWood2026}. For RLFR and Hong's estimator, we consider $B=11,19,$ and $37$. Since LC and LL do not use block aggregation, their estimates do not depend on $B$. We use the Epanechnikov kernel and the common bandwidth grid
\begin{align*}
    \mathcal H:=\{0.1,0.2,\ldots,2.0\}
\end{align*}
for all methods. Bandwidths are selected separately for each method, and for each value of $B$ for RLFR and Hong's estimator, using $5\times5$ nested cross-validation with mean squared Wasserstein prediction loss as the selection criterion.

Table~\ref{tab:mesa_aspe} reports the average squared prediction errors (ASPEs) obtained from the pooled outer-fold predictions. The values are very similar across all methods. In particular, the ASPEs of RLFR differ only slightly from those of LC, LL, and Hong's estimator for all three choices of $B$, indicating that the robust aggregation does not materially sacrifice average predictive performance.

\begin{table}[!ht]
\centering
\caption{
Average squared Wasserstein prediction errors (ASPEs) for the MESA actigraphy data. Bold values indicate the smallest value in each row, and underlined values indicate the second smallest.
}
\label{tab:mesa_aspe}
\begin{tabular}{ccccc}
\toprule
$B$ & LC & LL & RLFR & Hong \\
\midrule
11 & \underline{0.09595} & \textbf{0.09584}
   & 0.09646 & 0.09609 \\
19 & 0.09595 & \textbf{0.09584}
   & 0.09607 & \underline{0.09591} \\
37 & \underline{0.09595} & \textbf{0.09584}
   & 0.09681 & 0.09598 \\
\bottomrule
\end{tabular}
\end{table}

Table~\ref{tab:mesa_tail} reports the upper quantiles of the squared prediction error. Using \eqref{eq:Bdelta-binomial}, we pair $B=11,19,$ and $37$ with the empirical $90$th, $95$th, and $99$th percentiles, respectively. At $B=11$, RLFR has a slightly smaller $90$th percentile error than LC and Hong's estimator and is nearly indistinguishable from LL. For these pairings, the reductions in tail error relative to LC are approximately $0.4\%$, $1.4\%$, and $3.6\%$ for $B=11$, $19$, and $37$, respectively, while the corresponding reductions relative to Hong's estimator are approximately $0.3\%$, $1.0\%$, and $1.8\%$. Thus, while the methods have similar average prediction errors, the advantage of RLFR becomes more apparent at higher quantiles of the prediction-error distribution.

\begin{table}[!ht]
\centering
\caption{
Upper quantiles of the squared Wasserstein prediction error for the MESA actigraphy data, with $B=11,19,$ and $37$ paired with the $90$th, $95$th, and $99$th percentiles, respectively. Bold values indicate the smallest value in each row, and underlined values indicate the second smallest.
}
\label{tab:mesa_tail}
\begin{tabular}{lcccc}
\toprule
Tail level & LC & LL & RLFR & Hong \\
\midrule
$q_{0.90}$ ($B=11$) & 0.21071 & \textbf{0.20959} & \underline{0.20983} & 0.21042 \\
$q_{0.95}$ ($B=19$) & 0.27340 & \underline{0.27152} & \textbf{0.26961} & 0.27241 \\
$q_{0.99}$ ($B=37$) & 0.49487 & 0.49466 & \textbf{0.47716} & \underline{0.48605} \\
\bottomrule
\end{tabular}
\end{table}

Figure~\ref{fig:mesa_fitted} displays the fitted conditional quantile functions using RLFR with $B=19$ at the clinically interpretable BMI values $20$, $25$, $30$, $35$, and $40$, back-transformed to the original activity-count scale. The fitted curves shift downward as BMI increases. The separation is visible over a broad range of quantile levels and remains substantial in the upper part of the activity distribution. Thus, higher BMI is associated with a lower distribution of waking activity counts, including reduced activity at moderate and relatively high activity levels. 

\begin{figure}[!ht]
\centering
\includegraphics[width=\textwidth]{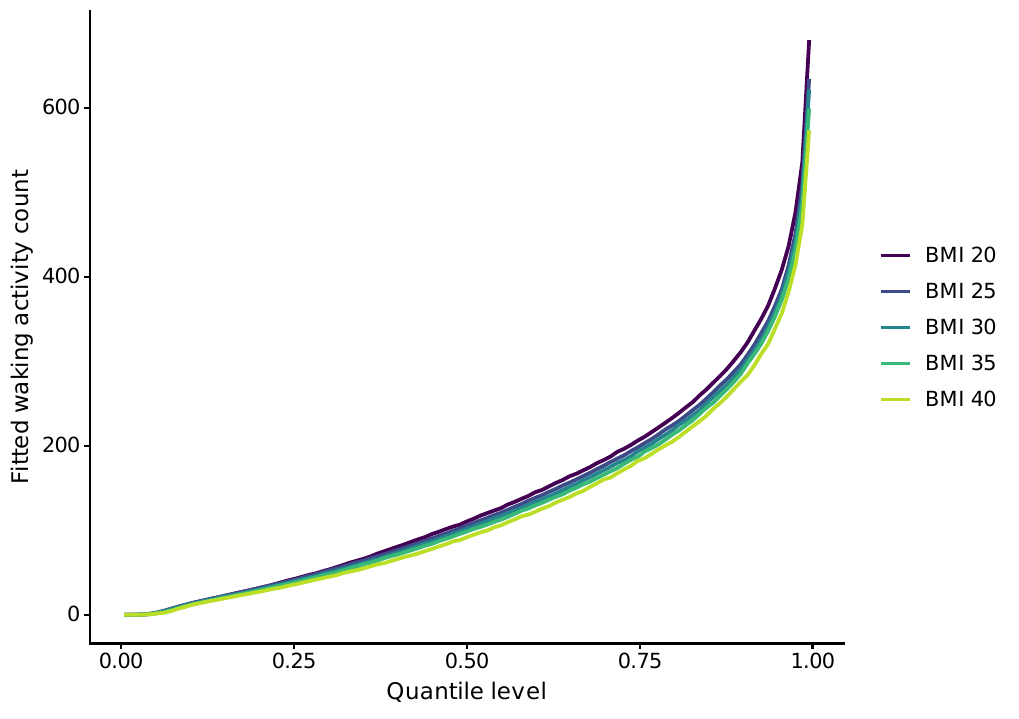}
\caption{
Fitted conditional quantile functions of waking activity counts at BMI values $20$, $25$, $30$, $35$, and $40$, obtained using RLFR with $B=19$. The curves are displayed on the original activity-count scale.
}
\label{fig:mesa_fitted}
\end{figure}

\section{Discussion}\label{sec:discussion}

We have established nonasymptotic pointwise high-probability bounds for robust local Fr\'echet regression under a local second-moment condition. A key ingredient is a nonasymptotic bound for weighted empirical Fr\'echet means that allows unequal weights and different distributions. After conditioning on the predictors, this result applies directly to the block estimators. Under either local second-order smoothness or local $1$-Wasserstein H\"older continuity, the optimized bounds attain the corresponding classical pointwise rates for nonparametric regression. Matching pointwise minimax lower bounds are obtained on embedded Gaussian regression subclasses.

The simulations illustrate that RLFR combines robustness to heavy-tailed responses with computational efficiency. In the real-data analysis, RLFR yields similar average prediction errors and modest improvements at the higher quantiles. These results illustrate its practical benefits without implying uniform superiority over the competing methods.

The present results concern pointwise local-constant Fr\'echet regression. Extending the present general metric-space theory to local-linear Fr\'echet regression would require treating signed weights, since local-linear equivalent weights need not be nonnegative. Uniform theory and data-driven bandwidth selection are also left for future work.

\section*{Acknowledgements}
Chang Jun Im was supported by the National Research Foundation of Korea grant funded by the Korea government (MSIT) (No.\ RS-2025-00515381). Daisuke Kurisu was partially supported by JSPS KAKENHI (No.\ 26K00323).

\bibliographystyle{bibliography_style}
\bibliography{Ref}

\ifcompilewithsupplement
\clearpage
\appendix
\begin{center}
{\Large\bfseries Supplementary Material for\par}
\vspace{2mm}
{\Large\bfseries \PaperTitle\par}
\end{center}
\vspace{0.5cm}

\section{Examples of response spaces}\label{sec:examples-scope}

\subsection{Hadamard spaces}\label{subsec:hadamard-examples}

A Hadamard space is a complete CAT(0) space. In a separable Hadamard space, every finite-second-moment distribution has a unique Fr\'echet mean, and the variance and quadruple inequalities hold with $\kappa_{\mathcal Y}=1$ and $C_{\mathcal Y}=2$; see Propositions 2.4, 4.3, and 4.4 of \cite{Sturm2003}. Together with Condition~\ref{ass:M3}, these results verify Conditions~\ref{ass:M1}--\ref{ass:M2} and \ref{ass:M4}. Condition~\ref{ass:M3} requires a distributional condition when $\mathcal Y$ is unbounded.

\begin{example}\leavevmode
Let $(\mathbb H,\|\cdot\|_{\mathbb H})$ be a nontrivial separable Hilbert space. Then $\mathbb H$ is a Hadamard space; see Proposition 3.5(i) of \cite{Sturm2003}. If $v\in\mathbb H$ satisfies $\|v\|_{\mathbb H}=1$, the map $t\mapsto tv$ is an isometric embedding of $\R$.
\end{example}

\begin{example}\leavevmode
Every complete separable metric tree is a Hadamard space; see Proposition 3.4 of \cite{Sturm2003}. Such a tree need not contain an isometric copy of $\R$. For example, $[0,1]$ with its usual distance is a complete metric tree and is bounded. The embedding condition used in the lower bounds must therefore be checked separately for metric trees.
\end{example}

\begin{example}\leavevmode
For $m\in\mathbb{N}$ with $m\ge 4$, let $\mathcal T_m$ denote the Billera--Holmes--Vogtmann (BHV) space of phylogenetic trees equipped with the BHV metric $d_{\mathrm{BHV}}$. The space $(\mathcal T_m,d_{\mathrm{BHV}})$ is a complete separable CAT(0) space and hence a Hadamard space; see \cite{BilleraHolmesVogtmann2001}. Consider the two incompatible splits
\begin{align*}
    S_1:=\{1,2\}\mid\{3,\ldots,m\}, \quad
    S_2:=\{1,3\}\mid\{2,4,\ldots,m\},
\end{align*}
which cannot both be induced by interior edges of the same tree. For $t\in(0,\infty)$ and $j=1,2$, let $T_j(t)$ denote the tree whose unique positive-length interior edge induces the split $S_j$ and has length $t$. At $t=0$, set $T_1(0)=T_2(0)=T_\star$, where $T_\star$ denotes the star tree on the labeled leaves $\{1,\ldots,m\}$. Define $\iota:\R\to\mathcal T_m$ by
\begin{align*}
    \iota(t):=
    \begin{cases}
        T_1(t), & t\ge0,\\
        T_2(-t), & t<0.
    \end{cases}
\end{align*}
Since the two splits are incompatible, the BHV geodesic between $T_1(s)$ and $T_2(t)$ passes through $T_\star$ for any $s,t\in[0,\infty)$. Hence, points on the same ray have BHV distance equal to the difference of their edge lengths, whereas points on different rays have BHV distance equal to the sum of their edge lengths. Therefore,
\begin{align*}
    d_{\mathrm{BHV}}\left(\iota(s),\iota(t)\right)=|s-t|, \quad s,t\in\R,
\end{align*}
so $\iota$ is an isometric embedding of $\R$ into $\mathcal T_m$.
\end{example}

\begin{example}\label{ex:spd-airm}\leavevmode
For $q\in\mathbb N$, let $\mathcal S_{++}^q$ be the space of $q\times q$ symmetric positive-definite matrices equipped with the affine-invariant metric
\begin{align*}
    d_{\mathrm{AI}}(A,B):=\left\|\log\left(A^{-1/2}BA^{-1/2}\right)\right\|_{\mathrm F}, \quad A,B\in\mathcal S_{++}^q,
\end{align*}
where $\|\cdot\|_{\mathrm F}$ is the Frobenius norm. Then, $\mathcal S_{++}^q$ is a finite-dimensional Hadamard space; see Proposition~5 of \cite{BhatiaHolbrook2006}. For fixed $A\in\mathcal S_{++}^q$, define $\iota:\R\to\mathcal S_{++}^q$ by
\begin{align*}
    \iota(t)=A^{1/2}\exp\left(\frac{t}{\sqrt{q}}I_q\right)A^{1/2}, \quad t\in\R,
\end{align*}
where $I_q$ is the $q\times q$ identity matrix. By the invariance of $d_{\mathrm{AI}}$ under congruence transformations,
\begin{align*}
    d_{\mathrm{AI}}(\iota(s),\iota(t))
    =d_{\mathrm{AI}}\left(\exp\left(\frac{s}{\sqrt{q}}I_q\right),\exp\left(\frac{t}{\sqrt{q}}I_q\right)\right)
    =\frac{|s-t|}{\sqrt{q}}\|I_q\|_{\mathrm F}=|s-t|, \quad s,t\in\R.
\end{align*}
Hence, $\iota$ is an isometric embedding of $\R$ into
$\mathcal S_{++}^q$.
\end{example}

\begin{example}\label{ex:Wass}\leavevmode
For $\mu\in\mathcal W_2(\R)$, let $F_\mu^{-1}$ denote its quantile function. The quantile map identifies $\mathcal W_2(\R)$ with the closed convex subset of $L^2(0,1)$ consisting of functions with a nondecreasing representative, since
\begin{align*}
    W_2^2(\mu,\nu)=\int_0^1\left(F_\mu^{-1}(u)-F_\nu^{-1}(u)\right)^2\,du, \quad \mu,\nu\in\mathcal W_2(\R).
\end{align*}
Thus $\mathcal W_2(\R)$ is a Hadamard space; see Proposition~2.2.8 and Section~2.3.2 of \cite{PanaretosZemel2020}. For $t\in\mathbb{R}$, let $\delta_t$ denote the Dirac measure at $t$. A map $t\mapsto\delta_t$ is an isometric embedding of $\R$, since $W_2(\delta_s,\delta_t)=|s-t|$ for any $s, t \in \mathbb{R}$. However, the response-space entropy condition used in \cite{PetersenMuller2019} does not hold in general on $\mathcal W_2(\R)$, as shown by the counterexample on $\mathcal W_2([0,1])$; see Proposition~2.1 of \cite{ImJeonPark2026Entropy}.
\end{example}

\begin{example}\label{ex:ccset}
For $q\in\mathbb N$, let $\mathcal{K}_c\left(\mathbb{R}^q\right)$ denote the collection of nonempty compact and convex sets in $\mathbb{R}^q$. For any $F\in\mathcal{K}_c\left(\mathbb{R}^q\right)$, define the support function by
\begin{align*}
    s_F(u):=\sup_{x\in F}\langle x,u\rangle, \quad u\in\mathbb{S}^{q-1},
\end{align*}
where $\langle\cdot,\cdot\rangle$ denotes the Euclidean inner product. Let $\sigma_{q-1}$ denote the surface area measure on the unit sphere $\mathbb{S}^{q-1}$. We define a metric $d_c$ on $\mathcal{K}_c\left(\mathbb{R}^q\right)$ by
\begin{align*}
    d_c^2(F,G):=\int_{\mathbb{S}^{q-1}}\left(s_F(u)-s_G(u)\right)^2\,d\sigma_{q-1}(u), \quad F,G\in\mathcal{K}_c\left(\mathbb{R}^q\right).
\end{align*}
Then $\left(\mathcal{K}_c\left(\mathbb{R}^q\right),d_c\right)$ is a Hadamard space; see \cite{kurisu2025random}. Moreover, the map $\iota:\mathbb{R}\to\mathcal{K}_c\left(\mathbb{R}^q\right)$ defined by
\begin{align*}
    \iota(t):=\left\{\left(\frac{q}{\sigma_{q-1}(\mathbb{S}^{q-1})}\right)^{1/2}t e_1\right\}, \quad t\in\mathbb{R},
\end{align*}
is an isometric embedding, where $e_1=(1,0,\ldots,0)\in\mathbb{R}^q$.
\end{example}

\begin{example}\leavevmode
Let $(\mathcal M,g)$ be a complete simply connected Riemannian manifold with nonpositive sectional curvature, and let $d_g$ be its geodesic distance. Then $(\mathcal M,d_g)$ is a Hadamard space; see Proposition 3.1 of \cite{Sturm2003}. Every complete unit-speed geodesic is an isometric embedding of $\R$. In particular, real hyperbolic spaces of positive dimension admit such an embedding.
\end{example}

\subsection{Non-Hadamard spaces}\label{subsec:non-hadamard-examples}

\begin{example}\label{ex:spherical-cap}\leavevmode
For $q\ge2$, let $\mathbb S^q$ be the $q$-dimensional unit sphere with its geodesic metric $d_{\mathbb S^q}$ defined by
\begin{align*}
    d_{\mathbb S^q}(y,\omega):=\arccos\langle y,\omega\rangle, \quad y,\omega\in\mathbb S^q.
\end{align*}
Fix $p\in\mathbb S^q$ and $\rho\in(0,\pi/4)$, and let $\mathcal Y=\overline B_{\mathbb S^q}(p,\rho)$ be a spherical cap. Note that $\mathcal Y$ is compact and geodesically convex, and its diameter is at most $2\rho$. Hence Condition~\ref{ass:M3} holds with $\sigma=2\rho$.

Fix $y\in\mathcal Y$ and define
\begin{align*}
    F_y(\omega):=d_{\mathbb S^q}^2(y,\omega), \quad \omega\in\mathcal Y.
\end{align*}
Since $d_{\mathbb S^q}(y,\omega)\le 2\rho<\pi/2$, the logarithm $\log_\omega (y)$ is well defined for any $\omega\in\mathcal Y$. By the first variation formula, the gradient of $F_y$ is
\begin{align} \label{eq:GradientofFy}
    \nabla F_y(\omega)=-2\log_\omega(y), \quad \omega\in\mathcal Y.
\end{align}
For $\omega\in\mathcal Y$ such that $\omega\ne y$, the Hessian of $F_y$ is
\begin{align}
\begin{split}
    \operatorname{Hess}F_y(\omega)[v,v]
    &=2d_{\mathbb S^q}(y,\omega)\cot\left(d_{\mathbb S^q}(y,\omega)\right)\|v\|^2\\
    &\quad
    +2\left(1-d_{\mathbb S^q}(y,\omega)\cot\left(d_{\mathbb S^q}(y,\omega)\right)\right)
    \left\langle
        \frac{\log_\omega(y)}{d_{\mathbb S^q}(y,\omega)},v
    \right\rangle^2,
    \quad v\in T_\omega\mathbb S^q,
\end{split} \label{eq:HessianofFy}
\end{align}
where $T_\omega\mathbb S^q$ is a tangent space of $\mathbb S^q$ at $\omega$. At $\omega=y$, the Hessian of $F_y$ is
\begin{align*}
    \operatorname{Hess}F_y(y)[v,v]=2\|v\|^2, \quad v\in T_y\mathbb S^q.
\end{align*}
This agrees with the continuous extension of \eqref{eq:HessianofFy} since $\lim_{t\to0+}t\cot t=1$ and $\|\log_\omega(y)\|=d_{\mathbb S^q}(y,\omega)$ for $\omega \in \mathcal{Y}\setminus\{y\}$. Since a map $t \mapsto t\cot t$ is decreasing on $(0,\pi/2)$ and $d_{\mathbb S^q}(y,\omega)\le 2\rho$, it follows that
\begin{align}\label{eq:hessianbound}
    \operatorname{Hess}F_y(\omega)[v,v]
    \ge 4\rho\cot(2\rho)\|v\|^2, \quad v\in T_\omega\mathbb S^q.
\end{align}

We first verify Conditions~\ref{ass:M1}(i) and \ref{ass:M2}(i). Let $h\in(0,h_0]$ be given. Since $\widetilde M_h(x_0,\cdot):\mathcal{Y}\to\mathbb{R}$ is continuous and $\mathcal Y$ is compact, the minimizer $\widetilde m_h(x_0)$ exists. Fix $\omega\in\mathcal Y$, and let $\gamma_{\omega}:[0,1]\to\mathcal Y$ be a constant-speed minimizing geodesic from $\widetilde m_h(x_0)$ to $\omega$. By the definition of $\widetilde M_h$ and \eqref{eq:hessianbound}, differentiation under the expectation gives
\begin{align*}
    \operatorname{Hess}\widetilde M_h(x_0,\gamma_{\omega}(t))
    [\dot\gamma_{\omega}(t),\dot\gamma_{\omega}(t)]
    \ge4\rho\cot(2\rho)\|\dot\gamma_{\omega}(t)\|^2,
    \quad t\in(0,1).
\end{align*}
By the definition of $\gamma_{\omega}$, $\|\dot\gamma_{\omega}(t)\|=d_{\mathcal Y}(\omega,\widetilde m_h(x_0))$ for $t\in(0, 1)$. Hence, the above yields
\begin{align*}
    \frac{d^2}{dt^2}\widetilde M_h(x_0,\gamma_{\omega}(t))
    &\ge4\rho\cot(2\rho)
    d_{\mathcal Y}^2\left(\omega,\widetilde m_h(x_0)\right), \quad t\in(0,1).
\end{align*}
Since $\widetilde m_h(x_0)$ is a minimizer and $\gamma_{\omega}(0)=\widetilde m_h(x_0)$,
\begin{align*}
    \left.\frac{d}{dt}\widetilde M_h(x_0,\gamma_{\omega}(t))\right|_{t=0}\ge0, \quad\omega\in\mathcal Y.
\end{align*}
Also, $\gamma_{\omega}(1)=\omega$, and Taylor's formula with integral remainder gives
\begin{align*}
    \widetilde M_h(x_0,\omega)
    -\widetilde M_h\left(x_0,\widetilde m_h(x_0)\right)
    =
    \left.\frac{d}{dt}\widetilde M_h(x_0,\gamma_{\omega}(t))\right|_{t=0}+\int_0^1(1-t)
    \frac{d^2}{dt^2}\widetilde M_h(x_0,\gamma_{\omega}(t))\,dt.
\end{align*}
Therefore,
\begin{align*}
    \widetilde M_h(x_0,\omega)
    -\widetilde M_h\left(x_0,\widetilde m_h(x_0)\right)
    &\ge
    4\rho\cot(2\rho)
    d_{\mathcal Y}^2\left(\omega,\widetilde m_h(x_0)\right)
    \int_0^1(1-t)\,dt\\
    &=2\rho\cot(2\rho)
    d_{\mathcal Y}^2\left(\omega,\widetilde m_h(x_0)\right).
\end{align*}
Since $2\rho\cot(2\rho)>0$, this inequality also implies uniqueness. Therefore, Conditions~\ref{ass:M1}(i) and \ref{ass:M2}(i) hold with $\kappa_{\mathcal Y}=2\rho\cot(2\rho)$. The same argument proves Conditions~\ref{ass:M1}(ii)--(iv) and \ref{ass:M2}(ii)--(iv) with the same constant.

For Condition~\ref{ass:M4}, fix $\omega\in\mathcal Y$ and consider the logarithm map $\log_\omega : \mathcal Y \to T_\omega\mathbb S^q$ defined by
\begin{align*}
    \log_\omega(y)
    =
    \begin{cases}
    \displaystyle
    \frac{d_{\mathbb S^q}(y,\omega)}
    {\sin(d_{\mathbb S^q}(y,\omega))}
    \left(y-\langle y,\omega\rangle\omega\right),
    & \text{if} \quad y\in\mathcal{Y}\setminus\{\omega\},\\
    0,
    & \text{if} \quad y=\omega.
    \end{cases}
\end{align*}
For $y\in\mathcal Y$ such that $y\ne\omega$, let $D_y\log_\omega(y):T_y\mathbb S^q\to T_\omega\mathbb S^q$ denote the differential of $\log_\omega$ with respect to $y$. Differentiating with respect to $y$ and using $\langle y,v\rangle=0$ for $v\in T_y\mathbb S^q$ gives
\begin{align*}
    \|D_y\log_\omega(y)[v]\|^2
    &=\frac{d_{\mathbb S^q}^2(y,\omega)}
    {\sin^2(d_{\mathbb S^q}(y,\omega))}\|v\|^2-\frac{d_{\mathbb S^q}^2(y,\omega)
    -\sin^2(d_{\mathbb S^q}(y,\omega))}
    {\sin^4(d_{\mathbb S^q}(y,\omega))}
    \langle\omega,v\rangle^2\\
    &\le\frac{d_{\mathbb S^q}^2(y,\omega)}
    {\sin^2(d_{\mathbb S^q}(y,\omega))}\|v\|^2, \quad v\in T_y\mathbb S^q.
\end{align*}
At $y=\omega$, the differential is the identity on $T_\omega\mathbb S^q$. Since a map $t \mapsto t/\sin t$ is increasing on $(0,\pi/2)$ and $d_{\mathbb S^q}(y,\omega)\le2\rho$, we obtain
\begin{align*}
    \|D_y\log_\omega(y)\|_{\mathrm{op}}
    \le\frac{2\rho}{\sin(2\rho)}, \quad y\in\mathcal Y.
\end{align*}
Integrating this bound along the minimizing geodesic from $y$ to $y'$, \eqref{eq:GradientofFy} yields
\begin{align*}
    \|\nabla F_y(\omega)-\nabla F_{y'}(\omega)\|
    &=2\|\log_\omega(y)-\log_\omega(y')\|\\
    &\le\frac{4\rho}{\sin(2\rho)}d_{\mathcal Y}(y,y'), \quad y,y'\in\mathcal Y.
\end{align*}
Now let $\eta_{\omega,\omega'}:[0,1]\to\mathcal Y$ be a constant-speed minimizing geodesic from $\omega$ to $\omega'$ for given $\omega, \omega' \in \mathcal{Y}$. Integrating along $\eta_{\omega,\omega'}$ yields
\begin{align} \label{eq:quadraticbound}
\begin{split}
    &\left|F_y(\omega)-F_y(\omega')-F_{y'}(\omega)+F_{y'}(\omega')\right|\\
    &=
    \left|\int_0^1
    \left\langle
        \nabla F_y\left(\eta_{\omega,\omega'}(t)\right)
        -\nabla F_{y'}\left(\eta_{\omega,\omega'}(t)\right),
        \dot\eta_{\omega,\omega'}(t)
    \right\rangle\,dt\right|\\
    &\le
    \frac{4\rho}{\sin(2\rho)}
    d_{\mathcal Y}(y,y')d_{\mathcal Y}(\omega,\omega'), \quad y,y'\in\mathcal Y.
\end{split}
\end{align}
This proves Condition~\ref{ass:M4} with $C_{\mathcal Y}=4\rho/\sin(2\rho)$.

However, $\mathcal Y$ has finite diameter, so it cannot contain an isometric copy of $\R$, whose diameter is infinite. Hence the embedding condition used in the lower bounds does not apply to this example.
\end{example}

\begin{example}\label{ex:fisher-rao}\leavevmode
For $q\in\mathbb N$, let $\mathcal P_q$ denote the space of probability densities on $\mathbb R^q$ with respect to Lebesgue measure, where densities that agree almost everywhere are identified. Equip $\mathcal P_q$ with the normalized Fisher--Rao metric
\begin{align*}
    d_{\mathrm{FR}}(y,\omega)
    :=\arccos\left(\int_{\mathbb R^q}\sqrt{y(z)\omega(z)}\,dz\right),
    \quad y,\omega\in\mathcal P_q.
\end{align*}
Let $H=L^2(\mathbb R^q)$, with inner product $\langle\cdot,\cdot\rangle_H$ and norm $\|\cdot\|_H$, and let
\begin{align*}
    \mathbb S_H:=\{\psi\in H:\|\psi\|_H=1\}
\end{align*}
be its unit sphere, equipped with the geodesic metric
\begin{align*}
    d_{\mathbb S_H}(\psi,\phi):=\arccos\langle\psi,\phi\rangle_H,
    \quad \psi,\phi\in\mathbb S_H.
\end{align*}
Define the square-root map $\Phi:\mathcal P_q\to\mathbb S_H$ by
\begin{align*}
    \Phi(y)(z):=\sqrt{y(z)}, \quad z\in\mathbb R^q,\quad y\in\mathcal P_q.
\end{align*}
Then,
\begin{align*}
    d_{\mathrm{FR}}(y,\omega)
    =d_{\mathbb S_H}\left(\Phi(y),\Phi(\omega)\right),
    \quad y,\omega\in\mathcal P_q.
\end{align*}
Thus $\Phi$ is an isometry onto the nonnegative part of $\mathbb S_H$; see \cite{Dai2022} and \cite{BruverisMichor2019} for background on this representation.

Fix $p\in\mathcal P_q$ and $\rho\in(0,\pi/4)$, and let $\mathcal Y=\overline B_{\mathrm{FR}}(p,\rho)$ be a closed ball. Then the image of $\mathcal Y$ under $\Phi$ is
\begin{align*}
    \mathcal C
    :=\left\{
    \psi\in H:
    \psi\ge0\ \text{almost everywhere},\
    \|\psi\|_H=1,\
    \langle\psi,\Phi(p)\rangle_H\ge\cos\rho
    \right\}.
\end{align*}
To see that $\mathcal Y$ is not a Hadamard space, choose a measurable partition $A_1,A_2,A_3$ of $\mathbb R^q$ such that
\begin{align*}
    \int_{A_j}p(z)\,dz=\frac13,
    \quad j=1,2,3.
\end{align*}
Such a partition exists because the first marginal distribution corresponding to $p$ has a continuous cumulative distribution function. Define
\begin{align*}
    e_j(z):=\sqrt{3p(z)}\,\mathbf 1_{A_j}(z),
    \quad z\in\mathbb R^q,\quad j=1,2,3.
\end{align*}
Then $e_1,e_2,e_3$ are nonnegative orthonormal elements of $H$, and $\Phi(p)=(e_1+e_2+e_3)/\sqrt3$. For $a=(a_1,a_2,a_3)^\top\in\mathbb S^2$ sufficiently close to $(1,1,1)^\top/\sqrt3$, $a_1, a_2, a_3>0$ and $\sum_{j=1}^3a_je_j\in\mathcal C$. Also,
\begin{align*}
    d_{\mathbb S_H}\left(
    \sum_{j=1}^3a_je_j,\sum_{j=1}^3b_je_j
    \right)
    =\arccos\langle a,b\rangle,
    \quad a,b\in\mathbb S^2.
\end{align*}
Hence $\mathcal C$ contains an isometric copy of a sufficiently small geodesically convex neighborhood of $(1,1,1)^\top/\sqrt3$ in $\mathbb S^2$. Since $\mathbb S^2$ has sectional curvature $1$, $\mathcal C$ is not CAT(0). Therefore, neither $\mathcal C$ nor $\mathcal Y$ is a Hadamard space.

We prove that $(\mathcal Y,d_{\mathrm{FR}})$ is complete and separable. Note that the nonnegative cone is closed in $H$, and both the norm and the inner product with $\Phi(p)$ are continuous. Hence $\mathcal C$ is closed in $H$. Moreover,
\begin{align*}
    \frac{2}{\pi}d_{\mathbb S_H}(\psi,\phi)
    \le\|\psi-\phi\|_H
    =2\sin\left(\frac{d_{\mathbb S_H}(\psi,\phi)}{2}\right)
    \le d_{\mathbb S_H}(\psi,\phi),
    \quad \psi,\phi\in\mathcal C.
\end{align*}
Therefore, the Hilbert and spherical metrics are uniformly equivalent on $\mathcal C$. Since $H$ is complete and separable, $\mathcal C$ is complete and separable under $d_{\mathbb S_H}$. The isometry $\Phi$ then implies that $(\mathcal Y,d_{\mathrm{FR}})$ is complete and separable.

We next show that $\mathcal C$ is geodesically convex. Let $\psi,\phi\in\mathcal C$ be distinct, and put $\theta=d_{\mathbb S_H}(\psi,\phi)$. Since $\theta\le2\rho<\pi/2$, the unique constant-speed minimizing geodesic from $\psi$ to $\phi$ is
\begin{align*}
    \gamma_{\psi,\phi}(t)
    =
    \frac{\sin((1-t)\theta)}{\sin\theta}\psi
    +
    \frac{\sin(t\theta)}{\sin\theta}\phi,
    \quad t\in[0,1].
\end{align*}
Both coefficients are nonnegative, so $\gamma_{\psi,\phi}(t)\ge0$ almost everywhere for all $t\in[0,1]$. Also,
\begin{align*}
    \left\langle\gamma_{\psi,\phi}(t),\Phi(p)\right\rangle_H
    &\ge
    \cos\rho\,
    \frac{\sin((1-t)\theta)+\sin(t\theta)}{\sin\theta}\\
    &=
    \cos\rho\,
    \frac{\cos((1/2-t)\theta)}{\cos(\theta/2)}\\
    &\ge\cos\rho,
    \quad t\in[0,1],
\end{align*}
where the last inequality follows from $|1/2-t|\le1/2$. Hence $\gamma_{\psi,\phi}(t)\in\mathcal C$ for all $t\in[0,1]$. Thus $\mathcal C$ is geodesically convex. Since $\Phi$ is an isometry, $\mathcal Y$ is also geodesically convex and has diameter at most $2\rho$. Hence Condition~\ref{ass:M3} holds with $\sigma=2\rho$.

We now verify Conditions~\ref{ass:M1} and \ref{ass:M2}. Fix $\psi\in\mathcal C$ and consider the squared-distance function
\begin{align*}
    F_\psi(\phi):=d_{\mathbb S_H}^2(\psi,\phi),
    \quad \phi\in\mathbb S_H.
\end{align*}
The same Hessian calculation applies to $\mathbb S_H$, since any two points and a tangent vector lie in a finite-dimensional subspace of $H$. By \eqref{eq:hessianbound}, we obtain
\begin{align}\label{eq:fr-hessian-lower}
    \operatorname{Hess}F_\psi(\phi)[v,v]
    \ge4\rho\cot(2\rho)\|v\|_H^2,
    \quad \psi,\phi\in\mathcal C,\quad v\in T_\phi\mathbb S_H,
\end{align}
when $\phi\ne\psi$. At $\phi=\psi$, we have $\operatorname{Hess}F_\psi(\psi)[v,v]=2\|v\|_H^2$. For $\omega,\omega'\in\mathcal Y$, let $\gamma_{\omega,\omega'}:[0,1]\to\mathcal Y$ be the constant-speed minimizing geodesic obtained by applying $\Phi^{-1}$ to the spherical geodesic from $\Phi(\omega)$ to $\Phi(\omega')$. By \eqref{eq:fr-hessian-lower}, we obtain
\begin{align*}
    \frac{d^2}{dt^2}
    d_{\mathrm{FR}}^2\left(y,\gamma_{\omega,\omega'}(t)\right)
    \ge
    4\rho\cot(2\rho)d_{\mathrm{FR}}^2(\omega,\omega'),
    \quad y,\omega,\omega'\in\mathcal Y,\quad t\in(0,1).
\end{align*}
Integrating this lower bound along the geodesic gives
\begin{align}\label{eq:fr-pointwise-convexity}
\begin{split}
    d_{\mathrm{FR}}^2\left(y,\gamma_{\omega,\omega'}(t)\right)
    &\le
    (1-t)d_{\mathrm{FR}}^2(y,\omega)
    +t\,d_{\mathrm{FR}}^2(y,\omega')\\
    &-2\rho\cot(2\rho)t(1-t)d_{\mathrm{FR}}^2(\omega,\omega'),
    \quad y,\omega,\omega'\in\mathcal Y,
    \quad t\in[0,1].
\end{split}
\end{align}
Let $P$ be any probability measure supported on $\mathcal Y$, and define
\begin{align*}
    F_P(\omega):=\int_{\mathcal Y}d_{\mathrm{FR}}^2(y,\omega)\,dP(y),
    \quad \omega\in\mathcal Y.
\end{align*}
Since $\mathcal Y$ has diameter at most $2\rho$, $F_P$ is finite. Furthermore, the triangle inequality gives
\begin{align*}
    |F_P(\omega)-F_P(\omega')|
    &\le
    \int_{\mathcal Y}
    \left|d_{\mathrm{FR}}^2(y,\omega)-d_{\mathrm{FR}}^2(y,\omega')\right|
    \,dP(y)\\
    &\le
    \int_{\mathcal Y}
    \left(d_{\mathrm{FR}}(y,\omega)+d_{\mathrm{FR}}(y,\omega')\right)
    d_{\mathrm{FR}}(\omega,\omega')\,dP(y)\\
    &\le4\rho\,d_{\mathrm{FR}}(\omega,\omega'),
    \quad \omega,\omega'\in\mathcal Y.
\end{align*}
Hence $F_P$ is continuous. Integrating \eqref{eq:fr-pointwise-convexity} with respect to $P$ yields
\begin{align}\label{eq:fr-integrated-convexity}
\begin{split}
    F_P\left(\gamma_{\omega,\omega'}(t)\right)
    &\le
    (1-t)F_P(\omega)+tF_P(\omega')\\
    &-2\rho\cot(2\rho)t(1-t)d_{\mathrm{FR}}^2(\omega,\omega'),
    \quad \omega,\omega'\in\mathcal Y,\quad t\in[0,1].
\end{split}
\end{align}
Unlike the finite-dimensional spherical cap in Example~\ref{ex:spherical-cap}, compactness is not available here. We instead use completeness and \eqref{eq:fr-integrated-convexity} to establish existence. Put $a_P:=\inf_{\omega\in\mathcal Y}F_P(\omega)$, and choose a sequence $(\omega_j)_{j\ge1}$ in $\mathcal Y$ such that $F_P(\omega_j)\to a_P$. By \eqref{eq:fr-integrated-convexity}, we obtain
\begin{align*}
    a_P
    \le
    F_P\left(\gamma_{\omega_j,\omega_\ell}\left(\frac{1}{2}\right)\right)
    \le
    \frac{F_P(\omega_j)+F_P(\omega_\ell)}{2}
    -\frac{\rho\cot(2\rho)}{2}d_{\mathrm{FR}}^2(\omega_j,\omega_\ell),
    \quad j,\ell\in\mathbb N.
\end{align*}
Therefore,
\begin{align*}
    d_{\mathrm{FR}}^2(\omega_j,\omega_\ell)
    \le
    \frac{1}{\rho\cot(2\rho)}
    \left\{F_P(\omega_j)+F_P(\omega_\ell)-2a_P\right\}
    \longrightarrow0
    \quad\text{as }j,\ell\to\infty.
\end{align*}
Thus $(\omega_j)_{j\ge1}$ is a Cauchy sequence. Since $\mathcal Y$ is complete, there exists $\omega_P\in\mathcal Y$ such that $\omega_j\to\omega_P$. By continuity, $F_P(\omega_P)=a_P$, so $\omega_P$ is a minimizer. Then \eqref{eq:fr-integrated-convexity} gives
\begin{align*}
    F_P(\omega_P)
    &\le
    F_P\left(\gamma_{\omega_P,\omega}(t)\right)\\
    &\le
    (1-t)F_P(\omega_P)+tF_P(\omega)
    -2\rho\cot(2\rho)t(1-t)d_{\mathrm{FR}}^2(\omega_P,\omega),
    \quad \omega\in\mathcal Y,
    \quad t\in(0,1).
\end{align*}
This implies
\begin{align*}
    F_P(\omega)-F_P(\omega_P)\ge \lim_{t\to0+}2\rho\cot(2\rho)(1-t)d_{\mathrm{FR}}^2(\omega,\omega_P)
    =2\rho\cot(2\rho)d_{\mathrm{FR}}^2(\omega,\omega_P),
    \quad \omega\in\mathcal Y.
\end{align*}
Since $2\rho\cot(2\rho)>0$, this inequality also implies uniqueness. Each criterion in Condition~\ref{ass:M1}(i)--(iv) can be written as $F_P$ for an appropriate probability measure $P$ supported on $\mathcal Y$. Therefore, Conditions~\ref{ass:M1} and \ref{ass:M2} hold with the common constant $\kappa_{\mathcal Y}=2\rho\cot(2\rho)$.

Finally, we verify Condition~\ref{ass:M4}. Fix $y,y',\omega,\omega'\in\mathcal Y$. Let $V$ be a finite-dimensional linear subspace of $H$ of dimension at least three containing $\Phi(p)$, $\Phi(y)$, $\Phi(y')$, $\Phi(\omega)$, and $\Phi(\omega')$. Such a subspace exists because $H$ is infinite-dimensional. Let
\begin{align*}
    \mathbb S_V:=\{\psi\in V:\|\psi\|_H=1\}.
\end{align*}
The restriction of $d_{\mathbb S_H}$ to $\mathbb S_V$ is the usual geodesic metric on this finite-dimensional unit sphere. Moreover, the four square-root densities belong to the spherical cap $\overline B_{\mathbb S_V}(\Phi(p),\rho)$. By \eqref{eq:quadraticbound}, we have
\begin{align*}
\begin{split}
    &\left|
    d_{\mathrm{FR}}^2(y,\omega)
    -d_{\mathrm{FR}}^2(y,\omega')
    -d_{\mathrm{FR}}^2(y',\omega)
    +d_{\mathrm{FR}}^2(y',\omega')
    \right|\\
    &\le
    \frac{4\rho}{\sin(2\rho)}
    d_{\mathbb S_H}\left(\Phi(y),\Phi(y')\right)
    d_{\mathbb S_H}\left(\Phi(\omega),\Phi(\omega')\right)\\
    &=
    \frac{4\rho}{\sin(2\rho)}
    d_{\mathrm{FR}}(y,y')d_{\mathrm{FR}}(\omega,\omega').
\end{split}
\end{align*}
This proves Condition~\ref{ass:M4} with $C_{\mathcal Y}=4\rho/\sin(2\rho)$.

However, since $\mathcal Y$ is bounded, it does not admit an isometric embedding of $\mathbb R$, and hence the embedded-Gaussian lower bounds do not apply to this example.
\end{example}

\begin{example}\label{ex:bures-wasserstein-spd}\leavevmode
For $q\in\mathbb N$ with $q\ge2$, let $\mathcal S_{++}^q$ denote the space of $q\times q$ symmetric positive-definite matrices. For fixed constants $m,M\in(0,\infty)$ with $m<M$, let
\begin{align*}
\mathcal Y:=\{A\in\mathcal S_{++}^q:mI_q\preceq A\preceq MI_q\},
\end{align*}
equipped with the Bures--Wasserstein metric
\begin{align*}
d_{\mathcal Y}^2(A,B):=\operatorname{tr}(A)+\operatorname{tr}(B)-2\operatorname{tr}(A^{1/2}BA^{1/2})^{1/2},\quad A,B\in\mathcal Y.
\end{align*}
In particular, $d_{\mathcal Y}(A,B)$ coincides with the $2$-Wasserstein distance between the centered Gaussian distributions $N(0,A)$ and $N(0,B)$ for $A,B\in\mathcal Y$; see Sections~2--4 of \cite{BhatiaJainLim2019}. The interior of $\mathcal Y$ has nonnegative sectional curvature, with strictly positive sectional curvature for some tangent planes; see Section~3.4 and Table~7 of \cite{ThanwerdasPennec2023}. Hence $(\mathcal Y,d_{\mathcal Y})$ is not a Hadamard space.

For $A,B\in\mathcal Y$, the factor representation of the Bures--Wasserstein metric gives
\begin{align}\label{eq:factor_representation}
d_{\mathcal Y}(A,B)=\min_{U\in O(q)}\|A^{1/2}-B^{1/2}U\|_{\mathrm F},\quad A,B\in\mathcal Y,
\end{align}
where $O(q)$ is the space of $q\times q$ orthogonal matrices. For more details, see Section~2 of \cite{BhatiaJainLim2019}. Taking $U=I_q$ gives
\begin{align*}
    d_{\mathcal Y}(A,B)\le\|A^{1/2}-B^{1/2}\|_{\mathrm F},\quad A,B\in\mathcal Y.
\end{align*}
By the Cauchy--Schwarz inequality and the symmetry,
\begin{align*}
\begin{split}
    &\|A^{1/2}-B^{1/2}\|_{\mathrm F}\|A-B\|_{\mathrm F}\\
    &\ge\left\langle A^{1/2}-B^{1/2},A-B\right\rangle_{\mathrm F}\\
    &=\left\langle A^{1/2}-B^{1/2},A^{1/2}(A^{1/2}-B^{1/2})+(A^{1/2}-B^{1/2})B^{1/2}\right\rangle_{\mathrm F}\\
    &=\left\langle A^{1/2}-B^{1/2},A^{1/2}(A^{1/2}-B^{1/2})\right\rangle_{\mathrm F}+\left\langle A^{1/2}-B^{1/2},(A^{1/2}-B^{1/2})B^{1/2}\right\rangle_{\mathrm F}\\
    &=\operatorname{tr}\left[(A^{1/2}-B^{1/2})A^{1/2}(A^{1/2}-B^{1/2})\right]+\operatorname{tr}\left[(A^{1/2}-B^{1/2})B^{1/2}(A^{1/2}-B^{1/2})\right]\\
    &\ge\operatorname{tr}\left[(A^{1/2}-B^{1/2})(\sqrt m I_q)(A^{1/2}-B^{1/2})\right]+\operatorname{tr}\left[(A^{1/2}-B^{1/2})(\sqrt m I_q)(A^{1/2}-B^{1/2})\right]\\
    &=2\sqrt m\,\operatorname{tr}\left[(A^{1/2}-B^{1/2})^2\right]\\
    &=2\sqrt m\,\|A^{1/2}-B^{1/2}\|_{\mathrm F}^2,\quad A,B\in\mathcal Y.
\end{split}
\end{align*}
where the last inequality follows from $A^{1/2},B^{1/2}\succeq\sqrt m I_q$. Therefore,
\begin{align}\label{eq:bw-upper-bound}
d_{\mathcal Y}(A,B)\le\frac{1}{2\sqrt m}\|A-B\|_{\mathrm F}\quad A,B\in\mathcal Y.
\end{align}
For the reverse inequality, let $A,B\in\mathcal Y$ be fixed. Let $U_{A,B}\in O(q)$ attain the minimum in \eqref{eq:factor_representation}. Then
\begin{align}\label{eq:factor_representation_equality}
d_{\mathcal Y}(A,B)=\|A^{1/2}-B^{1/2}U_{A,B}\|_{\mathrm F}.
\end{align}
Since $U_{A,B}\in O(q)$,
\begin{align}\label{eq:orthogonal_property}
B=(B^{1/2}U_{A,B})(B^{1/2}U_{A,B})^\top.
\end{align}
By \eqref{eq:factor_representation_equality}, \eqref{eq:orthogonal_property}, the triangle inequality, and the definition of the operator norm,
\begin{align*}
    \|A-B\|_{\mathrm F}
    &=\left\|A^{1/2}\left(A^{1/2}-B^{1/2}U_{A,B}\right)^\top+\left(A^{1/2}-B^{1/2}U_{A,B}\right)\left(B^{1/2}U_{A,B}\right)^\top\right\|_{\mathrm F}\\
    &\le\left\|A^{1/2}\left(A^{1/2}-B^{1/2}U_{A,B}\right)^\top\right\|_{\mathrm F}+\left\|\left(A^{1/2}-B^{1/2}U_{A,B}\right)\left(B^{1/2}U_{A,B}\right)^\top\right\|_{\mathrm F}\\
    &\le\left(\|A^{1/2}\|_{\mathrm{op}}+\|B^{1/2}U_{A,B}\|_{\mathrm{op}}\right)\|A^{1/2}-B^{1/2}U_{A,B}\|_{\mathrm F}\\
    &=\left(\|A^{1/2}\|_{\mathrm{op}}+\|B^{1/2}\|_{\mathrm{op}}\right)\|A^{1/2}-B^{1/2}U_{A,B}\|_{\mathrm F}\\
    &\le2\sqrt M\,d_{\mathcal Y}(A,B).
\end{align*}
where the last inequality follows from $A,B\preceq MI_q$. Since this holds for arbitrary $A,B\in\mathcal Y$,
\begin{align}\label{eq:bw-lower-bound}
d_{\mathcal Y}(A,B)\ge\frac{1}{2\sqrt M}\|A-B\|_{\mathrm F},\quad A,B\in\mathcal Y.
\end{align}
Combining \eqref{eq:bw-upper-bound} and \eqref{eq:bw-lower-bound}, we obtain
\begin{align}\label{eq:bw-frobenius-equivalence}
\frac{1}{2\sqrt M}\|A-B\|_{\mathrm F}\le d_{\mathcal Y}(A,B)\le\frac{1}{2\sqrt m}\|A-B\|_{\mathrm F},\quad A,B\in\mathcal Y.
\end{align}
Hence the Bures--Wasserstein and Frobenius metrics are uniformly equivalent on $\mathcal Y$. Since $\mathcal Y$ is compact in the Frobenius metric, it is also compact, complete, and separable under the Bures--Wasserstein metric.

We next verify Conditions~\ref{ass:M1} and \ref{ass:M2}. Let $A\in\mathcal Y$ be fixed, and define
\begin{align*}
    f_A(W):=d_{\mathcal Y}^2(A,W),\quad W\in\mathcal Y.
\end{align*}
Let $\mathrm{Sym}(q)$ be the space of $q\times q$ symmetric matrices. For $W\in\mathcal Y$ and $H\in\mathrm{Sym}(q)$, consider the second directional derivative of $f_A$ with respect to $W$ in the direction $H$. Write
\begin{align*}
\left(A^{1/2}WA^{1/2}\right)^{1/2}
=U_{A,W}\Lambda_{A,W}U_{A,W}^\top,
\quad
\Lambda_{A,W}
=\operatorname{diag}\left(\lambda_{A,W}^{(1)},\ldots,\lambda_{A,W}^{(q)}\right),
\quad W\in\mathcal Y,
\end{align*}
for the spectral decomposition of $\left(A^{1/2}WA^{1/2}\right)^{1/2}$, where $U_{A,W}\in O(q)$. Using the differential of the matrix square-root function, we obtain
\begin{align}\label{eq:bw-hessian}
D_W^2f_A(W)[H,H]
=\sum_{i,j=1}^q
\frac{\left[U_{A,W}^\top A^{1/2}HA^{1/2}U_{A,W}\right]_{ij}^2}
{\lambda_{A,W}^{(i)}\lambda_{A,W}^{(j)}\left(\lambda_{A,W}^{(i)}+\lambda_{A,W}^{(j)}\right)},\quad W\in\mathcal Y,\quad H\in\mathrm{Sym}(q).
\end{align}
For more details, see Section~6 of \cite{BhatiaJainLim2019}. Note that
\begin{align*}
m^2I_q\preceq mA\preceq A^{1/2}WA^{1/2}\preceq MA\preceq M^2I_q,\quad W\in\mathcal Y.
\end{align*}
By the operator monotonicity of the matrix square-root function,
\begin{align}\label{eq:matrix_square_root_bound}
mI_q\preceq\left(A^{1/2}WA^{1/2}\right)^{1/2}=U_{A,W}\Lambda_{A,W}U_{A,W}^\top\preceq MI_q,\quad W\in\mathcal Y.
\end{align}
For more details, see Section~1.5 of \cite{Bhatia2007}. Therefore,
\begin{align}\label{eq:spectral_eigen_value_bound}
    m\le\lambda_{A,W}^{(i)}\le M,\quad i=1,\cdots,q,\quad W\in\mathcal Y.
\end{align}
Moreover, $A\in\mathcal Y$ and the definition of the operator norm give
\begin{align}\label{eq:frobenius_operator_norm_bound}
\begin{split}
\|H\|_{\mathrm F}&=\left\|A^{-1/2}(A^{1/2}HA^{1/2})A^{-1/2}\right\|_{\mathrm F}\\
&\le\|A^{-1/2}\|_{\mathrm{op}}^2\|A^{1/2}HA^{1/2}\|_{\mathrm F}\\
&\le\frac1m\|A^{1/2}HA^{1/2}\|_{\mathrm F},\quad H\in\mathrm{Sym}(q).
\end{split}
\end{align}
Combining \eqref{eq:bw-hessian}, \eqref{eq:spectral_eigen_value_bound}, and \eqref{eq:frobenius_operator_norm_bound}, and using the orthogonal invariance of the Frobenius norm, we obtain
\begin{align}\label{eq:bw-hessian-lower}
\begin{split}
D_W^2f_A(W)[H,H]
&\ge\frac{1}{2M^3}\sum_{i,j=1}^q
\left[U_{A,W}^\top A^{1/2}HA^{1/2}U_{A,W}\right]_{ij}^2\\
&=\frac{1}{2M^3}\left\|U_{A,W}^\top A^{1/2}HA^{1/2}U_{A,W}\right\|_{\mathrm F}^2\\
&=\frac{1}{2M^3}\|A^{1/2}HA^{1/2}\|_{\mathrm F}^2\\
&\ge\frac{m^2}{2M^3}\|H\|_{\mathrm F}^2,\quad A,W\in\mathcal Y,\quad H\in\mathrm{Sym}(q).
\end{split}
\end{align}
Now let $P$ be any probability measure supported on $\mathcal Y$ and define
\begin{align*}
F_P(W):=\int_{\mathcal Y}f_A(W)\,dP(A),\quad W\in\mathcal Y.
\end{align*}
Since $\mathcal Y$ is compact and $F_P$ is continuous, there exists a minimizer $W_P\in\mathcal Y$. Also, since $\mathcal Y$ is convex in the Euclidean space of symmetric matrices, $W_P+t(W-W_P)\in\mathcal Y$ for every $W\in\mathcal Y$ and $t\in[0,1]$. For any given $W\in\mathcal Y$, define
\begin{align*}
g_W(t):=F_P\left(W_P+t(W-W_P)\right),\quad t\in[0,1].
\end{align*}
Since $g_W$ is minimized at $t=0$, $g_W'(0+)\ge0$ implies
\begin{align}\label{eq:bw-first-order-optimality}
\left\langle\nabla F_P(W_P),W-W_P\right\rangle_{\mathrm F}\ge0,\quad W\in\mathcal Y.
\end{align}
Integrating \eqref{eq:bw-hessian-lower} with respect to $P$ yields
\begin{align}\label{eq:bw-population-hessian-lower}
D_W^2F_P(W)[H,H]\ge\frac{m^2}{2M^3}\|H\|_{\mathrm F}^2,\quad W\in\mathcal Y,\quad H\in\mathrm{Sym}(q).
\end{align}
By \eqref{eq:bw-frobenius-equivalence}, \eqref{eq:bw-first-order-optimality}, and \eqref{eq:bw-population-hessian-lower}, Taylor's formula with integral remainder gives
\begin{align*}
\begin{split}
F_P(W)-F_P(W_P)
&=g_W(1)-g_W(0)\\
&=g_W'(0+)+\int_0^1(1-t)g_W''(t)\,dt\\
&=\left\langle\nabla F_P(W_P),W-W_P\right\rangle_{\mathrm F}\\
&\quad+\int_0^1(1-t)D_W^2F_P\left(W_P+t(W-W_P)\right)[W-W_P,W-W_P]\,dt\\
&\ge\frac{m^2}{2M^3}\|W-W_P\|_{\mathrm F}^2\int_0^1(1-t)\,dt\\
&=\frac{m^2}{4M^3}\|W-W_P\|_{\mathrm F}^2\\
&\ge\left(\frac{m}{M}\right)^3d_{\mathcal Y}^2(W,W_P),\quad W\in\mathcal Y.
\end{split}
\end{align*}
Each criterion in Condition~\ref{ass:M1}(i)--(iv) can be written as $F_P$ for an appropriate probability measure $P$ supported on $\mathcal Y$. Since $m/M>0$, the above inequality also implies that $W_P$ is unique. Therefore, Conditions~\ref{ass:M1} and \ref{ass:M2} hold with the common constant $\kappa_{\mathcal Y}=\left(m/M\right)^3$.

Now we verify Condition~\ref{ass:M3}. Since $\sqrt m I_q\preceq A^{1/2},B^{1/2}\preceq\sqrt M I_q$,
\begin{align}\label{eq:sqrt_root_subtraction_bound}
(\sqrt m-\sqrt M) I_q \preceq A^{1/2}-B^{1/2}\preceq (\sqrt M-\sqrt m) I_q,\quad A,B\in\mathcal Y.
\end{align}
By \eqref{eq:factor_representation} and \eqref{eq:sqrt_root_subtraction_bound}, taking $U=I_q$ in \eqref{eq:factor_representation} gives
\begin{align*}
d_{\mathcal Y}(A,B)\le\|A^{1/2}-B^{1/2}\|_{\mathrm F}\le\sqrt{q}\|A^{1/2}-B^{1/2}\|_{\mathrm{OP}}\le\sqrt q\,(\sqrt M-\sqrt m),\quad A,B\in\mathcal Y.
\end{align*}
Since $Y,m_\oplus(x_0)\in\mathcal Y$,
\begin{align*}
M_\oplus\left(x,m_\oplus(x_0)\right)=\E\left[d_{\mathcal Y}^2\left(Y,m_\oplus(x_0)\right)\mid X=x\right]\le q(\sqrt M-\sqrt m)^2.
\end{align*}
Therefore, Condition~\ref{ass:M3} holds with $\sigma=\sqrt q\,(\sqrt M-\sqrt m)$.

Finally, we verify Condition~\ref{ass:M4}. The gradient of $f_A(W)$ with respect to $W$ is
\begin{align}\label{eq:bw-gradient}
\nabla_Wf_A(W)=I_q-W^{-1/2}(W^{1/2}AW^{1/2})^{1/2}W^{-1/2}.
\end{align}
For more details, see Section~6 of \cite{BhatiaJainLim2019}. For $A,W\in\mathcal Y$ and $H\in\mathrm{Sym}(q)$, the standard differential formula for the matrix square-root function gives that the derivative of $(W^{1/2}AW^{1/2})^{1/2}$ with respect to $A$ in the direction $H$ is the unique solution $L_{A,W}(H)$ of
\begin{align}\label{eq:bw-sylvester-cross}
(W^{1/2}AW^{1/2})^{1/2}L_{A,W}(H)+L_{A,W}(H)(W^{1/2}AW^{1/2})^{1/2}=W^{1/2}HW^{1/2}.
\end{align}
It follows from \eqref{eq:bw-gradient} and \eqref{eq:bw-sylvester-cross} that
\begin{align}\label{eq:bw-gradient-cross-derivative}
D_A\nabla_Wf_A(W)[H]=-W^{-1/2}L_{A,W}(H)W^{-1/2}.
\end{align}
By \eqref{eq:matrix_square_root_bound}, with $A$ and $W$ interchanged,
\begin{align}\label{eq:bw-sylvester-bound-1}
(W^{1/2}AW^{1/2})^{1/2}\succeq mI_q,\quad A,W\in\mathcal Y.
\end{align}
Also, since $W\preceq MI_q$, the definition of the operator norm gives
\begin{align}\label{eq:bw-sylvester-bound-2}
\|W^{1/2}HW^{1/2}\|_{\mathrm F}
\le\|W^{1/2}\|_{\mathrm{op}}^2\|H\|_{\mathrm F}
\le M\|H\|_{\mathrm F},\quad W\in\mathcal Y,\quad H\in\mathrm{Sym}(q).
\end{align}
By \eqref{eq:bw-sylvester-cross}, \eqref{eq:bw-sylvester-bound-1}, and the Cauchy--Schwarz inequality,
\begin{align}\label{eq:bw-sylvester-bound-4}
\begin{split}
\|L_{A,W}(H)\|_{\mathrm F}\|W^{1/2}HW^{1/2}\|_{\mathrm F}
&\ge \left\langle L_{A,W}(H),W^{1/2}HW^{1/2}\right\rangle_{\mathrm F}\\
&=\left\langle L_{A,W}(H),(W^{1/2}AW^{1/2})^{1/2}L_{A,W}(H)\right\rangle_{\mathrm F}\\
&\quad+\left\langle L_{A,W}(H),L_{A,W}(H)(W^{1/2}AW^{1/2})^{1/2}\right\rangle_{\mathrm F}\\
&\ge m\|L_{A,W}(H)\|_{\mathrm F}^2+m\|L_{A,W}(H)\|_{\mathrm F}^2\\
&=2m\|L_{A,W}(H)\|_{\mathrm F}^2,\quad A,W\in\mathcal Y,\quad H\in\mathrm{Sym}(q).
\end{split}
\end{align}
Therefore, by \eqref{eq:bw-sylvester-bound-2} and \eqref{eq:bw-sylvester-bound-4},
\begin{align}\label{eq:bw-sylvester-bound-3}
\|L_{A,W}(H)\|_{\mathrm F}
\le\frac{1}{2m}\|W^{1/2}HW^{1/2}\|_{\mathrm F}
\le\frac{M}{2m}\|H\|_{\mathrm F},
\quad A,W\in\mathcal Y,\quad H\in\mathrm{Sym}(q).
\end{align}
Combining \eqref{eq:bw-gradient-cross-derivative} and \eqref{eq:bw-sylvester-bound-3}, the definition of the operator norm gives
\begin{align}\label{eq:bw-cross-hessian}
\begin{split}
\|D_A\nabla_Wf_A(W)[H]\|_{\mathrm F}
&\le\|W^{-1/2}\|_{\mathrm{op}}^2\|L_{A,W}(H)\|_{\mathrm F}\\
&\le\frac{M}{2m^2}\|H\|_{\mathrm F},
\quad A,W\in\mathcal Y,\quad H\in\mathrm{Sym}(q).
\end{split}
\end{align}
Let $A,A',W,W'\in\mathcal Y$ be fixed. Define
\begin{align*}
A_s:=A'+s(A-A'),\qquad W_t:=W'+t(W-W'),\quad s,t\in[0,1].
\end{align*}
Since $\mathcal Y$ is convex, $A_s,W_t\in\mathcal Y$ for all $s,t\in[0,1]$. By the chain rule,
\begin{align*}
\frac{\partial}{\partial t}f_{A_s}(W_t)&=\left\langle\left.\nabla_W f_{A_s}(W)\right|_{W=W_t},W-W'\right\rangle_{\mathrm F},\\
\frac{\partial^2}{\partial s\,\partial t}f_{A_s}(W_t)
&=
\left\langle
\left.
D_A\nabla_Wf_A(W)[A-A']
\right|_{(A,W)=(A_s,W_t)},
W-W'
\right\rangle_{\mathrm F},\quad s,t\in[0,1].
\end{align*}
Therefore, applying the fundamental theorem of calculus twice,
\begin{align} \label{eq:fundamental_calculus}
\begin{split}
&d_{\mathcal Y}^2(A,W)-d_{\mathcal Y}^2(A,W')-d_{\mathcal Y}^2(A',W)+d_{\mathcal Y}^2(A',W')\\
&=\int_0^1\left(\left\langle\left.\nabla_W f_{A}(W)\right|_{W=W_t},W-W'\right\rangle_{\mathrm F}-\left\langle\left.\nabla_W f_{A'}(W)\right|_{W=W_t},W-W'\right\rangle_{\mathrm F}\right)\,dt\\
&=\int_0^1\int_0^1\left\langle\left.D_A\nabla_Wf_A(W)[A-A']\right|_{(A,W)=(A_s,W_t)},W-W'\right\rangle_{\mathrm F}\,ds\,dt.
\end{split}
\end{align}
Hence, by \eqref{eq:bw-frobenius-equivalence}, \eqref{eq:bw-cross-hessian}, \eqref{eq:fundamental_calculus}, and the Cauchy--Schwarz inequality,
\begin{align*}
\begin{split}
&\left|
d_{\mathcal Y}^2(A,W)-d_{\mathcal Y}^2(A,W')
-d_{\mathcal Y}^2(A',W)+d_{\mathcal Y}^2(A',W')
\right|\\
&\le
\int_0^1\int_0^1
\left\|
\left.
D_A\nabla_Wf_A(W)[A-A']
\right|_{(A,W)=(A_s,W_t)}
\right\|_{\mathrm F}
\|W-W'\|_{\mathrm F}
\,ds\,dt\\
&\le
\frac{M}{2m^2}
\|A-A'\|_{\mathrm F}
\|W-W'\|_{\mathrm F}\\
&\le
2\left(\frac{M}{m}\right)^2
d_{\mathcal Y}(A,A')
d_{\mathcal Y}(W,W'),
\end{split}
\end{align*}
Therefore, Condition~\ref{ass:M4} holds with $C_{\mathcal Y}=2\left(M/m\right)^2$.

However, since $\mathcal Y$ is bounded, it does not admit an isometric embedding of $\mathbb R$, and hence the embedded-Gaussian lower bounds do not apply to this example.
\end{example}

\section{Proofs for Section \ref{subsec:general-framework}}\label{sec:supp-weighted}\label{sec:supp-r2}

In this section, we prove Theorem~\ref{thm:r2-weighted-barycenter} and provide the corresponding conditional bounds for the block estimators. For the proof of the theorem, fix $N\in\mathbb N$, let $\bm\lambda\in\mathcal S_N$ be deterministic, and let $\widetilde{\bm Y}=(\widetilde Y_1,\ldots,\widetilde Y_N)$ be as in Theorem~\ref{thm:r2-weighted-barycenter}. Let $\widetilde{\bm Y}'=(\widetilde Y_1',\ldots,\widetilde Y_N')$ be an independent copy of $\widetilde{\bm Y}$. For $i=1,\ldots,N$, define
\begin{align*}
    \widetilde{\bm Y}^{(i)}
    :=\left(\widetilde Y_1,\ldots,\widetilde Y_{i-1},\widetilde Y_i',\widetilde Y_{i+1},\ldots,\widetilde Y_N\right).
\end{align*}

\begin{lemma}\label{lem:r2-integrability}
Under the conditions of Theorem~\ref{thm:r2-weighted-barycenter},
\begin{align}
    \kappa_{\mathcal Y}d_{\mathcal Y}^2\left(\overline b_{\oplus,\bm\lambda}(\widetilde{\bm Y}),\overline q_{\oplus,\bm\lambda}(\widetilde{\bm Y})\right)
    &\le\overline B_{\oplus,\bm\lambda}\left(\widetilde{\bm Y},\overline q_{\oplus,\bm\lambda}(\widetilde{\bm Y})\right), \label{eq:r2-integrability-part1}\\
    \kappa_{\mathcal Y}d_{\mathcal Y}^2\left(\overline b_{\oplus,\bm\lambda}(\widetilde{\bm Y}^{(i)}),\overline q_{\oplus,\bm\lambda}(\widetilde{\bm Y})\right)
    &\le\overline B_{\oplus,\bm\lambda}\left(\widetilde{\bm Y}^{(i)},\overline q_{\oplus,\bm\lambda}(\widetilde{\bm Y})\right),
    \quad i=1,\ldots,N. \label{eq:r2-integrability-part2}
\end{align}
In particular, for every $i=1,\ldots,N$,
\begin{align}
    \E\left[d_{\mathcal Y}^2\left(\overline b_{\oplus,\bm\lambda}(\widetilde{\bm Y}),\overline q_{\oplus,\bm\lambda}(\widetilde{\bm Y})\right)\right]
    +\E\left[d_{\mathcal Y}^2\left(\overline b_{\oplus,\bm\lambda}(\widetilde{\bm Y}^{(i)}),\overline q_{\oplus,\bm\lambda}(\widetilde{\bm Y})\right)\right]
    &<\infty, \label{eq:r2-integrability-part3}\\
    \E\left[d_{\mathcal Y}(\widetilde Y_i,\widetilde Y_i')d_{\mathcal Y}\left(\overline b_{\oplus,\bm\lambda}(\widetilde{\bm Y}),\overline b_{\oplus,\bm\lambda}(\widetilde{\bm Y}^{(i)})\right)\right]
    &<\infty. \label{eq:r2-integrability-part4}
\end{align}
\end{lemma}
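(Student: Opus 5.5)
The two pointwise inequalities \eqref{eq:r2-integrability-part1}--\eqref{eq:r2-integrability-part2} follow directly from the variance inequality in Condition~\ref{ass:M2}(iii), applied with a deterministic vector of points. Fix a realization $\bm y\in\mathcal Y^N$ of $\widetilde{\bm Y}$ (respectively of $\widetilde{\bm Y}^{(i)}$). Condition~\ref{ass:M1}(iii) guarantees that $\overline b_{\oplus,\bm\lambda}(\bm y)$ exists. Apply Condition~\ref{ass:M2}(iii) with $\omega=\overline q_{\oplus,\bm\lambda}(\widetilde{\bm Y})$, which is a deterministic point by Condition~\ref{ass:M1}(iv). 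Dropping the nonnegative term $\overline B_{\oplus,\bm\lambda}(\bm y,\overline b_{\oplus,\bm\lambda}(\bm y))$ then gives
\begin{align*}
\kappa_{\mathcal Y}d_{\mathcal Y}^2\left(\overline b_{\oplus,\bm\lambda}(\bm y),\overline q_{\oplus,\bm\lambda}(\widetilde{\bm Y})\right)\le\overline B_{\oplus,\bm\lambda}\left(\bm y,\overline q_{\oplus,\bm\lambda}(\widetilde{\bm Y})\right).
\end{align*}
Substituting $\bm y=\widetilde{\bm Y}$ and $\bm y=\widetilde{\bm Y}^{(i)}$ yields both inequalities. Measurability of the minimizers is covered by the standing assumption of measurable versions.

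For \eqref{eq:r2-integrability-part3}, take expectations of these inequalities. Write $q:=\overline q_{\oplus,\bm\lambda}(\widetilde{\bm Y})$. Then
\begin{align*}
\E\left[\overline B_{\oplus,\bm\lambda}(\widetilde{\bm Y},q)\right]=\sum_{j}\lambda_j\E\left[d_{\mathcal Y}^2(\widetilde Y_j,q)\right]=\overline Q_{\oplus,\bm\lambda}(\widetilde{\bm Y},q).
\end{align*}
This quantity is finite because each $\widetilde Y_j\in\mathcal L_2(\mathcal Y)$: choose a reference point $y_0$ and use $d^2(\widetilde Y_j,q)\le 2d^2(\widetilde Y_j,y_0)+2d^2(y_0,q)$. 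The same computation applies to $\widetilde{\bm Y}^{(i)}$, since $\widetilde Y_i'$ has the same law as $\widetilde Y_i$ and therefore also lies in $\mathcal L_2(\mathcal Y)$. Dividing by $\kappa_{\mathcal Y}>0$ gives finiteness of both expectations.

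For \eqref{eq:r2-integrability-part4}, use the triangle inequality through $q$:
\begin{align*}
d_{\mathcal Y}\left(\overline b_{\oplus,\bm\lambda}(\widetilde{\bm Y}),\overline b_{\oplus,\bm\lambda}(\widetilde{\bm Y}^{(i)})\right)\le d_{\mathcal Y}\left(\overline b_{\oplus,\bm\lambda}(\widetilde{\bm Y}),q\right)+d_{\mathcal Y}\left(\overline b_{\oplus,\bm\lambda}(\widetilde{\bm Y}^{(i)}),q\right).
\end{align*}
Then apply Cauchy--Schwarz to each product with $d_{\mathcal Y}(\widetilde Y_i,\widetilde Y_i')$. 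The factor $\E[d_{\mathcal Y}^2(\widetilde Y_i,\widetilde Y_i')]\le 2\E[d^2(\widetilde Y_i,y_0)]+2\E[d^2(\widetilde Y_i',y_0)]$ is finite, and the other factors are finite by \eqref{eq:r2-integrability-part3}. No step here is a genuine obstacle. The only point needing care is that $q$ is deterministic, so that Condition~\ref{ass:M2}(iii) can be applied pathwise with a fixed $\omega$ and the expectation of $\overline B_{\oplus,\bm\lambda}$ identifies with $\overline Q_{\oplus,\bm\lambda}$.
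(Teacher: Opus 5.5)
Your proposal is correct and follows the paper's proof essentially step for step. You get the pointwise inequalities from Condition~\ref{ass:M2}(iii) with the deterministic $\omega=\overline q_{\oplus,\bm\lambda}(\widetilde{\bm Y})$ and nonnegativity of $\overline B_{\oplus,\bm\lambda}$, finiteness from $\mathcal L_2(\mathcal Y)$ membership, and the last bound from the triangle inequality through $\overline q_{\oplus,\bm\lambda}(\widetilde{\bm Y})$ plus Cauchy--Schwarz; the only cosmetic difference is that the paper first bounds $\E[d_{\mathcal Y}^2(\overline b_{\oplus,\bm\lambda}(\widetilde{\bm Y}),\overline b_{\oplus,\bm\lambda}(\widetilde{\bm Y}^{(i)}))]$ via $(a+b)^2\le 2a^2+2b^2$ and applies Cauchy--Schwarz once, whereas you split the product before applying it.
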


\begin{proof}[Proof of \Cref{lem:r2-integrability}]
By Condition~\ref{ass:M2}(iii) and the nonnegativity of $\overline B_{\oplus,\bm\lambda}$,
\begin{align*}
    \kappa_{\mathcal Y}d_{\mathcal Y}^2\left(\overline b_{\oplus,\bm\lambda}(\widetilde{\bm Y}),\overline q_{\oplus,\bm\lambda}(\widetilde{\bm Y})\right)
    &\le\overline B_{\oplus,\bm\lambda}\left(\widetilde{\bm Y},\overline q_{\oplus,\bm\lambda}(\widetilde{\bm Y})\right)
    -\overline B_{\oplus,\bm\lambda}\left(\widetilde{\bm Y},\overline b_{\oplus,\bm\lambda}(\widetilde{\bm Y})\right)\\
    &\le\overline B_{\oplus,\bm\lambda}\left(\widetilde{\bm Y},\overline q_{\oplus,\bm\lambda}(\widetilde{\bm Y})\right).
\end{align*}
This proves \eqref{eq:r2-integrability-part1}. The same argument with $\widetilde{\bm Y}^{(1)}, \ldots, \widetilde{\bm Y}^{(N)}$ proves \eqref{eq:r2-integrability-part2}.

Note that $\overline q_{\oplus,\bm\lambda}(\widetilde{\bm Y})$ is deterministic since it depends only on the distributions of $\widetilde Y_1,\ldots,\widetilde Y_N$ and $\bm\lambda$. Since $\widetilde Y_1,\ldots,\widetilde Y_N\in\mathcal L_2(\mathcal Y)$,
\begin{align*}
    \E\left[\overline B_{\oplus,\bm\lambda}\left(\widetilde{\bm Y},\overline q_{\oplus,\bm\lambda}(\widetilde{\bm Y})\right)\right]
    =\sum_{i=1}^N\lambda_i\E\left[d_{\mathcal Y}^2\left(\widetilde Y_i,\overline q_{\oplus,\bm\lambda}(\widetilde{\bm Y})\right)\right]<\infty.
\end{align*}
The same conclusion holds with $\widetilde{\bm Y}^{(i)}$ in place of $\widetilde{\bm Y}$ for every $i=1,\ldots,N$. Therefore, the preceding bound, \eqref{eq:r2-integrability-part1}, and \eqref{eq:r2-integrability-part2} imply \eqref{eq:r2-integrability-part3}.

Moreover, by the triangle inequality and the Cauchy--Schwarz inequality, for every $i=1,\ldots,N$,
\begin{align*}
    d_{\mathcal Y}^2\left(\overline b_{\oplus,\bm\lambda}(\widetilde{\bm Y}),\overline b_{\oplus,\bm\lambda}(\widetilde{\bm Y}^{(i)})\right)
    &\le2d_{\mathcal Y}^2\left(\overline b_{\oplus,\bm\lambda}(\widetilde{\bm Y}),\overline q_{\oplus,\bm\lambda}(\widetilde{\bm Y})\right)
    +2d_{\mathcal Y}^2\left(\overline b_{\oplus,\bm\lambda}(\widetilde{\bm Y}^{(i)}),\overline q_{\oplus,\bm\lambda}(\widetilde{\bm Y})\right).
\end{align*}
Thus, the above and \eqref{eq:r2-integrability-part3} imply, for every $i=1,\ldots,N$,
\begin{align}\label{eq:r2-integrability-proof-part1}
    \E\left[d_{\mathcal Y}^2\left(\overline b_{\oplus,\bm\lambda}(\widetilde{\bm Y}),\overline b_{\oplus,\bm\lambda}(\widetilde{\bm Y}^{(i)})\right)\right]<\infty.
\end{align}
For every $i=1,\ldots,N$, $\widetilde Y_i'$ has the same distribution as $\widetilde Y_i$. Therefore, by the triangle inequality and the Cauchy--Schwarz inequality,
\begin{align}\label{eq:r2-integrability-proof-part2}
    \E\left[d_{\mathcal Y}^2(\widetilde Y_i,\widetilde Y_i')\right]
    &\le2\E\left[d_{\mathcal Y}^2\left(\widetilde Y_i,\overline q_{\oplus,\bm\lambda}(\widetilde{\bm Y})\right)\right]
    +2\E\left[d_{\mathcal Y}^2\left(\widetilde Y_i',\overline q_{\oplus,\bm\lambda}(\widetilde{\bm Y})\right)\right]<\infty.
\end{align}
Thus, \eqref{eq:r2-integrability-proof-part1}, \eqref{eq:r2-integrability-proof-part2}, and the Cauchy--Schwarz inequality prove \eqref{eq:r2-integrability-part4}.
\end{proof}

\begin{lemma}\label{lem:r2-replace}
Under the conditions of Theorem~\ref{thm:r2-weighted-barycenter},
\begin{align*}
    d_{\mathcal Y}\left(\overline b_{\oplus,\bm\lambda}(\widetilde{\bm Y}),\overline b_{\oplus,\bm\lambda}(\widetilde{\bm Y}^{(i)})\right)
    \le\frac{C_{\mathcal Y}\lambda_i}{2\kappa_{\mathcal Y}}d_{\mathcal Y}(\widetilde Y_i,\widetilde Y_i'), \quad i=1,\ldots,N.
\end{align*}
\end{lemma}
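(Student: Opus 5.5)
The plan is a direct stability argument built from the curvature condition \ref{ass:M2}(iii) applied twice, combined with the quadruple inequality \ref{ass:M4}. The statement is pathwise, so no expectations enter. Fix $i$, and write $b:=\overline b_{\oplus,\bm\lambda}(\widetilde{\bm Y})$ and $b^{(i)}:=\overline b_{\oplus,\bm\lambda}(\widetilde{\bm Y}^{(i)})$. Both exist and are unique by Condition~\ref{ass:M1}(iii), since the realizations lie in $\mathcal Y^N$.

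First, apply Condition~\ref{ass:M2}(iii) to the data $\widetilde{\bm Y}$ with $\omega=b^{(i)}$, and to the data $\widetilde{\bm Y}^{(i)}$ with $\omega=b$. This gives
\begin{align*}
\overline B_{\oplus,\bm\lambda}(\widetilde{\bm Y},b^{(i)})-\overline B_{\oplus,\bm\lambda}(\widetilde{\bm Y},b)&\ge\kappa_{\mathcal Y}d_{\mathcal Y}^2(b,b^{(i)}),\\
\overline B_{\oplus,\bm\lambda}(\widetilde{\bm Y}^{(i)},b)-\overline B_{\oplus,\bm\lambda}(\widetilde{\bm Y}^{(i)},b^{(i)})&\ge\kappa_{\mathcal Y}d_{\mathcal Y}^2(b,b^{(i)}).
\end{align*}
Adding the two inequalities, every summand with index $j\ne i$ cancel, because $\widetilde{\bm Y}$ and $\widetilde{\bm Y}^{(i)}$ agree off coordinate $i$. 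What remains is
\begin{align*}
2\kappa_{\mathcal Y}d_{\mathcal Y}^2(b,b^{(i)})\le\lambda_i\Bigl(d_{\mathcal Y}^2(\widetilde Y_i,b^{(i)})-d_{\mathcal Y}^2(\widetilde Y_i,b)-d_{\mathcal Y}^2(\widetilde Y_i',b^{(i)})+d_{\mathcal Y}^2(\widetilde Y_i',b)\Bigr).
\end{align*}

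Next, bound the bracket by Condition~\ref{ass:M4} with $y=\widetilde Y_i$, $y'=\widetilde Y_i'$, $\omega=b^{(i)}$ and $\omega'=b$. This yields
\[
2\kappa_{\mathcal Y}d_{\mathcal Y}^2(b,b^{(i)})\le C_{\mathcal Y}\lambda_i\,d_{\mathcal Y}(\widetilde Y_i,\widetilde Y_i')\,d_{\mathcal Y}(b,b^{(i)}).
\]
If $d_{\mathcal Y}(b,b^{(i)})=0$ the claim is trivial. Otherwise, dividing by $2\kappa_{\mathcal Y}d_{\mathcal Y}(b,b^{(i)})$ gives the stated bound.

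The main point requiring care is the cancellation step. One must check that each $\overline B$ value is finite, which holds because it is a finite sum of finite squared distances. Then subtracting and rearranging is legitimate without any integrability concern. I expect no further obstacle: the statement holds for every realization, so measurability plays no role here.
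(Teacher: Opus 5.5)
Your proof is correct and follows essentially the same argument as the paper. You apply Condition~\ref{ass:M2}(iii) at both $\widetilde{\bm Y}$ and $\widetilde{\bm Y}^{(i)}$, add the two inequalities so the coordinates $j\ne i$ cancel, bound the remaining $\lambda_i$-weighted term by Condition~\ref{ass:M4}, and divide by $2\kappa_{\mathcal Y}d_{\mathcal Y}(b,b^{(i)})$.
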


\begin{proof}[Proof of \Cref{lem:r2-replace}]
Let $i=1,\ldots,N$ be given. Applying Condition~\ref{ass:M2}(iii) to $\widetilde{\bm Y}$ and $\widetilde{\bm Y}^{(i)}$ gives
\begin{align*}
    \kappa_{\mathcal Y}d_{\mathcal Y}^2\left(\overline b_{\oplus,\bm\lambda}(\widetilde{\bm Y}),\overline b_{\oplus,\bm\lambda}(\widetilde{\bm Y}^{(i)})\right)
    &\le\overline B_{\oplus,\bm\lambda}\left(\widetilde{\bm Y},\overline b_{\oplus,\bm\lambda}(\widetilde{\bm Y}^{(i)})\right)
    -\overline B_{\oplus,\bm\lambda}\left(\widetilde{\bm Y},\overline b_{\oplus,\bm\lambda}(\widetilde{\bm Y})\right),\\
    \kappa_{\mathcal Y}d_{\mathcal Y}^2\left(\overline b_{\oplus,\bm\lambda}(\widetilde{\bm Y}),\overline b_{\oplus,\bm\lambda}(\widetilde{\bm Y}^{(i)})\right)
    &\le\overline B_{\oplus,\bm\lambda}\left(\widetilde{\bm Y}^{(i)},\overline b_{\oplus,\bm\lambda}(\widetilde{\bm Y})\right)
    -\overline B_{\oplus,\bm\lambda}\left(\widetilde{\bm Y}^{(i)},\overline b_{\oplus,\bm\lambda}(\widetilde{\bm Y}^{(i)})\right).
\end{align*}
Adding the two inequalities above gives
\begin{align*}
    &2\kappa_{\mathcal Y}d_{\mathcal Y}^2\left(\overline b_{\oplus,\bm\lambda}(\widetilde{\bm Y}),\overline b_{\oplus,\bm\lambda}(\widetilde{\bm Y}^{(i)})\right) \\
    &\le\lambda_i\left(d_{\mathcal Y}^2\left(\widetilde Y_i,\overline b_{\oplus,\bm\lambda}(\widetilde{\bm Y}^{(i)})\right)
    -d_{\mathcal Y}^2\left(\widetilde Y_i,\overline b_{\oplus,\bm\lambda}(\widetilde{\bm Y})\right)-d_{\mathcal Y}^2\left(\widetilde Y_i',\overline b_{\oplus,\bm\lambda}(\widetilde{\bm Y}^{(i)})\right)
    +d_{\mathcal Y}^2\left(\widetilde Y_i',\overline b_{\oplus,\bm\lambda}(\widetilde{\bm Y})\right)\right)\\
    &\le\lambda_i\left|d_{\mathcal Y}^2\left(\widetilde Y_i,\overline b_{\oplus,\bm\lambda}(\widetilde{\bm Y}^{(i)})\right)
    -d_{\mathcal Y}^2\left(\widetilde Y_i,\overline b_{\oplus,\bm\lambda}(\widetilde{\bm Y})\right)-d_{\mathcal Y}^2\left(\widetilde Y_i',\overline b_{\oplus,\bm\lambda}(\widetilde{\bm Y}^{(i)})\right)
    +d_{\mathcal Y}^2\left(\widetilde Y_i',\overline b_{\oplus,\bm\lambda}(\widetilde{\bm Y})\right)\right|\\
    &\le C_{\mathcal Y}\lambda_i d_{\mathcal Y}(\widetilde Y_i,\widetilde Y_i')
    d_{\mathcal Y}\left(\overline b_{\oplus,\bm\lambda}(\widetilde{\bm Y}),\overline b_{\oplus,\bm\lambda}(\widetilde{\bm Y}^{(i)})\right),
\end{align*}
where the last inequality follows from Condition~\ref{ass:M4}. Since all quantities are nonnegative, this proves the lemma.
\end{proof}

\begin{lemma}\label{lem:r2-gap}
Under the conditions of Theorem~\ref{thm:r2-weighted-barycenter},
\begin{align*}
    \E\left[\overline Q_{\oplus,\bm\lambda}\left(\widetilde{\bm Y},\overline b_{\oplus,\bm\lambda}(\widetilde{\bm Y})\right)
    -\overline B_{\oplus,\bm\lambda}\left(\widetilde{\bm Y},\overline b_{\oplus,\bm\lambda}(\widetilde{\bm Y})\right)\right]
    \le\frac{C_{\mathcal Y}^2}{4\kappa_{\mathcal Y}}
    \sum_{i=1}^N\lambda_i^2\E\left[d_{\mathcal Y}^2(\widetilde Y_i,\widetilde Y_i')\right].
\end{align*}
\end{lemma}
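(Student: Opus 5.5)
The plan is to use the standard replace-one (leave-one-out) symmetrization argument for the generalization gap of an empirical minimizer. Write $\widehat b:=\overline b_{\oplus,\bm\lambda}(\widetilde{\bm Y})$ and $\widehat b^{(i)}:=\overline b_{\oplus,\bm\lambda}(\widetilde{\bm Y}^{(i)})$. Since $\widetilde Y_i'$ is independent of $\widetilde{\bm Y}$ and has the law of $\widetilde Y_i$, and $\widehat b$ is a function of $\widetilde{\bm Y}$ only, we have
\begin{align*}
    \E\left[\overline Q_{\oplus,\bm\lambda}(\widetilde{\bm Y},\widehat b)\right]=\sum_{i=1}^N\lambda_i\E\left[d_{\mathcal Y}^2(\widetilde Y_i',\widehat b)\right].
\end{align*}
Here we take the conditional expectation given $\widetilde{\bm Y}$ inside $\overline Q$. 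Next, by exchangeability of the pair $(\widetilde Y_i,\widetilde Y_i')$ and independence of the coordinates, $(\widetilde{\bm Y},\widetilde Y_i')$ has the same law as $(\widetilde{\bm Y}^{(i)},\widetilde Y_i)$. Hence $\E[d_{\mathcal Y}^2(\widetilde Y_i,\widehat b)]=\E[d_{\mathcal Y}^2(\widetilde Y_i',\widehat b^{(i)})]$. Combining the two displays, the left-hand side of the lemma becomes
\begin{align*}
    \sum_{i=1}^N\lambda_i\E\left[d_{\mathcal Y}^2(\widetilde Y_i',\widehat b)-d_{\mathcal Y}^2(\widetilde Y_i',\widehat b^{(i)})\right].
\end{align*}
Finiteness of each term, needed to split the expectations, follows from \Cref{lem:r2-integrability}. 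One has to check that the relevant second moments, such as $\E[d_{\mathcal Y}^2(\widetilde Y_i',\widehat b)]$, are finite via the triangle inequality through $\overline q_{\oplus,\bm\lambda}(\widetilde{\bm Y})$ and \eqref{eq:r2-integrability-part3}.

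The difference above is a single difference, not a quadruple, so Condition~\ref{ass:M4} does not apply directly. The trick is to add and subtract the same expression with $\widetilde Y_i$ in place of $\widetilde Y_i'$. Symmetry again gives $\E[d_{\mathcal Y}^2(\widetilde Y_i,\widehat b)-d_{\mathcal Y}^2(\widetilde Y_i,\widehat b^{(i)})]=-\E[d_{\mathcal Y}^2(\widetilde Y_i',\widehat b)-d_{\mathcal Y}^2(\widetilde Y_i',\widehat b^{(i)})]$, by swapping $\widetilde Y_i\leftrightarrow\widetilde Y_i'$, which exchanges $\widehat b\leftrightarrow\widehat b^{(i)}$. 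So each summand equals
\begin{align*}
    \frac12\E\left[d_{\mathcal Y}^2(\widetilde Y_i',\widehat b)-d_{\mathcal Y}^2(\widetilde Y_i',\widehat b^{(i)})-d_{\mathcal Y}^2(\widetilde Y_i,\widehat b)+d_{\mathcal Y}^2(\widetilde Y_i,\widehat b^{(i)})\right],
\end{align*}
which Condition~\ref{ass:M4} bounds by $\frac{C_{\mathcal Y}}2\E[d_{\mathcal Y}(\widetilde Y_i,\widetilde Y_i')d_{\mathcal Y}(\widehat b,\widehat b^{(i)})]$. This expectation is finite by \eqref{eq:r2-integrability-part4}.

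Finally, applying \Cref{lem:r2-replace} gives the bound $\frac{C_{\mathcal Y}}2\cdot\frac{C_{\mathcal Y}\lambda_i}{2\kappa_{\mathcal Y}}\E[d_{\mathcal Y}^2(\widetilde Y_i,\widetilde Y_i')]$. Multiplying by $\lambda_i$ and summing yields exactly $\frac{C_{\mathcal Y}^2}{4\kappa_{\mathcal Y}}\sum_i\lambda_i^2\E[d_{\mathcal Y}^2(\widetilde Y_i,\widetilde Y_i')]$. I expect the main obstacle to be the distributional-symmetry bookkeeping. Specifically, one must justify rigorously that $\widehat b$ and $\widehat b^{(i)}$ are measurable functions with $(\widetilde{\bm Y},\widetilde{\bm Y}')\mapsto$ swap being law-preserving; the paper's standing measurability assumption handles this. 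One must also ensure every expectation is finite before splitting sums, which is where \Cref{lem:r2-integrability} enters.
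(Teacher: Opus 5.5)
Your proof is correct and follows the paper's argument. It uses the same independence step, the same swap symmetry $\widetilde Y_i\leftrightarrow\widetilde Y_i'$ leading to the identical four-term expression, then Condition~\ref{ass:M4} and Lemma~\ref{lem:r2-replace}, with finiteness from Lemma~\ref{lem:r2-integrability}. The only difference is cosmetic: your intermediate rewriting as $\E[d_{\mathcal Y}^2(\widetilde Y_i',\widehat b)-d_{\mathcal Y}^2(\widetilde Y_i',\widehat b^{(i)})]$ is a harmless detour, since the paper symmetrizes $\E[d_{\mathcal Y}^2(\widetilde Y_i',\widehat b)-d_{\mathcal Y}^2(\widetilde Y_i,\widehat b)]$ directly and reaches the same quadruple.
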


\begin{proof}[Proof of \Cref{lem:r2-gap}]
By Lemma~\ref{lem:r2-integrability} and the triangle inequality, all expectations appearing below are finite, so the subtractions and rearrangements of the expectation terms are well-defined. By the definitions of $\overline Q_{\oplus,\bm\lambda}$ and $\overline B_{\oplus,\bm\lambda}$ and the independence of $\widetilde{\bm Y}'$ and $\widetilde{\bm Y}$,
\begin{align}
\begin{split}
    &\E\left[\overline Q_{\oplus,\bm\lambda}\left(\widetilde{\bm Y},\overline b_{\oplus,\bm\lambda}(\widetilde{\bm Y})\right)
    -\overline B_{\oplus,\bm\lambda}\left(\widetilde{\bm Y},\overline b_{\oplus,\bm\lambda}(\widetilde{\bm Y})\right)\right]\\
    &=\sum_{i=1}^N\lambda_i\E\left[
    d_{\mathcal Y}^2\left(\widetilde Y_i',\overline b_{\oplus,\bm\lambda}(\widetilde{\bm Y})\right)
    -d_{\mathcal Y}^2\left(\widetilde Y_i,\overline b_{\oplus,\bm\lambda}(\widetilde{\bm Y})\right)\right].
\end{split} \label{eq:r2-gap-proof-part1}
\end{align}
For every $i=1,\ldots,N$, interchanging $\widetilde Y_i$ and $\widetilde Y_i'$ preserves their joint distribution and interchanges $\overline b_{\oplus,\bm\lambda}(\widetilde{\bm Y})$ and $\overline b_{\oplus,\bm\lambda}(\widetilde{\bm Y}^{(i)})$. Therefore,
\begin{align}
\begin{split} 
    &\E\left[
    d_{\mathcal Y}^2\left(\widetilde Y_i',\overline b_{\oplus,\bm\lambda}(\widetilde{\bm Y})\right)
    -d_{\mathcal Y}^2\left(\widetilde Y_i,\overline b_{\oplus,\bm\lambda}(\widetilde{\bm Y})\right)\right]\\
    &=\frac12\E\left[
    d_{\mathcal Y}^2\left(\widetilde Y_i',\overline b_{\oplus,\bm\lambda}(\widetilde{\bm Y})\right)
    -d_{\mathcal Y}^2\left(\widetilde Y_i,\overline b_{\oplus,\bm\lambda}(\widetilde{\bm Y})\right)\right.\\
    &\quad\left.+d_{\mathcal Y}^2\left(\widetilde Y_i,\overline b_{\oplus,\bm\lambda}(\widetilde{\bm Y}^{(i)})\right)
    -d_{\mathcal Y}^2\left(\widetilde Y_i',\overline b_{\oplus,\bm\lambda}(\widetilde{\bm Y}^{(i)})\right)\right]\\
    &\le\frac{C_{\mathcal Y}}{2}\E\left[
    d_{\mathcal Y}(\widetilde Y_i,\widetilde Y_i')
    d_{\mathcal Y}\left(\overline b_{\oplus,\bm\lambda}(\widetilde{\bm Y}),\overline b_{\oplus,\bm\lambda}(\widetilde{\bm Y}^{(i)})\right)\right]\\
    &\le\frac{C_{\mathcal Y}^2\lambda_i}{4\kappa_{\mathcal Y}}
    \E\left[d_{\mathcal Y}^2(\widetilde Y_i,\widetilde Y_i')\right], \quad i=1, \ldots, N,
\end{split} \label{eq:r2-gap-proof-part2}
\end{align}
where the inequalities follow from Condition~\ref{ass:M4} and Lemma~\ref{lem:r2-replace}, respectively. Multiplying \eqref{eq:r2-gap-proof-part2} by $\lambda_i$, summing over $i=1,\ldots,N$, and using \eqref{eq:r2-gap-proof-part1} prove the lemma.
\end{proof}

\subsection{Proof of \Cref{thm:r2-weighted-barycenter}}
By Conditions~\ref{ass:M2}(iii)--(iv),
\begin{align}
\begin{split}
    \kappa_{\mathcal Y}d_{\mathcal Y}^2\left(\overline b_{\oplus,\bm\lambda}(\widetilde{\bm Y}),\overline q_{\oplus,\bm\lambda}(\widetilde{\bm Y})\right)
    &\le\overline Q_{\oplus,\bm\lambda}\left(\widetilde{\bm Y},\overline b_{\oplus,\bm\lambda}(\widetilde{\bm Y})\right)-\overline Q_{\oplus,\bm\lambda}\left(\widetilde{\bm Y},\overline q_{\oplus,\bm\lambda}(\widetilde{\bm Y})\right). \\ 
    \kappa_{\mathcal Y}d_{\mathcal Y}^2\left(\overline b_{\oplus,\bm\lambda}(\widetilde{\bm Y}),\overline q_{\oplus,\bm\lambda}(\widetilde{\bm Y})\right)
    &\le\overline B_{\oplus,\bm\lambda}\left(\widetilde{\bm Y},\overline q_{\oplus,\bm\lambda}(\widetilde{\bm Y})\right)-\overline B_{\oplus,\bm\lambda}\left(\widetilde{\bm Y},\overline b_{\oplus,\bm\lambda}(\widetilde{\bm Y})\right).
\end{split}\label{eq:r2-weighted-proof-part1}
\end{align}
Since $\overline q_{\oplus,\bm\lambda}(\widetilde{\bm Y})$ is deterministic, it holds that
\begin{align}
    \E\left[\overline B_{\oplus,\bm\lambda}\left(\widetilde{\bm Y},\overline q_{\oplus,\bm\lambda}(\widetilde{\bm Y})\right)\right]
    =\overline Q_{\oplus,\bm\lambda}\left(\widetilde{\bm Y},\overline q_{\oplus,\bm\lambda}(\widetilde{\bm Y})\right).
    \label{eq:r2-weighted-proof-part2}
\end{align}
By \eqref{eq:r2-weighted-proof-part1} and \eqref{eq:r2-weighted-proof-part2}, we have
\begin{align*}
    2\kappa_{\mathcal Y}\E\left[d_{\mathcal Y}^2\left(\overline b_{\oplus,\bm\lambda}(\widetilde{\bm Y}),\overline q_{\oplus,\bm\lambda}(\widetilde{\bm Y})\right)\right]
    &\le\E\left[\overline Q_{\oplus,\bm\lambda}\left(\widetilde{\bm Y},\overline b_{\oplus,\bm\lambda}(\widetilde{\bm Y})\right)-\overline B_{\oplus,\bm\lambda}\left(\widetilde{\bm Y},\overline b_{\oplus,\bm\lambda}(\widetilde{\bm Y})\right)\right]\\
    &\quad+\E\left[\overline B_{\oplus,\bm\lambda}\left(\widetilde{\bm Y},\overline q_{\oplus,\bm\lambda}(\widetilde{\bm Y})\right)\right]-\overline Q_{\oplus,\bm\lambda}\left(\widetilde{\bm Y},\overline q_{\oplus,\bm\lambda}(\widetilde{\bm Y})\right)\\
    &=\E\left[\overline Q_{\oplus,\bm\lambda}\left(\widetilde{\bm Y},\overline b_{\oplus,\bm\lambda}(\widetilde{\bm Y})\right)-\overline B_{\oplus,\bm\lambda}\left(\widetilde{\bm Y},\overline b_{\oplus,\bm\lambda}(\widetilde{\bm Y})\right)\right]\\
    &\le\frac{C_{\mathcal Y}^2}{4\kappa_{\mathcal Y}}\sum_{i=1}^N\lambda_i^2\E\left[d_{\mathcal Y}^2(\widetilde Y_i,\widetilde Y_i')\right],
\end{align*}
where the last inequality follows from Lemma~\ref{lem:r2-gap}. This proves Theorem~\ref{thm:r2-weighted-barycenter}.

\begin{corollary}\label{cor:r2-moment}
Under the conditions of Theorem~\ref{thm:r2-weighted-barycenter},
\begin{align} \label{eq:r2-moment-part1}
    \E\left[d_{\mathcal Y}^2\left(\overline b_{\oplus,\bm\lambda}(\widetilde{\bm Y}),\overline q_{\oplus,\bm\lambda}(\widetilde{\bm Y})\right)\right]
    \le\frac{C_{\mathcal Y}^2}{2\kappa_{\mathcal Y}^2}\sum_{i=1}^N\lambda_i^2\E\left[d_{\mathcal Y}^2\left(\widetilde Y_i,\overline q_{\oplus,\bm\lambda}(\widetilde{\bm Y})\right)\right].
\end{align}
Suppose that, for some $s\in[0,\infty)$,
\begin{align} \label{eq:r2-moment-assumption}
    \E\left[d_{\mathcal Y}^2\left(\widetilde Y_i,\overline q_{\oplus,\bm\lambda}(\widetilde{\bm Y})\right)\right]\le s^2, \quad i=1,\ldots,N.
\end{align}
Then
\begin{align} \label{eq:r2-moment-part2}
    \Pbb\left(d_{\mathcal Y}\left(\overline b_{\oplus,\bm\lambda}(\widetilde{\bm Y}),\overline q_{\oplus,\bm\lambda}(\widetilde{\bm Y})\right)>\frac{\sqrt{2}C_{\mathcal Y}s}{\kappa_{\mathcal Y}}\sqrt{\sum_{i=1}^N\lambda_i^2}\right)\le\frac14.
\end{align}
\end{corollary}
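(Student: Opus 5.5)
The corollary follows from Theorem~\ref{thm:r2-weighted-barycenter} in two steps: first bound $\E[d_{\mathcal Y}^2(\widetilde Y_i,\widetilde Y_i')]$ in terms of second moments about the deterministic point $q:=\overline q_{\oplus,\bm\lambda}(\widetilde{\bm Y})$, then apply Markov's inequality.

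For \eqref{eq:r2-moment-part1}, fix $i$. The triangle inequality and $(a+b)^2\le 2a^2+2b^2$ give
\begin{align*}
    d_{\mathcal Y}^2(\widetilde Y_i,\widetilde Y_i')\le 2d_{\mathcal Y}^2(\widetilde Y_i,q)+2d_{\mathcal Y}^2(\widetilde Y_i',q).
\end{align*}
The point $q$ depends only on the marginal distributions and $\bm\lambda$, so it is deterministic, and $\widetilde Y_i'$ has the same law as $\widetilde Y_i$. Taking expectations therefore yields
\begin{align*}
    \E\left[d_{\mathcal Y}^2(\widetilde Y_i,\widetilde Y_i')\right]\le 4\,\E\left[d_{\mathcal Y}^2(\widetilde Y_i,q)\right].
\end{align*}
(The same estimate appears as \eqref{eq:r2-integrability-proof-part2} in the proof of Lemma~\ref{lem:r2-integrability}.) Substituting into Theorem~\ref{thm:r2-weighted-barycenter} gives the constant $\frac{C_{\mathcal Y}^2}{8\kappa_{\mathcal Y}^2}\cdot 4=\frac{C_{\mathcal Y}^2}{2\kappa_{\mathcal Y}^2}$, which is exactly \eqref{eq:r2-moment-part1}.

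For \eqref{eq:r2-moment-part2}, assumption \eqref{eq:r2-moment-assumption} turns the right-hand side of \eqref{eq:r2-moment-part1} into at most $\frac{C_{\mathcal Y}^2 s^2}{2\kappa_{\mathcal Y}^2}\sum_i\lambda_i^2=:v$. Set $t:=\frac{\sqrt2 C_{\mathcal Y}s}{\kappa_{\mathcal Y}}\sqrt{\sum_i\lambda_i^2}$, so that $t^2=4v$. If $t>0$, Markov's inequality applied to $d_{\mathcal Y}^2(\overline b_{\oplus,\bm\lambda}(\widetilde{\bm Y}),q)$ gives probability at most $v/t^2=1/4$.

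The only case needing separate attention is the degenerate one $t=0$, i.e.\ $s=0$, since $\sum_i\lambda_i^2>0$ on the simplex. In that case \eqref{eq:r2-moment-part1} shows $\E[d_{\mathcal Y}^2(\overline b_{\oplus,\bm\lambda}(\widetilde{\bm Y}),q)]=0$. Hence $d_{\mathcal Y}(\overline b_{\oplus,\bm\lambda}(\widetilde{\bm Y}),q)=0$ almost surely, and the event $\{d_{\mathcal Y}>0\}$ has probability $0\le 1/4$. Measurability of $\overline b_{\oplus,\bm\lambda}(\widetilde{\bm Y})$ is covered by the paper's standing assumption that the minimizers admit measurable versions.
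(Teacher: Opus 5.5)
Your proposal is correct and follows essentially the same route as the paper: plug the bound $\E[d_{\mathcal Y}^2(\widetilde Y_i,\widetilde Y_i')]\le 4\E[d_{\mathcal Y}^2(\widetilde Y_i,\overline q_{\oplus,\bm\lambda}(\widetilde{\bm Y}))]$ into Theorem~\ref{thm:r2-weighted-barycenter}, then apply Markov's inequality. Your explicit handling of the degenerate case $s=0$ is a slightly more careful version of the paper's remark that one may trivially assume $s>0$.
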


\begin{proof}[Proof of \Cref{cor:r2-moment}]
Note that $\widetilde Y_i'$ has the same distribution as $\widetilde Y_i$ for every $i=1,\ldots,N$. Therefore, by the triangle inequality and the Cauchy--Schwarz inequality, we have
\begin{align*}
    \E\left[d_{\mathcal Y}^2(\widetilde Y_i,\widetilde Y_i')\right]
    &\le2\E\left[d_{\mathcal Y}^2\left(\widetilde Y_i,\overline q_{\oplus,\bm\lambda}(\widetilde{\bm Y})\right)\right]
    +2\E\left[d_{\mathcal Y}^2\left(\widetilde Y_i',\overline q_{\oplus,\bm\lambda}(\widetilde{\bm Y})\right)\right]\\
    &=4\E\left[d_{\mathcal Y}^2\left(\widetilde Y_i,\overline q_{\oplus,\bm\lambda}(\widetilde{\bm Y})\right)\right].
\end{align*}
Combining the above with Theorem~\ref{thm:r2-weighted-barycenter} proves \eqref{eq:r2-moment-part1}.

Suppose \eqref{eq:r2-moment-assumption} holds for some $s\in[0,\infty)$. Trivially, we may assume $s>0$. By \eqref{eq:r2-moment-part1} and Markov's inequality, we have
\begin{align*}
    &\Pbb\left(d_{\mathcal Y}\left(\overline b_{\oplus,\bm\lambda}(\widetilde{\bm Y}),\overline q_{\oplus,\bm\lambda}(\widetilde{\bm Y})\right)>\frac{\sqrt{2}C_{\mathcal Y}s}{\kappa_{\mathcal Y}}\sqrt{\sum_{i=1}^N\lambda_i^2}\right)\le\frac{\kappa_{\mathcal Y}^2\E\left[d_{\mathcal Y}^2\left(\overline b_{\oplus,\bm\lambda}(\widetilde{\bm Y}),\overline q_{\oplus,\bm\lambda}(\widetilde{\bm Y})\right)\right]}{2C_{\mathcal Y}^2s^2\sum_{i=1}^N\lambda_i^2}\le\frac14.
\end{align*}
This proves \eqref{eq:r2-moment-part2}.
\end{proof}

For $h\in(0,h_0]$ and $b=1,\ldots,B$, define
\begin{align*}
    S_b&:=\sum_{i\in I_b}K_h(X_i,x_0), \quad N_b:=|I_b|,\\
    \bm X_b&:=(X_i:i\in I_b), \quad \bm Y_b:=(Y_i:i\in I_b),
\end{align*}
with indices in increasing order. On $S_b>0$, define
\begin{align*}
    \lambda_{bi}&:=\frac{K_h(X_i,x_0)}{S_b}, \quad i\in I_b,\\
    \bm\lambda_b&:=(\lambda_{bi}:i\in I_b).
\end{align*}
Then $\widehat m_{h,b}(x_0)=\overline b_{\oplus,\bm\lambda_b}(\bm Y_b)$. By Condition~\ref{ass:K1}, $\lambda_{bi}>0$ implies $X_i\in\overline B(x_0,h)\subset\overline B(x_0,h_0)$ for $i\in I_b$. Therefore, whenever Conditions~\ref{ass:M3}, \ref{ass:S1}, or \ref{ass:S2} are applied to the block observations in the Supplementary Material, we restrict to indices with $\lambda_{bi}>0$ for $i\in I_b$.

\begin{corollary}\label{cor:r2-conditional}
Suppose Conditions~\ref{ass:K1} and \ref{ass:M1}--\ref{ass:M4} hold. Fix $h\in(0,h_0]$ and $b=1, \ldots, B$. Then, almost surely on $S_b>0$,
\begin{align} 
\begin{split} 
    &\E\left[d_{\mathcal Y}^2\left(\widehat m_{h,b}(x_0),\overline m_{\oplus,\bm\lambda_b}(\bm X_b)\right)\,\middle|\,X_1,\ldots,X_n\right]\\
    &\le\frac{C_{\mathcal Y}^2}{2\kappa_{\mathcal Y}^2}\sum_{i\in I_b}\lambda_{bi}^2
    \E\left[d_{\mathcal Y}^2\left(Y_i,\overline m_{\oplus,\bm\lambda_b}(\bm X_b)\right)\,\middle|\,X_1,\ldots,X_n\right],
\end{split}\label{eq:r2-conditional-mean-square}
\end{align}
where we omit the indices with $\lambda_{bi}=0$. Moreover,
\begin{align} \label{eq:r2-conditional-probability}
    \Pbb\left(
        d_{\mathcal Y}\left(\widehat m_{h,b}(x_0),\overline m_{\oplus,\bm\lambda_b}(\bm X_b)\right)
        >
        \frac{2C_{\mathcal Y}\sigma}{\kappa_{\mathcal Y}}
        \sqrt{\left(1+\frac{1}{\kappa_{\mathcal Y}}\right)\sum_{i\in I_b}\lambda_{bi}^2}
        \,\middle|\,X_1,\ldots,X_n
    \right)
    \le\frac14.
\end{align}
\end{corollary}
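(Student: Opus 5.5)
The plan is to condition on the predictors and apply Corollary~\ref{cor:r2-moment}. Fix $h\in(0,h_0]$ and a block $b$, and work on the event $S_b>0$. Conditionally on $X_1,\ldots,X_n$, the responses $(Y_i)_{i\in I_b}$ are independent, and $Y_i$ has law $P_{Y\mid X=X_i}$. This uses the i.i.d.\ sampling and the regular conditional distribution. The weights $\bm\lambda_b$ are functions of the predictors, so they are deterministic under this conditioning. We discard the indices with $\lambda_{bi}=0$; the remaining $X_i$ lie in $\overline B(x_0,h)$ by Condition~\ref{ass:K1}. For these indices, set $\widetilde Y_i$ to have law $P_{Y\mid X=X_i}$. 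Then $\overline Q_{\oplus,\bm\lambda_b}(\widetilde{\bm Y},\omega)=\overline M_{\oplus,\bm\lambda_b}(\bm X_b,\omega)$ for every $\omega$, so $\overline q_{\oplus,\bm\lambda_b}(\widetilde{\bm Y})=\overline m_{\oplus,\bm\lambda_b}(\bm X_b)$ by Conditions~\ref{ass:M1}(ii),(iv). We also have $\widehat m_{h,b}(x_0)=\overline b_{\oplus,\bm\lambda_b}(\bm Y_b)$.

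Next, we check that each $\widetilde Y_i$ lies in $\mathcal L_2(\mathcal Y)$. Condition~\ref{ass:M3} gives $M_\oplus(X_i,m_\oplus(x_0))\le\sigma^2$ on the retained indices, which is enough. We now apply the first display \eqref{eq:r2-moment-part1} of Corollary~\ref{cor:r2-moment} conditionally, via a regular-conditional-probability argument, and this gives \eqref{eq:r2-conditional-mean-square} directly. A measurability point needs care here: the conditional expectation must equal the unconditional bound evaluated at the frozen predictors. This follows from Fubini, since the joint law of $(Y_i)_{i\in I_b}$ given the predictors is the product $\bigotimes_i P_{Y\mid X=X_i}$, together with the assumed measurable versions of the minimizers.

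For \eqref{eq:r2-conditional-probability} we need the uniform moment bound \eqref{eq:r2-moment-assumption} with $s^2=4\sigma^2(1+1/\kappa_{\mathcal Y})$; this is the only real step. Write $\overline m:=\overline m_{\oplus,\bm\lambda_b}(\bm X_b)$ and $m_0:=m_\oplus(x_0)$. The triangle inequality gives
\[
\E\bigl[d_{\mathcal Y}^2(Y_i,\overline m)\mid X\bigr]\le 2M_\oplus(X_i,m_0)+2d_{\mathcal Y}^2(m_0,\overline m)\le 2\sigma^2+2d_{\mathcal Y}^2(m_0,\overline m).
\]
To bound the last term, apply Condition~\ref{ass:M2}(ii) with $\omega=m_0$:
\[
\kappa_{\mathcal Y}d_{\mathcal Y}^2(m_0,\overline m)\le \overline M_{\oplus,\bm\lambda_b}(\bm X_b,m_0)-\overline M_{\oplus,\bm\lambda_b}(\bm X_b,\overline m)\le \sum_i\lambda_{bi}M_\oplus(X_i,m_0)\le\sigma^2.
\]
Hence $\E[d_{\mathcal Y}^2(Y_i,\overline m)\mid X]\le 2\sigma^2(1+1/\kappa_{\mathcal Y})=:s^2/2$, which is better than needed. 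With $s^2=2\sigma^2(1+1/\kappa_{\mathcal Y})$, the threshold of the second display \eqref{eq:r2-moment-part2} of Corollary~\ref{cor:r2-moment} is $\sqrt2C_{\mathcal Y}s\kappa_{\mathcal Y}^{-1}\sqrt{\sum\lambda_{bi}^2}=2C_{\mathcal Y}\sigma\kappa_{\mathcal Y}^{-1}\sqrt{(1+1/\kappa_{\mathcal Y})\sum\lambda_{bi}^2}$. This is exactly the threshold in \eqref{eq:r2-conditional-probability}, and the conditional version of \eqref{eq:r2-moment-part2} gives the bound $1/4$.

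The main obstacle is therefore not analytic. It is the rigorous justification that the conditional probabilities equal the deterministic-weight, independent-component statements evaluated at the observed predictors. This should be handled by the disintegration argument above and the standing assumption that the random minimizers admit measurable versions.
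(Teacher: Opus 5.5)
Your proposal is correct and follows essentially the same route as the paper. You condition on the predictors, identify $\overline q_{\oplus,\bm\lambda_b}(\widetilde{\bm Y})=\overline m_{\oplus,\bm\lambda_b}(\bm X_b)$, apply Corollary~\ref{cor:r2-moment}, and bound the conditional second moments by $2(1+1/\kappa_{\mathcal Y})\sigma^2$ via Conditions~\ref{ass:M2}(ii) and \ref{ass:M3}, exactly as in Lemma~\ref{lem:block-second-moment}; Markov's inequality, packaged in \eqref{eq:r2-moment-part2}, then gives the $1/4$. One bookkeeping slip: the correct choice is $s^2=2\sigma^2(1+1/\kappa_{\mathcal Y})$, which you do use at the end, whereas your earlier claims that $s^2=4\sigma^2(1+1/\kappa_{\mathcal Y})$ is needed and that the bound is ``better than needed'' are wrong, since that larger $s$ would inflate the threshold by a factor $\sqrt2$.
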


\begin{proof}[Proof of \Cref{cor:r2-conditional}]
Conditionally on $X_1,\ldots,X_n$, the weights $\lambda_{bi}$ for $i\in I_b$ are deterministic and the responses $Y_i$ for $i\in I_b$ are independent with conditional distributions $P_{Y\mid X=X_i}$ for $i\in I_b$. Moreover, the weighted population and empirical Fr\'echet means are $\overline m_{\oplus,\bm\lambda_b}(\bm X_b)$ and $\widehat m_{h,b}(x_0)$, respectively. By Lemma~\ref{lem:block-second-moment} below, for every $i\in I_b$ with $\lambda_{bi}>0$, the conditional distribution of $Y_i$ has a finite second moment about $\overline m_{\oplus,\bm\lambda_b}(\bm X_b)$. Therefore, applying Corollary~\ref{cor:r2-moment} conditionally on $X_1,\ldots,X_n$ yields \eqref{eq:r2-conditional-mean-square}.

By Lemma~\ref{lem:block-second-moment} below, for every $i\in I_b$ with $\lambda_{bi}>0$,
\begin{align*}
    \E\left[d_{\mathcal Y}^2\left(Y_i,\overline m_{\oplus,\bm\lambda_b}(\bm X_b)\right)\,\middle|\,X_1,\ldots,X_n\right]
    \le2\left(1+\frac{1}{\kappa_{\mathcal Y}}\right)\sigma^2.
\end{align*}
By the above and \eqref{eq:r2-conditional-mean-square}, we have
\begin{align*}
    \E\left[d_{\mathcal Y}^2\left(\widehat m_{h,b}(x_0),\overline m_{\oplus,\bm\lambda_b}(\bm X_b)\right)\,\middle|\,X_1,\ldots,X_n\right]
    \le\frac{C_{\mathcal Y}^2\sigma^2}{\kappa_{\mathcal Y}^2}\left(1+\frac{1}{\kappa_{\mathcal Y}}\right)\sum_{i\in I_b}\lambda_{bi}^2.
\end{align*}
Therefore, the above and Markov's inequality applied conditionally on $X_1,\ldots,X_n$ prove \eqref{eq:r2-conditional-probability}.
\end{proof}

\section{Proofs for Section \ref{subsec:second-order-theory}}\label{sec:supp-second-order}

In this section, we provide proofs of Theorems~\ref{thm:second-order-kernel-regression}, \ref{thm:minimax-lower-second-order} and related results. Auxiliary results used in this section are collected in Section \ref{sec:supp-auxiliary}. 

\subsection{Sufficient conditions for Condition~\ref{ass:S1}}

We provide two sufficient conditions for Condition~\ref{ass:S1}. The first considers Hilbert-valued responses under smoothness of the conditional Fr\'echet mean, while the second uses smoothness of the conditional density ratio for a general metric response space. Similar regularity conditions on the conditional density ratio were also used in \cite{ImEtAl2025} and \cite{ImJeon2026}.

\begin{proposition}\label{prop:s1-hilbert}
Suppose that $\mathcal Y=\mathbb H$ is a separable Hilbert space and that $m_\oplus:\mathcal X\to\mathbb H$ is twice continuously differentiable on $B(x_0,h_0)$ with bounded first and second derivatives. Then Condition~\ref{ass:S1} holds with any $L_S\in(0,\infty)$ satisfying
\begin{align*}
    L_S\ge
    2\max\left\{
        \sup_{x\in B(x_0,h_0)}\|Dm_\oplus(x)\|_{\mathrm{op}},
        \sup_{x\in B(x_0,h_0)}\|D^2m_\oplus(x)\|_{\mathrm{op}}
    \right\}.
\end{align*}
\end{proposition}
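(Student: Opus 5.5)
In a Hilbert space the squared distance expands as $\|Y-\omega\|_{\mathbb H}^2=\|Y\|_{\mathbb H}^2-2\langle Y,\omega\rangle_{\mathbb H}+\|\omega\|_{\mathbb H}^2$. Taking conditional expectations shows that $m_\oplus(x)=\E[Y\mid X=x]$ (the Bochner conditional mean). This gives the explicit formula
\begin{align*}
    S_\omega(x)=M_\oplus(x,\omega)-M_\oplus(x,m_\oplus(x_0))
    =\|\omega\|_{\mathbb H}^2-\|m_\oplus(x_0)\|_{\mathbb H}^2-2\left\langle m_\oplus(x),\omega-m_\oplus(x_0)\right\rangle_{\mathbb H},
\end{align*}
valid for $x\in\overline B(x_0,h_0)$. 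The plan is to read off both derivative bounds of Condition~\ref{ass:S1} from this identity.

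First, only the last term depends on $x$. Since $m_\oplus$ is $C^2$ on $B(x_0,h_0)$ and $v\mapsto\langle v,u\rangle_{\mathbb H}$ is a bounded linear functional, $S_\omega$ is twice continuously differentiable on $B(x_0,h_0)$. Its derivatives are obtained by composition:
\begin{align*}
    \nabla S_\omega(x)[e]=-2\left\langle Dm_\oplus(x)[e],u\right\rangle_{\mathbb H},\qquad
    \nabla^2S_\omega(x)[e,e']=-2\left\langle D^2m_\oplus(x)[e,e'],u\right\rangle_{\mathbb H},
\end{align*}
where $u:=\omega-m_\oplus(x_0)$ and $e,e'\in\R^d$.

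Second, apply Cauchy--Schwarz and take suprema over unit vectors $e,e'$. This gives $\|\nabla S_\omega(x)\|\le2\|Dm_\oplus(x)\|_{\mathrm{op}}\|u\|_{\mathbb H}$. For the Hessian it gives $\|\nabla^2S_\omega(x)\|_{\mathrm{op}}\le2\|D^2m_\oplus(x)\|_{\mathrm{op}}\|u\|_{\mathbb H}$, where the norm of the symmetric bilinear map $D^2m_\oplus(x)$ controls the operator norm of the symmetric matrix $\nabla^2S_\omega(x)$. Since $\|u\|_{\mathbb H}=d_{\mathcal Y}(\omega,m_\oplus(x_0))$, both bounds hold uniformly over $x\in B(x_0,h_0)$, and in particular at $x_0$. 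They hold for every $\omega\in\mathbb H$, and hence on the ball in Condition~\ref{ass:S1}. Any $L_S$ at least twice the stated maximum therefore works.

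The main obstacle is a technical point in the first step: exchanging the conditional expectation with the inner product, i.e.\ $\E[\langle Y,u\rangle_{\mathbb H}\mid X=x]=\langle m_\oplus(x),u\rangle_{\mathbb H}$. This requires $\E[\|Y\|_{\mathbb H}^2\mid X=x]<\infty$ for $x\in\overline B(x_0,h_0)$. That integrability follows from Condition~\ref{ass:M3} if it is assumed; otherwise it is implicit in $m_\oplus(x)$ being well defined as a minimizer of a finite criterion. It also requires $m_\oplus(x)$ to equal the conditional mean, which follows from the expansion, since then $M_\oplus(x,\omega)=M_\oplus(x,m_\oplus(x))+\|\omega-m_\oplus(x)\|_{\mathbb H}^2$. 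With these in hand, the rest is direct differentiation.
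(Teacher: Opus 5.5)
Your proposal is correct and follows essentially the same route as the paper. Both identify $m_\oplus(x)=\E[Y\mid X=x]$, write $S_\omega(x)=\|\omega\|_{\mathbb H}^2-\|m_\oplus(x_0)\|_{\mathbb H}^2-2\langle m_\oplus(x),\omega-m_\oplus(x_0)\rangle_{\mathbb H}$, differentiate through the bounded linear functional, and bound via Cauchy--Schwarz. Your explicit remark on the conditional second-moment integrability behind the expansion is a point the paper leaves implicit.
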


\begin{proof}[Proof of \Cref{prop:s1-hilbert}]
Since $\mathcal Y=\mathbb H$, note that
\begin{align*}
    m_\oplus(x)=\E[Y\mid X=x], \quad x\in B(x_0,h_0),
\end{align*}
and the definition of $S_\omega$ in \eqref{def:criterion-contrast} gives
\begin{align*}
    S_\omega(x)=\|\omega\|_{\mathbb H}^2-\|m_\oplus(x_0)\|_{\mathbb H}^2-2\langle m_\oplus(x),\omega-m_\oplus(x_0)\rangle_{\mathbb H}, \quad x\in B(x_0,h_0).
\end{align*}
Let $Dm_\oplus(x):\R^d\to\mathbb H$ and $D^2m_\oplus(x):\R^d\times\R^d\to\mathbb H$ denote the first and second derivatives of $m_\oplus$ at $x\in B(x_0,h_0)$. For $x\in B(x_0,h_0)$, we have
\begin{align*}
    DS_\omega(x)[v]&=-2\langle Dm_\oplus(x)[v],\omega-m_\oplus(x_0)\rangle_{\mathbb H},\\
    D^2S_\omega(x)[v,w]&=-2\langle D^2m_\oplus(x)[v,w],\omega-m_\oplus(x_0)\rangle_{\mathbb H}, \quad v,w\in\R^d.
\end{align*}
Therefore, by the above and the Cauchy--Schwarz inequality, for $x\in B(x_0,h_0)$,
\begin{align*}
    \|\nabla S_\omega(x)\|&\le2\|Dm_\oplus(x)\|_{\mathrm{op}}\|\omega-m_\oplus(x_0)\|_{\mathbb H},\\
    \|\nabla^2S_\omega(x)\|_{\mathrm{op}}&\le2\|D^2m_\oplus(x)\|_{\mathrm{op}}\|\omega-m_\oplus(x_0)\|_{\mathbb H}.
\end{align*}
Since the above holds for arbitrary $x\in B(x_0,h_0)$, this proves the proposition.
\end{proof}

\begin{proposition}\label{prop:pm-sufficient}
Suppose Conditions~\ref{ass:M1}--\ref{ass:M4} hold. Let $P_Y$ denote the marginal distribution of $Y$, and suppose that $P_{Y\mid X=x}$ is absolutely continuous with respect to $P_Y$ for every $x\in B(x_0,h_0)$. Suppose that the Radon--Nikodym derivatives can be chosen such that, for $y\in\mathcal Y$,
\begin{align*}
    g_y(x):=\frac{dP_{Y\mid X=x}}{dP_Y}(y), \quad x\in B(x_0,h_0).
\end{align*}
Also, for every $y\in\mathcal Y$, suppose that the map $x\mapsto g_y(x)$ is twice continuously differentiable on $B(x_0,h_0)$. Also, suppose that
\begin{align} \label{eq:pm-sufficient-assumption1}
    \E\left[d_{\mathcal Y}\left(Y,m_\oplus(x_0)\right)\right]<\infty
\end{align}
and that there exists $L_g\in(0,\infty)$ such that
\begin{align} \label{eq:pm-sufficient-assumption2}
    \sup_{y\in\mathcal Y}\sup_{x\in B(x_0,h_0)}\|\nabla g_y(x)\|\le L_g,
    \quad
    \sup_{y\in\mathcal Y}\sup_{x\in B(x_0,h_0)}\|\nabla^2g_y(x)\|_{\mathrm{op}}\le L_g.
\end{align}
Then Condition~\ref{ass:S1} holds with any $L_S\in(0,\infty)$ satisfying
\begin{align*}
    L_S\ge C_{\mathcal Y}L_g\E\left[d_{\mathcal Y}\left(Y,m_\oplus(x_0)\right)\right].
\end{align*}
\end{proposition}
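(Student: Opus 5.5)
The plan is to write $S_\omega$ as an integral against $P_Y$ and differentiate under the integral sign. For $\omega\in\overline B_{\mathcal Y}(m_\oplus(x_0),\sigma/\sqrt{\kappa_{\mathcal Y}})$ and $x\in B(x_0,h_0)$, absolute continuity gives
\begin{align*}
    S_\omega(x)=\int_{\mathcal Y}\Delta_\omega(y)\,g_y(x)\,dP_Y(y),\qquad
    \Delta_\omega(y):=d_{\mathcal Y}^2(y,\omega)-d_{\mathcal Y}^2\bigl(y,m_\oplus(x_0)\bigr).
\end{align*}
The integrand has to be compared to something integrable in $y$, uniformly in $x$, so that the bounds on $\nabla g_y$ and $\nabla^2 g_y$ transfer to $S_\omega$.

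The key pointwise bound comes from Condition~\ref{ass:M4}. Take $y'=m_\oplus(x_0)$ and $\omega'=m_\oplus(x_0)$ in the quadruple inequality. This gives
\begin{align*}
    \bigl|\Delta_\omega(y)-d_{\mathcal Y}^2(m_\oplus(x_0),\omega)\bigr|
    \le C_{\mathcal Y}\, d_{\mathcal Y}\bigl(y,m_\oplus(x_0)\bigr)\,d_{\mathcal Y}\bigl(\omega,m_\oplus(x_0)\bigr).
\end{align*}
So $\Delta_\omega(y)=c_\omega+R_\omega(y)$, where $c_\omega:=d_{\mathcal Y}^2(m_\oplus(x_0),\omega)$ does not depend on $y$ and $|R_\omega(y)|\le C_{\mathcal Y}d_{\mathcal Y}(y,m_\oplus(x_0))\,d_{\mathcal Y}(\omega,m_\oplus(x_0))$. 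The constant part contributes $c_\omega\int g_y(x)\,dP_Y(y)=c_\omega$, which is constant in $x$ and has zero derivatives. Hence $S_\omega(x)=c_\omega+\int R_\omega(y)g_y(x)\,dP_Y(y)$.

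The remainder $R_\omega$ is dominated by $C_{\mathcal Y}d_{\mathcal Y}(\omega,m_\oplus(x_0))\,d_{\mathcal Y}(Y,m_\oplus(x_0))$, which is $P_Y$-integrable by \eqref{eq:pm-sufficient-assumption1}. The derivatives of $g_y$ are bounded uniformly by $L_g$ by \eqref{eq:pm-sufficient-assumption2}. Dominated convergence, applied through the mean value theorem to difference quotients, then justifies differentiating twice under the integral:
\begin{align*}
    \nabla S_\omega(x)=\int R_\omega(y)\nabla g_y(x)\,dP_Y(y),\qquad
    \nabla^2 S_\omega(x)=\int R_\omega(y)\nabla^2 g_y(x)\,dP_Y(y).
\end{align*}
Both integrals are bounded in norm by $C_{\mathcal Y}L_g\E[d_{\mathcal Y}(Y,m_\oplus(x_0))]\,d_{\mathcal Y}(\omega,m_\oplus(x_0))$, uniformly in $x\in B(x_0,h_0)$. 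The gradient bound at $x_0$ and the Hessian bound over the ball then both hold with this constant.

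It remains to show that $\nabla^2S_\omega$ is continuous, so that $S_\omega$ is twice continuously differentiable. This again follows by dominated convergence: $x\mapsto\nabla^2g_y(x)$ is continuous for each $y$, and the integrand is dominated by the same integrable function.

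The main obstacle is justifying the interchange of differentiation and integration without a uniform second-moment bound on the conditional laws. Working with $\Delta_\omega$ directly would require integrability of $d_{\mathcal Y}^2$ under $P_Y$. Subtracting the constant $c_\omega$, which is valid because $\int g_y(x)\,dP_Y(y)=1$ for every $x$, reduces the requirement to first-moment integrability, which \eqref{eq:pm-sufficient-assumption1} supplies. Measurability of $(x,y)\mapsto g_y(x)$ should be assumed, or arranged by choosing versions.
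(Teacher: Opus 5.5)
Your proposal is correct and matches the paper's proof essentially step for step. The paper also applies Condition~\ref{ass:M4} with $y'=\omega'=m_\oplus(x_0)$ to write $S_\omega(x)=\E[D_\omega(Y)g_Y(x)]+d_{\mathcal Y}^2(m_\oplus(x_0),\omega)$ with $|D_\omega(y)|\le C_{\mathcal Y}d_{\mathcal Y}(\omega,m_\oplus(x_0))\,d_{\mathcal Y}(y,m_\oplus(x_0))$, which is your $R_\omega$, and then differentiates twice under the integral by dominated convergence, using the first-moment condition and the uniform bounds on $\nabla g_y$ and $\nabla^2 g_y$; your remarks that the constant term drops out because $\int g_y(x)\,dP_Y(y)=1$ and that joint measurability of $(x,y)\mapsto g_y(x)$ is needed are points the paper leaves implicit.
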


\begin{proof}[Proof of \Cref{prop:pm-sufficient}]
Fix $\omega\in\overline B_{\mathcal Y}\left(m_\oplus(x_0),\sigma/\sqrt{\kappa_{\mathcal Y}}\right)$ and define
\begin{align*}
    D_\omega(y):=d_{\mathcal Y}^2(y,\omega)-d_{\mathcal Y}^2\left(y,m_\oplus(x_0)\right)-d_{\mathcal Y}^2\left(m_\oplus(x_0),\omega\right).
\end{align*}
By Condition~\ref{ass:M4}, we have
\begin{align}\label{eq:pm-sufficient-part1}
    |D_\omega(y)|\le C_{\mathcal Y}d_{\mathcal Y}\left(\omega,m_\oplus(x_0)\right)d_{\mathcal Y}\left(y,m_\oplus(x_0)\right), \quad y\in\mathcal Y.
\end{align}
By the definitions of $S_\omega$ and $g_y$, it holds that
\begin{align}
\begin{split}
    S_\omega(x)
    &=\E\left[d_{\mathcal Y}^2(Y,\omega)-d_{\mathcal Y}^2\left(Y,m_\oplus(x_0)\right)\,\middle|\,X=x\right]\\
    &=\int_{\mathcal Y}\left(d_{\mathcal Y}^2(y,\omega)-d_{\mathcal Y}^2\left(y,m_\oplus(x_0)\right)\right)g_y(x)\,dP_Y(y)\\
    &=\int_{\mathcal Y}\left(D_\omega(y)+d_{\mathcal Y}^2\left(m_\oplus(x_0),\omega\right)\right)g_y(x)\,dP_Y(y)\\
    &=\E\left[D_\omega(Y)g_Y(x)\right]+d_{\mathcal Y}^2\left(m_\oplus(x_0),\omega\right),\quad x\in B(x_0,h_0),
\end{split} \label{eq:pm-sufficient-part2}    
\end{align}
where the last expectation is with respect to $P_Y$. By \eqref{eq:pm-sufficient-assumption1}, \eqref{eq:pm-sufficient-assumption2}, \eqref{eq:pm-sufficient-part1}, and \eqref{eq:pm-sufficient-part2}, the dominated convergence theorem gives
\begin{align*}
    \nabla S_\omega(x)=\E\left[D_\omega(Y)\nabla g_Y(x)\right],\quad \nabla^2S_\omega(x)=\E\left[D_\omega(Y)\nabla^2g_Y(x)\right],\quad x\in B(x_0,h_0),
\end{align*}
and these derivatives are continuous on $B(x_0,h_0)$. Therefore, by the above, we have
\begin{align*}
    \|\nabla S_\omega(x_0)\|
    &\le C_{\mathcal Y}L_g\E\left[d_{\mathcal Y}\left(Y,m_\oplus(x_0)\right)\right]d_{\mathcal Y}\left(\omega,m_\oplus(x_0)\right),\\
    \sup_{x\in B(x_0,h_0)}\|\nabla^2S_\omega(x)\|_{\mathrm{op}}
    &\le C_{\mathcal Y}L_g\E\left[d_{\mathcal Y}\left(Y,m_\oplus(x_0)\right)\right]d_{\mathcal Y}\left(\omega,m_\oplus(x_0)\right).
\end{align*}
Since the above holds for arbitrary $\omega\in\overline B_{\mathcal Y}\left(m_\oplus(x_0),\sigma/\sqrt{\kappa_{\mathcal Y}}\right)$, this proves the proposition.
\end{proof}

\subsection{Bias and kernel weights}\label{subsec:bias-second-order}

Define
\begin{align*}
    \mu_{2,K}:=\int_{\R^d}\|u\|^2K(u)\,du, \quad
    \nu_{2,K}:=\int_{\R^d}\|u\|^2K(u)^2\,du,
\end{align*}
and let
\begin{align*}
    M_f:=\sup_{x\in\overline B(x_0,h_0)}f_X(x), \quad
    L_f:=\sup_{x\in\overline B(x_0,h_0)}\|\nabla f_X(x)\|.
\end{align*}
Note that these constants are finite by Conditions~\ref{ass:K1} and \ref{ass:D2}.

\begin{lemma}\label{lem:bias-second-order}
Suppose Conditions~\ref{ass:K1}--\ref{ass:K2}, \ref{ass:D1}--\ref{ass:D2}, \ref{ass:M1}--\ref{ass:M3}, and \ref{ass:S1} hold. For $h\in(0,h_0)$,
\begin{align*}
    d_{\mathcal Y}(\widetilde m_h(x_0),m_\oplus(x_0))
    \le\frac{2\mu_{2,K}L_S}{\kappa_{\mathcal Y}f_X(x_0)}\left(L_f+\frac{M_f}{2}\right)h^2.
\end{align*}
\end{lemma}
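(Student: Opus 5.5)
Write $m_0:=m_\oplus(x_0)$ and $\widetilde m:=\widetilde m_h(x_0)$. The plan is a curvature-plus-Taylor argument. By Condition~\ref{ass:M2}(i) applied at $\omega=m_0$, and by minimality of $m_0$ for $M_\oplus(x_0,\cdot)$ together with $S_\omega(x_0)\ge 0$, we aim for
\begin{align*}
    \kappa_{\mathcal Y}d_{\mathcal Y}^2(\widetilde m,m_0)
    \le \widetilde M_h(x_0,m_0)-\widetilde M_h(x_0,\widetilde m)
    = -\frac{\E\left[K_h(X,x_0)S_{\widetilde m}(X)\right]}{\E[K_h(X,x_0)]}
    \le -\frac{\E\left[K_h(X,x_0)\left(S_{\widetilde m}(X)-S_{\widetilde m}(x_0)\right)\right]}{\E[K_h(X,x_0)]}.
\end{align*}
This identity uses that $K_h(X,x_0)$ vanishes outside $\overline B(x_0,h)\subset\overline B(x_0,h_0)$ by Condition~\ref{ass:K1}, so $S_\omega$ agrees with its defining formula on the support. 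Before invoking Condition~\ref{ass:S1}, I will check that $\widetilde m$ lies in $\overline B_{\mathcal Y}(m_0,\sigma/\sqrt{\kappa_{\mathcal Y}})$. This follows from Condition~\ref{ass:M2}(i) and Condition~\ref{ass:M3}: $\kappa_{\mathcal Y}d^2(\widetilde m,m_0)\le \widetilde M_h(x_0,m_0)\le\sigma^2$, since $\widetilde M_h(x_0,m_0)$ is a kernel average of $M_\oplus(x,m_0)$ over $x\in\overline B(x_0,h)$.

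Next, I will bound the numerator. Substituting $x=x_0+hu$ gives
\begin{align*}
    \E\left[K_h(X,x_0)\left(S_\omega(X)-S_\omega(x_0)\right)\right]=\int K(u)\left(S_\omega(x_0+hu)-S_\omega(x_0)\right)f_X(x_0+hu)\,du .
\end{align*}
Taylor-expand $S_\omega(x_0+hu)-S_\omega(x_0)=h\nabla S_\omega(x_0)^\top u+R$ with $|R|\le \tfrac12 L_S d\,h^2\|u\|^2$, where $d=d_{\mathcal Y}(\omega,m_0)$, using the Hessian bound from Condition~\ref{ass:S1}. This needs $x_0+hu\in B(x_0,h_0)$, which holds since $h<h_0$ and $\|u\|\le1$. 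Also write $f_X(x_0+hu)=f_X(x_0)+h\nabla f_X(\xi)^\top u$ via the mean value theorem under Condition~\ref{ass:D2}. The linear term $h\nabla S_\omega(x_0)^\top u\, f_X(x_0)$ integrates to zero by the symmetry in Condition~\ref{ass:K2}. This is the key cancellation and the only delicate point, since it is where $h^2$ rather than $h$ comes from.

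The remaining cross term is bounded by $h^2 L_S d\, L_f\mu_{2,K}$, using $\|\nabla S_\omega(x_0)\|\le L_S d$ and $\|\nabla f_X\|\le L_f$. The remainder term is bounded by $\tfrac12 h^2 L_S d\, M_f\mu_{2,K}$. Together,
\begin{align*}
    \left|\E\left[K_h(X,x_0)\left(S_\omega(X)-S_\omega(x_0)\right)\right]\right|\le \mu_{2,K}L_S\left(L_f+\tfrac{M_f}{2}\right)h^2\, d .
\end{align*}
For the denominator, I will use \eqref{eq:local-density-bound} and $\int K=1$ to get $\E[K_h(X,x_0)]=\int K(u)f_X(x_0+hu)\,du\ge f_X(x_0)/2$. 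Combining this with the first display gives
\begin{align*}
    \kappa_{\mathcal Y}d^2\le \frac{2\mu_{2,K}L_S}{f_X(x_0)}\left(L_f+\tfrac{M_f}{2}\right)h^2 d .
\end{align*}
Dividing by $d$ (the case $d=0$ being trivial) yields the stated bound. I expect no real obstacle beyond verifying the ball membership and the symmetric cancellation.
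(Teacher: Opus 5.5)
Your proposal is correct and matches the paper's proof essentially step for step. The shared steps are the Taylor expansion of $S_\omega$ on $B(x_0,h_0)$, the use of Condition~\ref{ass:K2} to cancel the term $h f_X(x_0)\langle\nabla S_\omega(x_0),\int uK(u)\,du\rangle$, the same bounds $L_f\mu_{2,K}h^2$ and $\tfrac12 L_SM_f\mu_{2,K}h^2$, the denominator bound $\E[K_h(X,x_0)]\ge f_X(x_0)/2$, and the ball-membership check via Conditions~\ref{ass:M2}(i) and~\ref{ass:M3}.

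The only difference is cosmetic. You take the curvature from Condition~\ref{ass:M2}(i) at $\omega=m_\oplus(x_0)$ and the sign from minimality of $m_\oplus(x_0)$. The paper takes the curvature from Condition~\ref{ass:M2}(ii) with $N=1$ and $\bm x=(x_0)$, and the sign from minimality of $\widetilde m_h(x_0)$. Both routes give the same inequality.
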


\begin{proof}[Proof of \Cref{lem:bias-second-order}]
Let $h\in(0,h_0)$ be given, and fix $\omega\in\overline B_{\mathcal Y}(m_\oplus(x_0),\sigma/\sqrt{\kappa_{\mathcal Y}})$. Condition~\ref{ass:S1} and Taylor's formula give
\begin{align} \label{eq:bias-second-order-proof1}
    \left|S_\omega(x_0+hu)-S_\omega(x_0)-h\langle\nabla S_\omega(x_0),u\rangle\right|
    \le\frac{L_S}{2}d_{\mathcal Y}(\omega,m_\oplus(x_0))h^2\|u\|^2, \quad u\in\overline B(0,1).
\end{align}
By Conditions~\ref{ass:K2} and \ref{ass:D2},
\begin{align} 
\begin{split} 
    \left\|h\int_{\overline B(0,1)}uK(u)f_X(x_0+hu)\,du\right\|
    &=\left\|h\int_{\overline B(0,1)}uK(u)\left(f_X(x_0+hu)-f_X(x_0)\right)\,du\right\|\\
    &\le L_f\mu_{2,K}h^2.
\end{split} \label{eq:bias-second-order-proof2}
\end{align}
Also, by the change of variables, we have
\begin{align}
\begin{split}
    &\E[K_h(X,x_0)S_\omega(X)]-S_\omega(x_0)\E[K_h(X,x_0)]\\
    &=\int_{\overline B(0,1)}K(u)f_X(x_0+hu)\left(S_\omega(x_0+hu)-S_\omega(x_0)\right)\,du\\
    &=\left\langle\nabla S_\omega(x_0),h\int_{\overline B(0,1)}uK(u)f_X(x_0+hu)\,du\right\rangle\\
    &\quad+\int_{\overline B(0,1)}K(u)f_X(x_0+hu)
    \left(S_\omega(x_0+hu)-S_\omega(x_0)-h\langle\nabla S_\omega(x_0),u\rangle\right)\,du.
\end{split} \label{eq:bias-second-order-proof3}
\end{align}
Therefore, by \eqref{eq:bias-second-order-proof1}, \eqref{eq:bias-second-order-proof2}, \eqref{eq:bias-second-order-proof3}, and Condition~\ref{ass:S1}, we have
\begin{align*}
    &\left|\E[K_h(X,x_0)S_\omega(X)]-S_\omega(x_0)\E[K_h(X,x_0)]\right|\\
    &\le L_Sd_{\mathcal Y}(\omega,m_\oplus(x_0))L_f\mu_{2,K}h^2
    +\frac{L_S}{2}d_{\mathcal Y}(\omega,m_\oplus(x_0))M_f\mu_{2,K}h^2\\
    &=\mu_{2,K}L_S\left(L_f+\frac{M_f}{2}\right)h^2d_{\mathcal Y}(\omega,m_\oplus(x_0)).
\end{align*}
Since $\E[K_h(X,x_0)]>0$ by Lemma~\ref{lem:kernel-target-positive} below, the above implies
\begin{align} \label{eq:bias-second-order-proof4}
    \left|\frac{\E[K_h(X,x_0)S_\omega(X)]}{\E[K_h(X,x_0)]}-S_\omega(x_0)\right|
    \le\frac{\mu_{2,K}L_S}{\E[K_h(X,x_0)]}\left(L_f+\frac{M_f}{2}\right)h^2d_{\mathcal Y}(\omega,m_\oplus(x_0)).
\end{align}
On the other hand, by the definition of $\widetilde m_h(x_0)$, 
\begin{align} \label{eq:bias-second-order-proof5}
    \frac{\E\left[K_h(X,x_0)S_{\widetilde m_h(x_0)}(X)\right]}{\E[K_h(X,x_0)]}
    =\widetilde M_h(x_0,\widetilde m_h(x_0))-\widetilde M_h(x_0,m_\oplus(x_0))\le0.
\end{align}
Also, Condition~\ref{ass:M2}(ii) with $\bm\lambda=(1)$ and $\bm x=(x_0)$ gives
\begin{align} \label{eq:bias-second-order-proof6}
    \kappa_{\mathcal Y}d_{\mathcal Y}^2(\widetilde m_h(x_0),m_\oplus(x_0))
    &\le S_{\widetilde m_h(x_0)}(x_0)
\end{align} 
Since we can apply \eqref{eq:bias-second-order-proof4} for $\omega=\widetilde m_h(x_0)$ by Lemmas~\ref{lem:block-second-moment} below, \eqref{eq:bias-second-order-proof4}, \eqref{eq:bias-second-order-proof5}, \eqref{eq:bias-second-order-proof6}, and Lemma~\ref{lem:kernel-target-positive} below imply 
\begin{align*}
    \kappa_{\mathcal Y}d_{\mathcal Y}^2(\widetilde m_h(x_0),m_\oplus(x_0))
    &\le S_{\widetilde m_h(x_0)}(x_0)\\
    &\le S_{\widetilde m_h(x_0)}(x_0)
    -\frac{\E[K_h(X,x_0)S_{\widetilde m_h(x_0)}(X)]}{\E[K_h(X,x_0)]}\\
    &\le\frac{\mu_{2,K}L_S}{\E[K_h(X,x_0)]}
    \left(L_f+\frac{M_f}{2}\right)h^2
    d_{\mathcal Y}(\widetilde m_h(x_0),m_\oplus(x_0))\\
    &\le\frac{2\mu_{2,K}L_S}{f_X(x_0)}
    \left(L_f+\frac{M_f}{2}\right)h^2
    d_{\mathcal Y}(\widetilde m_h(x_0),m_\oplus(x_0)).
\end{align*}
Therefore, the above and Lemma~\ref{lem:kernel-target-positive} prove the lemma. 
\end{proof}

\begin{lemma}\label{lem:self-normalized-first-moment}
Suppose Conditions~\ref{ass:K1}--\ref{ass:K2} and \ref{ass:D1}--\ref{ass:D2} hold. For $h\in(0,h_0)$ and $b=1,\ldots,B$, if 
\begin{align} \label{eq:self-normalized-first-moment-ass1}
    N_bh^df_X(x_0)\ge\frac{56\|K\|_\infty}{3}\log 24,
\end{align}
then, with probability at least $11/12$,
\begin{align}
\begin{split}
    S_b&\ge\frac{N_b\E[K_h(X,x_0)]}{2}>0,\\
    \left\|\sum_{i\in I_b}\lambda_{bi}(X_i-x_0)\right\|
    &\le\frac{4L_f\mu_{2,K}}{f_X(x_0)}h^2
    +4\sqrt{\frac{24M_f\nu_{2,K}}{f_X(x_0)}}
    \frac{h}{\sqrt{N_bh^df_X(x_0)}}.
\end{split} \label{eq:self-normalized-first-moment}
\end{align}
\end{lemma}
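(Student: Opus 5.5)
The plan is to bound separately the self-normalizing denominator $S_b$ and the numerator $T_b:=\sum_{i\in I_b}K_h(X_i,x_0)(X_i-x_0)$. On the event where both bounds hold, we then write $\sum_{i\in I_b}\lambda_{bi}(X_i-x_0)=T_b/S_b$. We put $\mu_h:=\E[K_h(X,x_0)]$. Lemma~\ref{lem:kernel-target-positive} and \eqref{eq:local-density-bound} give $\mu_h\ge f_X(x_0)/2$, and $\mu_h\le M_f$. The failure probability $1/12$ will be split as $1/24$ for the denominator and $1/24$ for the numerator, which is consistent with the $\log 24$ in \eqref{eq:self-normalized-first-moment-ass1}.

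\textbf{Denominator.} The summands $K_h(X_i,x_0)$ are i.i.d., lie in $[0,\|K\|_\infty h^{-d}]$, and have variance at most $\E[K_h^2]\le \|K\|_\infty h^{-d}\mu_h$. We apply the lower-tail Bernstein inequality at deviation $N_b\mu_h/2$. The exponent is
\[
\frac{(N_b\mu_h/2)^2}{2N_b\|K\|_\infty h^{-d}\mu_h+\tfrac23\|K\|_\infty h^{-d}N_b\mu_h/2}
=\frac{3N_bh^d\mu_h}{28\|K\|_\infty}.
\]
Using $\mu_h\ge f_X(x_0)/2$, this exponent is at least $3N_bh^df_X(x_0)/(56\|K\|_\infty)\ge\log 24$ by \eqref{eq:self-normalized-first-moment-ass1}. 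Hence $S_b\ge N_b\mu_h/2>0$ with probability at least $23/24$. This is exactly where the constant $56/3$ comes from.

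\textbf{Numerator, mean part.} By the change of variables and Condition~\ref{ass:K2}, $\E[K_h(X,x_0)(X-x_0)]=h\int uK(u)f_X(x_0+hu)\,du$. As in \eqref{eq:bias-second-order-proof2}, this has norm at most $L_f\mu_{2,K}h^2$, using Condition~\ref{ass:D2} and Condition~\ref{ass:K1} to stay inside $\overline B(x_0,h_0)$. Hence $\|\E T_b\|\le N_bL_f\mu_{2,K}h^2$.

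\textbf{Numerator, fluctuation part.} For the centered vector sum we use Chebyshev's inequality in $\R^d$. We have
\[
\E\|T_b-\E T_b\|^2\le N_b\E[K_h^2\|X-x_0\|^2]=N_b h^{2-d}\int\|u\|^2K(u)^2f_X(x_0+hu)\,du\le N_bM_f\nu_{2,K}h^{2-d}.
\]
Chebyshev's inequality then gives, with probability at least $23/24$,
\[
\|T_b-\E T_b\|\le\sqrt{24N_bM_f\nu_{2,K}h^{2-d}}.
\]

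\textbf{Combining.} On the intersection of the two events, which has probability at least $11/12$, we divide by $S_b\ge N_b\mu_h/2\ge N_bf_X(x_0)/4$. The mean part contributes
\[
\frac{4L_f\mu_{2,K}h^2}{f_X(x_0)}.
\]
The fluctuation part contributes
\[
\frac{4}{N_bf_X(x_0)}\sqrt{24N_bM_f\nu_{2,K}h^{2-d}}
=4\sqrt{\frac{24M_f\nu_{2,K}}{f_X(x_0)}}\,\frac{h}{\sqrt{N_bh^df_X(x_0)}}.
\]
This matches the stated bound. The main obstacle is only bookkeeping of constants in the Bernstein step, namely the exact form of the variance bound and the factor $2/3$ versus $1/3$ in the range term, so that it lands precisely on $56/3$. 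If my Bernstein constants differ, I would adjust by using the version $\exp\{-t^2/(2(v+bt/3))\}$ with $v=N_b\E K_h^2$.
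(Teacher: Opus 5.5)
Your proposal is correct and follows essentially the same argument as the paper. Like the paper, you use Bernstein for $S_b$ (yielding the paper's exponent $3N_bh^d\E[K_h(X,x_0)]/(28\|K\|_\infty)$, so your constants are right and need no adjustment), the symmetric-kernel bound $L_f\mu_{2,K}h^2$ for the mean of the numerator, and Chebyshev for the centered vector sum, each at level $1/24$, followed by a union bound and division by $S_b\ge N_bf_X(x_0)/4$.
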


\begin{proof}[Proof of \Cref{lem:self-normalized-first-moment}]
Let $h\in(0,h_0)$ and $b=1,\ldots,B$ be given. By the change of variables and \eqref{eq:bias-second-order-proof2}, we have
\begin{align}
    \left\|\E\left[K_h(X,x_0)(X-x_0)\right]\right\|
    &=\left\|h\int_{\overline B(0,1)}uK(u)\left(f_X(x_0+hu)-f_X(x_0)\right)\,du\right\| \notag\\
    &\le L_f\mu_{2,K}h^2. \label{eq:self-normalized-first-moment-proof1}
\end{align}
Similarly, by the change of variables,
\begin{align}
    \E\left[\|K_h(X,x_0)(X-x_0)\|^2\right]
    &=h^{2-d}\int_{\overline B(0,1)}\|u\|^2K(u)^2f_X(x_0+hu)\,du \notag\\
    &\le M_f\nu_{2,K}h^{2-d}. \label{eq:self-normalized-first-moment-proof2}
\end{align}
Therefore, by independence of $X_i$ for $i\in I_b$, and \eqref{eq:self-normalized-first-moment-proof2},
\begin{align*}
    &\E\left[\left\|\frac1{N_b}\sum_{i\in I_b}
    \left(K_h(X_i,x_0)(X_i-x_0)-\E[K_h(X,x_0)(X-x_0)]\right)\right\|^2\right]\\
    &=\frac1{N_b^2}\sum_{i\in I_b}
    \E\left[\left\|K_h(X_i,x_0)(X_i-x_0)-\E[K_h(X,x_0)(X-x_0)]\right\|^2\right]\\
    &\le\frac1{N_b}\E\left[\|K_h(X,x_0)(X-x_0)\|^2\right]
    \le\frac{M_f\nu_{2,K}}{N_bh^{d-2}}.
\end{align*}
Thus, applying Markov's inequality with the above, we have
\begin{align} 
\begin{split}
    &\Pbb\left(
    \left\|\frac1{N_b}\sum_{i\in I_b}
    \left(K_h(X_i,x_0)(X_i-x_0)-\E[K_h(X,x_0)(X-x_0)]\right)\right\|
    >\sqrt{\frac{24M_f\nu_{2,K}}{N_bh^{d-2}}}
    \right)\\
    &\le\frac{N_bh^{d-2}}{24M_f\nu_{2,K}}\E\left[\left\|\frac1{N_b}\sum_{i\in I_b}
    \left(K_h(X_i,x_0)(X_i-x_0)-\E[K_h(X,x_0)(X-x_0)]\right)\right\|^2\right]\\
    &\le\frac1{24}.
\end{split} \label{eq:self-normalized-first-moment-proof3}
\end{align}
Suppose \eqref{eq:self-normalized-first-moment-ass1} holds. Then by Lemmas~\ref{lem:kernel-target-positive} and \ref{lem:kernel-mass} below,
\begin{align} \label{eq:self-normalized-first-moment-proof4}
    \Pbb\left(S_b<\frac{N_b\E[K_h(X,x_0)]}{2}\right)
    \le\exp\left(-\frac{3N_bh^df_X(x_0)}{56\|K\|_\infty}\right)
    \le\frac1{24}.
\end{align}
Therefore, by \eqref{eq:self-normalized-first-moment-proof3}, \eqref{eq:self-normalized-first-moment-proof4}, and the union bound, with probability at least $11/12$, we have
\begin{align}
\begin{split}
    S_b\ge\frac{N_b\E[K_h(X,x_0)]}{2}&>0,\\
    \left\|\frac1{N_b}\sum_{i\in I_b}
    \left(K_h(X_i,x_0)(X_i-x_0)-\E[K_h(X,x_0)(X-x_0)]\right)\right\|
    &\le\sqrt{\frac{24M_f\nu_{2,K}}{N_bh^{d-2}}}.
\end{split} \label{eq:self-normalized-first-moment-proof5}
\end{align}
On the event in \eqref{eq:self-normalized-first-moment-proof5}, Lemma~\ref{lem:kernel-target-positive} gives
\begin{align}
    \frac{S_b}{N_b}
    \ge\frac{\E[K_h(X,x_0)]}{2}
    \ge\frac{f_X(x_0)}{4}.
    \label{eq:self-normalized-first-moment-proof6}
\end{align}
Also, by the triangle inequality, \eqref{eq:self-normalized-first-moment-proof1}, and \eqref{eq:self-normalized-first-moment-proof5}, we have
\begin{align}
\begin{split}
    &\left\|\frac1{N_b}\sum_{i\in I_b}K_h(X_i,x_0)(X_i-x_0)\right\| \\
    &\le
    \left\|\frac1{N_b}\sum_{i\in I_b}
    \left(K_h(X_i,x_0)(X_i-x_0)-\E[K_h(X,x_0)(X-x_0)]\right)\right\| \\
    &\quad+\left\|\E[K_h(X,x_0)(X-x_0)]\right\| \\
    &\le
    \sqrt{\frac{24M_f\nu_{2,K}}{N_bh^{d-2}}}
    +L_f\mu_{2,K}h^2,
\end{split} \label{eq:self-normalized-first-moment-proof7}
\end{align}
on the event in \eqref{eq:self-normalized-first-moment-proof5}. Therefore, by \eqref{eq:self-normalized-first-moment-proof6}, \eqref{eq:self-normalized-first-moment-proof7}, and the definitions of $S_b$ and $\lambda_{bi}$ for $i\in I_b$, we have
\begin{align}
\begin{split}
    \left\|\sum_{i\in I_b}\lambda_{bi}(X_i-x_0)\right\|
    &=\frac{\left\|N_b^{-1}\sum_{i\in I_b}K_h(X_i,x_0)(X_i-x_0)\right\|}{S_b/N_b}\\
    &\le\frac{4L_f\mu_{2,K}}{f_X(x_0)}h^2
    +4\sqrt{\frac{24M_f\nu_{2,K}}{f_X(x_0)}}
    \frac{h}{\sqrt{N_bh^df_X(x_0)}},
\end{split} \label{eq:self-normalized-first-moment-proof8}
\end{align}
on the event in \eqref{eq:self-normalized-first-moment-proof5}. \eqref{eq:self-normalized-first-moment-proof5} and \eqref{eq:self-normalized-first-moment-proof8} prove the lemma.
\end{proof}

\begin{lemma}\label{lem:conditional-bias-second-order}
Suppose Conditions~\ref{ass:K1}, \ref{ass:M1}--\ref{ass:M3}, and \ref{ass:S1} hold. For $h\in(0,h_0)$ and $b=1,\ldots,B$, if $S_b>0$, then
\begin{align*}
    d_{\mathcal Y}\left(\overline m_{\oplus,\bm\lambda_b}(\bm X_b),m_\oplus(x_0)\right)
    \le\frac{L_S}{\kappa_{\mathcal Y}}\left(\left\|\sum_{i\in I_b}\lambda_{bi}(X_i-x_0)\right\|+\frac{h^2}{2}\right).
\end{align*}
\end{lemma}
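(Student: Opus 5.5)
The argument copies the one used for Lemma~\ref{lem:bias-second-order}, with the population kernel average replaced by the discrete weights $\lambda_{bi}$. Fix $h\in(0,h_0)$ and $b$ with $S_b>0$, and write $\bar m:=\overline m_{\oplus,\bm\lambda_b}(\bm X_b)$. By Condition~\ref{ass:K1}, every index with $\lambda_{bi}>0$ has $X_i\in\overline B(x_0,h)\subset\overline B(x_0,h_0)$, so only such indices appear and Condition~\ref{ass:M2}(ii) applies to $\bm X_b$. The proof has four steps.

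\emph{Step 1 (localisation).} We first show that $\bar m$ lies in the ball $\overline B_{\mathcal Y}(m_\oplus(x_0),\sigma/\sqrt{\kappa_{\mathcal Y}})$ on which Condition~\ref{ass:S1} is assumed. This is what Lemma~\ref{lem:block-second-moment} is cited for in the paper. Directly, Condition~\ref{ass:M2}(ii) at $\omega=m_\oplus(x_0)$ together with nonnegativity and Condition~\ref{ass:M3} gives
\begin{align*}
    \kappa_{\mathcal Y}d_{\mathcal Y}^2\left(m_\oplus(x_0),\bar m\right)\le\overline M_{\oplus,\bm\lambda_b}\left(\bm X_b,m_\oplus(x_0)\right)\le\sigma^2 .
\end{align*}
So $S_{\bar m}$ is $C^2$ on $B(x_0,h_0)$ and satisfies the derivative bounds of Condition~\ref{ass:S1}.

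\emph{Step 2 (two-sided comparison).} Optimality of $\bar m$ gives
\begin{align*}
    \sum_{i\in I_b}\lambda_{bi}S_{\bar m}(X_i)=\overline M_{\oplus,\bm\lambda_b}(\bm X_b,\bar m)-\overline M_{\oplus,\bm\lambda_b}(\bm X_b,m_\oplus(x_0))\le0 .
\end{align*}
Condition~\ref{ass:M2}(ii) with $N=1$, $\bm\lambda=(1)$, $\bm x=(x_0)$ gives $\kappa_{\mathcal Y}d_{\mathcal Y}^2(\bar m,m_\oplus(x_0))\le S_{\bar m}(x_0)$. Combining the two,
\begin{align*}
    \kappa_{\mathcal Y}d_{\mathcal Y}^2(\bar m,m_\oplus(x_0))\le S_{\bar m}(x_0)-\sum_{i\in I_b}\lambda_{bi}S_{\bar m}(X_i).
\end{align*}

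\emph{Step 3 (Taylor expansion).} For each $X_i\in\overline B(x_0,h)$, Taylor's formula around $x_0$ with the Hessian bound of Condition~\ref{ass:S1} gives
\begin{align*}
    \left|S_{\bar m}(X_i)-S_{\bar m}(x_0)-\langle\nabla S_{\bar m}(x_0),X_i-x_0\rangle\right|\le\frac{L_S}{2}d_{\mathcal Y}(\bar m,m_\oplus(x_0))\|X_i-x_0\|^2\le\frac{L_S}{2}d_{\mathcal Y}(\bar m,m_\oplus(x_0))h^2 .
\end{align*}
We average with weights $\lambda_{bi}$, which sum to one, and apply Cauchy--Schwarz together with $\|\nabla S_{\bar m}(x_0)\|\le L_Sd_{\mathcal Y}(\bar m,m_\oplus(x_0))$. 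The right-hand side of Step 2 is then at most
\begin{align*}
    L_S\,d_{\mathcal Y}(\bar m,m_\oplus(x_0))\left(\Bigl\|\sum_{i\in I_b}\lambda_{bi}(X_i-x_0)\Bigr\|+\frac{h^2}{2}\right).
\end{align*}

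\emph{Step 4 (conclusion).} If $d_{\mathcal Y}(\bar m,m_\oplus(x_0))=0$ the claim is trivial. Otherwise we divide by it to obtain the stated bound.

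The main obstacle is a boundary subtlety in Step 3. Condition~\ref{ass:S1} controls derivatives only on the open ball $B(x_0,h_0)$, while the $X_i$ lie in $\overline B(x_0,h)$. Since $h<h_0$, the segment from $x_0$ to each $X_i$ stays inside $B(x_0,h_0)$, so Taylor's formula applies legitimately.
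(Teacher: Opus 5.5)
Your proposal is correct and follows the paper's proof essentially step for step. Both arguments localize $\bar m$ in the ball where Condition~\ref{ass:S1} applies (the paper cites Lemma~\ref{lem:block-second-moment}; you redo its short argument inline), combine $\sum_{i\in I_b}\lambda_{bi}S_{\bar m}(X_i)\le0$ with $\kappa_{\mathcal Y}d_{\mathcal Y}^2(\bar m,m_\oplus(x_0))\le S_{\bar m}(x_0)$, and close with a second-order Taylor bound plus Cauchy--Schwarz on the gradient term; your remark that $h<h_0$ keeps each Taylor segment inside the open ball $B(x_0,h_0)$ makes explicit a point the paper leaves implicit.
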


\begin{proof}[Proof of \Cref{lem:conditional-bias-second-order}]
Let $h\in(0,h_0)$ and $b=1,\ldots,B$ be given, and suppose $S_b>0$. Fix $\omega\in\overline B_{\mathcal Y}(m_\oplus(x_0),\sigma/\sqrt{\kappa_{\mathcal Y}})$. By Taylor's formula and Condition~\ref{ass:S1}, we have
\begin{align} 
\begin{split}
    &-\sum_{i\in I_b}\lambda_{bi}S_\omega(X_i)+S_\omega(x_0)+\left\langle \nabla S_\omega(x_0),\sum_{i\in I_b}\lambda_{bi}(X_i-x_0)\right\rangle\\
    &\le\left|\sum_{i\in I_b}\lambda_{bi}S_\omega(X_i)-S_\omega(x_0)-\left\langle \nabla S_\omega(x_0),\sum_{i\in I_b}\lambda_{bi}(X_i-x_0)\right\rangle \right|\\
    &\le\frac{L_S}{2}d_{\mathcal Y}(\omega,m_\oplus(x_0))\sum_{i\in I_b}\lambda_{bi}\|X_i-x_0\|^2\\
    &\le\frac{L_S}{2}d_{\mathcal Y}(\omega,m_\oplus(x_0))h^2,
\end{split} \label{eq:conditional-bias-second-order-proof1}
\end{align}
where the last inequality follows since $\lambda_{bi}>0$ implies $X_i\in\overline B(x_0,h)$ for $i\in I_b$. By Lemma~\ref{lem:block-second-moment} below, we can apply the above at $\omega=\overline m_{\oplus,\bm\lambda_b}(\bm X_b)$. Also, by the definition of $\overline m_{\oplus,\bm\lambda_b}(\bm X_b)$, we have
\begin{align} \label{eq:conditional-bias-second-order-proof2}
    \sum_{i\in I_b}\lambda_{bi}S_{\overline m_{\oplus,\bm\lambda_b}(\bm X_b)}(X_i)
    =\overline M_{\oplus,\bm\lambda_b}\left(\bm X_b,\overline m_{\oplus,\bm\lambda_b}(\bm X_b)\right)
    -\overline M_{\oplus,\bm\lambda_b}(\bm X_b,m_\oplus(x_0))
    \le0.
\end{align}
On the other hand, Condition~\ref{ass:M2}(ii) with $N=1$, $\bm\lambda=(1)$, and $\bm x=(x_0)$ gives
\begin{align} \label{eq:conditional-bias-second-order-proof3}
    \kappa_{\mathcal Y}d_{\mathcal Y}^2\left(\overline m_{\oplus,\bm\lambda_b}(\bm X_b),m_\oplus(x_0)\right)
    \le S_{\overline m_{\oplus,\bm\lambda_b}(\bm X_b)}(x_0).
\end{align}
Therefore, by \eqref{eq:conditional-bias-second-order-proof1}, \eqref{eq:conditional-bias-second-order-proof2}, \eqref{eq:conditional-bias-second-order-proof3}, and Condition~\ref{ass:S1}, we have
\begin{align*}
    &\kappa_{\mathcal Y}d_{\mathcal Y}^2\left(\overline m_{\oplus,\bm\lambda_b}(\bm X_b),m_\oplus(x_0)\right)\\
    &\le S_{\overline m_{\oplus,\bm\lambda_b}(\bm X_b)}(x_0)\\
    &\le \sum_{i\in I_b}\lambda_{bi}
    S_{\overline m_{\oplus,\bm\lambda_b}(\bm X_b)}(X_i)
    -\left\langle
    \nabla S_{\overline m_{\oplus,\bm\lambda_b}(\bm X_b)}(x_0),
    \sum_{i\in I_b}\lambda_{bi}(X_i-x_0)
    \right\rangle \\
    &\quad+\frac{L_S}{2}
    d_{\mathcal Y}\left(\overline m_{\oplus,\bm\lambda_b}(\bm X_b),m_\oplus(x_0)\right)h^2\\
    &\le
    \left\|
    \nabla S_{\overline m_{\oplus,\bm\lambda_b}(\bm X_b)}(x_0)
    \right\|
    \left\|\sum_{i\in I_b}\lambda_{bi}(X_i-x_0)\right\|+\frac{L_S}{2}
    d_{\mathcal Y}\left(\overline m_{\oplus,\bm\lambda_b}(\bm X_b),m_\oplus(x_0)\right)h^2\\
    &\le
    L_Sd_{\mathcal Y}\left(\overline m_{\oplus,\bm\lambda_b}(\bm X_b),m_\oplus(x_0)\right)
    \left(
    \left\|\sum_{i\in I_b}\lambda_{bi}(X_i-x_0)\right\|
    +\frac{h^2}{2}
    \right).
\end{align*}
This proves the lemma.
\end{proof}

\subsection{Proof of \Cref{thm:second-order-kernel-regression}}
We first prepare the following lemma. 
\begin{lemma}\label{lem:one-kernel-block-second-order}
Under the conditions of Theorem~\ref{thm:second-order-kernel-regression}, there exist constants $C_{\mathrm{blk}},C_{\mathrm{bias}},C_{\mathrm{des}},C_{\mathrm{resp}}\in(0,\infty)$ depending only on the fixed constants in those conditions such that the following holds. For $h\in(0,h_0)$ and $b=1,\ldots,B$, if
\begin{align*}
    N_bh^df_X(x_0)\ge C_{\mathrm{blk}},
\end{align*}
then
\begin{align*}
    \Pbb\left(
    d_{\mathcal Y}(\widehat m_{h,b}(x_0),m_\oplus(x_0))
    >
    C_{\mathrm{bias}}h^2
    +\frac{C_{\mathrm{resp}}\sigma+C_{\mathrm{des}}h}
    {\sqrt{N_bh^df_X(x_0)}}
    \right)
    \le\frac5{16}.
\end{align*}
\end{lemma}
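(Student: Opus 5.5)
We split the error with the triangle inequality at the intermediate point $\overline m_{\oplus,\bm\lambda_b}(\bm X_b)$:
\begin{align*}
    d_{\mathcal Y}(\widehat m_{h,b}(x_0),m_\oplus(x_0))
    \le d_{\mathcal Y}\left(\widehat m_{h,b}(x_0),\overline m_{\oplus,\bm\lambda_b}(\bm X_b)\right)
    +d_{\mathcal Y}\left(\overline m_{\oplus,\bm\lambda_b}(\bm X_b),m_\oplus(x_0)\right),
\end{align*}
valid on $\{S_b>0\}$. The first term is a response (stochastic) error, controlled conditionally on the design by Corollary~\ref{cor:r2-conditional}. The second term is a conditional bias, controlled by Lemma~\ref{lem:conditional-bias-second-order} together with the design bound of Lemma~\ref{lem:self-normalized-first-moment}. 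The failure probability budget is $5/16 = 1/12 + 1/4 - \epsilon$: more precisely $1/12+1/4=1/3>5/16$. So we must tighten one piece. We will rerun the Markov step of Corollary~\ref{cor:r2-moment} with a larger constant so that its conditional failure probability is $1/4-1/48=11/48$, or instead choose the design event with probability $\ge 1-1/16$ by raising the constant $24$ to $32$ in Lemma~\ref{lem:self-normalized-first-moment}. Either adjustment only changes constants, and $1/16+1/4=5/16$.

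\textbf{Design event.} Take $C_{\mathrm{blk}}$ large enough that \eqref{eq:self-normalized-first-moment-ass1}, possibly with $\log 32$ in place of $\log 24$, holds. Let $E_b$ be the event of \eqref{eq:self-normalized-first-moment}, with probability at least $1-1/16$.

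On $E_b$ we have $S_b\ge N_b\E[K_h(X,x_0)]/2\ge N_bf_X(x_0)/4$ by Lemma~\ref{lem:kernel-target-positive}. Since $K_h\le \|K\|_\infty h^{-d}$,
\begin{align*}
    \sum_{i\in I_b}\lambda_{bi}^2\le \frac{\max_i K_h(X_i,x_0)}{S_b}\le\frac{4\|K\|_\infty}{N_bh^df_X(x_0)}.
\end{align*}
Also on $E_b$, Lemma~\ref{lem:conditional-bias-second-order} combined with \eqref{eq:self-normalized-first-moment} gives
\begin{align*}
    d_{\mathcal Y}\left(\overline m_{\oplus,\bm\lambda_b}(\bm X_b),m_\oplus(x_0)\right)\le \frac{L_S}{\kappa_{\mathcal Y}}\left(\frac{4L_f\mu_{2,K}}{f_X(x_0)}+\frac12\right)h^2+\frac{L_S}{\kappa_{\mathcal Y}}\,4\sqrt{\frac{24M_f\nu_{2,K}}{f_X(x_0)}}\,\frac{h}{\sqrt{N_bh^df_X(x_0)}}.
\end{align*}
This yields $C_{\mathrm{bias}}$ and $C_{\mathrm{des}}$. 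The event $E_b$ is measurable with respect to $X_1,\ldots,X_n$.

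\textbf{Response term.} Apply \eqref{eq:r2-conditional-probability} conditionally on the predictors on $E_b\subset\{S_b>0\}$. Substituting the $\sum\lambda_{bi}^2$ bound, with conditional probability at most $1/4$ we get
\begin{align*}
    d_{\mathcal Y}\left(\widehat m_{h,b}(x_0),\overline m_{\oplus,\bm\lambda_b}(\bm X_b)\right)> \frac{C_{\mathrm{resp}}\sigma}{\sqrt{N_bh^df_X(x_0)}},
    \qquad
    C_{\mathrm{resp}}=\frac{4C_{\mathcal Y}}{\kappa_{\mathcal Y}}\sqrt{\|K\|_\infty\left(1+\frac{1}{\kappa_{\mathcal Y}}\right)}.
\end{align*}

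\textbf{Combining.} Integrate the conditional bound over $E_b$ and add $\Pbb(E_b^c)\le 1/16$. This gives total failure probability at most $1/4+1/16=5/16$.

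The main obstacle is the bookkeeping of this probability budget: the stated lemma gives $11/12$, which is slightly too weak. The fix is to re-derive \eqref{eq:self-normalized-first-moment-proof3}–\eqref{eq:self-normalized-first-moment-proof4} with $1/32$ each. That changes the Markov constant $24$ to $32$ and requires $N_bh^df_X(x_0)\ge (56\|K\|_\infty/3)\log 32$, both absorbed into $C_{\mathrm{des}}$ and $C_{\mathrm{blk}}$. A secondary check is that Lemma~\ref{lem:conditional-bias-second-order} requires $\overline m_{\oplus,\bm\lambda_b}(\bm X_b)$ to lie in the ball of Condition~\ref{ass:S1}; this is supplied by Lemma~\ref{lem:block-second-moment}.
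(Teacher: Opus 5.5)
Your proof is correct and matches the paper's argument essentially step for step. It uses the same triangle-inequality split at $\overline m_{\oplus,\bm\lambda_b}(\bm X_b)$, the same design event from Lemma~\ref{lem:self-normalized-first-moment}, Lemma~\ref{lem:conditional-bias-second-order} for the conditional bias, Corollary~\ref{cor:r2-conditional} with the $\sum_i\lambda_{bi}^2$ bound for the response term, and the same constants $C_{\mathrm{bias}},C_{\mathrm{des}},C_{\mathrm{resp}}$.

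The one point to correct is your ``obstacle'', which does not exist. Integrating the conditional bound over $E_b$ gives $\Pbb(E_b\cap\{\text{response fails}\})\le\frac14\Pbb(E_b)$, not $\frac14$. The total failure probability is therefore at most $\Pbb(E_b^c)+\frac14\Pbb(E_b)=1-\frac34\Pbb(E_b)\le 1-\frac34\cdot\frac{11}{12}=\frac5{16}$. So the original $11/12$, with the constants $24$ and $C_{\mathrm{blk}}=\frac{56\|K\|_\infty}{3}\log 24$, already suffices. Your re-derivation with $32$ is valid but unnecessary, and your displayed $C_{\mathrm{des}}$, which still contains $24$, would be inconsistent with that re-derivation.
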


\begin{proof}[Proof of \Cref{lem:one-kernel-block-second-order}]
Let $h\in(0,h_0)$ and $b=1,\ldots,B$ be given, and define
\begin{align}
\begin{split}\label{def:second-order-theorem-constant1}
    C_{\mathrm{blk}}:=\frac{56\|K\|_\infty}{3}\log 24,\quad
    &C_{\mathrm{bias}}
    :=\frac{L_S}{\kappa_{\mathcal Y}}
    \left(\frac{4L_f\mu_{2,K}}{f_X(x_0)}+\frac12\right),\\
    C_{\mathrm{des}}
    :=\frac{4L_S}{\kappa_{\mathcal Y}}
    \sqrt{\frac{24M_f\nu_{2,K}}{f_X(x_0)}},\quad
    &C_{\mathrm{resp}}
    :=\frac{4C_{\mathcal Y}}{\kappa_{\mathcal Y}}
    \sqrt{\left(1+\frac{1}{\kappa_{\mathcal Y}}\right)\|K\|_\infty}.
\end{split}
\end{align}
Suppose that $N_bh^df_X(x_0)\ge C_{\mathrm{blk}}$. Let $\mathcal A_b$ be the event on which the two inequalities in Lemma~\ref{lem:self-normalized-first-moment} hold. By Lemma~\ref{lem:self-normalized-first-moment},
\begin{align} \label{eq:one-kernel-block-second-order-proof1}
    \Pbb(\mathcal A_b)\ge\frac{11}{12}.
\end{align}
On $\mathcal A_b$, Lemmas~\ref{lem:self-normalized-first-moment} and \ref{lem:conditional-bias-second-order} imply
\begin{align}
\begin{split}
    &d_{\mathcal Y}\left(\overline m_{\oplus,\bm\lambda_b}(\bm X_b),m_\oplus(x_0)\right)\\
    &\le\frac{L_S}{\kappa_{\mathcal Y}}
    \left[
    \left(\frac{4L_f\mu_{2,K}}{f_X(x_0)}+\frac12\right)h^2
    +4\sqrt{\frac{24M_f\nu_{2,K}}{f_X(x_0)}}
    \frac{h}{\sqrt{N_bh^df_X(x_0)}}
    \right]\\
    &=C_{\mathrm{bias}}h^2
    +\frac{C_{\mathrm{des}}h}{\sqrt{N_bh^df_X(x_0)}}.
\end{split}\label{eq:one-kernel-block-second-order-proof2}
\end{align}
By the above, Lemma~\ref{lem:weight-square} below, and Corollary~\ref{cor:r2-conditional}, on $\mathcal A_b$, we have
\begin{align*}
    &\Pbb\left(
    d_{\mathcal Y}\left(\widehat m_{h,b}(x_0),\overline m_{\oplus,\bm\lambda_b}(\bm X_b)\right)
    >
    \frac{C_{\mathrm{resp}}\sigma}{\sqrt{N_bh^df_X(x_0)}}
    \,\middle|\,X_1,\ldots,X_n
    \right)\\
    &\le
    \Pbb\left(
    d_{\mathcal Y}\left(\widehat m_{h,b}(x_0),\overline m_{\oplus,\bm\lambda_b}(\bm X_b)\right)
    >
    \frac{2C_{\mathcal Y}\sigma}{\kappa_{\mathcal Y}}
    \sqrt{\left(1+\frac{1}{\kappa_{\mathcal Y}}\right)\sum_{i\in I_b}\lambda_{bi}^2}
    \,\middle|\,X_1,\ldots,X_n
    \right)\\
    &\le\frac14.
\end{align*}
Since $\mathcal A_b$ is determined only by the predictors, the above implies
\begin{align}
\begin{split}
    &\Pbb\left(
    \mathcal A_b\cap
    \left\{
    d_{\mathcal Y}\left(\widehat m_{h,b}(x_0),
    \overline m_{\oplus,\bm\lambda_b}(\bm X_b)\right)
    \le
    \frac{C_{\mathrm{resp}}\sigma}{\sqrt{N_bh^df_X(x_0)}}
    \right\}
    \right)\\
    &=\E\left[
    \mathds{1}_{\mathcal A_b}
    \Pbb\left(
    d_{\mathcal Y}\left(\widehat m_{h,b}(x_0),
    \overline m_{\oplus,\bm\lambda_b}(\bm X_b)\right)
    \le
    \frac{C_{\mathrm{resp}}\sigma}{\sqrt{N_bh^df_X(x_0)}}
    \,\middle|\,X_1,\ldots,X_n
    \right)
    \right]\\
    &\ge\frac34\Pbb(\mathcal A_b).
\end{split}\label{eq:one-kernel-block-second-order-proof3}
\end{align}
On the event in \eqref{eq:one-kernel-block-second-order-proof3}, \eqref{eq:one-kernel-block-second-order-proof2} and the triangle inequality give
\begin{align}
\begin{split}
    d_{\mathcal Y}\left(\widehat m_{h,b}(x_0),m_\oplus(x_0)\right)
    &\le d_{\mathcal Y}\left(\widehat m_{h,b}(x_0),
    \overline m_{\oplus,\bm\lambda_b}(\bm X_b)\right)
    +d_{\mathcal Y}\left(\overline m_{\oplus,\bm\lambda_b}(\bm X_b),m_\oplus(x_0)\right)\\
    &\le C_{\mathrm{bias}}h^2
    +\frac{C_{\mathrm{resp}}\sigma+C_{\mathrm{des}}h}
    {\sqrt{N_bh^df_X(x_0)}}.
\end{split}\label{eq:one-kernel-block-second-order-proof4}
\end{align}
Therefore, by \eqref{eq:one-kernel-block-second-order-proof1}, \eqref{eq:one-kernel-block-second-order-proof3}, and \eqref{eq:one-kernel-block-second-order-proof4}, we have
\begin{align*}
    \Pbb\left(
    d_{\mathcal Y}\left(\widehat m_{h,b}(x_0),m_\oplus(x_0)\right)
    >
    C_{\mathrm{bias}}h^2
    +\frac{C_{\mathrm{resp}}\sigma+C_{\mathrm{des}}h}
    {\sqrt{N_bh^df_X(x_0)}}
    \right)
    \le1-\frac34\Pbb(\mathcal A_b)
    \le\frac5{16}.
\end{align*}
This proves the lemma.
\end{proof}

Now we prove Theorem \ref{thm:second-order-kernel-regression}.  By Lemma~\ref{lem:binomial-calibration} below, there exists a universal constant $C_B\in(0,\infty)$ such that
\begin{align} \label{eq:second-order-kernel-regression-proof1}
    B_\delta\le C_B\log(1/\delta), \quad \delta\in\left(0,\frac{1}{2}\right].
\end{align}
Let $\delta\in(0,1/2]$ and $h\in(0,h_0)$ be given. Define
\begin{align} \label{def:second-order-theorem-constant2}
    C_{S,U}':=2C_{\mathrm{blk}}C_B, \quad C_{S,U}:=\max\left\{3C_{\mathrm{bias}},3\sqrt{2C_B}\left(C_{\mathrm{resp}}+C_{\mathrm{des}}\right)\right\},
\end{align}
where $C_{\mathrm{blk}},C_{\mathrm{bias}},C_{\mathrm{des}},C_{\mathrm{resp}}\in(0,\infty)$ are defined by \eqref{def:second-order-theorem-constant1}. Suppose that 
\begin{align} \label{eq:second-order-kernel-regression-proof2}
    B_\delta\le\lfloor n/2\rfloor, \quad nh^df_X(x_0)\ge C_{S,U}'\log(1/\delta).
\end{align}
Since the block sizes differ by at most one,
\begin{align} \label{eq:second-order-kernel-regression-proof3}
    N_b\ge\left\lfloor\frac{n}{B_\delta}\right\rfloor\ge\frac{n}{2B_\delta}, \quad b=1,\ldots,B_\delta.
\end{align}
By \eqref{eq:second-order-kernel-regression-proof1}, \eqref{eq:second-order-kernel-regression-proof2}, and \eqref{eq:second-order-kernel-regression-proof3}, we have
\begin{align} \label{eq:second-order-kernel-regression-proof4}
    N_bh^df_X(x_0)\ge \frac{nh^df_X(x_0)}{2B_\delta} \ge \frac{C_{\mathrm{blk}}C_B}{B_\delta}\log(1/\delta) \ge C_{\mathrm{blk}}, \quad b=1,\ldots,B_\delta.
\end{align}
Therefore, \eqref{eq:second-order-kernel-regression-proof3}, \eqref{eq:second-order-kernel-regression-proof4}, and Lemma~\ref{lem:one-kernel-block-second-order} give
\begin{align}
\begin{split}
    &\Pbb\left(d_{\mathcal Y}\left(\widehat m_{h,b}(x_0),m_\oplus(x_0)\right)>C_{\mathrm{bias}}h^2+\left(C_{\mathrm{resp}}\sigma+C_{\mathrm{des}}h\right)\sqrt{\frac{2B_\delta}{nh^df_X(x_0)}}\right)\\
    &\le\Pbb\left(d_{\mathcal Y}(\widehat m_{h,b}(x_0),m_\oplus(x_0))> C_{\mathrm{bias}}h^2 +\frac{C_{\mathrm{resp}}\sigma+C_{\mathrm{des}}h} {\sqrt{N_bh^df_X(x_0)}}\right)\\
    &\le\frac5{16}, \quad b=1,\ldots,B_\delta.
\end{split} \label{eq:second-order-kernel-regression-proof5}
\end{align}
Since the block estimators are independent, \eqref{eq:second-order-kernel-regression-proof5} and Lemma~\ref{lem:mr-probability} below with $p=11/16$ give
\begin{align}
\begin{split}
    &\Pbb\left(d_{\mathcal Y}\left(\widehat m^{\mathrm{RLFR}}_{h,B_\delta}(x_0),m_\oplus(x_0)\right)>3C_{\mathrm{bias}}h^2+3\left(C_{\mathrm{resp}}\sigma+C_{\mathrm{des}}h\right)\sqrt{\frac{2B_\delta}{nh^df_X(x_0)}}\right)\\
    &\le Q_{B_\delta}\left(\frac{11}{16}\right) \\
    &\le\delta,
\end{split} \label{eq:second-order-kernel-regression-proof6}
\end{align}
where the last inequality holds by the definition of $B_\delta$. By \eqref{eq:second-order-kernel-regression-proof1} and \eqref{def:second-order-theorem-constant2}, we have
\begin{align} \label{eq:second-order-kernel-regression-proof7}
    3C_{\mathrm{bias}}h^2+3\left(C_{\mathrm{resp}}\sigma+C_{\mathrm{des}}h\right)\sqrt{\frac{2B_\delta}{nh^df_X(x_0)}} \le C_{S,U}\left(h^2+\sigma\sqrt{\frac{\log(1/\delta)}{nh^df_X(x_0)}}\right)
\end{align}
since $h<h_0\le\sigma$. Therefore, \eqref{eq:second-order-kernel-regression-proof6} and \eqref{eq:second-order-kernel-regression-proof7} prove the theorem.

\begin{corollary}\label{cor:second-order-optimized-rate}
Under the conditions of Theorem~\ref{thm:second-order-kernel-regression}, for fixed $\delta\in(0,1/2]$, define
\begin{align*}
    h_{n,\delta}^{\mathrm{opt}}:=\left(\frac{\sigma^2\log(1/\delta)}{nf_X(x_0)}\right)^{1/(d+4)}, \quad n\in\mathbb{N}.
\end{align*}
Then, for sufficiently large $n\in\mathbb N$,
\begin{align*}
    \Pbb\left(d_{\mathcal Y}\left(\widehat m^{\mathrm{RLFR}}_{h_{n,\delta}^{\mathrm{opt}},B_\delta}(x_0),m_\oplus(x_0)\right)>2C_{S,U}\left(\frac{\sigma^2\log(1/\delta)}{nf_X(x_0)}\right)^{2/(d+4)}\right)\le\delta.
\end{align*}
\end{corollary}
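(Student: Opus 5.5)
The plan is to substitute $h=h_{n,\delta}^{\mathrm{opt}}$ into Theorem~\ref{thm:second-order-kernel-regression}. This requires two checks: that the hypotheses of the theorem hold for all sufficiently large $n$, and that the resulting bound simplifies to the stated one. Throughout, $\delta$ is fixed, and write
\begin{align*}
    a_n:=\frac{\sigma^2\log(1/\delta)}{nf_X(x_0)},
\end{align*}
so that $h_{n,\delta}^{\mathrm{opt}}=a_n^{1/(d+4)}\to0$ as $n\to\infty$.

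First, the hypotheses. Since $a_n\to0$, we have $h_{n,\delta}^{\mathrm{opt}}<h_0$ for large $n$, so $h_{n,\delta}^{\mathrm{opt}}\in(0,h_0)$. Since $B_\delta$ is a fixed finite integer, $B_\delta\le\lfloor n/2\rfloor$ for large $n$. For the local sample size condition, compute
\begin{align*}
    n\bigl(h_{n,\delta}^{\mathrm{opt}}\bigr)^df_X(x_0)=nf_X(x_0)\,a_n^{d/(d+4)}=\bigl(nf_X(x_0)\bigr)^{4/(d+4)}\bigl(\sigma^2\log(1/\delta)\bigr)^{d/(d+4)}.
\end{align*}
This tends to infinity with $n$, so it eventually exceeds $C_{S,U}'\log(1/\delta)$. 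All conditions of Theorem~\ref{thm:second-order-kernel-regression} therefore hold for sufficiently large $n$.

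Second, the simplification. The bias term is $(h_{n,\delta}^{\mathrm{opt}})^2=a_n^{2/(d+4)}$. For the stochastic term,
\begin{align*}
    \sigma^2\frac{\log(1/\delta)}{n\bigl(h_{n,\delta}^{\mathrm{opt}}\bigr)^df_X(x_0)}=a_n\cdot a_n^{-d/(d+4)}=a_n^{4/(d+4)},
\end{align*}
so $\sigma\sqrt{\log(1/\delta)/(n(h_{n,\delta}^{\mathrm{opt}})^df_X(x_0))}=a_n^{2/(d+4)}$ as well. The threshold in \eqref{eq:second-order-main-full} is thus exactly $2C_{S,U}a_n^{2/(d+4)}$, which gives the claimed probability bound $\le\delta$.

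No step is a genuine obstacle. The only point needing care is to record explicitly that "sufficiently large" depends on $\delta$, $h_0$, $\sigma$, $f_X(x_0)$ and $C_{S,U}'$, since all three hypothesis checks are eventual in $n$ for fixed $\delta$.
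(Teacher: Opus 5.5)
Your proposal is correct and follows the paper's proof: fix $\delta$ (hence $B_\delta$), use $h_{n,\delta}^{\mathrm{opt}}\to0$ and $n(h_{n,\delta}^{\mathrm{opt}})^df_X(x_0)\to\infty$ to verify the hypotheses of Theorem~\ref{thm:second-order-kernel-regression} for large $n$, then substitute. Your explicit computation showing that both terms of the threshold equal $a_n^{2/(d+4)}$ is the simplification the paper leaves implicit.
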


\begin{proof}[Proof of \Cref{cor:second-order-optimized-rate}]
Let $\delta\in(0,1/2]$ be fixed. Note that $B_\delta$ is also fixed. By the definition of $h_{n,\delta}^{\mathrm{opt}}$, we have
\begin{align*}
    \lim_{n\to\infty}h_{n,\delta}^{\mathrm{opt}}=0, \quad \lim_{n\to\infty} n\left(h_{n,\delta}^{\mathrm{opt}}\right)^df_X(x_0)=\infty.
\end{align*}
Thus, for sufficiently large $n\in\mathbb N$, the above implies
\begin{align*}
    h_{n,\delta}^{\mathrm{opt}}\in(0,h_0), \quad B_\delta\le\lfloor n/2\rfloor, \quad n\left(h_{n,\delta}^{\mathrm{opt}}\right)^df_X(x_0)\ge C_{S,U}'\log(1/\delta).
\end{align*}
Therefore, the above, the definition of $h_{n,\delta}^{\mathrm{opt}}$, and Theorem~\ref{thm:second-order-kernel-regression} prove the corollary.
\end{proof}

\subsection{Proof of \Cref{thm:minimax-lower-second-order}}
Define the smooth radial function $\varphi:\R^d\to[0,1]$ by
\begin{align*}
    \varphi(u):=
    \begin{cases}
        \displaystyle \exp\left(1-\frac{1}{1-\|u\|^2}\right), & \text{if} \quad \|u\|<1,\\
        0, & \text{if} \quad \|u\|\ge1,
    \end{cases}\quad u\in\R^d.
\end{align*}
Then $\varphi(0)=1$, $\nabla\varphi(0)=0$, and $\operatorname{supp}(\varphi)\subset\overline B(0,1)$. Define
\begin{align*}
    H_\varphi:=\max\left\{1,\sup_{u\in\R^d}\|\nabla^2\varphi(u)\|_{\mathrm{op}}\right\}, \quad
    J_\varphi:=\int_{\R^d}\varphi(u)^2\,du.
\end{align*}
Then $H_\varphi, J_\varphi\in(0,\infty)$. By Condition~\ref{ass:D1}, there exists $r_0\in(0,h_0]$ such that
\begin{align} \label{eq:minimax-lower-second-order-proof1}
    f_X(x)\le2f_X(x_0), \quad x\in\overline B(x_0,r_0).
\end{align}
Define
\begin{align*}
    C_{S,L}':=\frac{H_\varphi^2}{J_\varphi r_0^{d+4}}, \quad
    C_{S,L}:=\frac{1}{2H_\varphi}\left(\frac{H_\varphi^2}{2J_\varphi}\right)^{2/(d+4)}.
\end{align*}
Let $\delta\in(0,1/16]$ and $L\in(0,\infty)$ be given, and suppose
\begin{align} \label{eq:minimax-lower-second-order-proof2}
    L\left(2h_0+h_0^2\right)\le2\tau, \quad
    nf_X(x_0)L^2\ge C_{S,L}'\tau^2\log(1/\delta).
\end{align}
Define
\begin{align*}
    h_{\delta, L}:=\left(
        \frac{H_\varphi^2\tau^2\log(1/(4\delta))}
        {J_\varphi nf_X(x_0)L^2}
    \right)^{1/(d+4)}.
\end{align*}
By \eqref{eq:minimax-lower-second-order-proof2} and the definition of $C_{S,L}'$, we have
\begin{align} \label{eq:minimax-lower-second-order-proof3}
    h_{\delta, L}^{d+4}
    =\frac{H_\varphi^2\tau^2\log(1/(4\delta))}
    {J_\varphi nf_X(x_0)L^2}
    \le\frac{H_\varphi^2\tau^2\log(1/\delta)}
    {J_\varphi nf_X(x_0)L^2}
    \le\frac{H_\varphi^2}{J_\varphi C_{S,L}'}
    =r_0^{d+4},
\end{align}
which implies $h_{\delta, L}\le r_0\le h_0$. We now define $g_1, g_2:\R^d\to\R$ by
\begin{align*}
    g_1(x):=0, \quad
    g_2(x):=\frac{Lh_{\delta, L}^2}{H_\varphi}\varphi\left(\frac{x-x_0}{h_{\delta, L}}\right),
    \quad x\in\R^d.
\end{align*}
By the definitions of $\varphi$ and $g_2$, we have
\begin{align} \label{eq:minimax-lower-second-order-proof4}
    \nabla g_2(x_0)
    =\frac{Lh_{\delta,L}}{H_\varphi}\nabla\varphi(0)=0,\quad
    \sup_{x\in\R^d}\|\nabla^2g_2(x)\|_{\mathrm{op}}
    =\frac{L}{H_\varphi}\sup_{u\in\R^d}\|\nabla^2\varphi(u)\|_{\mathrm{op}}
    \le L.
\end{align}
Also, since $\operatorname{supp}(\varphi)\subset\overline B(0,1)$ and $h_{\delta,L}\le h_0$, we have
\begin{align} \label{eq:minimax-lower-second-order-proof5}
    \operatorname{supp}(g_2)\subset\overline B(x_0,h_{\delta,L})\subset\overline B(x_0,h_0).
\end{align}
Therefore, by \eqref{eq:minimax-lower-second-order-proof4}, \eqref{eq:minimax-lower-second-order-proof5}, and the definition of $g_1$, we have $g_1,g_2\in\mathcal G_L$. Moreover, by the isometry of $\iota$ and $\varphi(0)=1$,
\begin{align} \label{eq:minimax-lower-second-order-proof6}
    d_{\mathcal Y}\left(\iota(g_1(x_0)),\iota(g_2(x_0))\right)
    =|g_1(x_0)-g_2(x_0)|
    =\frac{Lh_{\delta, L}^2}{H_\varphi}. 
\end{align}
Also, by Lemma~\ref{lem:gaussian-product-kl} below, \eqref{eq:minimax-lower-second-order-proof1}, \eqref{eq:minimax-lower-second-order-proof3}, the definition of $h_{\delta,L}$, and the change of variables, we have
\begin{align}
\begin{split}
    \operatorname{KL}\left(P_{g_1,\tau}^{\otimes n},P_{g_2,\tau}^{\otimes n}\right)
    &=\frac{n}{2\tau^2}\int_{\mathcal X}g_2(x)^2f_X(x)\,dx\\
    &\le\frac{nf_X(x_0)L^2h_{\delta,L}^4}{H_\varphi^2\tau^2}
    \int_{\overline B\left(x_0,h_{\delta,L}\right)}
    \varphi\left(\frac{x-x_0}{h_{\delta,L}}\right)^2\,dx\\
    &=\frac{nf_X(x_0)J_\varphi L^2h_{\delta,L}^{d+4}}{H_\varphi^2\tau^2}\\
    &=\log(1/(4\delta)).
\end{split}
\label{eq:minimax-lower-second-order-proof7}
\end{align}
Therefore, by \eqref{eq:minimax-lower-second-order-proof6}, \eqref{eq:minimax-lower-second-order-proof7}, and Lemma~\ref{lem:two-point-estimation} below, for every measurable map $\mathcal T_n:(\mathcal X\times\mathcal Y)^n\to\mathcal Y$,
\begin{align}
\begin{split}
    \max_{j\in\{1,2\}}P_{g_j,\tau}^{\otimes n}\left(
    d_{\mathcal Y}\left(\mathcal T_n(\mathcal D_n),\iota(g_j(x_0))\right)
    \ge\frac{Lh_{\delta,L}^2}{2H_\varphi}
    \right)&\ge\frac14
    \exp\left(
        -\operatorname{KL}\left(P_{g_1,\tau}^{\otimes n},P_{g_2,\tau}^{\otimes n}\right)
    \right)\\
    &\ge\frac14\exp\left(-\log(1/(4\delta))\right)\\
    &=\delta.
\end{split}
\label{eq:minimax-lower-second-order-proof8}
\end{align}
Since $\delta\in(0,1/16]$, we have
\begin{align*}
    \log(1/(4\delta))
    =\log(1/\delta)-\log4
    \ge\frac12\log(1/\delta).
\end{align*}
Then, by the above and the definition of $h_{\delta, L}$, we have
\begin{align*}
    \frac{Lh_{\delta,L}^2}{2H_\varphi}
    &=\frac{1}{2H_\varphi}
    \left(\frac{H_\varphi^2}{J_\varphi}\right)^{2/(d+4)}
    L^{d/(d+4)}
    \left(
        \frac{\tau^2\log(1/(4\delta))}
        {nf_X(x_0)}
    \right)^{2/(d+4)}\\
    &\ge\frac{1}{2H_\varphi}
    \left(\frac{H_\varphi^2}{2J_\varphi}\right)^{2/(d+4)}
    L^{d/(d+4)}
    \left(
        \frac{\tau^2\log(1/\delta)}
        {nf_X(x_0)}
    \right)^{2/(d+4)}\\
    &=C_{S,L}L^{d/(d+4)}
    \left(
        \frac{\tau^2\log(1/\delta)}
        {nf_X(x_0)}
    \right)^{2/(d+4)}.
\end{align*}
Since $g_1,g_2\in\mathcal G_L$, the above implies
\begin{align}
\begin{split}
    &\sup_{g\in\mathcal G_L}
    P_{g,\tau}^{\otimes n}\left(
    d_{\mathcal Y}\left(\mathcal T_n(\mathcal D_n),\iota(g(x_0))\right)
    \ge C_{S,L}L^{d/(d+4)}
    \left(
        \frac{\tau^2}
        {nf_X(x_0)} \log\left(\frac{1}{\delta} \right) 
    \right)^{2/(d+4)}
    \right)\\
    &\ge
    \max_{j\in\{1,2\}}
    P_{g_j,\tau}^{\otimes n}\left(
    d_{\mathcal Y}\left(\mathcal T_n(\mathcal D_n),\iota(g_j(x_0))\right)
    \ge\frac{Lh_{\delta,L}^2}{2H_\varphi}
    \right).
\end{split} \label{eq:minimax-lower-second-order-proof9}
\end{align}
Therefore, \eqref{eq:minimax-lower-second-order-proof8} and \eqref{eq:minimax-lower-second-order-proof9} prove the theorem.

\begin{proposition}\label{prop:gaussian-subclass-upper}
Suppose Conditions~\ref{ass:K1}--\ref{ass:K2}, \ref{ass:D1}--\ref{ass:D2}, \ref{ass:M1}(iv), \ref{ass:M2}(iv), and \ref{ass:M4} hold. For $L,\tau\in(0,\infty)$, if $L\left(2h_0+h_0^2\right)\le2\tau$, then, uniformly over $g\in\mathcal G_L$, the model \eqref{eq:embeddedgaussian} satisfies Conditions~\ref{ass:M1}--\ref{ass:M4} and \ref{ass:S1}, with $\sigma=\sqrt2\tau$ and $L_S=3C_{\mathcal Y}L$.
\end{proposition}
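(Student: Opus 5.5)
The plan is to exploit the explicit structure of model~\eqref{eq:embeddedgaussian}: conditionally on $X=x$, the response is the push-forward under $\iota$ of $N(g(x),\tau^2)$. I will first reduce the geometric conditions to the two assumed ones, then identify the conditional Fr\'echet mean, and finally verify Conditions~\ref{ass:M3} and \ref{ass:S1} by one-dimensional Gaussian calculus along the embedded line.

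\textbf{Step 1 (geometric conditions).} Each criterion in Conditions~\ref{ass:M1}(i)--(iii) and \ref{ass:M2}(i)--(iii) is a criterion $\overline Q_{\oplus,\bm\lambda}$ for a suitable finite family in $\mathcal L_2(\mathcal Y)$:
\begin{itemize}
\item $\widetilde M_h(x_0,\cdot)$ is $\overline Q_{\oplus,(1)}$ for one variable whose law is the kernel-weighted mixture $\int P_{Y\mid X=x}\,K_h(x,x_0)f_X(x)\,dx/\E[K_h(X,x_0)]$;
\item $\overline M_{\oplus,\bm\lambda}(\bm x,\cdot)$ is $\overline Q_{\oplus,\bm\lambda}$ with $\widetilde Y_i\sim P_{Y\mid X=x_i}$;
\item $\overline B_{\oplus,\bm\lambda}(\bm y,\cdot)$ is $\overline Q_{\oplus,\bm\lambda}$ with Dirac variables $\widetilde Y_i\equiv y_i$.
\end{itemize}
The Gaussian laws have finite second moments, so all these variables lie in $\mathcal L_2(\mathcal Y)$. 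Hence Conditions~\ref{ass:M1}(iv) and \ref{ass:M2}(iv) give all of \ref{ass:M1}--\ref{ass:M2}, and \ref{ass:M4} is assumed. This step is uniform in $g$.

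\textbf{Step 2 (identify $m_\oplus(x)=\iota(g(x))$).} This is needed both for \ref{ass:M3} and for the minimax comparison, and I expect it to be the main obstacle. Along the line it is clear: $\E\,d_{\mathcal Y}^2(\iota(g(x)+\varepsilon),\iota(t))=\tau^2+(g(x)-t)^2$ is minimized at $t=g(x)$. What must be excluded is a minimizer off $\iota(\R)$. I would try a perturbation argument. Let $q$ be the unique minimizer given by Condition~\ref{ass:M1}(iv), and put $D(z):=d_{\mathcal Y}^2(\iota(z),q)-d_{\mathcal Y}^2(\iota(z),\iota(g(x)))$. Then:
\begin{itemize}
\item Condition~\ref{ass:M4} makes $z\mapsto D(z)$ Lipschitz with constant $C_{\mathcal Y}d_{\mathcal Y}(q,\iota(g(x)))$.
\item Minimality of $q$ against every $\iota(t)$, combined with the variance inequality \ref{ass:M2}(iv) for shifted Gaussians $N(s,\tau^2)$, should give $\E D(s+\varepsilon)\ge \kappa_{\mathcal Y}d^2_{\mathcal Y}(q,\iota(s))-(\text{quadratic in }s-g(x))$ for all $s$.
\end{itemize}
Optimizing over $s$ near $g(x)$ should then force $d_{\mathcal Y}(q,\iota(g(x)))=0$. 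If this fails, I would fall back on a first-order optimality identity. The Gaussian integration-by-parts formula $\frac{d}{ds}\E D(s+\varepsilon)=\tau^{-2}\E[\varepsilon D(s+\varepsilon)]$ encodes the symmetry of $\varepsilon$, and I would pair it with uniqueness.

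\textbf{Step 3 (Condition~\ref{ass:M3}).} With $m_\oplus(x_0)=\iota(g(x_0))$ and the isometry of $\iota$,
\[
M_\oplus(x,m_\oplus(x_0))=\tau^2+(g(x)-g(x_0))^2 .
\]
Taylor's formula together with $\|\nabla g(x_0)\|\le L$ and $\|\nabla^2 g\|_{\mathrm{op}}\le L$ gives $|g(x)-g(x_0)|\le Lh_0+Lh_0^2/2\le\tau$ on $\overline B(x_0,h_0)$, using $L(2h_0+h_0^2)\le2\tau$. Hence $\sigma^2=2\tau^2$.

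\textbf{Step 4 (Condition~\ref{ass:S1}).} Fix $\omega$ and set $D_\omega(z):=d_{\mathcal Y}^2(\iota(z),\omega)-d_{\mathcal Y}^2(\iota(z),\iota(g(x_0)))$, which is $C_{\mathcal Y}d_{\mathcal Y}(\omega,m_\oplus(x_0))$-Lipschitz by Condition~\ref{ass:M4}. Then $S_\omega(x)=\phi_\omega(g(x))$ with
\[
\phi_\omega(t):=\int D_\omega(z)\,\tau^{-1}\phi\bigl((z-t)/\tau\bigr)\,dz ,
\]
where $\phi$ is the standard normal density. The Gaussian convolution is smooth, and differentiating under the integral sign (dominated convergence, using the Lipschitz growth of $D_\omega$) gives:
\begin{itemize}
\item $|\phi_\omega'|\le C_{\mathcal Y}d$, with $d:=d_{\mathcal Y}(\omega,m_\oplus(x_0))$, by moving the derivative onto $D_\omega$;
\item $|\phi_\omega''|\le C_{\mathcal Y}d\,\E|\varepsilon|/\tau^2\le C_{\mathcal Y}d/\tau$, by placing one derivative on $D_\omega$ and one on the density.
\end{itemize}
The chain rule gives $\nabla S_\omega=\phi_\omega'(g)\nabla g$ and $\nabla^2S_\omega=\phi_\omega''(g)\nabla g\nabla g^\top+\phi_\omega'(g)\nabla^2g$. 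Thus $\|\nabla S_\omega(x_0)\|\le C_{\mathcal Y}Ld$, and the Hessian is at most $C_{\mathcal Y}d\bigl(L+\|\nabla g\|^2/\tau\bigr)$. Since $\|\nabla g\|\le L(1+h_0)$ on the ball, the hypothesis $L(2h_0+h_0^2)\le 2\tau$ is what must absorb $\|\nabla g\|^2/\tau$ into $2L$. This yields $L_S=3C_{\mathcal Y}L$, with the constant bookkeeping to be checked, uniformly over $\mathcal G_L$.
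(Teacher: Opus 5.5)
\textbf{Step 2 is a genuine gap.} Your Steps 1 and 3, and the Gaussian-convolution and chain-rule formulas in Step 4, coincide with the paper's argument. However, the identification $m_\oplus(x)=\iota(g(x))$ is not established, and both Steps 3 and 4 rest on it, since they center everything at $\iota(g(x_0))$. The routes you sketch do not close. Applying Condition~\ref{ass:M2}(iv) to the law of $\iota(s+\varepsilon)$ only controls distances to \emph{that} law's Fr\'echet mean. Knowing that this mean is $\iota(s)$ is exactly the claim, so your proposed lower bound on $\E D(s+\varepsilon)$ is circular. The Lipschitz bound on $D$ from Condition~\ref{ass:M4}, combined with $\E D(g(x)+\varepsilon)\le-\kappa_{\mathcal Y}d_{\mathcal Y}^2(q,\iota(g(x)))$, only bounds $d_{\mathcal Y}(q,\iota(g(x)))$ by a multiple of $\tau$, not by zero. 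Finally, the reflection $t\mapsto 2g(x)-t$ of $\R$ need not extend to an isometry of $\mathcal Y$, so uniqueness plus symmetry of $\varepsilon$ does not force $q\in\iota(\R)$. The missing idea is an elementary symmetrization that uses neither \ref{ass:M2} nor \ref{ass:M4}. With $a=g(x)$, for all $\omega\in\mathcal Y$ and $t\in\R$, Cauchy--Schwarz, the triangle inequality, and the isometry of $\iota$ give
\begin{align*}
d_{\mathcal Y}^2(\iota(a+t),\omega)+d_{\mathcal Y}^2(\iota(a-t),\omega)
&\ge\tfrac12\bigl(d_{\mathcal Y}(\iota(a+t),\omega)+d_{\mathcal Y}(\iota(a-t),\omega)\bigr)^2\\
&\ge\tfrac12 d_{\mathcal Y}^2\bigl(\iota(a+t),\iota(a-t)\bigr)=2t^2.
\end{align*}
Averaging over the symmetric $\varepsilon$ gives $M_\oplus(x,\omega)\ge\tau^2=M_\oplus(x,\iota(a))$ for every $\omega$, and Condition~\ref{ass:M1}(iv) with $N=1$ gives uniqueness.

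\textbf{The constant $L_S=3C_{\mathcal Y}L$ does not follow from your gradient bound.} With $\|\nabla g\|\le L(1+h_0)$, the hypothesis $L(2h_0+h_0^2)\le2\tau$ only yields $\|\nabla g\|^2/\tau\le 2L(1+h_0)^2/(2h_0+h_0^2)$. This always exceeds $2L$ and is of order $L/h_0$ as $h_0\to0$. Even restoring the factor $\E|\varepsilon|/\tau=\sqrt{2/\pi}$ that you dropped, the resulting bound $2\sqrt{2/\pi}\,L(1+h_0)^2/(2h_0+h_0^2)$ is below $2L$ only when $h_0\gtrsim1.22$, whereas $h_0$ is meant to be small. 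You must use the support constraint in $\mathcal G_L$. Since $g\in C^2(\R^d)$ vanishes outside $\overline B(x_0,h_0)$, we have $\nabla g=0$ on $\partial B(x_0,h_0)$. Integrating the Hessian bound along the radial segment from $x$ to the sphere then gives $\sup_{x\in B(x_0,h_0)}\|\nabla g(x)\|\le Lh_0$. Hence $\sqrt{2/\pi}\,L^2h_0^2/\tau\le 2\sqrt{2/\pi}\,Lh_0/(2+h_0)<2L$, so $\|\nabla^2S_\omega\|_{\mathrm{op}}\le 3C_{\mathcal Y}L\,d_{\mathcal Y}(\omega,m_\oplus(x_0))$ on $B(x_0,h_0)$. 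The bound on $\nabla S_\omega(x_0)$ needs only $\|\nabla g(x_0)\|\le L$, as you have it.
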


\begin{proof}[Proof of \Cref{prop:gaussian-subclass-upper}]
Let $L,\tau\in(0,\infty)$ be given such that $L\left(2h_0+h_0^2\right)\le2\tau$. By the definition of $\mathcal G_L$, Taylor's formula gives
\begin{align} \label{eq:gaussian-subclass-upper-proof1}
    |g(x)-g(x_0)|
    \le L\|x-x_0\|+\frac{L}{2}\|x-x_0\|^2
    \le L\left(h_0+\frac{h_0^2}{2}\right)
    \le\tau, \quad x\in\overline B(x_0,h_0),
\end{align}
for every $g\in\mathcal G_L$. Therefore, by the definition of $M_\oplus$, \eqref{eq:embeddedgaussian}, the isometry of $\iota$, and Lemma~\ref{lem:embedded-gaussian-mean} below, for every $g\in\mathcal G_L$, we have
\begin{align}
\begin{split}
    M_\oplus(x,m_\oplus(x_0))
    &=\E\left[d_{\mathcal Y}^2\left(Y,m_\oplus(x_0)\right)\,\middle|\,X=x\right]\\
    &=\E\left[d_{\mathcal Y}^2\left(\iota(g(x)+\varepsilon),\iota(g(x_0))\right)\right]\\
    &=\E\left[(g(x)-g(x_0)+\varepsilon)^2\right]\\
    &=(g(x)-g(x_0))^2
    +2(g(x)-g(x_0))\E[\varepsilon]
    +\E[\varepsilon^2]\\
    &=(g(x)-g(x_0))^2+\tau^2\\
    &\le2\tau^2, \quad x\in\overline B(x_0,h_0),
\end{split}
\label{eq:gaussian-subclass-upper-proof2}
\end{align}
where the last inequality follows from \eqref{eq:gaussian-subclass-upper-proof1}. Thus Condition~\ref{ass:M3} holds with $\sigma=\sqrt2\tau$.

For the remainder of the proof, we fix $g\in\mathcal G_L$. To verify Conditions~\ref{ass:M1}--\ref{ass:M2}, first note that $\E[K_h(X,x_0)]>0$ for $h\in(0,h_0]$ by Lemma~\ref{lem:kernel-target-positive} below. For $h\in(0,h_0]$, define the probability measure $\widetilde P_h:\mathcal B(\mathcal Y)\to[0,1]$ by
\begin{align*}
    \widetilde P_h(A)
    :=\frac{1}{\E[K_h(X,x_0)]}
    \int_{\overline B(x_0,h_0)}
    K_h(x,x_0)P_{Y\mid X=x}(A)\,dP_X(x),
    \quad A\in\mathcal B(\mathcal Y),
\end{align*}
where $\mathcal B(\mathcal Y)$ denotes the Borel $\sigma$-field on $\mathcal Y$. By Condition~\ref{ass:K1} and the definition of $M_\oplus$,
\begin{align*}
    \int_{\mathcal Y}d_{\mathcal Y}^2(y,\omega)\,d\widetilde P_h(y)
    &=\frac{\E\left[K_h(X,x_0)d_{\mathcal Y}^2(Y,\omega)\right]}
    {\E[K_h(X,x_0)]}
    =\widetilde M_h(x_0,\omega),
    \quad \omega\in\mathcal Y.
\end{align*}
Let $\widetilde Y_h$ be a $\mathcal Y$-valued random variable with distribution $\widetilde P_h$. Then
\begin{align}\label{eq:gaussian-subclass-upper-proof3}
    \E\left[d_{\mathcal Y}^2\left(\widetilde Y_h,\omega\right)\right]
    =\widetilde M_h(x_0,\omega),
    \quad \omega\in\mathcal Y.
\end{align}
By \eqref{eq:gaussian-subclass-upper-proof2} and \eqref{eq:gaussian-subclass-upper-proof3}, we have
\begin{align*}
    \E\left[d_{\mathcal Y}^2\left(\widetilde Y_h,m_\oplus(x_0)\right)\right]
    =\widetilde M_h(x_0,m_\oplus(x_0))=\frac{\E\left[K_h(X,x_0)M_\oplus(X,m_\oplus(x_0))\right]}
    {\E[K_h(X,x_0)]}
    \le2\tau^2,
\end{align*}
which also implies $\widetilde Y_h\in\mathcal L_2(\mathcal Y)$. Therefore, by the above, Conditions~\ref{ass:M1}(iv) and \ref{ass:M2}(iv), applied with $N=1$, $\widetilde{\bm Y}=(\widetilde Y_h)$, and $\bm\lambda=(1)$, give Conditions~\ref{ass:M1}(i) and \ref{ass:M2}(i). Similarly, for $N\in\mathbb N$, $\bm\lambda\in\mathcal S_N$, and $\bm x\in\overline B(x_0,h_0)^N$, let $\widetilde Y_i\sim P_{Y\mid X=x_i}$ for $i=1,\ldots,N$. By \eqref{eq:gaussian-subclass-upper-proof2}, $\widetilde Y_i\in\mathcal L_2(\mathcal Y)$ for every $i=1,\ldots,N$, and
\begin{align*}
    \overline Q_{\oplus,\bm\lambda}(\widetilde{\bm Y},\omega)
    =\sum_{i=1}^N\lambda_i
    \E\left[d_{\mathcal Y}^2(\widetilde Y_i,\omega)\right]
    =\sum_{i=1}^N\lambda_iM_\oplus(x_i,\omega)=\overline M_{\oplus,\bm\lambda}(\bm x,\omega),
    \quad \omega\in\mathcal Y.
\end{align*}
By the above, Conditions~\ref{ass:M1}(iv) and \ref{ass:M2}(iv) give Conditions~\ref{ass:M1}(ii) and \ref{ass:M2}(ii). Also, for $\bm y=(y_1,\ldots,y_N)\in\mathcal Y^N$, take $\widetilde Y_i=y_i$ for $i=1,\ldots,N$. Then
\begin{align*}
    \overline Q_{\oplus,\bm\lambda}(\widetilde{\bm Y},\omega)
    =\sum_{i=1}^N\lambda_i d_{\mathcal Y}^2(y_i,\omega)
    =\overline B_{\oplus,\bm\lambda}(\bm y,\omega),
    \quad \omega\in\mathcal Y.
\end{align*}
By the above, Conditions~\ref{ass:M1}(iv) and \ref{ass:M2}(iv) give Conditions~\ref{ass:M1}(iii) and \ref{ass:M2}(iii).

It remains to verify Condition~\ref{ass:S1}. Fix $\omega\in\mathcal Y$ and define
\begin{align*}
    \Delta_\omega(t)
    :=d_{\mathcal Y}^2\left(\iota(t),\omega\right)
    -d_{\mathcal Y}^2\left(\iota(t),m_\oplus(x_0)\right),
    \quad t\in\R.
\end{align*}
By Condition~\ref{ass:M4} and the isometry of $\iota$, we have
\begin{align*} 
    |\Delta_\omega(t)-\Delta_\omega(s)|
    \le C_{\mathcal Y}d_{\mathcal Y}(\omega,m_\oplus(x_0))|t-s|,
    \quad s,t\in\R,
\end{align*}
which implies that $\Delta_\omega$ is Lipschitz. Thus, $\Delta_\omega$ is differentiable almost everywhere and
\begin{align} \label{eq:gaussian-subclass-upper-proof5}
    |\Delta_\omega'(t)|
    \le C_{\mathcal Y}d_{\mathcal Y}(\omega,m_\oplus(x_0))
\end{align}
for almost every $t\in\R$. Moreover, since $\Delta_\omega$ is Lipschitz, its convolution with the Gaussian density is infinitely differentiable; see Proposition~9.10 of \cite{Folland1999}. Therefore, integration by parts gives
\begin{align} \label{eq:gaussian-subclass-upper-proof6}
    \frac{d}{du}\E[\Delta_\omega(u+\varepsilon)]
    =\E[\Delta_\omega'(u+\varepsilon)],\quad
    \frac{d^2}{du^2}\E[\Delta_\omega(u+\varepsilon)]
    =\frac{1}{\tau^2}\E[\varepsilon\Delta_\omega'(u+\varepsilon)].
\end{align}
Since $g\in C^2(\R^d)$ and $\operatorname{supp}(g)\subset\overline B(x_0,h_0)$, we have
\begin{align*}
    \nabla g(x)=0,\quad x\in\overline B(x_0,h_0)\setminus B(x_0,h_0).
\end{align*}
Let $x\in B(x_0,h_0)$ be fixed. Then choose $\widetilde x \in\overline B(x_0,h_0)\setminus B(x_0,h_0)$ such that $\|x-\widetilde x\|\le h_0$. By the fundamental theorem of calculus and the definition of $\mathcal G_L$, the above implies
\begin{align*}
    \|\nabla g(x)\|=\left\|\nabla g(x)-\nabla g\left(\widetilde x\right)\right\| \le\sup_{z\in B(x_0,h_0)} \|\nabla^2g(z)\|_{\mathrm{op}}\|x-\widetilde x\| \le Lh_0.
\end{align*}
Since the above holds for every $x\in B(x_0,h_0)$, we have
\begin{align} \label{eq:gaussian-subclass-upper-proof7}
    \sup_{x\in B(x_0,h_0)}\|\nabla g(x)\|\le Lh_0.
\end{align}
Also, by the definition of $S_\omega$ in \eqref{def:criterion-contrast},
\begin{align*}
    S_\omega(x)=\E[\Delta_\omega(g(x)+\varepsilon)],\quad x\in \overline B(x_0,h_0).
\end{align*}
Therefore, by the above, \eqref{eq:gaussian-subclass-upper-proof6}, and the chain rule, for $x\in B(x_0,h_0)$, we have
\begin{align} \label{eq:gaussian-subclass-upper-proof8}
\begin{split}
    \nabla S_\omega(x)
    &=\E[\Delta_\omega'(g(x)+\varepsilon)]\nabla g(x),\\
    \nabla^2S_\omega(x)
    &=\frac{1}{\tau^2}
    \E[\varepsilon\Delta_\omega'(g(x)+\varepsilon)]
    \nabla g(x)\nabla g(x)^\top +\E[\Delta_\omega'(g(x)+\varepsilon)]\nabla^2g(x).
\end{split}
\end{align}
By \eqref{eq:gaussian-subclass-upper-proof5}, \eqref{eq:gaussian-subclass-upper-proof7}, \eqref{eq:gaussian-subclass-upper-proof8}, the definition of $\mathcal G_L$, and the fact that $\E[|\varepsilon|]=\tau\sqrt{2/\pi}$, we have
\begin{align}
\begin{split}
    \|\nabla S_\omega(x_0)\|
    &\le \E\left[\left|\Delta_\omega'(g(x_0)+\varepsilon)\right|\right]
    \|\nabla g(x_0)\|
    \le C_{\mathcal Y}Ld_{\mathcal Y}(\omega,m_\oplus(x_0)),\\
    \sup_{x\in B(x_0,h_0)}
    \|\nabla^2S_\omega(x)\|_{\mathrm{op}}
    &\le
    \frac{1}{\tau^2}
    \sup_{x\in B(x_0,h_0)}
    \E\left[
        |\varepsilon|
        \left|\Delta_\omega'(g(x)+\varepsilon)\right|
    \right]
    \|\nabla g(x)\|^2\\
    &\quad+
    \sup_{x\in B(x_0,h_0)}
    \E\left[
        \left|\Delta_\omega'(g(x)+\varepsilon)\right|
    \right]
    \|\nabla^2g(x)\|_{\mathrm{op}}\\
    &\le
    C_{\mathcal Y}
    \left(\sqrt{\frac{2}{\pi}}\frac{L^2h_0^2}{\tau}+L\right)d_{\mathcal Y}(\omega,m_\oplus(x_0)).
\end{split} \label{eq:gaussian-subclass-upper-proof9}
\end{align}
Since $L\left(2h_0+h_0^2\right)\le2\tau$, we have
\begin{align} \label{eq:gaussian-subclass-upper-proof10}
    \sqrt{\frac{2}{\pi}}\frac{L^2h_0^2}{\tau}+L \le 2\sqrt{\frac{2}{\pi}}\frac{L h_0}{2+h_0}+L
    <\left(2\sqrt{\frac{2}{\pi}}+1\right)L<3L.
\end{align}
Therefore, by \eqref{eq:gaussian-subclass-upper-proof9} and \eqref{eq:gaussian-subclass-upper-proof10}, Condition~\ref{ass:S1} holds with $L_S=3C_{\mathcal Y}L$. Since $g\in\mathcal G_L$ was arbitrary and all the above bounds are independent of $g$, the conclusions hold uniformly over $g\in\mathcal G_L$. This proves the proposition.
\end{proof}

\section{Proofs for Section \ref{subsec:w1-main-summary}}\label{sec:supp-w1}

In this section, we provide proofs of Theorems~\ref{thm:w1-kernel-regression}, \ref{thm:minimax-lower-w1} and related results. Auxiliary results used in this section are collected in Section \ref{sec:supp-auxiliary}. 

\subsection{Bias and kernel weights}\label{subsec:block-target}

\begin{proposition}\label{prop:bias-w1}
Suppose Conditions~\ref{ass:K1}, \ref{ass:D1}, \ref{ass:M1}--\ref{ass:M4}, and \ref{ass:S2} hold. For $h\in(0,h_0)$,
\begin{align*}
    d_{\mathcal Y}(\widetilde m_h(x_0),m_\oplus(x_0))
    \le\frac{C_{\mathcal Y}}{\kappa_{\mathcal Y}}L_Wh^\beta.
\end{align*}
\end{proposition}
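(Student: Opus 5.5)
Write $\widetilde m:=\widetilde m_h(x_0)$ and $m_0:=m_\oplus(x_0)$. The plan mirrors the proof of Lemma~\ref{lem:bias-second-order}, with the Taylor expansion of $S_\omega$ replaced by a coupling argument based on Condition~\ref{ass:S2} and the quadruple inequality. Two curvature inequalities will be combined. First, Condition~\ref{ass:M2}(ii) with $N=1$, $\bm\lambda=(1)$, $\bm x=(x_0)$ gives
\begin{align*}
\kappa_{\mathcal Y}d_{\mathcal Y}^2(\widetilde m,m_0)\le S_{\widetilde m}(x_0).
\end{align*}
Second, by minimality of $\widetilde m$ for $\widetilde M_h(x_0,\cdot)$,
\begin{align*}
\frac{\E[K_h(X,x_0)S_{\widetilde m}(X)]}{\E[K_h(X,x_0)]}=\widetilde M_h(x_0,\widetilde m)-\widetilde M_h(x_0,m_0)\le0 .
\end{align*}
Here the support of $K$ in $\overline B(0,1)$ and $h<h_0$ ensure that only $x\in\overline B(x_0,h)$ contribute, so $S_\omega(x)=M_\oplus(x,\omega)-M_\oplus(x,m_0)$ on the relevant set. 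Subtracting the second inequality from the first yields
\begin{align*}
\kappa_{\mathcal Y}d_{\mathcal Y}^2(\widetilde m,m_0)\le\frac{\E\bigl[K_h(X,x_0)\bigl(S_{\widetilde m}(x_0)-S_{\widetilde m}(X)\bigr)\bigr]}{\E[K_h(X,x_0)]}.
\end{align*}

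The key step is a pointwise bound $|S_\omega(x)-S_\omega(x_0)|\le C_{\mathcal Y}\,d_{\mathcal Y}(\omega,m_0)\,W_1(P_{Y\mid X=x},P_{Y\mid X=x_0})$ for $x\in\overline B(x_0,h_0)$. To prove it, take any coupling $\gamma$ of $P_{Y\mid X=x}$ and $P_{Y\mid X=x_0}$ with pair $(Y,Y')\sim\gamma$. Then
\begin{align*}
S_\omega(x)-S_\omega(x_0)=\E_\gamma\bigl[d_{\mathcal Y}^2(Y,\omega)-d_{\mathcal Y}^2(Y,m_0)-d_{\mathcal Y}^2(Y',\omega)+d_{\mathcal Y}^2(Y',m_0)\bigr].
\end{align*}
This is legitimate because each of the four terms is integrable: Condition~\ref{ass:M3} bounds the second moments about $m_0$, and $d_{\mathcal Y}(\omega,m_0)$ is finite. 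Condition~\ref{ass:M4} bounds the integrand in absolute value by $C_{\mathcal Y}d_{\mathcal Y}(Y,Y')d_{\mathcal Y}(\omega,m_0)$. Taking the infimum over $\gamma$ gives the claim, and Condition~\ref{ass:S2} then gives $|S_\omega(x)-S_\omega(x_0)|\le C_{\mathcal Y}L_W\|x-x_0\|^\beta d_{\mathcal Y}(\omega,m_0)$.

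To conclude, apply the bound at $\omega=\widetilde m$ and use $\|x-x_0\|\le h$ on the kernel support. The weighted average is then at most $C_{\mathcal Y}L_Wh^\beta d_{\mathcal Y}(\widetilde m,m_0)$, since the normalized kernel weights form a probability measure. This gives $\kappa_{\mathcal Y}d_{\mathcal Y}^2(\widetilde m,m_0)\le C_{\mathcal Y}L_Wh^\beta d_{\mathcal Y}(\widetilde m,m_0)$. Dividing by $d_{\mathcal Y}(\widetilde m,m_0)$ when it is positive proves the proposition; the case $d_{\mathcal Y}(\widetilde m,m_0)=0$ is trivial.

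The main obstacle is measure-theoretic rather than analytic. One must check that $P_{Y\mid X=x}\in\mathcal P_1(\mathcal Y)$, which follows from Condition~\ref{ass:M3}, and justify splitting expectations under an arbitrary coupling. One must also check that $\E[K_h S_{\widetilde m}(X)]$ is finite, which again uses Condition~\ref{ass:M3} together with $\E[K_h(X,x_0)]>0$ from Lemma~\ref{lem:kernel-target-positive}. Unlike Lemma~\ref{lem:bias-second-order}, no restriction of $\omega$ to a ball is needed, because Condition~\ref{ass:M4} holds globally.
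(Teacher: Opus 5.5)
Your proposal is correct and follows essentially the same argument as the paper. You derive the coupling bound $|S_\omega(x)-S_\omega(x_0)|\le C_{\mathcal Y}L_W\,d_{\mathcal Y}(\omega,m_\oplus(x_0))\|x-x_0\|^\beta$ from Conditions~\ref{ass:M4} and \ref{ass:S2}, combine it with Condition~\ref{ass:M2}(ii) at $\bm x=(x_0)$ and the minimality of $\widetilde m_h(x_0)$, and average against the normalized kernel weights supported on $\overline B(x_0,h)$, with the same integrability justifications via Condition~\ref{ass:M3} and Lemma~\ref{lem:kernel-target-positive}.
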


\begin{proof}[Proof of \Cref{prop:bias-w1}]
Let $h\in(0,h_0)$ be given. By the definition of $M_\oplus$, the definition of $S_\omega$ in \eqref{def:criterion-contrast}, and Condition~\ref{ass:M4}, for $\omega\in\mathcal Y$ and $x\in\overline B(x_0,h_0)$, we have
\begin{align*}
    &\left|S_\omega(x)-S_\omega(x_0)\right|\\
    &=\left|M_\oplus(x,\omega)-M_\oplus(x,m_\oplus(x_0))-M_\oplus(x_0,\omega)+M_\oplus(x_0,m_\oplus(x_0))\right|\\
    &\le\int_{\mathcal Y\times\mathcal Y}\left|d_{\mathcal Y}^2(y_1,\omega)-d_{\mathcal Y}^2(y_1,m_\oplus(x_0))-d_{\mathcal Y}^2(y_2,\omega)+d_{\mathcal Y}^2(y_2,m_\oplus(x_0))\right|\,d\gamma(y_1,y_2)\\
    &\le C_{\mathcal Y}d_{\mathcal Y}(\omega,m_\oplus(x_0))
    \int_{\mathcal Y\times\mathcal Y}d_{\mathcal Y}(y_1,y_2)\,d\gamma(y_1,y_2), \quad \gamma\in\Pi\left(P_{Y\mid X=x},P_{Y\mid X=x_0}\right).
\end{align*}
All four values of $M_\oplus$ are finite by Condition~\ref{ass:M3} and the triangle inequality and the Cauchy--Schwarz inequality. Therefore, taking the infimum over $\gamma\in\Pi(P_{Y\mid X=x},P_{Y\mid X=x_0})$ and applying Condition~\ref{ass:S2}, for $\omega\in\mathcal Y$ and $x\in\overline B(x_0,h_0)$, the above gives
\begin{align}
\begin{split}
    \left|S_\omega(x)-S_\omega(x_0)\right|&\le C_{\mathcal Y}d_{\mathcal Y}(\omega,m_\oplus(x_0))W_1(P_{Y\mid X=x},P_{Y\mid X=x_0})\\
    &\le C_{\mathcal Y}L_Wd_{\mathcal Y}(\omega,m_\oplus(x_0))\|x-x_0\|^\beta.
\end{split}\label{eq:bias-w1-proof1}
\end{align}
Since $\E[K_h(X,x_0)]>0$ by Lemma~\ref{lem:kernel-target-positive} below, the definitions of $M_\oplus$, $\widetilde M_h$, $\widetilde m_h(x_0)$, and $S_{\widetilde m_h(x_0)}$ give
\begin{align}
\begin{split}
    \frac{\E\left[K_h(X,x_0)S_{\widetilde m_h(x_0)}(X)\right]}{\E[K_h(X,x_0)]}
    &=\frac{\E\left[K_h(X,x_0)\left(M_\oplus(X,\widetilde m_h(x_0))-M_\oplus(X,m_\oplus(x_0))\right)\right]}{\E[K_h(X,x_0)]}\\
    &=\frac{\E\left[K_h(X,x_0)\left(d_{\mathcal Y}^2(Y,\widetilde m_h(x_0))-d_{\mathcal Y}^2(Y,m_\oplus(x_0))\right)\right]}{\E[K_h(X,x_0)]}\\
    &=\widetilde M_h(x_0,\widetilde m_h(x_0))-\widetilde M_h(x_0,m_\oplus(x_0))\\
    &\le0.
\end{split}\label{eq:bias-w1-proof2}
\end{align}
Also, Condition~\ref{ass:M2}(ii) with $\bm\lambda=(1)$ and $\bm x=(x_0)$ gives
\begin{align} \label{eq:bias-w1-proof3}
    \kappa_{\mathcal Y}d_{\mathcal Y}^2(\omega,m_\oplus(x_0))
    \le S_\omega(x_0), \quad \omega\in\mathcal Y.
\end{align} 
Therefore, by \eqref{eq:bias-w1-proof1}, \eqref{eq:bias-w1-proof2}, \eqref{eq:bias-w1-proof3}, and Condition~\ref{ass:K1}, we have
\begin{align*}
\kappa_{\mathcal Y}d_{\mathcal Y}^2(\widetilde m_h(x_0),m_\oplus(x_0))
&\le S_{\widetilde m_h(x_0)}(x_0)\\
&\le S_{\widetilde m_h(x_0)}(x_0)
-\frac{\E\left[K_h(X,x_0)S_{\widetilde m_h(x_0)}(X)\right]}{\E[K_h(X,x_0)]}\\
&=\frac{\E\left[K_h(X,x_0)\left(
S_{\widetilde m_h(x_0)}(x_0)-S_{\widetilde m_h(x_0)}(X)
\right)\right]}{\E[K_h(X,x_0)]}\\
&\le C_{\mathcal Y}L_Wd_{\mathcal Y}(\widetilde m_h(x_0),m_\oplus(x_0))
\frac{\E\left[K_h(X,x_0)\|X-x_0\|^\beta\right]}{\E[K_h(X,x_0)]}\\
&\le C_{\mathcal Y}L_Wh^\beta
d_{\mathcal Y}(\widetilde m_h(x_0),m_\oplus(x_0)).
\end{align*}
This proves the proposition.
\end{proof}

\begin{lemma}\label{lem:conditional-bias}
Under the conditions of Proposition~\ref{prop:bias-w1}, for $h\in(0,h_0)$ and $b=1,\ldots,B$, if $S_b>0$, then
\begin{align*}
    d_{\mathcal Y}\left(\overline m_{\oplus,\bm\lambda_b}(\bm X_b),m_\oplus(x_0)\right)
    \le\frac{C_{\mathcal Y}}{\kappa_{\mathcal Y}}L_Wh^\beta.
\end{align*}
\end{lemma}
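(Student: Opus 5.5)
The argument repeats the proof of Proposition~\ref{prop:bias-w1}, with the population kernel weights replaced by the deterministic block weights $\bm\lambda_b$. Fix $h\in(0,h_0)$ and $b$, and suppose $S_b>0$.

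First, recall inequality \eqref{eq:bias-w1-proof1}, established for every $\omega\in\mathcal Y$ and $x\in\overline B(x_0,h_0)$ using only Conditions~\ref{ass:M3}, \ref{ass:M4}, and \ref{ass:S2}:
\begin{align*}
|S_\omega(x)-S_\omega(x_0)|\le C_{\mathcal Y}L_W d_{\mathcal Y}(\omega,m_\oplus(x_0))\|x-x_0\|^\beta.
\end{align*}
By Condition~\ref{ass:K1}, any index $i\in I_b$ with $\lambda_{bi}>0$ satisfies $X_i\in\overline B(x_0,h)\subset\overline B(x_0,h_0)$. Restrict to these indices, as stipulated in the Supplementary Material. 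Then $\bm X_b$ (restricted) lies in $\overline B(x_0,h_0)^{N}$, so $\overline m_{\oplus,\bm\lambda_b}(\bm X_b)$ exists and is unique by Condition~\ref{ass:M1}(ii).

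Second, write $\bar\omega:=\overline m_{\oplus,\bm\lambda_b}(\bm X_b)$. Minimality of $\bar\omega$ gives, exactly as in \eqref{eq:conditional-bias-second-order-proof2},
\begin{align*}
\sum_{i\in I_b}\lambda_{bi}S_{\bar\omega}(X_i)=\overline M_{\oplus,\bm\lambda_b}(\bm X_b,\bar\omega)-\overline M_{\oplus,\bm\lambda_b}(\bm X_b,m_\oplus(x_0))\le0.
\end{align*}
Here the values of $M_\oplus$ involved are finite by Condition~\ref{ass:M3} together with the triangle inequality. Condition~\ref{ass:M2}(ii) applied with $N=1$, $\bm\lambda=(1)$, and $\bm x=(x_0)$ gives $\kappa_{\mathcal Y}d_{\mathcal Y}^2(\bar\omega,m_\oplus(x_0))\le S_{\bar\omega}(x_0)$.

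Third, combine the two steps, using $\sum_i\lambda_{bi}=1$:
\begin{align*}
\kappa_{\mathcal Y}d_{\mathcal Y}^2(\bar\omega,m_\oplus(x_0))
\le \sum_{i\in I_b}\lambda_{bi}\bigl(S_{\bar\omega}(x_0)-S_{\bar\omega}(X_i)\bigr)
\le C_{\mathcal Y}L_W d_{\mathcal Y}(\bar\omega,m_\oplus(x_0))\sum_{i\in I_b}\lambda_{bi}\|X_i-x_0\|^\beta.
\end{align*}
Every index with positive weight has $\|X_i-x_0\|\le h$, so the final sum is at most $h^\beta$. If $d_{\mathcal Y}(\bar\omega,m_\oplus(x_0))=0$ the claim is trivial; otherwise, dividing by $\kappa_{\mathcal Y}d_{\mathcal Y}(\bar\omega,m_\oplus(x_0))$ gives the stated bound.

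There is no real obstacle here. The only points needing care are that the H\"older bound \eqref{eq:bias-w1-proof1} is used only at points inside $\overline B(x_0,h_0)$, which holds because zero-weight indices are discarded, and that the relevant finiteness holds. Unlike the second-order case, no ball restriction on $\omega$ is needed, since \eqref{eq:bias-w1-proof1} holds for every $\omega\in\mathcal Y$.
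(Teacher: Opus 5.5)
Your proposal is correct and follows essentially the same route as the paper's proof. Both arguments use minimality of $\overline m_{\oplus,\bm\lambda_b}(\bm X_b)$ to get $\sum_{i\in I_b}\lambda_{bi}S_{\overline m_{\oplus,\bm\lambda_b}(\bm X_b)}(X_i)\le 0$, then combine the curvature bound \eqref{eq:bias-w1-proof3} with the H\"older estimate \eqref{eq:bias-w1-proof1}, using that positive weights force $\|X_i-x_0\|\le h$. Your remarks on discarding zero-weight indices, on finiteness, and on the trivial case $d_{\mathcal Y}=0$ simply make explicit what the paper leaves implicit.
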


\begin{proof}[Proof of \Cref{lem:conditional-bias}]
Let $h\in(0,h_0)$ and $b=1,\ldots,B$ be given, and suppose $S_b>0$. By the definitions of $\overline m_{\oplus,\bm\lambda_b}(\bm X_b)$ and $S_\omega$, we have
\begin{align*}
    \sum_{i\in I_b}\lambda_{bi}S_{\overline m_{\oplus,\bm\lambda_b}(\bm X_b)}(X_i)
    &=\sum_{i\in I_b}\lambda_{bi}\left(
    M_\oplus\left(X_i,\overline m_{\oplus,\bm\lambda_b}(\bm X_b)\right)
    -M_\oplus(X_i,m_\oplus(x_0))
    \right)\\
    &=\overline M_{\oplus,\bm\lambda_b}\left(\bm X_b,\overline m_{\oplus,\bm\lambda_b}(\bm X_b)\right)
    -\overline M_{\oplus,\bm\lambda_b}\left(\bm X_b,m_\oplus(x_0)\right)\\
    &\le0.
\end{align*}
Therefore, by the above, \eqref{eq:bias-w1-proof1}, \eqref{eq:bias-w1-proof3}, and Condition~\ref{ass:K1}, we have
\begin{align*}
    \kappa_{\mathcal Y}d_{\mathcal Y}^2\left(\overline m_{\oplus,\bm\lambda_b}(\bm X_b),m_\oplus(x_0)\right)
    &\le S_{\overline m_{\oplus,\bm\lambda_b}(\bm X_b)}(x_0)\\
    &\le S_{\overline m_{\oplus,\bm\lambda_b}(\bm X_b)}(x_0)
    -\sum_{i\in I_b}\lambda_{bi}S_{\overline m_{\oplus,\bm\lambda_b}(\bm X_b)}(X_i)\\
    &=\sum_{i\in I_b}\lambda_{bi}\left(S_{\overline m_{\oplus,\bm\lambda_b}(\bm X_b)}(x_0)-S_{\overline m_{\oplus,\bm\lambda_b}(\bm X_b)}(X_i)\right)\\
    &\le C_{\mathcal Y}L_Wd_{\mathcal Y}\left(\overline m_{\oplus,\bm\lambda_b}(\bm X_b),m_\oplus(x_0)\right)
    \sum_{i\in I_b}\lambda_{bi}\|X_i-x_0\|^\beta\\
    &\le C_{\mathcal Y}L_Wh^\beta d_{\mathcal Y}\left(\overline m_{\oplus,\bm\lambda_b}(\bm X_b),m_\oplus(x_0)\right),
\end{align*}
where the last inequality follows since $\lambda_{bi}>0$ implies $X_i\in\overline B(x_0,h)$ for $i\in I_b$. This proves the lemma.
\end{proof}

\subsection{Proof of \Cref{thm:w1-kernel-regression}}

We first prepare the following lemma. 
\begin{lemma}\label{lem:one-kernel-block}
Under the conditions of Theorem~\ref{thm:w1-kernel-regression}, there exist constants $C_{\mathrm{blk}},C_{\mathrm{resp}}\in(0,\infty)$ depending only on the fixed constants in Conditions~\ref{ass:K1}, \ref{ass:D1}, and \ref{ass:M1}--\ref{ass:M4} such that the following holds. For $h\in(0,h_0)$ and $b=1,\ldots,B$, if
\begin{align*}
    N_bh^df_X(x_0)\ge C_{\mathrm{blk}},
\end{align*}
then
\begin{align*}
    \Pbb\left(
    d_{\mathcal Y}(\widehat m_{h,b}(x_0),m_\oplus(x_0))
    >
    \frac{C_{\mathcal Y}}{\kappa_{\mathcal Y}}L_Wh^\beta
    +\frac{C_{\mathrm{resp}}\sigma}{\sqrt{N_bh^df_X(x_0)}}
    \right)
    \le\frac{9}{32}.
\end{align*}
\end{lemma}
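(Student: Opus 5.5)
This follows the template of Lemma~\ref{lem:one-kernel-block-second-order}, with the design-dependent bias handled by Lemma~\ref{lem:conditional-bias} instead of Lemmas~\ref{lem:self-normalized-first-moment} and \ref{lem:conditional-bias-second-order}. The only thing we need from the design is that the block kernel mass is not too small. Set
\[
C_{\mathrm{blk}}:=\frac{56\|K\|_\infty}{3}\log 8,\qquad C_{\mathrm{resp}}:=\frac{4C_{\mathcal Y}}{\kappa_{\mathcal Y}}\sqrt{\Bigl(1+\frac1{\kappa_{\mathcal Y}}\Bigr)\|K\|_\infty}.
\]
Both depend only on the constants in \ref{ass:K1}, \ref{ass:D1}, and \ref{ass:M1}--\ref{ass:M4}. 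Let $\mathcal A_b:=\{S_b\ge N_b\E[K_h(X,x_0)]/2\}$. This event depends on the predictors only, and on it $S_b>0$.

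\textbf{Step 1: control of $\mathcal A_b$.} Lemmas~\ref{lem:kernel-target-positive} and \ref{lem:kernel-mass} give
\[
\Pbb(\mathcal A_b^c)\le\exp\!\left(-\frac{3N_bh^df_X(x_0)}{56\|K\|_\infty}\right).
\]
Under $N_bh^df_X(x_0)\ge C_{\mathrm{blk}}$ this is at most $1/8$, so $\Pbb(\mathcal A_b)\ge 7/8$.

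\textbf{Step 2: bias and squared weights on $\mathcal A_b$.} Lemma~\ref{lem:conditional-bias} gives, deterministically,
\[
d_{\mathcal Y}\bigl(\overline m_{\oplus,\bm\lambda_b}(\bm X_b),m_\oplus(x_0)\bigr)\le \frac{C_{\mathcal Y}}{\kappa_{\mathcal Y}}L_Wh^\beta .
\]
For the weights, Lemma~\ref{lem:weight-square} should give $\sum_{i\in I_b}\lambda_{bi}^2\le 4\|K\|_\infty/(N_bh^df_X(x_0))$. If it is not stated in exactly that form, it follows directly from three facts: $\sum_i K_h(X_i,x_0)^2\le h^{-d}\|K\|_\infty S_b$; $S_b\ge N_b\E[K_h]/2$ on $\mathcal A_b$; and $\E[K_h]\ge f_X(x_0)/2$.

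\textbf{Step 3: conditional response error.} Corollary~\ref{cor:r2-conditional} then gives, on $\mathcal A_b$,
\[
\Pbb\!\left(d_{\mathcal Y}\bigl(\widehat m_{h,b}(x_0),\overline m_{\oplus,\bm\lambda_b}(\bm X_b)\bigr)>\frac{C_{\mathrm{resp}}\sigma}{\sqrt{N_bh^df_X(x_0)}}\,\middle|\,X_1,\ldots,X_n\right)\le \frac14 .
\]
Integrating over $\mathcal A_b$ as in \eqref{eq:one-kernel-block-second-order-proof3}, the good event (the intersection of $\mathcal A_b$ with the complement of this deviation event) has probability at least $\tfrac34\Pbb(\mathcal A_b)\ge \tfrac34\cdot\tfrac78=\tfrac{21}{32}$.

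\textbf{Step 4: conclusion.} On that good event, the triangle inequality combines Steps 2 and 3 into the stated bound. The failure probability is therefore at most $1-\tfrac{21}{32}=\tfrac{11}{32}$.

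This is where the main care is needed. The statement demands $9/32$, so the constants must be tuned further. One option is to strengthen $C_{\mathrm{blk}}$ to use $\log 24$, which gives $\Pbb(\mathcal A_b)\ge 23/24$ and a failure probability of at most $1-\tfrac34\cdot\tfrac{23}{24}=\tfrac{27}{96}=\tfrac{9}{32}$. That choice yields exactly $9/32$, so I would use $C_{\mathrm{blk}}=\frac{56\|K\|_\infty}{3}\log 24$, matching the second-order lemma. Everything else is bookkeeping. Notably, no design-moment lemma is required, because the Wasserstein bias bound is uniform over the weights. This is also why \ref{ass:K2} and \ref{ass:D2} are not needed.
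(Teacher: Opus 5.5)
Your final argument is correct and is exactly the paper's proof. It uses $C_{\mathrm{blk}}=\frac{56\|K\|_\infty}{3}\log 24$, the predictor event $\mathcal A_b=\{S_b\ge N_b\E[K_h(X,x_0)]/2\}$, Lemma~\ref{lem:weight-square} with Corollary~\ref{cor:r2-conditional} for the conditional $1/4$ bound, and Lemma~\ref{lem:conditional-bias} for the deterministic bias, giving $1-\tfrac34\cdot\tfrac{23}{24}=\tfrac{9}{32}$; since the preliminary $\log 8$ choice only yields $\tfrac{11}{32}$, drop it and state the $\log 24$ constant from the start.
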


\begin{proof}[Proof of \Cref{lem:one-kernel-block}]
Let $h\in(0,h_0)$ and $b=1,\ldots,B$ be given, and let $C_{\mathrm{blk}}$ and $C_{\mathrm{resp}}$ be the constants defined in \eqref{def:second-order-theorem-constant1}. Suppose that $N_bh^df_X(x_0)\ge C_{\mathrm{blk}}$. Define
\begin{align*}
    \mathcal A_b:=\left\{S_b\ge\frac{N_b\E[K_h(X,x_0)]}{2}\right\}.
\end{align*}
By Lemmas~\ref{lem:kernel-target-positive} and \ref{lem:kernel-mass} below,
\begin{align} \label{eq:one-kernel-block-w1-proof1}
    \Pbb\left(\mathcal A_b^c\right) = \Pbb\left(S_b<\frac{N_b\E[K_h(X,x_0)]}{2}\right)
    \le\exp\left(-\frac{3N_bh^df_X(x_0)}{56\|K\|_\infty}\right)
    \le\frac{1}{24}.
\end{align}
By the above, Lemma~\ref{lem:weight-square} below, and Corollary~\ref{cor:r2-conditional}, on $\mathcal A_b$, we have
\begin{align*}
    &\Pbb\left(
    d_{\mathcal Y}\left(\widehat m_{h,b}(x_0),\overline m_{\oplus,\bm\lambda_b}(\bm X_b)\right)
    >
    \frac{C_{\mathrm{resp}}\sigma}{\sqrt{N_bh^df_X(x_0)}}
    \,\middle|\,X_1,\ldots,X_n
    \right)\\
    &\le\Pbb\left(
    d_{\mathcal Y}\left(\widehat m_{h,b}(x_0),\overline m_{\oplus,\bm\lambda_b}(\bm X_b)\right)
    >
    \frac{2C_{\mathcal Y}\sigma}{\kappa_{\mathcal Y}}
    \sqrt{\left(1+\frac{1}{\kappa_{\mathcal Y}}\right)\sum_{i\in I_b}\lambda_{bi}^2}
    \,\middle|\,X_1,\ldots,X_n
    \right)
    \le\frac14.
\end{align*}
Since $\mathcal A_b$ is determined only by the predictors, the above implies
\begin{align}
\begin{split}
    &\Pbb\left(
    \mathcal A_b\cap
    \left\{
    d_{\mathcal Y}\left(\widehat m_{h,b}(x_0),\overline m_{\oplus,\bm\lambda_b}(\bm X_b)\right)
    \le\frac{C_{\mathrm{resp}}\sigma}{\sqrt{N_bh^df_X(x_0)}}
    \right\}
    \right)\\
    &=\E\left[
    \mathds{1}_{\mathcal A_b}
    \Pbb\left(
    d_{\mathcal Y}\left(\widehat m_{h,b}(x_0),\overline m_{\oplus,\bm\lambda_b}(\bm X_b)\right)
    \le\frac{C_{\mathrm{resp}}\sigma}{\sqrt{N_bh^df_X(x_0)}}
    \,\middle|\,X_1,\ldots,X_n
    \right)
    \right]\\
    &\ge\frac34\Pbb(\mathcal A_b).
\end{split}
\label{eq:one-kernel-block-w1-proof3}
\end{align}
On the event in \eqref{eq:one-kernel-block-w1-proof3}, Lemma~\ref{lem:conditional-bias} and the triangle inequality give
\begin{align}
\begin{split}
    d_{\mathcal Y}(\widehat m_{h,b}(x_0),m_\oplus(x_0))
    &\le d_{\mathcal Y}\left(\widehat m_{h,b}(x_0),\overline m_{\oplus,\bm\lambda_b}(\bm X_b)\right)
    +d_{\mathcal Y}\left(\overline m_{\oplus,\bm\lambda_b}(\bm X_b),m_\oplus(x_0)\right)\\
    &\le\frac{C_{\mathcal Y}}{\kappa_{\mathcal Y}}L_Wh^\beta
    +\frac{C_{\mathrm{resp}}\sigma}{\sqrt{N_bh^df_X(x_0)}}.
\end{split}
\label{eq:one-kernel-block-w1-proof4}
\end{align}
Therefore, by \eqref{eq:one-kernel-block-w1-proof1}, \eqref{eq:one-kernel-block-w1-proof3}, and \eqref{eq:one-kernel-block-w1-proof4},
\begin{align*}
    \Pbb\left(
    d_{\mathcal Y}(\widehat m_{h,b}(x_0),m_\oplus(x_0))
    >
    \frac{C_{\mathcal Y}}{\kappa_{\mathcal Y}}L_Wh^\beta
    +\frac{C_{\mathrm{resp}}\sigma}{\sqrt{N_bh^df_X(x_0)}}
    \right)
    \le1-\frac34\Pbb(\mathcal A_b)
    \le\frac{9}{32}.
\end{align*}
This proves the lemma.
\end{proof}

Now we prove Theorem \ref{thm:w1-kernel-regression}. 
By Lemma~\ref{lem:binomial-calibration} below, there exists a universal constant $C_B\in(0,\infty)$ such that
\begin{align} \label{eq:w1-kernel-regression-proof1}
    B_\delta\le C_B\log(1/\delta), \quad \delta\in\left(0,\frac12\right].
\end{align}
Let $\delta\in(0,1/2]$ and $h\in(0,h_0)$ be given. Define
\begin{align} \label{def:w1-theorem-constant1}
    C_{W,U}':=2C_{\mathrm{blk}}C_B, \quad
    C_{W,U}:=\max\left\{\frac{3C_{\mathcal Y}}{\kappa_{\mathcal Y}},3\sqrt{2C_B}C_{\mathrm{resp}}\right\},
\end{align}
where $C_{\mathrm{blk}},C_{\mathrm{resp}}\in(0,\infty)$ are defined by \eqref{def:second-order-theorem-constant1}. Suppose that
\begin{align} \label{eq:w1-kernel-regression-proof2}
    B_\delta\le\lfloor n/2\rfloor, \quad
    nh^df_X(x_0)\ge C_{W,U}'\log(1/\delta).
\end{align}
Since the block sizes differ by at most one,
\begin{align} \label{eq:w1-kernel-regression-proof3}
    N_b\ge\left\lfloor\frac{n}{B_\delta}\right\rfloor
    \ge\frac{n}{2B_\delta}, \quad b=1,\ldots,B_\delta.
\end{align}
By \eqref{eq:w1-kernel-regression-proof1}, \eqref{eq:w1-kernel-regression-proof2}, and \eqref{eq:w1-kernel-regression-proof3},
\begin{align} \label{eq:w1-kernel-regression-proof4}
    N_bh^df_X(x_0)
    \ge\frac{nh^df_X(x_0)}{2B_\delta}
    \ge\frac{C_{\mathrm{blk}}C_B}{B_\delta}\log(1/\delta)
    \ge C_{\mathrm{blk}}, \quad b=1,\ldots,B_\delta.
\end{align}
Therefore, \eqref{eq:w1-kernel-regression-proof3}, \eqref{eq:w1-kernel-regression-proof4}, and Lemma~\ref{lem:one-kernel-block} give
\begin{align}
\begin{split}
    &\Pbb\left(
    d_{\mathcal Y}\left(\widehat m_{h,b}(x_0),m_\oplus(x_0)\right)
    >
    \frac{C_{\mathcal Y}}{\kappa_{\mathcal Y}}L_Wh^\beta
    +C_{\mathrm{resp}}\sigma\sqrt{\frac{2B_\delta}{nh^df_X(x_0)}}
    \right)\\
    &\le\Pbb\left(
    d_{\mathcal Y}\left(\widehat m_{h,b}(x_0),m_\oplus(x_0)\right)
    >
    \frac{C_{\mathcal Y}}{\kappa_{\mathcal Y}}L_Wh^\beta
    +\frac{C_{\mathrm{resp}}\sigma}{\sqrt{N_bh^df_X(x_0)}}
    \right)\\
    &\le\frac{9}{32}\\
    &<\frac{5}{16}, \quad b=1,\ldots,B_\delta.
\end{split}
\label{eq:w1-kernel-regression-proof5}
\end{align}
Since the block estimators are independent, \eqref{eq:w1-kernel-regression-proof5} and Lemma~\ref{lem:mr-probability} below with $p=11/16$ give
\begin{align}
\begin{split}
    \Pbb\left(
    d_{\mathcal Y}\left(\widehat m^{\mathrm{RLFR}}_{h,B_\delta}(x_0),m_\oplus(x_0)\right)
    >
    \frac{3C_{\mathcal Y}}{\kappa_{\mathcal Y}}L_Wh^\beta
    +3C_{\mathrm{resp}}\sigma\sqrt{\frac{2B_\delta}{nh^df_X(x_0)}}
    \right)&\le Q_{B_\delta}\left(\frac{11}{16}\right)\\
    &\le\delta,
\end{split}
\label{eq:w1-kernel-regression-proof6}
\end{align}
where the last inequality holds by the definition of $B_\delta$. By \eqref{eq:w1-kernel-regression-proof1} and \eqref{def:w1-theorem-constant1},
\begin{align} \label{eq:w1-kernel-regression-proof7}
    \frac{3C_{\mathcal Y}}{\kappa_{\mathcal Y}}L_Wh^\beta
    +3C_{\mathrm{resp}}\sigma\sqrt{\frac{2B_\delta}{nh^df_X(x_0)}}
    \le C_{W,U}\left(
    L_Wh^\beta+\sigma\sqrt{\frac{\log(1/\delta)}{nh^df_X(x_0)}}
    \right).
\end{align}
Therefore, \eqref{eq:w1-kernel-regression-proof6} and \eqref{eq:w1-kernel-regression-proof7} prove the theorem.

\begin{corollary}\label{cor:w1-optimized-rate}
Under the conditions of Theorem~\ref{thm:w1-kernel-regression}, for fixed $\delta\in(0,1/2]$, define
\begin{align*}
    h_{n,\delta}^{\mathrm{opt}}:=\left(\frac{\sigma^2\log(1/\delta)}{L_W^2nf_X(x_0)}\right)^{1/(2\beta+d)}, \quad n\in\mathbb N.
\end{align*}
Then, for sufficiently large $n\in\mathbb N$,
\begin{align*}
    \Pbb\left(d_{\mathcal Y}\left(\widehat m^{\mathrm{RLFR}}_{h_{n,\delta}^{\mathrm{opt}},B_\delta}(x_0),m_\oplus(x_0)\right)>2C_{W,U}L_W^{d/(2\beta+d)}\left(\frac{\sigma^2\log(1/\delta)}{nf_X(x_0)}\right)^{\beta/(2\beta+d)}\right)\le\delta.
\end{align*}
\end{corollary}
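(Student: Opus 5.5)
The plan mirrors the proof of Corollary~\ref{cor:second-order-optimized-rate}: check that the hypotheses of Theorem~\ref{thm:w1-kernel-regression} hold at $h=h_{n,\delta}^{\mathrm{opt}}$ for all large $n$, then substitute this bandwidth into the bound \eqref{eq:w1-main-summary} and simplify.

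\emph{Hypotheses.} Fix $\delta\in(0,1/2]$; then $B_\delta$ is a fixed integer, so $B_\delta\le\lfloor n/2\rfloor$ once $n\ge 2B_\delta+1$. Since $2\beta+d>0$, we have $h_{n,\delta}^{\mathrm{opt}}\to0$, so $h_{n,\delta}^{\mathrm{opt}}\in(0,h_0)$ for large $n$. Next,
\begin{align*}
    n\left(h_{n,\delta}^{\mathrm{opt}}\right)^df_X(x_0)
    =\left(nf_X(x_0)\right)^{2\beta/(2\beta+d)}
    \left(\frac{\sigma^2\log(1/\delta)}{L_W^2}\right)^{d/(2\beta+d)}.
\end{align*}
Because $\beta>0$, this tends to infinity. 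Hence it eventually exceeds $C_{W,U}'\log(1/\delta)$, and Theorem~\ref{thm:w1-kernel-regression} applies.

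\emph{Bias and stochastic terms.} Write $A:=\sigma^2\log(1/\delta)/(nf_X(x_0))$, so that $h_{n,\delta}^{\mathrm{opt}}=(A/L_W^2)^{1/(2\beta+d)}$. The bias term is
\begin{align*}
    L_W\left(h_{n,\delta}^{\mathrm{opt}}\right)^\beta
    =L_W^{1-2\beta/(2\beta+d)}A^{\beta/(2\beta+d)}
    =L_W^{d/(2\beta+d)}A^{\beta/(2\beta+d)}.
\end{align*}
The stochastic term is
\begin{align*}
    \sigma\sqrt{\frac{\log(1/\delta)}{n(h_{n,\delta}^{\mathrm{opt}})^df_X(x_0)}}
    =\sqrt{A}\left(\frac{L_W^2}{A}\right)^{d/(2(2\beta+d))}
    =L_W^{d/(2\beta+d)}A^{\beta/(2\beta+d)}.
\end{align*}
So the two terms are exactly equal, and their sum is $2L_W^{d/(2\beta+d)}A^{\beta/(2\beta+d)}$.

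\emph{Conclusion.} Inserting this sum into \eqref{eq:w1-main-summary} gives exactly the threshold $2C_{W,U}L_W^{d/(2\beta+d)}A^{\beta/(2\beta+d)}$ in the corollary, with failure probability at most $\delta$. The only step needing care is the exponent bookkeeping that makes the two terms coincide; everything else is immediate from the theorem.
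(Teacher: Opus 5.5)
Your proposal is correct and follows the paper's proof exactly: verify $h_{n,\delta}^{\mathrm{opt}}\in(0,h_0)$, $B_\delta\le\lfloor n/2\rfloor$, and $n(h_{n,\delta}^{\mathrm{opt}})^df_X(x_0)\ge C_{W,U}'\log(1/\delta)$ for large $n$, then apply Theorem~\ref{thm:w1-kernel-regression}. Your explicit check that the bias and stochastic terms both equal $L_W^{d/(2\beta+d)}A^{\beta/(2\beta+d)}$ is the exponent bookkeeping the paper leaves implicit.
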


\begin{proof}[Proof of \Cref{cor:w1-optimized-rate}]
Let $\delta\in(0,1/2]$ be fixed. Note that $B_\delta$ is also fixed. By the definition of $h_{n,\delta}^{\mathrm{opt}}$, we have
\begin{align*}
    \lim_{n\to\infty}h_{n,\delta}^{\mathrm{opt}}=0, \quad \lim_{n\to\infty}n\left(h_{n,\delta}^{\mathrm{opt}}\right)^df_X(x_0)=\infty.
\end{align*}
Thus, for sufficiently large $n\in\mathbb N$, the above implies
\begin{align*}
    h_{n,\delta}^{\mathrm{opt}}\in(0,h_0), \quad B_\delta\le\lfloor n/2\rfloor, \quad n\left(h_{n,\delta}^{\mathrm{opt}}\right)^df_X(x_0)\ge C_{W,U}'\log(1/\delta).
\end{align*}
Therefore, the above, the definition of $h_{n,\delta}^{\mathrm{opt}}$, and Theorem~\ref{thm:w1-kernel-regression} prove the corollary.
\end{proof}

\subsection{Proof of \Cref{thm:minimax-lower-w1}}
For $\beta\in(0,1]$, define
\begin{align*}
    \varphi_\beta(u):=(1-\|u\|)_+^\beta, \quad u\in\R^d,
\end{align*}
and let
\begin{align*}
    J_\beta:=\int_{\R^d}\varphi_\beta(u)^2\,du.
\end{align*}
Since the map $u\mapsto(1-\|u\|)_+$ is Lipschitz with constant one and 
\begin{align*}
    |s^\beta-t^\beta|\le|s-t|^\beta, \quad s,t\in[0,\infty)
\end{align*}
for $\beta\in(0,1]$, we have
\begin{align} \label{eq:minimax-lower-w1-proof1}
    |\varphi_\beta(u)-\varphi_\beta(v)|
    \le\|u-v\|^\beta, \quad u,v\in\R^d.
\end{align}
Let $V_d$ denote the volume of the $d$-dimensional unit Euclidean ball $\overline B(0,1)$. Since
\begin{align*}
    0\le\varphi_\beta(u)\le1, \quad \varphi_1(u)\le\varphi_\beta(u), \quad u\in\R^d,
\end{align*}
and $\operatorname{supp}(\varphi_\beta)\subset\overline B(0,1)$, we have, for $\beta\in(0,1]$,
\begin{align} \label{eq:minimax-lower-w1-proof-add1}
    0<J_1\le J_\beta\le V_d.
\end{align}
By Condition~\ref{ass:D1}, there exists $r_0\in\left(0,\min\{h_0, 1\}\right)$ such that
\begin{align} \label{eq:minimax-lower-w1-proof2}
    f_X(x)\le2f_X(x_0), \quad x\in\overline B(x_0,r_0).
\end{align}
Define
\begin{align*}
    C_{W,L}':=\frac{1}{J_1r_0^{d+2}}, \quad
    C_{W,L}:=\frac12\min\left\{1,(2V_d)^{-1/(d+2)}\right\}.
\end{align*}
Let $\delta\in(0,1/16]$, $\beta\in(0,1]$, and $L\in(0,\infty)$ be given, and suppose
\begin{align} \label{eq:minimax-lower-w1-proof3}
    Lh_0^\beta\le\tau, \quad
    nf_X(x_0)L^2\ge C_{W,L}'\tau^2\log(1/\delta).
\end{align}
Define
\begin{align*}
    h_{\delta,\beta,L}
    :=\left(
    \frac{\tau^2\log(1/(4\delta))}{J_\beta nf_X(x_0)L^2}
    \right)^{1/(2\beta+d)}.
\end{align*}
By \eqref{eq:minimax-lower-w1-proof-add1}, \eqref{eq:minimax-lower-w1-proof3}, and the definition of $C_{W,L}'$, we have
\begin{align}\label{eq:minimax-lower-w1-proof4}
    h_{\delta,\beta,L}^{d+2\beta}=\frac{\tau^2\log(1/(4\delta))}{J_\beta nf_X(x_0)L^2}
    \le\frac{\tau^2\log(1/\delta)}{J_\beta nf_X(x_0)L^2}
    \le\frac{1}{J_\beta C_{W,L}'}
    =\frac{J_1}{J_\beta}r_0^{d+2}\le r_0^{d+2}
    \le r_0^{d+2\beta}.
\end{align}
Thus, $h_{\delta,\beta,L}\le r_0\le h_0$. We now define $g_1,g_2:\R^d\to\R$ by
\begin{align*}
    g_1(x):=0, \quad
    g_2(x):=Lh_{\delta,\beta,L}^\beta
    \varphi_\beta\left(\frac{x-x_0}{h_{\delta,\beta,L}}\right), \quad x\in\R^d.
\end{align*}
By the definition of $\varphi_\beta$ and \eqref{eq:minimax-lower-w1-proof4},
\begin{align} \label{eq:minimax-lower-w1-proof-add2}
    \operatorname{supp}(g_2)
    \subset\overline B(x_0,h_{\delta,\beta,L})
    \subset\overline B(x_0,h_0).
\end{align}
Also, by \eqref{eq:minimax-lower-w1-proof1}, we have
\begin{align} \label{eq:minimax-lower-w1-proof-add3}
    |g_2(x_1)-g_2(x_2)|
    \le Lh_{\delta,\beta,L}^\beta
    \left\|\frac{x_1-x_2}{h_{\delta,\beta,L}}\right\|^\beta
    =L\|x_1-x_2\|^\beta, \quad x_1,x_2\in\R^d.
\end{align}
Therefore, by \eqref{eq:minimax-lower-w1-proof-add2}, \eqref{eq:minimax-lower-w1-proof-add3}, and the definition of $g_1$, we have $g_1,g_2\in\mathcal G_{\beta,L}$. Moreover, by the isometry of $\iota$ and $\varphi_\beta(0)=1$,
\begin{align} \label{eq:minimax-lower-w1-proof5}
    d_{\mathcal Y}\left(\iota(g_1(x_0)),\iota(g_2(x_0))\right)
    =|g_1(x_0)-g_2(x_0)|
    =Lh_{\delta,\beta,L}^\beta.
\end{align}
By Lemma~\ref{lem:gaussian-product-kl}, \eqref{eq:minimax-lower-w1-proof2}, and the change of variables, we have
\begin{align}
\begin{split}
    \operatorname{KL}\left(P_{g_1,\tau}^{\otimes n},P_{g_2,\tau}^{\otimes n}\right)
    &=\frac{n}{2\tau^2}\int_{\mathcal X}g_2(x)^2f_X(x)\,dx\\
    &\le\frac{nf_X(x_0)L^2h_{\delta,\beta,L}^{2\beta}}{\tau^2}
    \int_{\overline B(x_0,h_{\delta,\beta,L})}
    \varphi_\beta\left(\frac{x-x_0}{h_{\delta,\beta,L}}\right)^2\,dx\\
    &=\frac{nf_X(x_0)J_\beta L^2h_{\delta,\beta,L}^{d+2\beta}}{\tau^2}\\
    &=\log(1/(4\delta)).
\end{split}
\label{eq:minimax-lower-w1-proof6}
\end{align}
Therefore, by \eqref{eq:minimax-lower-w1-proof5}, \eqref{eq:minimax-lower-w1-proof6}, and Lemma~\ref{lem:two-point-estimation} below, for every measurable map $\mathcal T_n:(\mathcal X\times\mathcal Y)^n\to\mathcal Y$,
\begin{align}
\begin{split}
    \max_{j\in\{1,2\}}
    P_{g_j,\tau}^{\otimes n}\left(
    d_{\mathcal Y}\left(\mathcal T_n(\mathcal D_n),\iota(g_j(x_0))\right)
    \ge\frac{Lh_{\delta,\beta,L}^\beta}{2}
    \right)
    &\ge\frac14
    \exp\left(-\operatorname{KL}\left(P_{g_1,\tau}^{\otimes n},P_{g_2,\tau}^{\otimes n}\right)\right)\\
    &\ge\frac14\exp\left(-\log(1/(4\delta))\right)\\
    &=\delta.
\end{split}
\label{eq:minimax-lower-w1-proof7}
\end{align}
Since $\delta\in(0,1/16]$,
\begin{align*}
    \log(1/(4\delta))
    =\log(1/\delta)-\log4
    \ge\frac12\log(1/\delta).
\end{align*}
By \eqref{eq:minimax-lower-w1-proof-add1} and $\beta/(2\beta+d)\le1/(d+2)$, we have
\begin{align*}
    \left(2J_\beta\right)^{-\beta/(2\beta+d)}
    &\ge
    \begin{cases}
        1, & \text{if} \quad 2J_\beta\le1,\\
        (2V_d)^{-1/(d+2)}, & \text{if} \quad 2J_\beta>1,
    \end{cases}
\end{align*}
which implies $\left(2J_\beta\right)^{-\beta/(2\beta+d)}\ge2C_{W,L}$. Therefore, by the above and the definitions of $C_{W,L}$ and $h_{\delta,\beta,L}$, we have
\begin{align}
\begin{split}
    \frac{Lh_{\delta,\beta,L}^\beta}{2}
    &=\frac12J_\beta^{-\beta/(2\beta+d)}
    L^{d/(2\beta+d)}
    \left(
    \frac{\tau^2\log(1/(4\delta))}{nf_X(x_0)}
    \right)^{\beta/(2\beta+d)}\\
    &\ge\frac12
    \left(2J_\beta\right)^{-\beta/(2\beta+d)}
    L^{d/(2\beta+d)}
    \left(
    \frac{\tau^2\log(1/\delta)}{nf_X(x_0)}
    \right)^{\beta/(2\beta+d)}\\
    &\ge C_{W,L}
    L^{d/(2\beta+d)}
    \left(
    \frac{\tau^2\log(1/\delta)}{nf_X(x_0)}
    \right)^{\beta/(2\beta+d)}.
\end{split}
\label{eq:minimax-lower-w1-proof8}
\end{align}
Since $g_1,g_2\in\mathcal G_{\beta,L}$, the above implies
\begin{align}
\begin{split}
    &\sup_{g\in\mathcal G_{\beta,L}}
    P_{g,\tau}^{\otimes n}\left(
    d_{\mathcal Y}\left(\mathcal T_n(\mathcal D_n),\iota(g(x_0))\right)
    \ge C_{W,L}L^{d/(2\beta+d)}
    \left(
    \frac{\tau^2\log(1/\delta)}{nf_X(x_0)}
    \right)^{\beta/(2\beta+d)}
    \right)\\
    &\ge\max_{j\in\{1,2\}}
    P_{g_j,\tau}^{\otimes n}\left(
    d_{\mathcal Y}\left(\mathcal T_n(\mathcal D_n),\iota(g_j(x_0))\right)
    \ge\frac{Lh_{\delta,\beta,L}^\beta}{2}
    \right).
\end{split}
\label{eq:minimax-lower-w1-proof9}
\end{align}
Therefore, \eqref{eq:minimax-lower-w1-proof7} and \eqref{eq:minimax-lower-w1-proof9} prove the theorem.

\begin{proposition}\label{prop:gaussian-holder-subclass-upper}
Suppose Conditions~\ref{ass:K1}, \ref{ass:D1}, \ref{ass:M1}(iv), \ref{ass:M2}(iv), and \ref{ass:M4} hold. For $\beta\in(0,1]$ and $L,\tau\in(0,\infty)$, if $Lh_0^\beta\le\tau$, then, uniformly over $g\in\mathcal G_{\beta,L}$, the model \eqref{eq:embeddedgaussian} satisfies Conditions~\ref{ass:M1}--\ref{ass:M4} and \ref{ass:S2}, with $\sigma=\sqrt2\tau$ and $L_W=L$.
\end{proposition}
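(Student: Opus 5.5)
The proof will follow the same pattern as Proposition~\ref{prop:gaussian-subclass-upper}: verify each condition for an arbitrary $g\in\mathcal G_{\beta,L}$ with bounds free of $g$.

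\textbf{Condition~\ref{ass:M3}.} For $x\in\overline B(x_0,h_0)$, the H\"older bound in the definition of $\mathcal G_{\beta,L}$ gives
\begin{align*}
    |g(x)-g(x_0)|\le L\|x-x_0\|^\beta\le Lh_0^\beta\le\tau .
\end{align*}
By the isometry of $\iota$ and Lemma~\ref{lem:embedded-gaussian-mean}, we have $m_\oplus(x_0)=\iota(g(x_0))$. Therefore
\begin{align*}
    M_\oplus(x,m_\oplus(x_0))=\E\left[(g(x)-g(x_0)+\varepsilon)^2\right]=(g(x)-g(x_0))^2+\tau^2\le2\tau^2,
\end{align*}
so Condition~\ref{ass:M3} holds with $\sigma=\sqrt2\tau$. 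The same bound shows that every conditional law $P_{Y\mid X=x}$, $x\in\overline B(x_0,h_0)$, has a finite second moment.

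\textbf{Conditions~\ref{ass:M1}--\ref{ass:M2}.} These are reduced to parts (iv) exactly as in the proof of Proposition~\ref{prop:gaussian-subclass-upper}; that argument never used smoothness of $g$.
\begin{enumerate}
\item For (i), take $\widetilde Y_h$ with the kernel-mixture distribution $\widetilde P_h$. It lies in $\mathcal L_2(\mathcal Y)$ and satisfies $\E[d_{\mathcal Y}^2(\widetilde Y_h,\omega)]=\widetilde M_h(x_0,\omega)$.
\item For (ii), take independent $\widetilde Y_i\sim P_{Y\mid X=x_i}$, so that $\overline Q_{\oplus,\bm\lambda}=\overline M_{\oplus,\bm\lambda}$.
\item For (iii), take degenerate $\widetilde Y_i=y_i$, so that $\overline Q_{\oplus,\bm\lambda}=\overline B_{\oplus,\bm\lambda}$.
\end{enumerate}
In each case Conditions~\ref{ass:M1}(iv) and \ref{ass:M2}(iv) then yield the required existence, uniqueness and curvature with the same $\kappa_{\mathcal Y}$. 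Condition~\ref{ass:M4} is assumed outright.

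\textbf{Condition~\ref{ass:S2}.} Couple $P_{Y\mid X=x}$ and $P_{Y\mid X=x_0}$ through a common noise variable, using the pair $(\iota(g(x)+\varepsilon),\iota(g(x_0)+\varepsilon))$. By the isometry of $\iota$,
\begin{align*}
    W_1\left(P_{Y\mid X=x},P_{Y\mid X=x_0}\right)\le\E\left[|g(x)-g(x_0)|\right]\le L\|x-x_0\|^\beta ,\quad x\in\overline B(x_0,h_0).
\end{align*}
Hence Condition~\ref{ass:S2} holds with $L_W=L$ and the same $\beta$.

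Finite first moments, needed for $P_{Y\mid X=x}\in\mathcal P_1(\mathcal Y)$, follow from the second-moment bound above. All constants ($\sigma$, $\kappa_{\mathcal Y}$, $C_{\mathcal Y}$, $L_W$) are independent of $g$, which gives uniformity over $\mathcal G_{\beta,L}$.

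The only delicate step is confirming that $m_\oplus(x_0)=\iota(g(x_0))$ holds in $\mathcal Y$, rather than merely along the embedded line. Lemma~\ref{lem:embedded-gaussian-mean} is invoked for exactly this, as in the previous proposition.
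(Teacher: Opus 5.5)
Your proposal is correct and follows the paper's proof essentially step for step. The H\"older bound $|g(x)-g(x_0)|\le Lh_0^\beta\le\tau$ together with Lemma~\ref{lem:embedded-gaussian-mean} gives Condition~\ref{ass:M3} with $\sigma=\sqrt2\tau$, the common-noise coupling gives Condition~\ref{ass:S2} with $L_W=L$, and Conditions~\ref{ass:M1}(i)--(iii) and \ref{ass:M2}(i)--(iii) are reduced to part (iv) exactly as in Proposition~\ref{prop:gaussian-subclass-upper}. You also note explicitly that the second-moment bound gives $P_{Y\mid X=x}\in\mathcal P_1(\mathcal Y)$, so that $W_1$ is well defined; the paper leaves this detail implicit.
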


\begin{proof}[Proof of \Cref{prop:gaussian-holder-subclass-upper}]
Let $\beta\in(0,1]$ and $L,\tau\in(0,\infty)$ be given such that $Lh_0^\beta\le\tau$. By the definition of $\mathcal G_{\beta,L}$,
\begin{align} \label{eq:gaussian-holder-subclass-upper-proof1}
    |g(x)-g(x_0)|
    \le L\|x-x_0\|^\beta
    \le Lh_0^\beta
    \le\tau, \quad x\in\overline B(x_0,h_0),
\end{align}
for every $g\in\mathcal G_{\beta,L}$. Therefore, by the definition of $M_\oplus$, \eqref{eq:embeddedgaussian}, the isometry of $\iota$, and Lemma~\ref{lem:embedded-gaussian-mean}, for every $g\in\mathcal G_{\beta,L}$,
\begin{align}
\begin{split}
    M_\oplus(x,m_\oplus(x_0))
    &=\E\left[d_{\mathcal Y}^2\left(\iota(g(x)+\varepsilon),\iota(g(x_0))\right)\right]\\
    &=\E\left[(g(x)-g(x_0)+\varepsilon)^2\right]\\
    &=(g(x)-g(x_0))^2+\tau^2\\
    &\le2\tau^2, \quad x\in\overline B(x_0,h_0).
\end{split}
\label{eq:gaussian-holder-subclass-upper-proof2}
\end{align}
Thus Condition~\ref{ass:M3} holds with $\sigma=\sqrt2\tau$.

Also, using the same Gaussian variable in the two conditional distributions and the isometry of $\iota$, for $x\in\overline B(x_0,h_0)$,
\begin{align}
\begin{split}
    W_1(P_{Y\mid X=x},P_{Y\mid X=x_0})
    &\le\E\left[d_{\mathcal Y}\left(\iota(g(x)+\varepsilon),\iota(g(x_0)+\varepsilon)\right)\right]\\
    &=|g(x)-g(x_0)|\\
    &\le L\|x-x_0\|^\beta.
\end{split}
\label{eq:gaussian-holder-subclass-upper-proof3}
\end{align}
Hence Condition~\ref{ass:S2} holds with $L_W=L$.

The verification of Conditions~\ref{ass:M1}(i)--(iii) and \ref{ass:M2}(i)--(iii) in the proof of Proposition~\ref{prop:gaussian-subclass-upper} uses only Conditions~\ref{ass:K1}, \ref{ass:D1}, \ref{ass:M1}(iv), \ref{ass:M2}(iv), and the local second-moment bound. Therefore, by \eqref{eq:gaussian-holder-subclass-upper-proof2}, the same argument shows that Conditions~\ref{ass:M1}(i)--(iii) and \ref{ass:M2}(i)--(iii) hold. Since $g\in\mathcal G_{\beta,L}$ was arbitrary and all the above bounds are independent of $g$, the conclusions hold uniformly over $g\in\mathcal G_{\beta,L}$. This proves the proposition.
\end{proof}

\section{Auxiliary results}\label{sec:supp-auxiliary}

This section collects auxiliary results used in the proofs for Sections \ref{sec:supp-second-order} and \ref{sec:supp-w1}. We first establish basic properties of the kernel weights and block estimators and then deterministic and probabilistic bounds for the distance-based aggregation.

\subsection{Properties of kernel weights}\label{subsec:kernel-weight-bounds}

In this subsection, we provide auxiliary results for the kernel weights and block estimators.

\begin{lemma}\label{lem:kernel-target-positive}
Suppose Conditions~\ref{ass:K1} and \ref{ass:D1} hold. Then
\begin{align}
    \frac{f_X(x_0)}{2}\le\E[K_h(X,x_0)]\le2f_X(x_0), &\quad h\in(0,h_0]. \label{eq:expectationbound}
\end{align}
In addition, if Condition~\ref{ass:M3} also holds, then, for every $\omega\in\mathcal Y$,
\begin{align} \label{eq:bigmhfinite}
    \widetilde M_h(x_0,\omega) \le 2\sigma^2+2d_{\mathcal Y}^2(\omega,m_\oplus(x_0)),
\end{align}
where the constant $\sigma\in(0,\infty)$ is defined in Condition~\ref{ass:M3}.
\end{lemma}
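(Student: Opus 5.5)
For the first claim, \eqref{eq:expectationbound}, I will use a change of variables. Fix $h\in(0,h_0]$ and write
\begin{align*}
    \E[K_h(X,x_0)]=\int_{\R^d}h^{-d}K\left(\frac{x-x_0}{h}\right)f_X(x)\,dx=\int_{\overline B(0,1)}K(u)f_X(x_0+hu)\,du .
\end{align*}
The second equality uses $\operatorname{supp}(K)\subset\overline B(0,1)$ from Condition~\ref{ass:K1}. For $u\in\overline B(0,1)$ we have $x_0+hu\in\overline B(x_0,h)\subset\overline B(x_0,h_0)$. By \eqref{eq:local-density-bound}, which was obtained from Condition~\ref{ass:D1}, the integrand therefore lies between $K(u)f_X(x_0)/2$ and $2K(u)f_X(x_0)$. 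Since $K$ is a probability density, $\int K(u)\,du=1$, and integrating these two bounds gives \eqref{eq:expectationbound}. In particular $\E[K_h(X,x_0)]>0$, so $\widetilde M_h$ is well defined.

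For \eqref{eq:bigmhfinite}, I start from the pointwise bound
\begin{align*}
    d_{\mathcal Y}^2(y,\omega)\le 2d_{\mathcal Y}^2\bigl(y,m_\oplus(x_0)\bigr)+2d_{\mathcal Y}^2\bigl(m_\oplus(x_0),\omega\bigr),
\end{align*}
which follows from the triangle inequality and $(a+b)^2\le 2a^2+2b^2$. Next I condition on $X$ using the regular conditional distribution:
\begin{align*}
    \E\left[K_h(X,x_0)d_{\mathcal Y}^2(Y,\omega)\right]=\E\left[K_h(X,x_0)M_\oplus(X,\omega)\right].
\end{align*}
Both sides are integrals of nonnegative quantities, so Tonelli's theorem justifies this step without any integrability assumption.

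On the event $\{K_h(X,x_0)>0\}$ we have $X\in\overline B(x_0,h_0)$. There the pointwise bound together with Condition~\ref{ass:M3} gives
\begin{align*}
    M_\oplus(X,\omega)\le 2\sigma^2+2d_{\mathcal Y}^2\bigl(\omega,m_\oplus(x_0)\bigr).
\end{align*}
Dividing by $\E[K_h(X,x_0)]>0$ then yields \eqref{eq:bigmhfinite}.

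The only point needing care is the measure-theoretic justification of the conditioning identity. Restricting to $\overline B(x_0,h_0)$ is then harmless, because the kernel vanishes outside it. Beyond this, the argument is routine.
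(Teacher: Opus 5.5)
Your proposal is correct and follows the paper's proof essentially step by step. For \eqref{eq:expectationbound} you use the same change of variables, the support of $K$ and \eqref{eq:local-density-bound}; for \eqref{eq:bigmhfinite} you use the same inequality $d_{\mathcal Y}^2(y,\omega)\le 2d_{\mathcal Y}^2(y,m_\oplus(x_0))+2d_{\mathcal Y}^2(m_\oplus(x_0),\omega)$, conditioning on $X$, and Condition~\ref{ass:M3} on the kernel support. The only cosmetic difference is that you apply the inequality after conditioning rather than before, which changes nothing.
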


\begin{proof}[Proof of \Cref{lem:kernel-target-positive}]
By \eqref{eq:local-density-bound}, for $h\in(0,h_0]$, we have
\begin{align*}
    \E[K_h(X,x_0)]
    =\int_{\mathcal X}\frac{1}{h^d}K\left(\frac{x-x_0}{h}\right)f_X(x)\,dx
    =\int_{\overline{B}(0,1)}K(u)f_X(x_0+hu)\,du
\end{align*}
by Condition~\ref{ass:K1} and the change of variables. Since $h\in(0,h_0]$ and $u\in\overline{B}(0,1)$ imply $x_0+hu\in\overline B(x_0,h_0)$, \eqref{eq:expectationbound} follows directly from \eqref{eq:local-density-bound}.

In addition, for $\omega\in\mathcal Y$,
\begin{align*}
    d_{\mathcal Y}^2(Y,\omega)
    \le
    2d_{\mathcal Y}^2(Y,m_\oplus(x_0))
    +2d_{\mathcal Y}^2(\omega,m_\oplus(x_0))
\end{align*}
by the triangle inequality and the Cauchy--Schwarz inequality. Therefore, for $h\in(0,h_0]$ and $\omega\in\mathcal Y$, the above inequality and the definition of $\widetilde M_h$ in \eqref{def:bigmhtilde} imply
\begin{align} \label{eq:bigmhbound1}
    \widetilde M_h(x_0,\omega)
    &\le \frac{2\E\left[K_h(X,x_0)M_\oplus(X,m_\oplus(x_0))\right]}
    {\E[K_h(X,x_0)]}
    +2d_{\mathcal Y}^2(\omega,m_\oplus(x_0)).
\end{align}
For $h\in(0,h_0]$ and $x\in\mathcal X$, note that $K_h(x,x_0)>0$ only if $x\in\overline B(x_0,h)\subset\overline B(x_0,h_0)$ by Condition~\ref{ass:K1}. Hence, for $h\in(0,h_0]$,
\begin{align} \label{eq:bigmhbound2}
    \E\left[K_h(X,x_0)M_\oplus(X,m_\oplus(x_0))\right]
    \le \sigma^2 \E[K_h(X,x_0)]
\end{align}
by Condition~\ref{ass:M3}. By \eqref{eq:bigmhbound1} and \eqref{eq:bigmhbound2}, \eqref{eq:bigmhfinite} holds for every $\omega\in\mathcal Y$.
\end{proof}

\begin{lemma}\label{lem:kernel-mass}
Under Conditions~\ref{ass:K1} and \ref{ass:D1}, for $h\in(0,h_0]$,
\begin{align*}
    \Pbb\left(S_b<\frac{N_b\E[K_h(X,x_0)]}{2}\right)
    \le
    \exp\left(
        -\frac{3N_bh^d\E[K_h(X,x_0)]}{28\|K\|_\infty}
    \right),
\end{align*}
where $\|K\|_\infty:=\sup_{u\in\mathbb{R}^d}K(u)$.
\end{lemma}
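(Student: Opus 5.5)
The plan is to prove this as a direct application of Bernstein's inequality to the i.i.d.\ kernel weights in block $b$. Fix $h\in(0,h_0]$ and write $Z_i:=K_h(X_i,x_0)$ for $i\in I_b$, so that $S_b=\sum_{i\in I_b}Z_i$ is a sum of $N_b$ i.i.d.\ copies of $Z:=K_h(X,x_0)$. Put $\mu:=\E[Z]$; it is strictly positive by Lemma~\ref{lem:kernel-target-positive}. The event $\{S_b<N_b\mu/2\}$ is a lower deviation $\sum_{i\in I_b}(Z_i-\mu)<-t$ with $t:=N_b\mu/2$.

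First I will record the two inputs Bernstein needs, both from Condition~\ref{ass:K1}.
\begin{itemize}
\item \emph{Boundedness.} Since $0\le Z\le \|K\|_\infty h^{-d}$, we get $\mu-Z\le\mu\le\|K\|_\infty h^{-d}$. So the centred variables $\mu-Z_i$ are bounded above by $b:=\|K\|_\infty h^{-d}$. This one-sided bound is all the lower-tail version of Bernstein requires.
\item \emph{Variance.} We have $\operatorname{Var}(Z)\le\E[Z^2]\le\|K\|_\infty h^{-d}\E[Z]=b\mu$. This self-bounding step is what produces the factor $\E[K_h(X,x_0)]$ in the exponent, rather than a cruder bound in terms of $f_X(x_0)$.
\end{itemize}

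Next I apply Bernstein's inequality to $\sum_{i\in I_b}(\mu-Z_i)$:
\begin{align*}
    \Pbb\left(\sum_{i\in I_b}(\mu-Z_i)>t\right)\le\exp\left(-\frac{t^2}{2\left(N_b b\mu+bt/3\right)}\right).
\end{align*}
Substituting $t=N_b\mu/2$, the denominator becomes $2N_bb\mu(1+1/6)=\tfrac{7}{3}N_bb\mu$. The numerator is $N_b^2\mu^2/4$. The exponent is therefore
\begin{align*}
    \frac{3N_b\mu}{28b}=\frac{3N_bh^d\E[K_h(X,x_0)]}{28\|K\|_\infty},
\end{align*}
which is exactly the claimed bound. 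The strict versus non-strict inequality in the event is harmless.

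No step here is a real obstacle; the only care needed is bookkeeping. One point is to use the one-sided bound $\mu-Z\le b$ rather than $|Z-\mu|\le b$, which would cost a factor. The other is to keep track of the $1/3$ factor in Bernstein so that the constant comes out as $3/28$. This is also the form in which the lemma is used in \eqref{eq:self-normalized-first-moment-proof4}: combined with $\E[K_h(X,x_0)]\ge f_X(x_0)/2$ from Lemma~\ref{lem:kernel-target-positive}, it gives the exponent $3N_bh^df_X(x_0)/(56\|K\|_\infty)$ quoted there.
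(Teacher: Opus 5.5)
Your proposal is correct and follows the paper's proof: both apply Bernstein's inequality (Corollary 2.11 of Boucheron--Lugosi--Massart) to the one-sidedly bounded variables $\E[K_h(X,x_0)]-K_h(X_i,x_0)\le\|K\|_\infty h^{-d}$, use the self-bounding variance estimate $\E[K_h^2(X_i,x_0)]\le\|K\|_\infty h^{-d}\E[K_h(X,x_0)]$, and obtain the same $3/28$ constant. The only cosmetic issue is that your symbol $b$ for the bound clashes with the block index $b$.
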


\begin{proof}[Proof of \Cref{lem:kernel-mass}]
By Lemma~\ref{lem:kernel-target-positive}, $\E[K_h(X,x_0)]>0$. Also, for each $i\in I_b$, Condition~\ref{ass:K1} gives
\begin{align*}
    \E[K_h(X,x_0)]-K_h(X_i,x_0)
    &\le\frac{\|K\|_\infty}{h^d}, \\
    \operatorname{Var}\left[K_h(X_i,x_0)\right] \le    \E\left[K_h^2(X_i,x_0)\right] &\le \frac{\|K\|_\infty}{h^d} \E[K_h(X,x_0)].
\end{align*}
Therefore, applying Bernstein's inequality to
$\E[K_h(X,x_0)]-K_h(X_i,x_0)$ for $i\in I_b$ gives
\begin{align*}
    &\Pbb\left(S_b<\frac{N_b\E[K_h(X,x_0)]}{2}\right)\\
    &\le
    \exp\left(
        -\frac{N_b^2\E[K_h(X,x_0)]^2/4}
        {2N_b\|K\|_\infty h^{-d}\E[K_h(X,x_0)]
        +\|K\|_\infty h^{-d}N_b\E[K_h(X,x_0)]/3}
    \right)\\
    &=\exp\left(
        -\frac{3N_bh^d\E[K_h(X,x_0)]}{28\|K\|_\infty}
    \right).
\end{align*}
For details, see Corollary 2.11 of \cite{BoucheronLugosiMassart2013}. This proves the lemma.
\end{proof}

\begin{lemma}\label{lem:weight-square}
Under Conditions~\ref{ass:K1} and \ref{ass:D1}, on $S_b\ge N_b\E[K_h(X,x_0)]/2$,
\begin{align*}
    \sum_{i\in I_b}\lambda_{bi}^2
    \le
    \frac{2\|K\|_\infty}{N_bh^d\E[K_h(X,x_0)]}
    \le
    \frac{4\|K\|_\infty}{N_bh^df_X(x_0)}.
\end{align*}
\end{lemma}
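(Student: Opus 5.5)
The argument is purely deterministic: we work on the event $\{S_b\ge N_b\E[K_h(X,x_0)]/2\}$, where $S_b>0$ and the weights $\lambda_{bi}=K_h(X_i,x_0)/S_b$ are well defined. The plan is to bound the numerator $\sum_{i\in I_b}K_h(X_i,x_0)^2$ using only the boundedness of $K$, and then divide by $S_b^2$.

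First, by Condition~\ref{ass:K1}, $0\le K_h(X_i,x_0)\le \|K\|_\infty h^{-d}$ for each $i\in I_b$. This gives
\begin{align*}
    \sum_{i\in I_b}K_h(X_i,x_0)^2\le \frac{\|K\|_\infty}{h^d}\sum_{i\in I_b}K_h(X_i,x_0)=\frac{\|K\|_\infty}{h^d}S_b .
\end{align*}
Dividing by $S_b^2$ yields
\begin{align*}
    \sum_{i\in I_b}\lambda_{bi}^2\le \frac{\|K\|_\infty}{h^dS_b}.
\end{align*}
On the event in question we have $S_b\ge N_b\E[K_h(X,x_0)]/2$, and this immediately gives the first inequality, with constant $2\|K\|_\infty/(N_bh^d\E[K_h(X,x_0)])$.

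For the second inequality, invoke Lemma~\ref{lem:kernel-target-positive}, which gives $\E[K_h(X,x_0)]\ge f_X(x_0)/2$ for $h\in(0,h_0]$. Hence $2/\E[K_h(X,x_0)]\le 4/f_X(x_0)$. This is the only step that uses Condition~\ref{ass:D1}, through \eqref{eq:local-density-bound}.

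I expect no real obstacle. The only care needed is to note that $S_b>0$ on the event, because $\E[K_h(X,x_0)]>0$ by Lemma~\ref{lem:kernel-target-positive}, so the weights are well defined, and to use $h\le h_0$ implicitly, as in the surrounding lemmas.
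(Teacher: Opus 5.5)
Your proposal is correct and follows essentially the same argument as the paper. You bound $\sum_{i\in I_b}K_h^2(X_i,x_0)\le \|K\|_\infty h^{-d}S_b$, divide by $S_b^2$, use $S_b\ge N_b\E[K_h(X,x_0)]/2$, and finish with $\E[K_h(X,x_0)]\ge f_X(x_0)/2$ from Lemma~\ref{lem:kernel-target-positive}.
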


\begin{proof}[Proof of \Cref{lem:weight-square}]
By Condition~\ref{ass:K1}, on $S_b>0$,
\begin{align*}
    \sum_{i\in I_b}\lambda_{bi}^2
    =\frac{\sum_{i\in I_b}K_h^2(X_i,x_0)}{S_b^2}\le\frac{\|K\|_\infty \sum_{i\in I_b}K_h(X_i,x_0)}{h^d S_b^2}
    =\frac{\|K\|_\infty}{h^dS_b}.
\end{align*}
Therefore, on $S_b\ge N_b\E[K_h(X,x_0)]/2$,
\begin{align*}
    \sum_{i\in I_b}\lambda_{bi}^2
    \le\frac{2\|K\|_\infty}{N_bh^d\E[K_h(X,x_0)]}.
\end{align*}
The above and Lemma~\ref{lem:kernel-target-positive} prove the lemma.
\end{proof}

\begin{lemma}\label{lem:block-second-moment}
Suppose Conditions~\ref{ass:K1} and \ref{ass:M1}--\ref{ass:M3} hold. Then
\begin{align} \label{eq:block-second-moment-part1}
    d_{\mathcal Y}\left(\widetilde m_h\left(x_0\right),m_\oplus\left(x_0\right)\right)
    \le\frac{\sigma}{\sqrt{\kappa_{\mathcal Y}}}.
\end{align}
Moreover, on $S_b>0$,
\begin{align} \label{eq:block-second-moment-part2}
    d_{\mathcal Y}\left(\overline m_{\oplus,\bm\lambda_b}\left(\bm X_b\right),m_\oplus\left(x_0\right)\right)
    \le\frac{\sigma}{\sqrt{\kappa_{\mathcal Y}}}.
\end{align}
Finally, for every $i\in I_b$ with $\lambda_{bi}>0$,
\begin{align} \label{eq:block-second-moment-part3}
    \E\left[d_{\mathcal Y}^2\left(Y_i,\overline m_{\oplus,\bm\lambda_b}\left(\bm X_b\right)\right)\,\middle|\,X_1,\ldots,X_n\right]
    \le2\left(1+\frac{1}{\kappa_{\mathcal Y}}\right)\sigma^2.
\end{align}
\end{lemma}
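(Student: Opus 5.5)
The three claims all follow from the same principle. A minimizer of a weighted criterion cannot be farther from $m_\oplus(x_0)$ than the square root of that criterion evaluated at $m_\oplus(x_0)$, divided by $\sqrt{\kappa_{\mathcal Y}}$. The quantitative input is the curvature lower bound of Condition~\ref{ass:M2} combined with the moment bound of Condition~\ref{ass:M3}.

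For \eqref{eq:block-second-moment-part1}, I apply Condition~\ref{ass:M2}(i) with $\omega=m_\oplus(x_0)$. This gives
\begin{align*}
\kappa_{\mathcal Y}d_{\mathcal Y}^2\left(m_\oplus(x_0),\widetilde m_h(x_0)\right)\le \widetilde M_h\left(x_0,m_\oplus(x_0)\right)-\widetilde M_h\left(x_0,\widetilde m_h(x_0)\right)\le \widetilde M_h\left(x_0,m_\oplus(x_0)\right),
\end{align*}
using nonnegativity of $\widetilde M_h$. Next I bound $\widetilde M_h(x_0,m_\oplus(x_0))$ by $\sigma^2$. Conditioning on $X$ shows it equals $\E[K_h(X,x_0)M_\oplus(X,m_\oplus(x_0))]/\E[K_h(X,x_0)]$. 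Since $K_h(\cdot,x_0)$ vanishes outside $\overline B(x_0,h)\subset\overline B(x_0,h_0)$ by Condition~\ref{ass:K1}, Condition~\ref{ass:M3} gives the bound $\sigma^2$, exactly as in \eqref{eq:bigmhbound2}. Positivity of the denominator comes from Lemma~\ref{lem:kernel-target-positive}. Here $h\in(0,h_0]$ is implicit, as in the definition of $\widetilde m_h$.

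For \eqref{eq:block-second-moment-part2}, I argue identically on $S_b>0$. I use Condition~\ref{ass:M2}(ii) with $\bm\lambda=\bm\lambda_b$ and $\bm x=\bm X_b$, restricted to the indices with $\lambda_{bi}>0$ so that $\bm x\in\overline B(x_0,h_0)^N$. This yields
\begin{align*}
\kappa_{\mathcal Y}d_{\mathcal Y}^2\left(m_\oplus(x_0),\overline m_{\oplus,\bm\lambda_b}(\bm X_b)\right)\le\overline M_{\oplus,\bm\lambda_b}\left(\bm X_b,m_\oplus(x_0)\right)=\sum_{i}\lambda_{bi}M_\oplus\left(X_i,m_\oplus(x_0)\right)\le\sigma^2,
\end{align*}
where each $X_i$ lies in $\overline B(x_0,h)$ and so Condition~\ref{ass:M3} applies termwise.

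For \eqref{eq:block-second-moment-part3}, I condition on $X_1,\ldots,X_n$. Then $\overline m_{\oplus,\bm\lambda_b}(\bm X_b)$ is fixed, and $Y_i$ has law $P_{Y\mid X=X_i}$ by independence of the sample pairs and the regular conditional distribution. The inequality $d^2(a,c)\le2d^2(a,b)+2d^2(b,c)$ bounds the left side by
\[
2M_\oplus\left(X_i,m_\oplus(x_0)\right)+2d_{\mathcal Y}^2\left(m_\oplus(x_0),\overline m_{\oplus,\bm\lambda_b}(\bm X_b)\right)\le2\sigma^2+2\sigma^2/\kappa_{\mathcal Y},
\]
using Condition~\ref{ass:M3} (since $\lambda_{bi}>0$ forces $X_i\in\overline B(x_0,h_0)$) and \eqref{eq:block-second-moment-part2}.

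I expect no real obstacle. The only delicate points are bookkeeping. First, I must restrict to indices with positive weight so that Condition~\ref{ass:M2}(ii) applies with $\bm x\in\overline B(x_0,h_0)^N$; zero weights can be dropped without changing the criterion or its minimizer. Second, the conditional-expectation identity for $Y_i$ given all predictors must be justified, which follows from the i.i.d. structure of the sample.
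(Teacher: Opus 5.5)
Your proposal is correct and follows the paper's proof essentially step for step. You apply the curvature inequalities of Condition~\ref{ass:M2}(i)--(ii) at $\omega=m_\oplus(x_0)$, bound the criterion at $m_\oplus(x_0)$ by $\sigma^2$ via Condition~\ref{ass:M3} restricted to positive-weight indices, and then combine $d_{\mathcal Y}^2(a,c)\le 2d_{\mathcal Y}^2(a,b)+2d_{\mathcal Y}^2(b,c)$ with \eqref{eq:block-second-moment-part2} to bound the conditional second moment, exactly as the paper does.
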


\begin{proof}[Proof of \Cref{lem:block-second-moment}]
By \eqref{eq:bigmhbound2} and the definition of $\widetilde M_h$ in \eqref{def:bigmhtilde},
\begin{align*}
    \widetilde M_h\left(x_0,m_\oplus\left(x_0\right)\right)\le\sigma^2.
\end{align*}
Therefore, Condition~\ref{ass:M2}(i) and the nonnegativity of $\widetilde M_h$ imply
\begin{align*}
    \kappa_{\mathcal Y}d_{\mathcal Y}^2\left(\widetilde m_h\left(x_0\right),m_\oplus\left(x_0\right)\right)
    &\le \widetilde M_h\left(x_0,m_\oplus\left(x_0\right)\right)
    -\widetilde M_h\left(x_0,\widetilde m_h\left(x_0\right)\right)
    \le\sigma^2.
\end{align*}
This proves \eqref{eq:block-second-moment-part1}.

On $S_b>0$, Condition~\ref{ass:M3} and $\sum_{i\in I_b}\lambda_{bi}=1$ give
\begin{align*}
    \overline M_{\oplus,\bm\lambda_b}\left(\bm X_b,m_\oplus\left(x_0\right)\right)
    =\sum_{i\in I_b}\lambda_{bi}M_\oplus\left(X_i,m_\oplus\left(x_0\right)\right)
    \le\sigma^2,
\end{align*}
where Condition~\ref{ass:M3} is applied only to indices with $\lambda_{bi}>0$. Omitting the indices with $\lambda_{bi}=0$, Condition~\ref{ass:M2}(ii) and the nonnegativity of $\overline M_{\oplus,\bm\lambda_b}$ imply
\begin{align*}
    \kappa_{\mathcal Y}d_{\mathcal Y}^2\left(\overline m_{\oplus,\bm\lambda_b}\left(\bm X_b\right),m_\oplus\left(x_0\right)\right)
    &\le \overline M_{\oplus,\bm\lambda_b}\left(\bm X_b,m_\oplus\left(x_0\right)\right)
    -\overline M_{\oplus,\bm\lambda_b}\left(\bm X_b,\overline m_{\oplus,\bm\lambda_b}\left(\bm X_b\right)\right)
    \le\sigma^2.
\end{align*}
This proves \eqref{eq:block-second-moment-part2}.

For every $i\in I_b$ with $\lambda_{bi}>0$, conditionally on $X_1,\ldots,X_n$, $Y_i$ has conditional distribution $P_{Y\mid X=X_i}$. Thus,
\begin{align*}
    \E\left[d_{\mathcal Y}^2\left(Y_i,\overline m_{\oplus,\bm\lambda_b}\left(\bm X_b\right)\right)\,\middle|\,X_1,\ldots,X_n\right]
    =M_\oplus\left(X_i,\overline m_{\oplus,\bm\lambda_b}\left(\bm X_b\right)\right).
\end{align*}
By \eqref{eq:block-second-moment-part2}, the triangle inequality, and the Cauchy--Schwarz inequality,
\begin{align*}
    M_\oplus\left(X_i,\overline m_{\oplus,\bm\lambda_b}\left(\bm X_b\right)\right)
    &\le2M_\oplus\left(X_i,m_\oplus\left(x_0\right)\right)
    +2d_{\mathcal Y}^2\left(\overline m_{\oplus,\bm\lambda_b}\left(\bm X_b\right),m_\oplus\left(x_0\right)\right)\\
    &\le2\sigma^2+\frac{2\sigma^2}{\kappa_{\mathcal Y}}
    =2\left(1+\frac{1}{\kappa_{\mathcal Y}}\right)\sigma^2.
\end{align*}
This proves \eqref{eq:block-second-moment-part3}.
\end{proof}

We now give a short remark on Condition~\ref{ass:M3}.

\begin{remark}\label{rem:moment-center}\leavevmode
Under Conditions~\ref{ass:M1}--\ref{ass:M2}, Condition~\ref{ass:M3} is equivalent, up to a change in the constant, to a local second-moment bound about a fixed reference point $y_0\in\mathcal Y$. Indeed, suppose that, for some $y_0\in\mathcal Y$ and $s\in(0,\infty)$,
\begin{align*}
    \sup_{x\in\overline B(x_0,h_0)}M_\oplus(x,y_0)\le s^2.
\end{align*}
Then, by Condition~\ref{ass:M2}(ii) applied with $N=1$, $\bm\lambda=(1)$, and $\bm x=(x_0)$, we have
\begin{align*}
    \kappa_{\mathcal Y}d_{\mathcal Y}^2(y_0,m_\oplus(x_0))
    \le M_\oplus(x_0,y_0)-M_\oplus(x_0,m_\oplus(x_0))
    \le s^2.
\end{align*}
Therefore, by the triangle inequality and the Cauchy--Schwarz inequality,
\begin{align*}
    \sup_{x\in\overline B(x_0,h_0)}M_\oplus(x,m_\oplus(x_0))
    &\le2\sup_{x\in\overline B(x_0,h_0)}M_\oplus(x,y_0)
    +2d_{\mathcal Y}^2(y_0,m_\oplus(x_0))\\
    &\le2\left(1+\frac{1}{\kappa_{\mathcal Y}}\right)s^2.
\end{align*}

Conversely, suppose Conditions~\ref{ass:M1}--\ref{ass:M3} hold and let $y_0\in\mathcal Y$ be fixed. Then, by the triangle inequality and the Cauchy--Schwarz inequality,
\begin{align*}
    \sup_{x\in\overline B(x_0,h_0)}M_\oplus(x,y_0)
    &\le2\sup_{x\in\overline B(x_0,h_0)}M_\oplus(x,m_\oplus(x_0))
    +2d_{\mathcal Y}^2(y_0,m_\oplus(x_0))\\
    &\le2\sigma^2+2d_{\mathcal Y}^2(y_0,m_\oplus(x_0)).
\end{align*}
Thus the choice of a fixed reference point changes only the constant in the local second-moment bound.
\end{remark}

\subsection{Bounds for the distance-based aggregation}\label{subsec:supp-aggregation}

In this subsection, we provide deterministic and probabilistic bounds for the distance-based aggregation. For $B\in\mathbb N$ and $p\in(1/2,1)$, define
\begin{align*}
    Q_B(p):=\Pbb\left(\operatorname{Bin}(B,p)\le\lfloor B/2\rfloor\right).
\end{align*}

\begin{lemma}\label{lem:mr-deterministic}
Let $y\in\mathcal Y$ and $r\in[0,\infty)$. Suppose that
\begin{align*}
    \sum_{b=1}^B \mathds{1}_{\left\{d_{\mathcal Y}\left(\widehat m_{h,b}(x_0),y\right)\le r\right\}}>\frac B2
\end{align*}
holds, where $\widehat m_{h,b}(x_0)$ is defined in \eqref{eq:means-selector} for $b=1, \ldots, B$. Then
\begin{align*}
    d_{\mathcal Y}\left(\widehat m^{\mathrm{RLFR}}_{h,B}(x_0),y\right)\le3r.
\end{align*}
\end{lemma}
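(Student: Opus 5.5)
The argument is the standard majority/pigeonhole argument behind the Hsu--Sabato aggregation rule, carried out in two steps. To lighten notation, write $\widehat m_b:=\widehat m_{h,b}(x_0)$, $r_b:=r_{h,b}(x_0)$ and $\widehat b:=\widehat b_h(x_0)$, so that $\widehat m^{\mathrm{RLFR}}_{h,B}(x_0)=\widehat m_{\widehat b}$. Let
\begin{align*}
    G:=\left\{b\in\{1,\ldots,B\}:d_{\mathcal Y}(\widehat m_b,y)\le r\right\}.
\end{align*}
By hypothesis $|G|>B/2$; in particular $G$ is nonempty.

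The first step bounds the selected radius. Fix any $b\in G$. For every $l\in G$, the triangle inequality gives
\begin{align*}
    d_{\mathcal Y}(\widehat m_b,\widehat m_l)\le d_{\mathcal Y}(\widehat m_b,y)+d_{\mathcal Y}(y,\widehat m_l)\le 2r .
\end{align*}
Hence the closed ball of radius $2r$ around $\widehat m_b$ contains all $|G|>B/2$ block estimates indexed by $G$. The radius $2r$ therefore belongs to the set in the definition \eqref{eq:median-of-means-selector}, so $r_b\le 2r$. Since $\widehat b$ minimizes $r_l$ over $l$, we obtain $r_{\widehat b}\le r_b\le 2r$.

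The second step uses the majority at the selected index. The counting function $\rho\mapsto\sum_{l}\mathds 1\{d_{\mathcal Y}(\widehat m_{\widehat b},\widehat m_l)\le\rho\}$ is nondecreasing, right-continuous and integer-valued, and it takes only finitely many values. The infimum in \eqref{eq:median-of-means-selector} is therefore attained. It follows that the set
\begin{align*}
    G':=\left\{l:d_{\mathcal Y}(\widehat m_{\widehat b},\widehat m_l)\le r_{\widehat b}\right\}
\end{align*}
has $|G'|>B/2$. Since $|G|+|G'|>B$, the sets $G$ and $G'$ must intersect; pick $l\in G\cap G'$. Then
\begin{align*}
    d_{\mathcal Y}(\widehat m_{\widehat b},y)\le d_{\mathcal Y}(\widehat m_{\widehat b},\widehat m_l)+d_{\mathcal Y}(\widehat m_l,y)\le r_{\widehat b}+r\le 3r,
\end{align*}
which is the claimed bound $d_{\mathcal Y}(\widehat m^{\mathrm{RLFR}}_{h,B}(x_0),y)\le 3r$.

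The only point requiring care is that the infimum defining $r_{\widehat b}$ is attained, so that the majority holds at radius exactly $r_{\widehat b}$ rather than only at radii slightly larger than it. This attainment is justified by the finiteness and right-continuity of the counting function, as noted in the second step.
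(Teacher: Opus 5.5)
Your proof is correct. It is the same argument the paper uses, written out in full: the paper only cites Proposition~8 of \citet{HsuSabato2016}, whose proof is exactly your two steps (a good block has radius at most $2r$, so the selected radius is at most $2r$, and the selected majority ball must meet the good set), and your check that the infimum defining $r_{h,b}(x_0)$ is attained is a detail the paper leaves implicit in that citation.
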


\begin{proof}[Proof of \Cref{lem:mr-deterministic}]
By assumption, more than half of $\widehat m_{h,1}(x_0),\ldots,\widehat m_{h,B}(x_0)$ lie in $\overline B_{\mathcal Y}(y,r)$. Therefore, applying Proposition~8 of \cite{HsuSabato2016} to $(\mathcal Y,d_{\mathcal Y})$ proves the lemma.
\end{proof}

\begin{lemma}\label{lem:mr-probability}
Let $y\in\mathcal Y$, $r\in[0,\infty)$, and $p\in(1/2, 1)$. Suppose the block estimates are independent and
\begin{align*}
    \Pbb\left(d_{\mathcal Y}\left(\widehat m_{h,b}(x_0),y\right)\le r\right)\ge p, \quad b=1,\ldots,B.
\end{align*}
Then
\begin{align} \label{eq:3r-inequality}
    \Pbb\left(d_{\mathcal Y}\left(\widehat m^{\mathrm{RLFR}}_{h,B}(x_0),y\right)>3r\right)\le Q_B(p).
\end{align}
Moreover,
\begin{align}\label{eq:mr-KL-envelope}
    Q_B(p)\le\exp\left(-\frac{B}{2}\log\left(\frac{1}{4p(1-p)}\right)\right).
\end{align}
\end{lemma}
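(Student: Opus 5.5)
The plan is to reduce \eqref{eq:3r-inequality} to a binomial tail via Lemma~\ref{lem:mr-deterministic}, and then to bound that binomial tail by a Chernoff--KL argument.

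\textbf{Step 1: reduction to counting good blocks.} For $b=1,\ldots,B$, let $Z_b:=\mathds{1}_{\{d_{\mathcal Y}(\widehat m_{h,b}(x_0),y)\le r\}}$ and $Z:=\sum_{b=1}^B Z_b$. By Lemma~\ref{lem:mr-deterministic}, on the event $\{Z>B/2\}$ we have $d_{\mathcal Y}(\widehat m^{\mathrm{RLFR}}_{h,B}(x_0),y)\le 3r$. Since $Z$ is an integer, $Z>B/2$ is equivalent to $Z\ge\lfloor B/2\rfloor+1$. Hence
\begin{align*}
\Pbb\left(d_{\mathcal Y}\left(\widehat m^{\mathrm{RLFR}}_{h,B}(x_0),y\right)>3r\right)\le\Pbb\left(Z\le\lfloor B/2\rfloor\right).
\end{align*}

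\textbf{Step 2: stochastic domination.} The $Z_b$ are independent Bernoulli variables with success probabilities $p_b\ge p$. A coupling argument shows that $Z$ stochastically dominates $\operatorname{Bin}(B,p)$. Concretely, take independent uniforms $U_b$ and realize $Z_b=\mathds{1}_{\{U_b\le p_b\}}\ge\mathds{1}_{\{U_b\le p\}}$. The event $\{Z\le\lfloor B/2\rfloor\}$ is a lower set, so its probability is at most $Q_B(p)$, which proves \eqref{eq:3r-inequality}. The one point needing care here is measurability, together with the fact that the $Z_b$ are independent; both follow from the independence of the blocks.

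\textbf{Step 3: the envelope \eqref{eq:mr-KL-envelope}.} Apply the Chernoff bound with $\lambda\ge0$ and $W\sim\operatorname{Bin}(B,p)$:
\begin{align*}
\Pbb(W\le B/2)\le e^{\lambda B/2}\,\E\left[e^{-\lambda W}\right]=\left(e^{\lambda/2}\left(1-p+pe^{-\lambda}\right)\right)^B.
\end{align*}
Optimizing in $\lambda$ gives $e^{\lambda}=p/(1-p)$, which is admissible because $p>1/2$ makes $\lambda\ge0$. At this choice the base equals $\sqrt{p/(1-p)}\cdot 2(1-p)=2\sqrt{p(1-p)}$. The bound is therefore $(4p(1-p))^{B/2}=\exp\bigl(-\tfrac B2\log\tfrac{1}{4p(1-p)}\bigr)$, as claimed. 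Equivalently, this is $\exp(-B\,\mathrm{KL}(\tfrac12\|p))$.

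None of the three steps is a real obstacle. The only subtle point is the integer-rounding link between the strict-majority condition in Lemma~\ref{lem:mr-deterministic} and the definition of $Q_B$, and Step~1 already handles it.
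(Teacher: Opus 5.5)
Your proposal is correct and follows the paper's proof essentially step for step. Both arguments reduce the event to $\Pbb\bigl(\sum_{b=1}^B\mathds{1}_{\{d_{\mathcal Y}(\widehat m_{h,b}(x_0),y)\le r\}}\le\lfloor B/2\rfloor\bigr)$ via Lemma~\ref{lem:mr-deterministic}, compare this with $\operatorname{Bin}(B,p)$, and then apply the Chernoff bound at $e^{t}=(1-p)/p$ (your $\lambda=-t$); the only difference is that you justify the stochastic-domination step with an explicit uniform coupling, which the paper simply asserts.
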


\begin{proof}[Proof of \Cref{lem:mr-probability}]
By \Cref{lem:mr-deterministic}, the event
\begin{align*}
    d_{\mathcal Y}\left(\widehat m^{\mathrm{RLFR}}_{h,B}(x_0),y\right)>3r
\end{align*}
can occur only if at most $\lfloor B/2\rfloor$ of $\widehat m_{h,1}(x_0),\ldots,\widehat m_{h,B}(x_0)$ lie in $\overline B_{\mathcal Y}(y,r)$. Since these estimates are independent and
\begin{align*}
    \Pbb\left(\widehat m_{h,b}(x_0)\in\overline B_{\mathcal Y}(y,r)\right)\ge p,
    \quad b=1,\ldots,B,
\end{align*}
we have
\begin{align*}
    &\Pbb\left(\sum_{b=1}^B\mathds{1}_{\left\{\widehat m_{h,b}(x_0)\in\overline B_{\mathcal Y}(y,r)\right\}}\le\lfloor B/2\rfloor\right)\le
    \Pbb\left(\operatorname{Bin}(B,p)\le\lfloor B/2\rfloor\right)
    =Q_B(p).
\end{align*}
Therefore, \eqref{eq:3r-inequality} holds.

To prove \eqref{eq:mr-KL-envelope}, let $Z\sim\operatorname{Bin}(B,p)$. For every $t<0$, Markov's inequality gives
\begin{align*}
    Q_B(p)
    &=\Pbb\left(Z\le\frac{B}{2}\right)=\Pbb\left(e^{tZ}\ge e^{tB/2}\right)\le e^{-tB/2}\E\left[e^{tZ}\right]
    =e^{-tB/2}\left(1-p+pe^t\right)^B.
\end{align*}
Since $p>1/2$, we may take $e^t=(1-p)/p$. Then
\begin{align*}
    Q_B(p)
    &\le\left(2\sqrt{p(1-p)}\right)^B
    =\exp\left(-\frac{B}{2}\log\left(\frac{1}{4p(1-p)}\right)\right).
\end{align*}
This proves \eqref{eq:mr-KL-envelope}.
\end{proof}

\begin{lemma}\label{lem:binomial-calibration}
The block count $B_\delta$ in \eqref{eq:Bdelta-binomial} is odd. It satisfies $B_{0.10}=11$, $B_{0.05}=19$, and $B_{0.01}=37$, and
\begin{align}\label{eq:Bdelta-log-envelope}
    \frac{\log(1/\delta)}{\log(16/5)}
    \le B_\delta
    \le\left\lceil\frac{\log(1/\delta)}{\frac12\log(64/55)}\right\rceil,
    \quad \delta\in(0,1/2].
\end{align}
In particular, $B_\delta\asymp\log(1/\delta)$ uniformly over $\delta\in(0,1/2]$.
\end{lemma}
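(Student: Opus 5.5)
The plan is to study $Q_B(p)=\Pbb(\operatorname{Bin}(B,p)\le\lfloor B/2\rfloor)$ at $p=11/16$ directly. By definition \eqref{eq:Bdelta-binomial}, $B_\delta=\min\{B\in\mathbb N:Q_B(11/16)\le\delta\}$. There are three parts: oddness and monotonicity along odd $B$, the two-sided envelope \eqref{eq:Bdelta-log-envelope}, and the numerical values.

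\emph{Oddness and monotonicity.} Let $S_m\sim\operatorname{Bin}(m,p)$ and write $S_{2k}=S_{2k-1}+\xi$ with $\xi\sim\mathrm{Bernoulli}(p)$ independent of $S_{2k-1}$. Then
\begin{align*}
Q_{2k}(p)=\Pbb(S_{2k-1}\le k-1)+(1-p)\Pbb(S_{2k-1}=k)\ge Q_{2k-1}(p).
\end{align*}
So if an even $B=2k$ satisfies $Q_B\le\delta$, then $2k-1\ge1$ does too, and the minimizer cannot be even. Next, add two independent trials to $S_{2k-1}$ and condition on its value. This gives
\begin{align*}
Q_{2k+1}(p)-Q_{2k-1}(p)=-p^2\Pbb(S_{2k-1}=k-1)+(1-p)^2\Pbb(S_{2k-1}=k).
\end{align*}
Since $\Pbb(S_{2k-1}=k)/\Pbb(S_{2k-1}=k-1)=p/(1-p)$, the difference equals $p(1-2p)\Pbb(S_{2k-1}=k-1)<0$ for $p>1/2$. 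Hence $Q_B(11/16)$ is strictly decreasing along odd $B$, and by the oddness step $B_\delta$ is the first odd $B$ with $Q_B\le\delta$.

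\emph{Envelope.} For the upper bound, apply \eqref{eq:mr-KL-envelope} of Lemma~\ref{lem:mr-probability} with $4p(1-p)=55/64$. This gives $Q_B\le\exp(-\tfrac B2\log(64/55))\le\delta$ as soon as $B\ge\log(1/\delta)/(\tfrac12\log(64/55))$. Hence $B_\delta$ is at most the ceiling of that quantity. For the lower bound, use $Q_B(p)\ge\Pbb(S_B=0)=(5/16)^B$. Then $Q_{B_\delta}\le\delta$ forces $B_\delta\log(16/5)\ge\log(1/\delta)$. Since $\delta\le1/2$ gives $\log(1/\delta)\ge\log2$, the ceiling is at most a universal multiple of $\log(1/\delta)$. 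This yields $B_\delta\asymp\log(1/\delta)$ uniformly in $\delta\in(0,1/2]$.

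\emph{Numerical values.} By monotonicity along odd $B$, it suffices to evaluate finitely many binomial tails exactly. For $B_{0.10}=11$, verify $Q_9(11/16)>0.10\ge Q_{11}(11/16)$. For $B_{0.05}=19$, verify $Q_{17}>0.05\ge Q_{19}$. For $B_{0.01}=37$, verify $Q_{35}>0.01\ge Q_{37}$. These are routine finite sums, and I expect the only real labor to be carrying out these exact evaluations, especially near $B=35,37$ where the margins are thin. The structural steps, namely the coupling identities, are short.
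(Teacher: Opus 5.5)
Your proposal is correct and takes essentially the same route as the paper. Your coupling reproduces the paper's two binomial recursion identities, $Q_{2k}-Q_{2k-1}=(1-p)\Pbb(\operatorname{Bin}(2k-1,p)=k)$ and $Q_{2k-1}-Q_{2k+1}=p(2p-1)\Pbb(\operatorname{Bin}(2k-1,p)=k-1)$, which give oddness and monotonicity along odd $B$; from there you use, as the paper does, the Chernoff envelope \eqref{eq:mr-KL-envelope} for the upper bound, $Q_B\ge(5/16)^B$ for the lower bound, and direct evaluation for the three values. Your explicit use of $\log(1/\delta)\ge\log 2$ to absorb the ceiling into $B_\delta\asymp\log(1/\delta)$ is a small addition the paper leaves implicit, and the thin-margin evaluations do check out (for example, $Q_{35}(11/16)\approx 0.0102$ and $Q_{37}(11/16)\approx 0.0086$).
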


\begin{proof}[Proof of \Cref{lem:binomial-calibration}]
Fix $\delta\in(0,1/2]$ and $p\in(1/2,1)$. For $m\in\mathbb N$, binomial recursion gives
\begin{align*}
    Q_{2m}(p)-Q_{2m-1}(p)
    &=(1-p)\Pbb\left(\operatorname{Bin}(2m-1,p)=m\right)>0,\\
    Q_{2m-1}(p)-Q_{2m+1}(p)
    &=p(2p-1)\Pbb\left(\operatorname{Bin}(2m-1,p)=m-1\right)>0.
\end{align*}
Thus $Q_{2m}(p)>Q_{2m-1}(p)$ and the sequence $Q_{2m-1}(p)$ is strictly decreasing in $m\in\mathbb{N}$. Therefore, the smallest block count satisfying $Q_B(p)\le\delta$ is odd.

For $p=11/16$, direct evaluation gives
\begin{align*}
    Q_{11}\left(\frac{11}{16}\right)<&0.10<Q_9\left(\frac{11}{16}\right), \\
    Q_{19}\left(\frac{11}{16}\right)<&0.05<Q_{17}\left(\frac{11}{16}\right),\\
    Q_{37}\left(\frac{11}{16}\right)<&0.01<Q_{35}\left(\frac{11}{16}\right).
\end{align*}
Therefore, $B_{0.10}=11$, $B_{0.05}=19$, and $B_{0.01}=37$. By \eqref{eq:mr-KL-envelope} with $p=11/16$,
\begin{align*}
    Q_B\left(\frac{11}{16}\right)
    \le\exp\left(-\frac{B}{2}\log\left(\frac{64}{55}\right)\right).
\end{align*}
 On the other hand,
\begin{align*}
    Q_B\left(\frac{11}{16}\right)
    \ge\Pbb\left(\operatorname{Bin}\left(B,\frac{11}{16}\right)=0\right)
    =\left(\frac{5}{16}\right)^B.
\end{align*}
Since $Q_{B_\delta}(11/16)\le\delta$, it follows that
\begin{align*}
    B_\delta\ge\frac{\log(1/\delta)}{\log(16/5)}.
\end{align*}
This proves \eqref{eq:Bdelta-log-envelope}.
\end{proof}

\subsection{Properties for pointwise lower bounds}
\label{subsec:lower-bound-auxiliary}

We provide three properties for the isometric embedding $\iota:\R\to\mathcal Y$ and the Gaussian model in \eqref{eq:embeddedgaussian} that are used in both pointwise minimax lower bounds.

\begin{lemma}\label{lem:embedded-gaussian-mean}
Suppose Condition~\ref{ass:M1}(iv) holds. Under \eqref{eq:embeddedgaussian},
\begin{align*}
    m_\oplus(x)=\iota(g(x)), \quad x\in\mathcal X.
\end{align*}
\end{lemma}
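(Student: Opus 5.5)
The idea is to reduce the claim to the one-dimensional variance decomposition. Under model \eqref{eq:embeddedgaussian} every response lies on the geodesic line $\iota(\R)$. We compare the conditional Fr\'echet functional at an arbitrary $\omega\in\mathcal Y$ with its value at $\iota(g(x))$, using Condition~\ref{ass:M1}(iv) only to guarantee that the minimizer exists and is unique. Fix $x\in\mathcal X$. Conditionally on $X=x$ we have $Y=\iota(g(x)+\varepsilon)$ with $\varepsilon\sim N(0,\tau^2)$. This lies in $\mathcal L_2(\mathcal Y)$, since
\begin{align*}
    \E\bigl[d_{\mathcal Y}^2(Y,\iota(g(x)))\bigr]=\E[\varepsilon^2]=\tau^2
\end{align*}
by the isometry property. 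Let $\widetilde Y\sim P_{Y\mid X=x}$, and apply Condition~\ref{ass:M1}(iv) with $N=1$, $\bm\lambda=(1)$ and $\widetilde{\bm Y}=(\widetilde Y)$. This shows that $\omega\mapsto M_\oplus(x,\omega)=\E[d_{\mathcal Y}^2(\widetilde Y,\omega)]$ has a unique minimizer $m_\oplus(x)$. It remains to show that this minimizer is $\iota(g(x))$.

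For the identification, write $u=g(x)$. First I restrict to the embedded line. For $t\in\R$ the isometry gives
\begin{align*}
    M_\oplus(x,\iota(t))=\E\bigl[(u+\varepsilon-t)^2\bigr]=\tau^2+(u-t)^2,
\end{align*}
which is minimized over the line exactly at $t=u$. The harder part is to rule out minimizers off the line, that is, to show $M_\oplus(x,\omega)\ge\tau^2$ for every $\omega\in\mathcal Y$. A triangle-inequality argument alone is not enough here. Instead I would use the symmetry of $\varepsilon$: the pair $(\iota(u+\varepsilon),\iota(u-\varepsilon))$ has identical marginals, so
\begin{align*}
    M_\oplus(x,\omega)=\tfrac12\E\bigl[d_{\mathcal Y}^2(\iota(u+\varepsilon),\omega)+d_{\mathcal Y}^2(\iota(u-\varepsilon),\omega)\bigr].
\end{align*}
For any two points $a,b$ at distance $2|\varepsilon|$, the elementary inequality $d^2(a,\omega)+d^2(b,\omega)\ge\frac12\bigl(d(a,\omega)+d(b,\omega)\bigr)^2\ge\frac12 d^2(a,b)$ gives the pointwise bound $\tfrac12\cdot\tfrac12(2|\varepsilon|)^2=\varepsilon^2$. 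Taking expectations yields $M_\oplus(x,\omega)\ge\E[\varepsilon^2]=\tau^2=M_\oplus(x,\iota(u))$. Hence $\iota(u)$ is a global minimizer, and by the uniqueness from Condition~\ref{ass:M1}(iv) it equals $m_\oplus(x)$.

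The main obstacle I anticipate is exactly this off-line comparison. Condition~\ref{ass:M2} is not assumed in the lemma, so one cannot use a curvature-based projection onto $\iota(\R)$. The symmetrization trick above avoids any geometric assumption beyond the triangle inequality and the symmetry of the Gaussian law, and I would verify carefully that the constant works out to exactly $\tau^2$. Measurability is routine: $x\mapsto M_\oplus(x,\omega)$ is defined through the regular conditional distribution. Since $x$ was arbitrary, the identity holds for all $x\in\mathcal X$.
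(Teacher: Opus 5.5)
Your proposal is correct and follows essentially the same route as the paper's proof: symmetrize $\varepsilon$, use $d_{\mathcal Y}^2(a,\omega)+d_{\mathcal Y}^2(b,\omega)\ge\frac12 d_{\mathcal Y}^2(a,b)$ to get $M_\oplus(x,\omega)\ge\tau^2$ with equality at $\iota(g(x))$, and then invoke Condition~\ref{ass:M1}(iv) with $N=1$ for uniqueness. Your separate restriction to the embedded line $\iota(\R)$ is redundant once you have the global lower bound, but it does no harm.
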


\begin{proof}[Proof of \Cref{lem:embedded-gaussian-mean}]
Fix $x\in\mathcal X$. For $\omega\in\mathcal Y$, the Cauchy--Schwarz inequality and the triangle inequality give
\begin{align*}
    d_{\mathcal Y}^2(\iota(g(x)+t),\omega)+d_{\mathcal Y}^2(\iota(g(x)-t),\omega)
    &\ge\frac12\left(d_{\mathcal Y}(\iota(g(x)+t),\omega)+d_{\mathcal Y}(\iota(g(x)-t),\omega)\right)^2\\
    &\ge\frac12d_{\mathcal Y}^2(\iota(g(x)+t),\iota(g(x)-t))\\
    &=2t^2, \quad t\in\R.
\end{align*}
Therefore, for $\varepsilon\sim N(0,\tau^2)$ and $\omega\in\mathcal Y$, the above inequality and symmetry give
\begin{align}\label{eq:pointwise-lower-bounds-part1}
    \E\left[d_{\mathcal Y}^2(\iota(g(x)+\varepsilon),\omega)\right]
    &=\frac12\E\left[d_{\mathcal Y}^2(\iota(g(x)+\varepsilon),\omega)+d_{\mathcal Y}^2(\iota(g(x)-\varepsilon),\omega)\right]
    \ge\tau^2,
\end{align}
where the inequality becomes an equality at $\omega=\iota(g(x))$ by the isometry of $\iota$. Moreover,
\begin{align*}
    d_{\mathcal Y}\left(\iota(g(x)+\varepsilon),\omega\right)
    \le|\varepsilon|+d_{\mathcal Y}\left(\iota(g(x)),\omega\right),
\end{align*}
so $M_\oplus(x,\omega)<\infty$. Thus, \eqref{eq:pointwise-lower-bounds-part1} and equality at $\omega=\iota(g(x))$ show that $\iota(g(x))$ is a minimizer of $M_\oplus(x,\cdot)$, and Condition~\ref{ass:M1}(iv) with $N=1$ gives uniqueness. Since $x\in\mathcal X$ was arbitrary, the result follows.
\end{proof}

For probability measures $P$ and $Q$ on the same measurable space, define the Kullback--Leibler divergence by
\begin{align*}
    \operatorname{KL}(P,Q):=
    \begin{cases}
        \displaystyle\int\log\left(\frac{dP}{dQ}\right)dP, & \text{if $P$ is absolutely continuous with respect to $Q$},\\
        +\infty, & \text{otherwise},
    \end{cases}
\end{align*}
where $dP/dQ$ denotes the Radon--Nikodym derivative.

\begin{lemma}\label{lem:gaussian-product-kl}
Let $g_1$ and $g_2$ be two functions in the Gaussian model \eqref{eq:embeddedgaussian}, and let $P_{g_1,\tau}$ and $P_{g_2,\tau}$ denote the corresponding joint distributions, so that $P_{g_1,\tau}^{\otimes n}$ and $P_{g_2,\tau}^{\otimes n}$ are the corresponding distributions of $\mathcal D_n$. Then
\begin{align*}
    \operatorname{KL}\left(P_{g_1,\tau}^{\otimes n},P_{g_2,\tau}^{\otimes n}\right)
    =\frac{n}{2\tau^2}\int_{\mathcal X}(g_1(x)-g_2(x))^2f_X(x)\,dx.
\end{align*}
\end{lemma}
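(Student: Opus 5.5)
The statement to prove is \Cref{lem:gaussian-product-kl}, the KL identity for the embedded Gaussian model. The plan is to reduce to a single observation, using tensorization of the Kullback--Leibler divergence over independent products: $\operatorname{KL}(P^{\otimes n},Q^{\otimes n})=n\operatorname{KL}(P,Q)$. This holds because the Radon--Nikodym derivative of the product is the product of the marginal derivatives, so its logarithm is a sum of $n$ i.i.d.\ terms whose expectations under $P^{\otimes n}$ add up. If $P\ll Q$ fails, both sides are $+\infty$. It therefore suffices to show that $\operatorname{KL}(P_{g_1,\tau},P_{g_2,\tau})=\frac{1}{2\tau^2}\int_{\mathcal X}(g_1-g_2)^2f_X\,dx$.

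For one observation, I will use the chain rule (disintegration) for KL. Both joint laws share the marginal $P_X$. The conditional law of $Y$ given $X=x$ under $P_{g_j,\tau}$ is the pushforward $\iota_\#N(g_j(x),\tau^2)$. Since $\iota$ is an isometry, it is injective and continuous, and its image $\iota(\R)$ is a closed subset of $\mathcal Y$, because a complete metric space isometrically embedded is closed. By Lusin--Souslin, the inverse map $\iota^{-1}:\iota(\R)\to\R$ is Borel measurable. Hence the density of $\iota_\#N(g_1(x),\tau^2)$ with respect to $\iota_\#N(g_2(x),\tau^2)$ is the Gaussian likelihood ratio composed with $\iota^{-1}$. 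Moreover, KL is invariant under this bijective measurable pushforward, so
\begin{align*}
\operatorname{KL}\left(\iota_\#N(g_1(x),\tau^2),\iota_\#N(g_2(x),\tau^2)\right)=\operatorname{KL}\left(N(g_1(x),\tau^2),N(g_2(x),\tau^2)\right)=\frac{(g_1(x)-g_2(x))^2}{2\tau^2}.
\end{align*}
The final equality is the standard Gaussian computation: the expectation of $\log$ of the density ratio is $\E[(2\varepsilon(g_1-g_2)+(g_1-g_2)^2)/(2\tau^2)]$.

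Next, I will write the joint density explicitly. The ratio $dP_{g_1,\tau}/dP_{g_2,\tau}(x,y)=\exp\{((y'-g_2(x))^2-(y'-g_1(x))^2)/(2\tau^2)\}$, with $y'=\iota^{-1}(y)$, is jointly measurable because $g_1,g_2$ are measurable. Integrating it against $P_{g_1,\tau}$ via Fubini (iterated over $x$ then $\varepsilon$) gives the claimed integral against $P_X$, which equals $\int(g_1-g_2)^2f_X\,dx/(2\tau^2)$. Fubini/Tonelli is legitimate here because the integrand, after taking the expectation over $\varepsilon$, is nonnegative. The main technical point is this measurability and absolute-continuity bookkeeping for the pushforward through $\iota$; the rest is routine. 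Multiplying the single-observation identity by $n$ gives the lemma.
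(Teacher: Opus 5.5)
Your argument follows the paper's proof: tensorization over the $n$ i.i.d.\ pairs, the chain rule with the common marginal $P_X$, invariance of KL under the bimeasurable bijection $\iota:\R\to\iota(\R)$, and the Gaussian formula. Your measurability details (closed image, measurable inverse, explicit likelihood ratio) fill in what the paper only asserts. Lusin--Souslin is unnecessary there, since $\iota^{-1}$ is itself an isometry on $\iota(\R)$ and hence continuous.

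One justification should be repaired. At $Y=\iota(g_1(X)+\varepsilon)$ the log-likelihood ratio equals $\{2\varepsilon(g_1(X)-g_2(X))+(g_1(X)-g_2(X))^2\}/(2\tau^2)$, which takes negative values. So Tonelli does not apply, and nonnegativity of the inner $\varepsilon$-integral does not license the iterated integration.

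Instead, bound its absolute value by $|\varepsilon||g_1-g_2|/\tau^2+(g_1-g_2)^2/(2\tau^2)$. This bound is $P_X\otimes N(0,\tau^2)$-integrable whenever $\int(g_1-g_2)^2f_X\,dx<\infty$, as holds for the bounded $g$ in $\mathcal G_L$ and $\mathcal G_{\beta,L}$, so Fubini applies. If that integral is infinite, split into positive and negative parts, using that the negative part of a log-likelihood ratio is always integrable. Alternatively, simply rely on the KL chain rule you already invoked, as the paper does.
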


\begin{proof}[Proof of \Cref{lem:gaussian-product-kl}]
It is well-known that
\begin{align*}
    \operatorname{KL}\left(N(g_1(x),\tau^2),N(g_2(x),\tau^2)\right)
    =\frac{(g_1(x)-g_2(x))^2}{2\tau^2}, \quad x\in\mathcal X.
\end{align*}
Since $\iota:\R\to\iota(\R)$ is a bimeasurable bijection, the Kullback--Leibler divergence is invariant under the transformation $\iota$. Since the marginal distribution of $X$ is the same under both models,
\begin{align*}
    \operatorname{KL}\left(P_{g_1,\tau},P_{g_2,\tau}\right)
    &=\int_{\mathcal X}\operatorname{KL}\left(N(g_1(x),\tau^2),N(g_2(x),\tau^2)\right)f_X(x)\,dx\\
    &=\frac{1}{2\tau^2}\int_{\mathcal X}(g_1(x)-g_2(x))^2f_X(x)\,dx.
\end{align*}
By independence of the observations,
\begin{align*}
    \operatorname{KL}\left(P_{g_1,\tau}^{\otimes n},P_{g_2,\tau}^{\otimes n}\right)
    =n\operatorname{KL}\left(P_{g_1,\tau},P_{g_2,\tau}\right).
\end{align*}
Combining the above proves the lemma.
\end{proof}

\begin{lemma}\label{lem:two-point-estimation}
Let $P$ and $Q$ be two distributions of $\mathcal D_n$ with respective targets $m_P,m_Q\in\mathcal Y$ satisfying $d_{\mathcal Y}(m_P,m_Q)=a>0$. Then, for every measurable estimator $\mathcal T_n:(\mathcal X\times\mathcal Y)^n\to\mathcal Y$,
\begin{align*}
    \max\left\{
    P\left(d_{\mathcal Y}\left(\mathcal T_n(\mathcal D_n),m_P\right)\ge\frac a2\right),
    Q\left(d_{\mathcal Y}\left(\mathcal T_n(\mathcal D_n),m_Q\right)\ge\frac a2\right)
    \right\}
    \ge\frac14\exp\left(-\operatorname{KL}(P,Q)\right).
\end{align*}
\end{lemma}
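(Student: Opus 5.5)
The plan is to reduce the claim to a lower bound on testing error between $P$ and $Q$, and then invoke the Bretagnolle--Huber inequality. First, define the two events
\begin{align*}
    A:=\left\{d_{\mathcal Y}\left(\mathcal T_n(\mathcal D_n),m_P\right)\ge\frac a2\right\},\quad
    A':=\left\{d_{\mathcal Y}\left(\mathcal T_n(\mathcal D_n),m_Q\right)\ge\frac a2\right\}.
\end{align*}
Both are measurable because $\mathcal T_n$ is measurable and $\omega\mapsto d_{\mathcal Y}(\omega,m)$ is continuous. The key deterministic observation is that $A^c\subset A'$: on $A^c$ we have $d_{\mathcal Y}(\mathcal T_n(\mathcal D_n),m_P)<a/2$, and the triangle inequality then gives
\begin{align*}
    d_{\mathcal Y}(\mathcal T_n(\mathcal D_n),m_Q)\ge a-d_{\mathcal Y}(\mathcal T_n(\mathcal D_n),m_P)>a/2.
\end{align*}
Hence $Q(A')\ge Q(A^c)$.

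Next, bound the sum $P(A)+Q(A^c)$ from below using the Bretagnolle--Huber inequality:
\begin{align*}
    P(A)+Q(A^c)\ge\frac12\exp\left(-\operatorname{KL}(P,Q)\right)
\end{align*}
for every measurable event $A$. If $\operatorname{KL}(P,Q)=\infty$ there is nothing to prove, so assume $P\ll Q$. I would cite this inequality, for instance Lemma 2.6 of Tsybakov (2009), or sketch its proof. Writing $p=dP/d\mu$ and $q=dQ/d\mu$ for a dominating measure $\mu$, we have $P(A)+Q(A^c)\ge\int\min(p,q)\,d\mu$. The elementary identity $\min(p,q)\max(p,q)=pq$ together with Cauchy--Schwarz gives
\begin{align*}
    \left(\int\sqrt{pq}\,d\mu\right)^2\le\int\min(p,q)\,d\mu\int\max(p,q)\,d\mu\le 2\int\min(p,q)\,d\mu.
\end{align*}
Finally, Jensen's inequality yields
\begin{align*}
    \left(\int\sqrt{pq}\,d\mu\right)^2=\exp\left(2\log\int_{\{p>0\}} p\sqrt{q/p}\,d\mu\right)\ge\exp\left(-\operatorname{KL}(P,Q)\right).
\end{align*}

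Combining the two steps gives
\begin{align*}
    \max\{P(A),Q(A')\}\ge\frac12\left(P(A)+Q(A^c)\right)\ge\frac14\exp(-\operatorname{KL}(P,Q)),
\end{align*}
which is the claim. The only delicate point is the Jensen step, which is applied on the set $\{p>0\}$ and uses $P\ll Q$. Everything else is routine, so I expect no real obstacle beyond correctly citing or reproducing the Bretagnolle--Huber bound.
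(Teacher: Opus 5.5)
Your proof is correct and follows essentially the paper's route: reduce to a two-point test, apply the Bretagnolle--Huber inequality (which the paper simply cites as Lemma 2.6 of Tsybakov), and use the triangle inequality to turn testing error into estimation error. The only differences are cosmetic. The paper uses the minimum-distance test comparing $d_{\mathcal Y}(\mathcal T_n(\mathcal D_n),m_P)$ with $d_{\mathcal Y}(\mathcal T_n(\mathcal D_n),m_Q)$, whereas you apply the inequality directly to the event $A$ and use $A^c\subset A'$. You also sketch a correct proof of Bretagnolle--Huber itself.
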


\begin{proof}[Proof of \Cref{lem:two-point-estimation}]
Define the binary test $\psi$ as
\begin{align*}
    \psi(\mathcal D_n):=
    \begin{cases}
        1, & \text{if} \quad  d_{\mathcal Y}\left(\mathcal T_n(\mathcal D_n),m_P\right)\le d_{\mathcal Y}\left(\mathcal T_n(\mathcal D_n),m_Q\right),\\
        2, & \text{otherwise}.
    \end{cases}
\end{align*}
By the Bretagnolle--Huber inequality, it holds that
\begin{align}\label{eq:two-point-testing}
    P\left(\psi(\mathcal D_n)=2\right)+Q\left(\psi(\mathcal D_n)=1\right)
    \ge\frac12\exp\left(-\operatorname{KL}(P,Q)\right).
\end{align}
For more details, see Lemma 2.6 of \cite{Tsybakov2009}. If $\psi(\mathcal D_n)=2$ under $P$, the triangle inequality gives
\begin{align*}
    d_{\mathcal Y}\left(\mathcal T_n(\mathcal D_n),m_P\right)\ge\frac a2.
\end{align*}
Similarly, if $\psi(\mathcal D_n)=1$ under $Q$, then
\begin{align*}
    d_{\mathcal Y}\left(\mathcal T_n(\mathcal D_n),m_Q\right)\ge\frac a2.
\end{align*}
Therefore, combining the above, we have
\begin{align*}
    &P\left(d_{\mathcal Y}\left(\mathcal T_n(\mathcal D_n),m_P\right)\ge\frac a2\right)
    +Q\left(d_{\mathcal Y}\left(\mathcal T_n(\mathcal D_n),m_Q\right)\ge\frac a2\right)\\
    &\ge P\left(\psi(\mathcal D_n)=2\right)+Q\left(\psi(\mathcal D_n)=1\right)\\
    &\ge\frac12\exp\left(-\operatorname{KL}(P,Q)\right).
\end{align*}
Taking the larger of the two probabilities proves the lemma.
\end{proof}

\fi

\end{document}